\algnewcommand\algorithmicinput{\textbf{INPUT:}}
\algnewcommand\INPUT{\item[\algorithmicinput]}
\algnewcommand\algorithmicoutput{\textbf{OUTPUT:}}
\algnewcommand\OUTPUT{\item[\algorithmicoutput]}
\DeclareMathOperator*{\argmin}{arg\,min}
\DeclareMathOperator*{\argmax}{arg\,max}
\newcommand{\p}{\mathbb P}
\newcommand{\I}{{\mathcal I }} 
\newcommand{\T}{{\mathcal T }}
  \newcommand{\E}{\mathbb E}
  \newcommand{\he}{{\widehat \eta }}
  \newcommand{\ws}{{ \widetilde {\mathcal  S }}}
  \newcommand{\wm}{{ \widetilde {\mathcal M }}}
 \newcommand{\s}{ {\mathfrak{s} }}
\newcommand{\ot}{ { (0, t] }}
\newcommand{\tn}{ { (t,  n ] }}
\newtheorem{assumption}{Assumption}
\newtheorem{lemma}{Lemma}
\newtheorem{proposition}{Proposition}
\newtheorem{corollary}{Corollary}
\newtheorem{theorem}{Theorem}
\newtheorem{condition}{Condition}
\date{\vspace{-5ex}}
\title{Optimal Change-point Testing for High-dimensional Linear Models with Temporal Dependence}
\author {Zifeng Zhao$^{1}$}
\author {Xiaokai Luo$^{1}$}
\author  {Zongge Liu$^{2}$}
\author {Daren Wang$^{1}$}
\affil{  University of Notre Dame$^{1}$ and Apple$^{2}$}
\begin{document}
\setlength{\abovedisplayskip}{4pt}
\setlength{\belowdisplayskip}{4pt}
\setlength{\abovedisplayshortskip}{2.5pt}
\setlength{\belowdisplayshortskip}{2.5pt}	

\maketitle
	
\begin{abstract}
In this paper, we study change-point testing for high-dimensional linear models, an important problem that has not been well explored in the literature. Specifically, we propose a quadratic-form  cumulative sum (CUSUM)  statistic to test the stability of regression coefficients in high-dimensional linear models. The test controls type-I error at any desired level and is robust to temporally dependent observations. We establish its asymptotic distribution under   the null hypothesis, and demonstrate that it is asymptotically powerful against multiple change-point alternatives and achieves the optimal detection boundary for a wide class of high-dimensional models.
We further develop an adaptive procedure to estimate the tuning parameters of the test, making our method practical in applications. Additionally, we extend our approach to localize change-points  in the regression time series and establish sharp error bounds for our  change-point estimator. Extensive numerical experiments and a real data application in macroeconomics are conducted to demonstrate the promising performance and practical utility of the proposed test. 

\end{abstract}

\noindent\textit{Keywords}:  Change-point testing and localization; high-dimensional regression; optimal detection boundary; temporal dependence

\clearpage
\section{Introduction}
Over the last two decades, with the rise of ``big data'', high-dimensional linear models have become a key  component of modern statistical modeling, with tremendous applications across various  fields, including biology, neuroscience, climatology, finance, economics, and cybersecurity, and much more. A vast body of statistical literature explores the methodological, computational, and theoretical aspects of high-dimensional linear models. We refer the readers to \cite{Buehlmann2011} for an excellent book-length review.

On the other hand, one of the most commonly encountered issues for ``big data" is heterogeneity. For sequentially collected data, heterogeneity often manifests itself through non-stationarity, where the data-generating mechanism experiences structural breaks or change-points over time. To address such issues, numerous statistical testing procedures have been proposed to examine the existence of structural breaks.

Change-point testing is a classical problem in statistics and has been extensively studied in the low-dimensional setting, we refer the readers to \cite{Aue2009}, \cite{shao2010testing}, \cite{Matteson2014}, \cite{Kirch2015} and \cite{Zhang2018} (among many others) for some recent work and \cite{perron2006} and \cite{aue:13} for comprehensive reviews. More recently, there is a surge of interest in change-point testing under the high-dimensional setting, see for example \cite{Horvath2012}, \cite{Cho2015}, \cite{Jirak2015}, \cite{Wang2018c}, \cite{Enikeeva2019}, and \cite{Wang2020}. However, these works mainly focus on testing the stability of mean vector or covariance matrices of high-dimensional times series, and we are not aware of any valid change-point testing procedure for high-dimensional linear models.

The existing change-point literature for high-dimensional linear models focuses on change-point estimation, where the main objective is to locate the unknown structural breaks. \cite{Lee2016,Lee2018} and \cite{Kaul2019} study change-point estimation with the knowledge of a single change-point, and \cite{Leonardi2016} and \cite{Wang2021a} propose procedures for localization of multiple change-points. However, all these works are essentially built upon penalized model selection procedures that cannot be easily adapted for valid hypothesis testing of structural instability. Moreover, all these works require temporal independence, which may not be realistic for real data applications, especially for financial and economics studies.

The change-point testing literature for linear models can only be found under the low-dimensional setting, see the sup $F$ test and its extensions in \cite{andrews1993} and \cite{Bai1998}. However, since the sup $F$ test is based on OLS, it is degenerate under the high-dimensional setting where the dimension of covariates $p$ is larger than the sample size $n$. In addition, even if $p$ is smaller than $n$, the sup $F$ test can incur large size distortion for the case where $p$ and $n$ are on comparable scales.

To fill the gap in the literature, in this paper, we propose a valid and optimal change-point testing procedure, named QF-CUSUM, for high-dimensional linear models. Our main contributions are three-fold. First, to our best knowledge, QF-CUSUM is the first valid change-point testing procedure for high-dimensional linear models that can control the type-I error at any given level. Second, through a well-designed randomization component, the proposed test is theoretically sound for temporally dependent observations, which is more appealing and realistic for real data applications. Third, we show that QF-CUSUM is optimal in the sense that it achieves the minimax lower bound of the detection boundary for a wide class of high-dimensional linear models. In addition, we establish some new consistency results of the Lasso~\citep{Tibshirani1996} under the change-point setting, which may be of independent interest. 

The rest of the paper is organized as follows.   \Cref{sec:QF_CUSUM_Dep} proposes the QF-CUSUM test and establishes its asymptotic distribution under both the null and alternative hypotheses. The matching detection lower bound  of QF-CUSUM is further established as well.  The optimal testing procedure requires knowledge of population quantities and in \Cref{sec:adaptive}, we develop an adaptive testing procedure to overcome this limitation. In 
\Cref{sec:localization}, we further show  that  the  change point  estimator based on the QF-CUSUM   test statistics    achieves the sharp localization error bound. 
Finally in \Cref{sec:numerical}, we conduct extensive numerical experiments and a real data example to examine the finite sample performance of QF-CUSUM under both null and alternative hypotheses.   \Cref{sec:conclusion} concludes.

\section{A Quadratic Form based CUSUM Test}\label{sec:QF_CUSUM_Dep}

We first formally introduce the change-point testing problem for the high-dimensional linear model. Denote $y_i\in \mathbb{R}$ as the univariate response and $x_i\in \mathbb{R}^p$ as the $p$-dimensional covariate. Given sequentially observed data $\{(x_i,y_i)\}_{i=1}^n$, the high-dimensional linear model assumes
\begin{align}\label{eq:model}
	y_i=x_i^\top \beta_i^* +\epsilon_i, \text{ for } i=1,2,\cdots,n,
\end{align}
where $\beta_i^*$ is the regression coefficient and $\epsilon_i$ is the random noise. Under the null hypothesis, the regression coefficient $\beta_i^*$ stays unchanged and we have
$$H_0: \beta_1^*=\beta_2^*=\cdots=\beta_n^*.$$
Under the alternative hypothesis, there exists $K\geq 1$ unknown change-points $1\leq\eta_1<\eta_2<\cdots<\eta_K<n$ such that
$$H_a: \beta_1^*=\cdots=\beta^*_{\eta_1}\neq \beta^*_{\eta_1+1}=\cdots=\beta^*_{\eta_2}\neq \cdots\neq \beta_{\eta_K+1}^*=\cdots=\beta^*_n.$$

Given a generic interval $\I \subseteq (0,n]$, denote its cardinality as $|\I|$. To handle the high-dimensional setting where $p>|\I|$, in this following, we work with the Lasso estimator such that
\begin{align}\label{eq:interval lasso}
	\widehat \beta_ \I  : = \argmin_{  \beta \in \mathbb R^p  } \frac{1}{|\I| }  \sum_{i\in \I }  \left(  y_i  -  x_i ^\top  \beta  \right)^2 
	+ \frac{ \lambda }{  \sqrt {  |\I| }  } \| \beta\|_1,  
\end{align}
where $\|\cdot\|_1$ is the $l_1$-norm and $\lambda $ is the Lasso tuning parameter to be specified later. Define $\beta_{\I}^*=\frac{1}{|\I|}\sum_{i\in\I}\beta_i^*$, by \Cref{assume:beta}(a), we have $\beta_{\I}^*$ is unique minimizer of the population squared loss function $\mathbb{E}(\sum_{i\in\I} (y_i-x_i^\top \beta)^2)$ and thus can be viewed as the population version of $\widehat \beta_\I$. In addition, denote the sample covariance matrix estimated based on the interval $\I$ as $\widehat \Sigma_{ \mathcal I } := \frac{1}{ | \mathcal I  | } \sum_{i \in \mathcal I }  x_ix_i^ \top.$

\Cref{lemma:interval lasso} provides a uniform consistency result for the Lasso estimator $\widehat \beta_\I$ defined in \eqref{eq:interval lasso} w.r.t. its population quantity $\beta_{\I}^*$ across all intervals $\I\subseteq (0,n]$, which is of independent interest. \Cref{lemma:interval lasso} is used extensively in the technical proof to analyze the later proposed quadratic form. 
 
\begin{lemma} \label{lemma:interval lasso}
Let $ \zeta> 0$ be any fixed constant in $(0,1)$. Suppose \Cref{assume:beta} and \Cref{assume:order_betamixing} hold. There exists $\lambda = C_\lambda \sqrt { \log p }$ with some sufficiently large constant $C_\lambda $, such that under both $H_0$ and $H_a$, we have with probability at least $1-n^{-5}$, for all $ \mathcal I  \subseteq (0, n] $ with $| \mathcal I | \ge \zeta n  $, it holds that 
	\begin{align*}
	 \| \widehat \beta_\I -\beta^*_\I \| _2^2 \le \frac{C \s\log p}{n }, ~ \| \widehat \beta_\I  -\beta^*_\I \| _1  \le     C \s\sqrt {  \frac{\log p} {n } }, ~ \| (\widehat \beta_\I -\beta^*_\I)_{S_\I^c} \| _1 \le  3 \| (\widehat \beta _\I  -\beta^*_\I )_{S_\I } \| _1   .
	\end{align*}
where $C$ is an absolute constant and $S^c_\I=\{1,2,\cdots,p\} \setminus S_\I$ with $S_\I$ being the support set of $\beta_\I^*.$
\end{lemma}

\subsection{A bias-corrected quadratic form}
To ease presentation, in the following, we introduce the quadratic form based CUSUM~(QF-CUSUM) using the single change-point alternative as a motivating example. In other words, we assume under $H_a$, there is only one unknown change-point $\eta$ such that
$$\beta_i^*=\beta^{(1)} \text{ for } 1\leq i\leq \eta \text{ and } \beta_i^*=\beta^{(2)} \text{ for } \eta+1\leq i\leq n.$$
We remark that the presented method and theory apply to the {\it multiple} change-point alternative.

For two regression coefficients $\beta^{(1)}$ and $\beta^{(2)}$, it is natural to directly measure their difference via the $l_2$-norm $\|\beta^{(1)}-\beta^{(2)}\|_2^2.$ However, under the regression context, an alternative  and more relevant  quantity is the quadratic form ${(\beta^{(1)}-\beta^{(2)})^\top \Sigma (\beta^{(1)}-\beta^{(2)})}$, which equals to ${\text{Var}(x_i^\top(\beta^{(1)}-\beta^{(2)}))}$ as $\text{Cov}(x_i)=\Sigma$. Note that under Assumption \ref{assume: model assumption}(a), we have that
$$c_x\|\beta^{(1)}-\beta^{(2)}\|_2^2 \leq {(\beta^{(1)}-\beta^{(2)})^\top \Sigma (\beta^{(1)}-\beta^{(2)})} \leq C_x \|\beta^{(1)}-\beta^{(2)}\|_2^2.$$
Thus, in terms of theoretical magnitude, $\|\beta^{(1)}-\beta^{(2)}\|_2^2$ and ${(\beta^{(1)}-\beta^{(2)})^\top \Sigma (\beta^{(1)}-\beta^{(2)})}$ are the same and both can capture the change in the regression coefficient.

However, compared to $\|\beta^{(1)}-\beta^{(2)}\|_2^2$, the quadratic form ${(\beta^{(1)}-\beta^{(2)})^\top \Sigma (\beta^{(1)}-\beta^{(2)})}$ further incorporates the covariance structure $\Sigma$ of $x$ and thus can better reflect the difference between two regression models $y=x^\top \beta^{(1)}+\epsilon$ and $y=x^\top \beta^{(2)}+\epsilon$.   Therefore   ${(\beta^{(1)}-\beta^{(2)})^\top \Sigma (\beta^{(1)}-\beta^{(2)})}$ is preferred for change-point testing. We remark that ${(\beta^{(1)}-\beta^{(2)})^\top \Sigma (\beta^{(1)}-\beta^{(2)})}$ is closely related to the explained variance in the regression literature, see for example \cite{Cai2020}, where the explained variance is defined as $\beta^\top\Sigma\beta$ for a regression model $y=x^\top \beta+\epsilon$.

Given a potential change-point location $t$, to estimate the quadratic form, a natural choice is the plug-in estimator. Specifically, based on the Lasso estimator \eqref{eq:interval lasso}, we can obtain $\widehat{\beta}_{(0,t]}$ from $\{(x_i,y_i)\}_{i=1}^t$ and $\widehat{\beta}_{(t,n]}$ from $\{(x_i,y_i)\}_{i=t+1}^n$. Denote $\widehat{\Delta}_t=\widehat{\beta}_{(0,t]}-\widehat{\beta}_{(t,n]}$, one possible plug-in estimator we can use is
\begin{align*}
	(\widehat{\Delta}_t^\top \widehat{\Sigma}_{(0,t]}\widehat{\Delta}_t+\widehat{\Delta}_t^\top\widehat{\Sigma}_{(t,n]} \widehat{\Delta}_t)/2,
\end{align*}
which intuitively estimates $\Delta_t^{*\top} \Sigma \Delta_t^*$ with $\Delta_t^*={\beta}_{(0,t]}^*-{\beta}_{(t,n]}^*$.

However, as will be made clear in the proof, due to the use of Lasso, the plug-in estimator has an intrinsic bias term that makes it technically difficult to analyze its asymptotic distribution under the null and alternative hypothesis. In particular, we have
\begin{align*}
	&\widehat{\Delta}_t^\top \widehat{\Sigma}_{(0,t]} \widehat{\Delta}_t -\Delta_t^{*\top} \Sigma \Delta_t^*\\
   =&2\widehat{\Delta}_t^\top\widehat{\Sigma}_{(0,t]}(\widehat{\beta}_{(0,t]}-{\beta}_{(0,t]}^*)-
   2\widehat{\Delta}_t^\top\widehat{\Sigma}_{(0,t]}(\widehat{\beta}_{(t,n]}-{\beta}_{(t,n]}^*)+\Delta_t^{*\top} (\widehat{\Sigma}_{(0,t]}-\Sigma)\Delta_t^* -(\widehat{\Delta}_t-\Delta_t^*)^\top\widehat{\Sigma}_{(0,t]}(\widehat{\Delta}_t-\Delta_t^*).
\end{align*}
Due to the bias of the Lasso estimator, the asymptotic behavior of the first two terms in the above equation is technically difficult to analyze. 

Thus, we further introduce bias correction terms to the plug-in estimator. Specifically, the first two terms can be estimated by $\frac{2}{t}\sum_{i=1}^t\widehat{\Delta}_t^\top x_i(x_i^\top\widehat{\beta}_{(0,t]}-y_i)$ and $\frac{2}{n-t}\sum_{i=t+1}^n\widehat{\Delta}_t^\top x_i(x_i^\top\widehat{\beta}_{(t,n]}-y_i)$, where the key idea is to make use of $\mathbb{E}(y_i|x_i)=x_i^\top\beta^*_i$. The bias-corrected quadratic form of $\widehat{\Delta}_t^\top \widehat{\Sigma}_{(0,t]}\widehat{\Delta}_t$ is thus defined as
$\widehat{\Delta}_t^\top \widehat{\Sigma}_{(0,t]}\widehat{\Delta}_t+\frac{2}{t}\sum_{i=1}^t\widehat{\Delta}_t^\top x_i(y_i-x_i^\top\widehat{\beta}_{(0,t]})-\frac{2}{n-t}\sum_{i=t+1}^n\widehat{\Delta}_t^\top x_i(y_i-x_i^\top\widehat{\beta}_{(t,n]}).$ Based on the same arguments, we have that the bias-corrected quadratic form of $\widehat{\Delta}_t^\top\widehat{\Sigma}_{(t,n]} \widehat{\Delta}_t$ can be defined as $\widehat{\Delta}_t^\top\widehat{\Sigma}_{(t,n]} \widehat{\Delta}_t+\frac{2}{t}\sum_{i=1}^t\widehat{\Delta}_t^\top x_i(y_i-x_i^\top\widehat{\beta}_{(0,t]})-\frac{2}{n-t}\sum_{i=t+1}^n\widehat{\Delta}_t^\top x_i(y_i-x_i^\top\widehat{\beta}_{(t,n]}).$

Combine the above arguments and sum the two bias-corrected quadratic forms of $\widehat{\Delta}_t^\top \widehat{\Sigma}_{(0,t]}\widehat{\Delta}_t$ and $\widehat{\Delta}_t^\top\widehat{\Sigma}_{(t,n]} \widehat{\Delta}_t$, we estimate $\Delta_t^{*\top} \Sigma \Delta_t^*$ using the bias-corrected quadratic form
\begin{align}\label{eq:bias_qf}
	\frac{1}{2}(\widehat{\Delta}_t^\top \widehat{\Sigma}_{(0,t]}\widehat{\Delta}_t+\widehat{\Delta}_t^\top\widehat{\Sigma}_{(t,n]} \widehat{\Delta}_t)+\frac{2}{t}\sum_{i=1}^t\widehat{\Delta}_t^\top x_i(y_i-x_i^\top\widehat{\beta}_{(0,t]})-\frac{2}{n-t}\sum_{i=t+1}^n\widehat{\Delta}_t^\top x_i(y_i-x_i^\top\widehat{\beta}_{(t,n]}),
\end{align}
which later serves as the building block of the proposed QF-CUSUM test.

\textbf{A Goodness of Fit viewpoint}: The proposed QF in \eqref{eq:bias_qf} may seem complicated, however, we show that it is indeed intuitive and interpretable by rewriting \eqref{eq:bias_qf} as a goodness of fit~(GoF) statistic. Specifically, given a time point $t$, under $H_0$ with no change-point, we have that $\widehat{\beta}_t:=(\widehat{\beta}_{(0,t]}+\widehat{\beta}_{(t,n]})/2$ can serve as a valid estimator for the regression coefficient. Thus, to differentiate $H_0$ and $H_a$, we can examine the magnitude of the following GoF statistic,
\begin{align*}
	\frac{1}{t}\left(\sum_{i=1}^t(y_i-x_i^\top \widehat{\beta}_{(0,t]})^2-\sum_{i=1}^t(y_i-x_i^\top \widehat{\beta}_{t})^2\right)+\frac{1}{n-t}\left(\sum_{i=t+1}^n(y_i-x_i^\top \widehat{\beta}_{(t,n]})^2-\sum_{i=t+1}^n(y_i-x_i^\top \widehat{\beta}_{t})^2\right),
\end{align*}
where we declare the existence of change-points once the GoF statistic is ``too large" for some $t=1,\cdots,n$. Elementary algebra shows that the GoF statistic is nothing but $\eqref{eq:bias_qf}/2.$

\subsection{QF-CUSUM}\label{subsec:qf_cusum_proof}
Based on the bias-corrected quadratic form \eqref{eq:bias_qf}, in this section, we propose QF-CUSUM for change-point testing in high-dimensional linear models.

Using \Cref{lemma:interval lasso}, it is intuitive to see that under $H_0,$ since $\Delta_t^*=0$ for all $t$, we have that the bias-corrected QF \eqref{eq:bias_qf} is of order $O_p({\s\log p}/{n})$, which is in general degenerate and does not follow any pivotal distribution. To overcome this issue and design a non-degenerate test under $H_0$, we adopt a randomization strategy to inject additional variance into the bias-corrected QF, see similar approaches used in \cite{Cai2020} to construct confidence intervals for high-dimensional linear models. We refer to \cite{lopes2011more}, \cite{srivastava2016raptt} and \cite{li2020randomized} for other types of randomized tests proposed in the high-dimensional literature.

Specifically, we generate a sequence of random variables $\{\xi_i\}_{i=1}^n$ such that $\xi_i\overset{i.i.d.}{\sim} N(0,\sigma_\xi^2)$, where the variance level $\sigma_{\xi}^2$ will be specified later in \Cref{theorem:main_beta}. We then define the randomized bias-corrected QF as
\begin{align}\label{eq:qf_random}
	\mathcal{S}_n(t)=&\frac{1}{2}(\widehat{\Delta}_t^\top \widehat{\Sigma}_{(0,t]}\widehat{\Delta}_t+\widehat{\Delta}_t^\top\widehat{\Sigma}_{(t,n]} \widehat{\Delta}_t)\nonumber\\
	+&\frac{1}{t}\sum_{i=1}^t(2\widehat{\Delta}_t^\top x_i+\xi_i)(y_i-x_i^\top\widehat{\beta}_{(0,t]})-\frac{1}{n-t}\sum_{i=t+1}^n(2\widehat{\Delta}_t^\top x_i+\xi_i) (y_i-x_i^\top\widehat{\beta}_{(t,n]}),
\end{align}
and further define the proposed QF-CUSUM statistic as
\begin{align}\label{eq:qf_cusum}
	\mathcal T_n(t)=&\frac{1}{\sigma_{\epsilon}\sigma_\xi}\sqrt{\frac{t(n-t)}{n}}\mathcal{S}_n(t),
\end{align}
where we combine $\mathcal{S}_n(t)$ with the classical CUSUM weight function. We reject the null hypothesis if $\mathcal{T}_n(t)$ is larger than a pre-specified threshold.

Before presenting the formal theoretical results, we first discuss some high-level intuition of the QF-CUSUM test in \eqref{eq:qf_cusum}. Using \Cref{lemma:interval lasso} and some additional technical lemmas in the Appendix, we can show that under $H_0$, $\mathcal T_n(t)$ can be approximately written as
\begin{align*}
	\T_n(t)=\frac{1}{\sigma_{\epsilon}\sigma_\xi}\sqrt{\frac{t(n-t)}{n}}\left[\frac{1}{t}\sum_{i=1}^t \epsilon_i\xi_i -\frac{1}{n-t}\sum_{i=t+1}^n\epsilon_i\xi_i \right] + O_p\left(\frac{ \s\log p}{\sqrt{n}}\right)+O_p\left(\frac{ \s\log p}{\sqrt{n}\sigma_\xi}\right).
\end{align*}
The first term takes the form of the classical CUSUM statistic and we later control the last two noise terms with additional conditions in \Cref{assume:order_betamixing}. 

 {\bf Remark 1.} We remark that the randomization strategy is devised to derive a pivotal and non-degenerate asymptotic distribution under $H_0$. More importantly, as will be seen later in this  section, the randomized error $\{\xi_i\}_{i=1}^n$ is deliberately designed to avoid the estimation of the long-run variance~(LRV) under temporal dependence, which is highly challenging even in low-dimensional change point settings~\citep{shao2010testing}. In the literature, another strategy to avoid a degenerate test under $H_0$ is to combine the designed test with another pivotal (possibly less powerful) test statistic, see for example \cite{Fan2015}. However, in our current setting of high-dimensional change-point testing, it is not obvious how to construct another pivotal test and such a strategy will likely involve the estimation of LRV under temporal dependence. We thus do not pursue this direction.

  \subsection{High-dimensional regression time series}
In practice, temporal dependence is the norm rather than exception for sequentially observed data. Thus, in this section, we  study the behavior of the QF-CUSUM test $\T_n(t)$ under the context of temporal dependence. Specifically, we follow the $\beta$-mixing framework in \cite{Wong2020} for high-dimensional linear models, which also allows for heavy-tailed observations.

  We begin our discussion by reviewing the concept of $\beta$-mixing and define the sub-Weibull family, which includes the independent sub-Gaussian family   as a special case.

Given two sigma fields $\mathcal{F}_1$ and $\mathcal{F}_2$, we define the $\beta$-mixing coefficient as
$$ \beta  (\mathcal{F}_1, \mathcal{F}_2) = \sup \frac{1}{2} \sum_{ i=1}^I \sum_{j=1}^J
|\mathbb P(A_i\cap B_j) -\mathbb P(A_i) \mathbb P(B_j) |,$$
where the supremum is over all pairs of finite partitions $\{A_1, \ldots, A_I \}$ and $ \{B_1 , \ldots, B_J\}$ of the sample space $\Omega$ such that $ A_i \in \mathcal{F}_1$, $B_j \in \mathcal{F}_2$ for all $i,j$. Thus, given a sequence of random vectors $\{ Z_t\}_{t=-\infty}^{\infty}$, we can define its $\beta$-mixing coefficient at lag $l\in \mathbb{N}$ as
$$ \beta(l) =  \sup_{t\in \mathbb Z } \ \beta (\sigma( \{ Z_{-\infty :t }) \}, \sigma( \{ Z_{  t+l:\infty  })\}    ),$$
where $\sigma(\cdot)$ denotes the generated sigma field.

We now introduce the sub-Weibull family. For any $\gamma \in (0,2]$\footnote{The sub-Weibull family further incorporates the case of $\gamma>2$. However, a $\gamma>2$ implies a lighter tail than the sub-Gaussian family, which is not useful in practice. Thus, we only consider $\gamma \in (0,2]$.}, a random variable $x\in\mathbb{R}$ is called sub-Weibull with parameter $\gamma$ if there exists a constant $K>0$ such that
$$ ( \mathbb E  |x |^d)^{1/d} \le K d^{1/\gamma} \text{ for all }  d \ge \min\{ 1,\gamma\}.$$
We further define the sub-Weibull$(\gamma)$ norm to be $\| x \|_{\psi_\gamma } : = \sup_{ d \ge 1 } (\mathbb E|x|^d )^{1/d} d^{-1/\gamma}.$ A random vector $x \in \mathbb R^p$ is said to be a sub-Weibull$(\gamma)$ random vector if $ \| x\|_{\psi_\gamma} : = \sup_{v\in S^{p-1}} \|v^\top x\|_{\psi_\gamma }<\infty,$
where $ S^{p-1}$ is the unit sphere in $\mathbb R^p$. Note that sub-Weibull with $\gamma=2$ reduces to the sub-Gaussian family and with $\gamma=1$ reduces to the sub-Exponential family.

We proceed by imposing some mild assumptions on the high-dimensional linear model.

\begin{assumption} \label{assume:beta}
	The observations $\{(x_i,y_i)\}_{i=1}^n$ follow Model \eqref{eq:model} with strictly stationary covariates $\{x_i\}_{i=1}^n$ and random noise $\{\epsilon_i\}_{i=1}^n.$ 
	\\
	{\bf a.} (Eigenvalue) Denote $\Sigma=\text{Cov}(x_i)$, there exists absolute constant $c_x$ and $C_x$ such that the minimal and maximal eigenvalues of $\Sigma$ satisfy 
	$ \Lambda_{\min}   (\Sigma)\ge c_x >0  $   and $\Lambda_{\max} (\Sigma ) \le C_x < \infty .   $ 
	\\	  
	{\bf b.} (Noise) The random noise $\epsilon_i$ is independent of $x_i$ and we have $\mathbb E(\epsilon_i)=0$ and  $ \text{Var}(\epsilon_i) =\sigma^2_\epsilon>0$.
	\\
	{\bf c.} (Mixing) The data $\{(x_i,\epsilon_i)\}_{i=1}^n$ is geometrically $\beta$-mixing; i.e., there exist positive constants
	$c $ and $\gamma_1$ such that the $\beta$-mixing coefficient of $\{(x_i,\epsilon_i)\}_{i=1}^n$ satisfies $\beta(l) \le \exp(-c l^{\gamma_1})$ for all $l\in \mathbb N. $
	\\
	{\bf d.} (Sub-Weibull) The covariate $x_i$ and random noise $\epsilon_i$ follow sub-Weibull distributions with parameter $\gamma_2$; i.e., $\|x_i\|_{\psi_{\gamma_2}} \le K_X$ and $\|\epsilon_i\|_{\psi_{\gamma_2}} \le K_\epsilon$ for some absolute constants $K_X$ and $K_\epsilon.$
	\\
	{\bf e.}  (Infill) Under $H_a$, there exist a fixed set of (relative) change-points 
	$ 0<  \eta_1^* < \eta_2^* < \ldots <\eta_K^* <1 $ 
	such that $\eta_k=\lfloor n\eta_k^*\rfloor$ for $k=1,\cdots,K.$
\end{assumption}

Given the two positive constants $\gamma_1$ and $\gamma_2$ in \Cref{assume:beta}, we further define a key quantity $\gamma = \left( \frac{1}{\gamma_1} + \frac{2}{\gamma_2}\right)^{-1}$ and introduce \Cref{assume:order_betamixing}, which extends the highly-dimensional regression  signal-to-noise condition  to  the temporal independence setting.
\begin{assumption}\label{assume:order_betamixing} 
We have  that 
$$	\frac{ \max\{ \log^{1/\gamma}(p),  ( \s\log p )^{ \frac{2}{\gamma}- 1}  \} }{n} \to 0.$$ 
  Furthermore, it holds that 
	$$\frac{\s\log p  }{\sqrt n } \to 0 . $$	
\end{assumption}
\noindent As discussed in \cite{Wong2020}, the parameter $\gamma$ measures the difficulty brought by the temporal dependence and heavy-tailed condition, where a smaller $\gamma$ corresponds to a more difficult problem and thus requires a stronger assumption as in the first part of \Cref{assume:order_betamixing} than the usual conditions for the consistency of Lasso. Note that for sub-Gaussian random variables with temporal independence, we have $\gamma_1=\infty$ and $\gamma_2=2$. Thus the first part of  \Cref{assume:order_betamixing} essentially reduces to  $ \s\log(p)/n \to 0 $, which is the optimal  signal-to-noise condition for the  highly-dimensional linear  regression model. 
 
The second part of \Cref{assume:order_betamixing} requires that $\s\log p=o(\sqrt{n})$, which is needed to control the noise term of $\T_n(t)$ as discussed above and thus facilitates the derivation of the limiting distribution. This is a standard assumption in the literature  that concerns   the limiting distribution of the high-dimensional linear models, see e.g.\ \cite{Javanmard2013}, \cite{Geer2014}, \cite{Zheng2019} and \cite{Cai2020}.
   
\subsection{Theoretical guarantees}\label{subsec:qf_betamixing}

In the following, we establish the theoretical guarantees of QF-CUSUM under the null and alternative hypothesis with \Cref{assume:beta} and \Cref{assume:order_betamixing}.  Throughout the rest of the manuscript, we will assume  without loss of generality that $p \ge  n^{\alpha}$ for some $\alpha >0$. This is a   convenient assumption commonly used in the literature for high-dimensional linear models. We remark that for $p=o(n^{\alpha})$, e.g.\ $p=O(1)$ or $p=\log n$, all of our theoretical results continue to hold, up to a logarithmic factor of $n$.  Below in  \Cref{theorem:main_beta}, we establish a process convergence result for $ \mathcal T_n(t)$ under the null hypothesis $H_0$.

To begin, using \Cref{lemma:interval lasso} and additional technical results established in the Appendix,  we show that under $H_0$, 
 $\mathcal T_n(t)$ can   be approximately written as
\begin{align*}
	\T_n(t)=\frac{1}{\sigma_{\epsilon}\sigma_\xi}\sqrt{\frac{t(n-t)}{n}}\left[\frac{1}{t}\sum_{i=1}^t \epsilon_i\xi_i -\frac{1}{n-t}\sum_{i=t+1}^n\epsilon_i\xi_i \right] + O_p\left(\frac{s\log p}{\sqrt{n}}\right)+O_p\left(\frac{s\log p}{\sqrt{n}\sigma_\xi}\right).
\end{align*}
Define $\sigma_{\epsilon L}^{2}=\sum_{t=-\infty}^\infty \text{Cov}(\epsilon_0,\epsilon_t)$, which is the long-run variance~(LRV) of $\{\epsilon_i\}_{i=1}^n$ due to temporal dependence. It is well known that $\lim_{t\to\infty}\text{Var}(\frac{1}{t}\sum_{i=1}^t \epsilon_i)=\sigma_{\epsilon L}^2>\sigma_\epsilon^2$ and the estimation of LRV can be challenging as it requires a user-specified bandwidth parameter~\citep{shao2010testing}.

Fortunately, note that due to the i.i.d.\ nature of the randomized error $\{\xi_i\}_{i=1}^n$, $\T_n(t)$ automatically avoids this difficulty. Specifically, the first term still converges to the Gaussian process defined in \Cref{theorem:main_beta}, as $\text{Var}(\frac{1}{t}\sum_{i=1}^t \epsilon_i\xi_i)=\sigma_\epsilon^2\sigma_\xi^2/t$ and $\text{Var}(\frac{1}{n-t}\sum_{i=t+1}^n \epsilon_i\xi_i)=\sigma_\epsilon^2\sigma_\xi^2/(n-t)$ do not depend on the LRV $\sigma_{\epsilon L}^{2}$ but only on the marginal variance $\sigma_\epsilon^2$. Denote the change size $ \kappa_k: =  \| \beta^* _{\eta_k+1} - \beta_{\eta_k} ^* \|_2$ for $k=1,2,\cdots, K.$  \Cref{theorem:main_beta} provides the theoretical guarantees for QF-CUSUM under both the null and alternative hypotheses.

\begin{theorem} \label{theorem:main_beta}
	Let $\zeta>0$ be any fixed constant in $(0,1/2)$. Suppose \Cref{assume:beta} and \Cref{assume:order_betamixing} hold,  and $\lambda = C_\lambda \sqrt { \log p }$ for some sufficiently large constant $C_\lambda $.  Under $H_0$, we have that
	$$\T_n(\lfloor nr \rfloor) \Rightarrow \mathcal{G}(r), \text{ over } r \in [\zeta, 1-\zeta], \text{ and }\max_{t=\lfloor n\zeta\rfloor, \lfloor n\zeta\rfloor+1, \cdots, \lfloor n(1-\zeta)\rfloor}\T_n(t) \overset{d}{\to} \sup_{r\in [\zeta,1-\zeta]}\mathcal{G}(r).$$
	where $\mathcal G(r)$ is the Gaussian process such that $\mathcal G(r)=[B(r)-rB(1)]/\sqrt{r(1-r)}$ for $r\in (0,1)$ and $B(\cdot)$ is the standard Brownian motion. Under $H_a$,  suppose  that  
	  $\sigma_\xi= A_n\cdot \frac{ \s\log p}{\sqrt{n}}$     for any diverging sequence $A_n \to \infty $  and  the maximum change size satisfies 
	\begin{align} \label{eq:detection boundary beta}
	\max_{1\le k \le K } \kappa _k^2 \ge  B _n \frac{\s\log p}{n }
	\end{align}
	for any diverging  sequence $ B _n \to \infty $ and $B_n/A_n \to \infty  $ as $n \to \infty$. We have that
	\begin{align*}
		\mathbb{P}\left(\max_{t=\lfloor n\zeta\rfloor, \lfloor n\zeta\rfloor+1, \cdots, \lfloor n(1-\zeta)\rfloor}\T_n(t) >\mathcal{G}_\alpha(\zeta)\right) \to 1 \text{ as } n \to \infty.
	\end{align*}	
\end{theorem}

Note that \Cref{theorem:main_beta} holds for any interval $ [\zeta, 1-\eta]$ for  any $\zeta>0$, but does not hold for the interval $[0,1]$. This is a well-known phenomenon for the CUSUM statistic, see for example \cite{andrews1993}. In fact, we have that $\sup_{r\in[0,1]} \mathcal{G}(r) = \infty$~(Corollary 1 in \cite{andrews1993}), resulting in a trivial critical value $\infty$ for $\T_n(t)$. Thus, we require the interval $[\zeta,1-\zeta]$ to be bounded away from zero and one.

For any fixed $\zeta \in (0,1/2)$, $\sup_{r\in [\zeta,1-\zeta]}\mathcal{G}(r)$ is a pivotal distribution. For a given type-I error rate $\alpha$, define $\mathcal{G}_\alpha(\zeta)$ such that $ \mathbb P(\sup_{ r \in[\zeta,1-\zeta ]} \mathcal G(r) \ge   \mathcal G_{\alpha }(\zeta)) = \alpha.$ We reject the null hypothesis if
$$\max_{t=\lfloor n\zeta\rfloor, \lfloor n\zeta\rfloor+1, \cdots, \lfloor n(1-\zeta)\rfloor}\T_n(t) >\mathcal{G}_\alpha(\zeta).$$
By the first part of \Cref{theorem:main_beta}, this gives a test with asymptotically correct type I error rate.

The second part of \Cref{theorem:main_beta} states that QF-CUSUM can detect changes in the regression coefficient once the {\it maximum} change size is  larger than ${\s\log p}/{n}$ by any   diverging order $B_n$. Note that our result holds for any fixed $\zeta \in (0,1/2)$ without requiring that the true change-point with the maximum jump size is located between $\lfloor n\zeta\rfloor$ and $\lfloor n(1-\zeta)\rfloor$. We show in \Cref{subsec_optim} that \eqref{eq:detection boundary beta} is the optimal detection boundary that can be achieved by any valid test. We remark that \Cref{theorem:main_beta} indicates that QF-CUSUM works under both single and multiple change-point alternatives and allows multiscale changes, i.e.\ different magnitude of jump sizes among $\{\kappa_1,\kappa_2,\cdots,\kappa_K\}$.   

We can further establish an asymptotic distribution for $\T_n(t)$ under the alternative hypothesis with proper centering and scaling. However, due to temporal dependence, the scaling factor is more complicated as it now involves LRV of the random noise $\{\epsilon_i\}_{i=1}^n$ and covariate $\{x_i\}_{i=1}^n.$ Define 
\begin{align*} 
	\mu (t)  =   \Delta^{*\top}_t  \Sigma \Delta^*_t \text{ and } \widetilde\sigma^2(t) =\sigma_\epsilon^2    \sigma_\xi^2    + 4\text{Var}\left(\frac{1}{t}\sum_{i=1}^t x_i^\top \Delta_t^{* }\epsilon_i\right)    +   \frac{1}{4}\text{Var} \left(\frac{1}{t}\sum_{i=1}^t \Delta_t^{* }(x_ix_i^\top-\Sigma)\Delta_t^* \right),
\end{align*}
and further define $$	\widetilde\psi _L (t) =  \widetilde\sigma^2(t)  +   \frac{1}{t  }   \sum_{i=1}^t   \sigma_\xi^2  ( \beta_i^* -   \beta^*_{ (0, t ] }   ) ^\top \Sigma   ( \beta_i^* -   \beta^*_{ (0, t ] }   ), ~ \widetilde\psi _R (t)  =  \widetilde\sigma^2(t)  +   \frac{1}{ n- t  }   \sum_{i=t +1}^{n } \sigma_\xi^2  ( \beta_i^* -   \beta^*_{ (t , n]  }   ) ^\top \Sigma   ( \beta_i^* -  \beta^*_{ (t , n]  }   ) .$$
  \Cref{theorem:alternative 2_beta} provides the limiting distribution of bias-corrected QF statistics  $\mathcal S_n$ defined in \eqref{eq:qf_random}  under the alternative hypothesis $H_a$. 

\begin{theorem}  \label{theorem:alternative 2_beta} 
	Let $\zeta>0$ be any fixed constant in $(0,1/2)$. Suppose \Cref{assume:beta} and \Cref{assume:order_betamixing} hold, and $\lambda = C_\lambda \sqrt { \log p }$ for some sufficiently large constant $C_\lambda $. Under $H_a$, we have that for any fixed $r  \in [\zeta, 1-\zeta]  $, it holds that
	$$   \bigg(  \sqrt {  \frac{ \widetilde\psi_L (\lfloor r n\rfloor ) }{   \lfloor r n\rfloor   }   +  \frac{  \widetilde\psi_R (\lfloor r n\rfloor  )  } {   n-\lfloor r n\rfloor   }  } \bigg)^{-1}    \left(\mathcal{S}_n(\lfloor r n\rfloor ) -  \mu (\lfloor r n\rfloor )\right) \overset{d}{\to} N(0,1).$$
	In addition,  the empirical process 
	$$  \sqrt n    \big(\mathcal{S}_n(\lfloor r n\rfloor  ) -  \mu (\lfloor r n\rfloor ) \big)$$
	converges to a centered Gaussian process on $r\in [\zeta, 1-\zeta]$.
\end{theorem}

\subsection{Lower bound of the detection boundary}\label{subsec_optim}
 
In this section, we further establish a matching lower bound for QF-CUSUM, which states that, if the maximum change size $\max_{1\leq k\leq K} \kappa_k^2 $ is too small, no test can consistently distinguish between $H_0$ and $H_a$. This together with the results in \Cref{theorem:main_beta}   shows that our test procedure is asymptotically optimal (up to an arbitrarily slowly diverging factor).


\begin{theorem} \label{lemma:lower bound for detection}
	Suppose the observations $\{(x_i,y_i)\}_{i=1}^n $ are generated according to Model \eqref{eq:model} and that  $\s\log(p) /\sqrt n \to 0  $  and   $\s\le p ^{\alpha}  $ for any $\alpha <1/2$. In addition,   suppose that  \Cref{assume:beta} and \Cref{assume:order_betamixing} hold.
	For $0<\zeta<1/2$, consider 
	$$ H_0  :     \beta^*_1 =   \ldots  = \beta_n^* \quad
	\text{and}  \quad  H_a (b)  :   H_a \text{ holds and } \max_{1\le k \le K } \kappa_k^2 \ge b \frac{\s\log(p)} {n}  , $$
	where $b$ is some positive constant. 
	Let $   \psi  $ be any test function  mapping $\{ (x_i,y_i)\}_{i=1}^n $  to $\{ 0,1\}$. If 
	$ b $ is a sufficiently small constant, we have
	$$ \liminf_{n,p  \to \infty }\inf_{   \psi}  \sup_{P  \in \mathcal P_1 (b) } \mathbb E _ 0 (  \psi) + \mathbb E _ {P } (1-    \psi)\to 1   ,$$
	where $  \mathcal P_1 (b)$ denotes the class of distributions satisfying $H_a(b)$, $\mathbb{E}_0(\psi)$ gives the type-I error of $\psi$ under $H_0$ and $\mathbb{E}_P(1-\psi)$ gives the power of $\psi$ when the observations are generated according to $P.$
\end{theorem}

 \Cref{theorem:main_beta} and \Cref{lemma:lower bound for detection} together  imply that, under additional mild conditions on the sparsity parameter $\s$, the  QF-CUSUM  statistics can optimally distinguish   $H_0$ and $H_a$ and achieve the asymptotic  detection boundary. To the best of our knowledge, no  existing testing procedure can achieve this optimal detection boundary in the high-dimensional regression change point  model  with the presence of temporal dependence.

\section{An  adaptive testing procedure}
\label{sec:adaptive}
\subsection{Estimating $\sigma_\epsilon^2 $}
In this section, we provide a method to   consistently  estimate the parameter $ \sigma_\epsilon^2 = Var(\epsilon_i) $.  Note that for any interval $\I$ with diverging length,  
$  \frac{1}{|\I| } \sum_{i\in \I } \epsilon_i^2 \to  Var(\epsilon_i)   $ in probability.   
Since
$ \epsilon_i^2 = (y_i-x_i^\top \beta _i ^*)^2 $, a natural plug-in estimator for $\sigma_\epsilon^2$  is 
$$ \frac{1}{|\I| } \sum_{i\in \I } (y_i-x_i^\top \widehat \beta _\I )^2  , $$
where  $\widehat \beta_ \I $ is  defined   in  \eqref{eq:interval lasso}. This leads us to propose the procedure outlined in \Cref{algorithm:variance}, and  the consistency of \Cref{algorithm:variance}  is justified in \Cref{lemma:consistent estimate of variance under dependence}. 
\begin{algorithm}[ht]
\begin{algorithmic}
	\INPUT Data $\{ y_i, x_i \}_{i =1}^{n }$, tuning parameter  $\lambda$.
	
\For { $m$ in $\{1 ,\ldots,  \lfloor \log(n)\rfloor\}$}	
	\State $$ \mathcal J_m = \begin{cases}  \bigg(  \{ m-1 \}  \cdot \lfloor n/\log(n)  \rfloor      ,  \ 
	 m \cdot  \lfloor n/\log(n)  \rfloor     \bigg  ]   &\text{when }  m  \le  \lfloor \log(n)\rfloor-1,
	 \\\bigg(    \{ m-1 \}  \cdot \lfloor n/\log(n)  \rfloor      ,  \ 
	n  \bigg ]  &\text{when }  m  = \lfloor \log(n)\rfloor  .
	 \end{cases} 
	 $$
	 \State Set  
	 $$ \widehat    \sigma^2_ m   =    \frac{1}{|\mathcal J _m | } \sum_{i\in \mathcal J _m  } (y_i-x_i^\top \widehat \beta _{\mathcal J _m}  )^2   $$
	 \State where 
	 \begin{align} 
	\widehat \beta_{\mathcal J _m}    : = \argmin_{  \beta \in \mathbb R^p  } \frac{1}{|{\mathcal J _m}  | }  \sum_{i\in {\mathcal J _m}   }  \left(  y_i  -  x_i ^\top  \beta  \right)^2 
	+ \frac{ \lambda }{  \sqrt {  |{\mathcal J _m}  | }  } \| \beta\|_1,  
\end{align}
	\EndFor
 
\State Set  	
$   \widehat  \sigma_\epsilon^2   $ to be the median of $\{ \widehat    \sigma^2_ m\}_{m=1} ^{\lfloor \log(n)\rfloor} $.   
 	
	\OUTPUT  The estimated  variance $\widehat  \sigma_\epsilon^2 $.  
 	
 \caption{Variance Estimation.  }
\label{algorithm:variance}
\end{algorithmic}
\end{algorithm}

\begin{theorem} \label{lemma:consistent estimate of variance under dependence}  Suppose \Cref{assume:beta} and \ref{assume:order_betamixing} hold and that
$   \s\log(p)\log(n)   =o(n)   . $   Let $ \widehat \sigma_\epsilon^2$ be the output of \Cref{algorithm:variance} with  $\lambda = C_\lambda \sqrt { \log p }$ for some sufficiently large constant $C_\lambda $.   Then under $H_0$ or $H_a$, 
$$  \widehat  \sigma_\epsilon^  2    \overset{\p}{\to} \sigma_\epsilon^2 .  $$

\end{theorem}
 
\Cref{lemma:consistent estimate of variance under dependence} demonstrates that we can consistently estimate \( \sigma_\epsilon^2 \) without prior knowledge of the unknown change points. Estimating the variance \( \sigma_\epsilon^2 \) is crucial in high-dimensional regression, as it is a key parameter for many inference tools, including confidence intervals \citep{cai2017confidence}, signal-to-noise ratio estimation \citep{verzelen2018adaptive}, and genetic relatedness \citep{guo2019optimal}. To the best of our knowledge, we are the first to provide a consistent estimator for \( \sigma_\epsilon^2 \) when the high-dimensional regression time series are both non-stationary and temporally dependent.

\subsection{Estimating the sparsity}
 Let \( S_i \) denote the support set of \( \beta_i^* \), as defined in \Cref{assume: model assumption}{\bf c}. We impose the following assumption on the coefficients of the regression parameters:

\begin{assumption}  \label{assume:recovery signal size} 
For every \( i \in \{1, \ldots, n\} \), suppose that for some constant \( 0 < \theta < 1/4 \),
$$ \min_{j \in S_i} |\beta_i^*(j)| \geq n^{-\theta}. $$
\end{assumption}

\Cref{assume:recovery signal size} ensures that the non-zero entries of the regression coefficients do not diminish faster than \( n^{-1/4} \). This is a mild assumption, commonly found in the high-dimensional variable-screening literature, such as in \cite{fan2008sure} and \cite{fan2010sure}. In \Cref{algorithm:sparsity}, we provide a procedure to consistently estimate the order of the sparsity parameter \( \s \).

\begin{algorithm}[ht]
\begin{algorithmic}
	\INPUT Data $\{ y_i, x_i \}_{i =1 }^n  $, tuning parameter  $\lambda $.
	
\For { $m$ in $\{1 ,\ldots,  \lfloor \log(n)\rfloor\}$}	
	\State $$ \mathcal J_m = \begin{cases}  \bigg(  \{ m-1 \}  \cdot \lfloor n/\log(n)  \rfloor      ,  \ 
	 m \cdot  \lfloor n/\log(n)  \rfloor     \bigg  ]   &\text{when }  m  \le  \lfloor \log(n)\rfloor-1,
	 \\\bigg(    \{ m-1 \}  \cdot \lfloor n/\log(n)  \rfloor      ,  \ 
	n  \bigg ]  &\text{when }  m  = \lfloor \log(n)\rfloor  .
	 \end{cases} 
	 $$
	 \State Set  
	 $$\widehat S_{\mathcal J_m }   =  \bigg\{j:   |\widehat  \beta _{\mathcal J_m }  (j) | \ge   n^{-1/4}  \bigg \}   $$
	 \State where 
	 \begin{align*} 
	\widehat \beta_{\mathcal J _m}    : = \argmin_{  \beta \in \mathbb R^p  } \frac{1}{|{\mathcal J _m}  | }  \sum_{i\in {\mathcal J _m}   }  \left(  y_i  -  x_i ^\top  \beta  \right)^2 
	+ \frac{ \lambda }{  \sqrt {  |{\mathcal J _m}  | }  } \| \beta\|_1.  
\end{align*}
 \State Let $\widehat \s_m = |\widehat S_{\mathcal J_m }    |   $
	\EndFor

\State Set  	
$   \widehat  \s    $ to be the median of $\{ \widehat    \s_m \}_{m=1} ^{\lfloor \log(n)\rfloor} $.   
 	 
	\OUTPUT     $\widehat \s$.  
 	
 \caption{Variable-screening for   time series  data. }
\label{algorithm:sparsity}
\end{algorithmic}
\end{algorithm}

\begin{theorem}\label{lemma:consistent estimate of sparisty under dependence}
 Suppose \Cref{assume:beta}, \ref{assume:order_betamixing} and  \ref{assume:recovery signal size} hold.   Let $\widehat \s  $ be the output of  \Cref{algorithm:sparsity} with  $\lambda = C_\lambda \sqrt { \log p }$ for some sufficiently large constant $C_\lambda $.  Then under     $H_0$ or  $H_a$, it holds that 
$$ \p(  \s \le \widehat \s \le \log(n) \s  ) \ge 1-n^{-2}.$$
\end{theorem}

\subsection{An adaptive procedure}
In this section, we propose a practical procedure for change-point testing using the pre-estimates of variance and sparsity. Let \( \widehat{\sigma}_\epsilon^2 \) be the output of \Cref{algorithm:variance}, and let \( \widehat{\s} \) be the output of \Cref{algorithm:sparsity}. Define  
\[ \widehat{\sigma}_\xi = A_n \frac{\widehat{\s} \log(p)}{\sqrt{n}}, \]
where \( A_n \) is any slowly diverging sequence, and \( \xi_i' \overset{i.i.d.}{\sim} N(0, \widehat{\sigma}_\xi^2) \).
We further define the following test statistic:
\begin{align}\label{eq:practical qf_random 2}
	\mathcal{S}'_n(t) &= \frac{1}{2}(\widehat{\Delta}_t^\top \widehat{\Sigma}_{(0,t]} \widehat{\Delta}_t + \widehat{\Delta}_t^\top \widehat{\Sigma}_{(t,n]} \widehat{\Delta}_t) \nonumber \\
	&\quad + \frac{1}{t} \sum_{i=1}^t (2\widehat{\Delta}_t^\top x_i + \xi_i')(y_i - x_i^\top \widehat{\beta}_{(0,t]}) - \frac{1}{n-t} \sum_{i=t+1}^n (2\widehat{\Delta}_t^\top x_i + \xi_i')(y_i - x_i^\top \widehat{\beta}_{(t,n]}),
\end{align}
and the corresponding CUSUM-based statistic:
\begin{align}\label{eq:practical qf_cusum}
	\mathcal{T}_n'(t) = \frac{1}{\widehat{\sigma}_\epsilon \widehat{\sigma}_\xi} \sqrt{\frac{t(n-t)}{n}} \mathcal{S}'_n(t).
\end{align}

 \begin{theorem} \label{theorem:practical main_beta}
	Let \( \zeta > 0 \) be any fixed constant in \( (0,1/2) \). Suppose \Cref{assume:beta}, \Cref{assume:order_betamixing}, and \Cref{assume:recovery signal size} hold, and \( \lambda = C_\lambda \sqrt{ \log p } \) for some sufficiently large constant \( C_\lambda \). Under \( H_0 \), we have that
	$$
	\mathcal{T}_n'(\lfloor nr \rfloor) \Rightarrow \mathcal{G}(r), \text{ over } r \in [\zeta, 1-\zeta], \quad \text{and} \quad \max_{t = \lfloor n\zeta \rfloor, \lfloor n\zeta \rfloor + 1, \dots, \lfloor n(1-\zeta) \rfloor} \mathcal{T}_n'(t) \overset{d}{\to} \sup_{r \in [\zeta, 1-\zeta]} \mathcal{G}(r),
	$$
	where \( \mathcal{G}(r) \) is the Gaussian process defined in \Cref{theorem:main_beta}. Under \( H_a \), suppose the maximum change size satisfies 
	$$
	\max_{1 \leq k \leq K } \kappa_k^2 \geq B_n \frac{\s \log p}{n}
	$$
	for some diverging sequence \( B_n \to \infty \) and \( B_n / A_n \to \infty \) as \( n \to \infty \). We then have that
	\begin{align*}
		\mathbb{P} \left( \max_{t = \lfloor n\zeta \rfloor, \lfloor n\zeta \rfloor + 1, \dots, \lfloor n(1-\zeta) \rfloor} \mathcal{T}_n'(t) > \mathcal{G}_\alpha(\zeta) \right) \to 1 \quad \text{as} \quad n \to \infty.
	\end{align*}
\end{theorem}

\Cref{theorem:practical main_beta} implies that \( \mathcal{T}_n'(t) \) has the same asymptotic properties as the original QF-CUSUM statistic \( \mathcal{T}_n(t) \), while \( \mathcal{T}_n'(t) \) does not require the prior knowledge of  the population parameters such as \( \sigma_\epsilon^2 \) or \( \s \) as inputs.

\section{Change point localization}
\label{sec:localization}

In what follows, we study change-point localization based on the QF-CUSUM testing statistics. Specifically, suppose that under the alternative hypothesis, there exists a change point \( 0 < \eta < n \) such that:
\[
H_a: \beta_1^* = \cdots = \beta_\eta^* \neq \beta_{\eta+1}^* = \cdots = \beta_n^*.
\]
Let 
\begin{align}\label{eq:localization qf_random}
	\widetilde{\mathcal{S}}_n(t) &= \frac{1}{2} \left( \widehat{\Delta}_t^\top \widehat{\Sigma}_{(0,t]} \widehat{\Delta}_t + \widehat{\Delta}_t^\top \widehat{\Sigma}_{(t,n]} \widehat{\Delta}_t \right) \nonumber\\
	&\quad + \frac{1}{t} \sum_{i=1}^t (2 \widehat{\Delta}_t^\top x_i)(y_i - x_i^\top \widehat{\beta}_{(0,t]}) - \frac{1}{n-t} \sum_{i=t+1}^n (2 \widehat{\Delta}_t^\top x_i)(y_i - x_i^\top \widehat{\beta}_{(t,n]}),
\end{align}
and define the change-point estimator as:
\begin{align}\label{eq:change estimation}
	\widehat{\eta} = \argmax_{\lfloor \zeta n \rfloor \leq t \leq \lfloor (1-\zeta) n \rfloor} \left\{ \frac{(n-t)t}{n} \right\} \widetilde{\mathcal{S}}_n(t).
\end{align}

The localization   statistic \( \widetilde{\mathcal{S}}_n(t) \)  is derived by removing the user-generated random variables \( \{ \xi_i \}_{i=1}^n \) from the original QF-CUSUM statistic \( \mathcal{S}_n(t) \). The following theorem establishes the consistency of the of our proposed estimator $\widehat \eta$.

 \begin{theorem}
 \label{theorem:localization bound}
 For any generic interval $ \I \subset (0, n]$, let  $\widehat \beta_\I $ be defined as in \ref{eq:interval lasso} with  $\lambda= C_\lambda \sqrt {\log(p)}$ for sufficiently large constant $C_\lambda$.  Suppose \Cref{assume:beta} and \ref{assume:order_betamixing} hold and that $\eta \in \{ \lfloor \zeta n \rfloor  , \ldots , \lfloor (1- \zeta) n \rfloor \}   $. Then with probability at least $ 1-n^{-2}$, it holds that 
$$ |\widehat \eta -\eta| \le C\frac{\s\log(p)}{\kappa^2 },$$ 
 where $C$ is a sufficiently large constant independent of $n$.
 \end{theorem}
Note that in \Cref{theorem:localization bound}, the localization error bound    is derived under conditions of temporal dependence and heavy-tailed distributions.  It matches the sharpest localization error bound in the literature (see \cite{rinaldo2021localizing}), which was attained in the setting of temporal independence.

\section{Numerical Studies}\label{sec:numerical}
In this section, we conduct extensive numerical experiments to investigate the performance of the proposed  QF-CUSUM for change-point testing in high-dimensional linear models. \Cref{subsec_size} examines the finite-sample size of the test and \Cref{subsec_power} studies the power performance of the test under single and multiple change-points. \Cref{subsec_realdata} presents a real data application on macroeconomics to further illustrate the practical utility of the proposed test.

To our best knowledge, there is no existing change-point test designed for high-dimensional linear models available in the literature. Thus, we instead compare QF-CUSUM with three change-point estimation algorithms for high-dimensional linear models: BSA in \cite{Leonardi2016}, VPC in \cite{Wang2021a} and SGL in \cite{Zhang2015a}. We emphasize that the comparison is for reference purposes only, as QF-CUSUM focuses on testing while BSA, VPC and SGL are designed for estimation.

BSA estimates change-points via the minimization of an $l_0$-penalized information criteria, where the minimizer is searched via a heuristic procedure based on binary segmentation. VPC transforms change-point estimation for high-dimensional linear models to change-point detection in the mean of a univariate time series via a projection strategy. SGL recasts change-point estimation  into a sparse group Lasso~\citep{Simon2013} problem. We remark that theoretical guarantees of the three algorithms all require temporal independence.

For all three algorithms, we keep the tuning parameters at the default setting recommended by the authors. We rule that BSA rejects $H_0$ if the set of estimated change-points is non-empty, and vice-versa. Same applies to VPC and SGL.

\textbf{Estimation of key quantities for QF-CUSUM}: To operationalize QF-CUSUM, we need to select the tuning parameter $\lambda$ of the Lasso estimation and further estimate the noise level $\sigma_\epsilon$ and sparsity level $\s$. To do so, we make the mild assumption that the first and last $\zeta$-proportion of the observations, $\{(x_i,y_i)\}_{i=1}^{\zeta n}$ and $\{(x_i,y_i)\}_{i=(1-\zeta)n}^{n}$, are stationary segments without change. As discussed before, this is a common assumption in the change-point testing literature and is also needed for the process convergence result in \Cref{theorem:main null}, see \cite{andrews1993} and references therein. For the choice of the trimming parameter $\zeta$, following the recommendation in \cite{andrews1993}, we set $\zeta=0.15$ throughout this section.

Given $\{(x_i,y_i)\}_{i=1}^{\zeta n}$, we conduct a standard 10-fold cross-validation to select tuning parameters $\lambda_{pre}$. Based on $\lambda_{pre}$, we estimate the sparsity level $\widehat{\s}_{pre}$ of $\beta^{pre}$ in $\{(x_i,y_i)\}_{i=1}^{\zeta n}$ as the number of nonzero entries in $\widehat{\beta}^{pre}$ and estimate the noise level via $\widehat{\sigma}_{\epsilon}^{pre}=\sqrt{\sum_{i=1}^{\zeta n}(y_i-x_i^\top \widehat{\beta}^{pre})^2/(\zeta n-\widehat{\s}_{pre})}$. We estimate $\widehat{\s}_{post}$ and  $\widehat{\sigma}_{\epsilon}^{post}$ for $\{(x_i,y_i)\}_{i=(1-\zeta)n}^{n}$ in the same way. Finally, we set $\lambda=(\lambda_{pre}+\lambda_{post})/2$, $\widehat{\s}=(\widehat{\s}_{pre}+\widehat{\s}_{post})/2$ and $\widehat{\sigma}_{\epsilon}=(\widehat{\sigma}_{\epsilon}^{pre}+\widehat{\sigma}_{\epsilon}^{post})/2.$ Based on \Cref{assume:order}, we set the variance level of the random error as $\sigma_\xi=\widehat{\s}\log p/\sqrt{n}\times \log\log n.$

As can be seen from \Cref{assume:order} and \Cref{theorem:main_beta}, for the validity of QF-CUSUM under $H_0$, the theoretical choice for $ \sigma_\xi $ only requires the knowledge of the upper bound of  $ \s$, though an unnecessarily large upper bound will negatively impact the power of QF-CUSUM. For high-dimensional linear models, theoretical guarantees for the estimation of $\s$ have been well studied under mild conditions in the variable selection literature, see \cite{meinshausen2006high}, \cite{wainwright2009sharp}, \cite{fan2010selective} and references therein. In addition, consistency for the estimation of the variance level $\sigma^2_\epsilon$ has also been established in the high-dimensional literature, see for example \cite{Sun2012} and \cite{Guo2019}.

\subsection{Size performance}\label{subsec_size}
We simulate the data from the high-dimensional linear model
\begin{align*}
	y_i=x_i^\top \beta^*+\epsilon_i, ~~ i=1,2,\cdots,n,
\end{align*}
where $x_i\in\mathbb{R}^p$ denotes the $p$-dimensional covariates and $\epsilon_i$ is the noise term with $\sigma^2_{\epsilon}=1$.

We vary the sample size $n$ across $\{200,400\}$ and the dimension $p$ across $\{100,200,400\}$. For the covariance matrix $\Sigma$ of $x_i$, we consider two cases: Toeplitz $\Sigma_{ij}=0.6^{|i-j|}$ or compound symmetric $\Sigma_{ij}=0.3\mathbb{I}(i\neq j)+\mathbb{I}(i=j).$ Denote $s$ as the sparsity of $\beta^*$, we vary $s$ across $\{5,10\}$. Given $s$ and $p$, define $\beta^o(i)=i/s$ for $i=1,2,\cdots,s$ and $\beta^o(i)=0$ for $s<i\leq p$. We set $\beta^*=3\beta^o/\sqrt{\beta^{o\top}\Sigma\beta^o}$ to keep the quadratic form at the same magnitude across different simulation settings.

As for the temporal dependence among $\{x_i\}_{i=1}^n$ and $\{\epsilon_i\}_{i=1}^n$, we consider three cases: independence~(Ind.), auto-regressive~(AR) dependence, and moving-average~(MA) dependence. For AR dependence, we generate $\{x_i\}_{i=1}^n$ via $x_i=0.3x_{i-1}+\sqrt{1-0.3^2}\mathbf{e}_i$ with $\mathbf{e}_i\overset{i.i.d.}{\sim} N(0,\mathbb{I}_p)$ and generate $\{\epsilon_i\}_{i=1}^n$ via $\epsilon_i=0.3\epsilon_{i-1}+\sqrt{1-0.3^2}{e}'_i$ with ${e}'_i\overset{i.i.d.}{\sim} N(0,1)$. For MA dependence, we generate $\{x_i\}_{i=1}^n$ via $x_i=(\mathbf{e}_i+0.4\mathbf{e}_{i-1})/\sqrt{1+0.4^2}$ with $\mathbf{e}_i\overset{i.i.d.}{\sim} N(0,\mathbb{I}_p)$ and generate $\{\epsilon_i\}_{i=1}^n$ via $\epsilon_i=(e'_i+0.4{e}'_{i-1})/\sqrt{1+0.4^2}$ with ${e}'_i\overset{i.i.d.}{\sim} N(0,1)$.

For each simulation setting with different $(n,p,s,\Sigma)$ and temporal dependence, we set the target size at 5\% and repeat the experiments 500 times. \Cref{tab:size} summarizes the empirical size of QF-CUSUM across different simulation settings. In general, the size of QF-CUSUM is reasonable and improves with the sample size $n$. When the sample size is small ($n=200$), a large dimension $p$ seems to cause an undersize of the test, and Toeplitz covariance exhibits more notable oversize than the CS covariance. However, these phenomena alleviate significantly when $n$ increases, suggesting the validity of our asymptotic theory. The temporal dependence does inflate the empirical size, however, the inflation is not significant especially for $n=400.$ We note that \Cref{theorem:main null} requires $\s\log(p)/\sqrt{n}\to 0$ for the convergence of QF-CUSUM to the Gaussian process, which is not always the case for the simulation settings we consider. However, the empirical size achieved here is still reasonable, indicating the robustness of QF-CUSUM.

\begin{table}[ht]
	\centering
	\begin{tabular}{lcc|cc|cc}
		\hline\hline
		&\multicolumn{2}{c|}{Ind.} & \multicolumn{2}{c|}{AR} & \multicolumn{2}{c}{MA}\\\hline 
	    $(n,p,s)~ \Large/ ~\Sigma$	& T & CS & T & CS & T & CS \\ 
		\hline
		(200,100,5) & 6.60 & 5.20 & 7.20 & 5.20 & 8.80 & 3.80 \\ 
		(200,200,5) & 3.60 & 1.60 & 7.20 & 3.80 & 4.80 & 2.20 \\ 
		(200,400,5) & 2.80 & 1.40 & 4.80 & 2.40 & 3.60 & 2.20 \\ 
		(200,100,10) & 6.00 & 6.00 & 6.80 & 6.20 & 8.20 & 5.60 \\ 
		(200,200,10) & 2.20 & 2.60 & 4.20 & 4.00 & 5.00 & 2.80 \\ 
		(200,400,10) & 2.20 & 2.20 & 1.80 & 1.60 & 1.00 & 2.00 \\ \hline
		(400,100,5) & 6.40 & 4.20 & 5.40 & 5.80 & 7.20 & 4.60 \\ 
		(400,200,5) & 4.80 & 2.80 & 6.60 & 5.60 & 7.40 & 5.40 \\ 
		(400,400,5) & 4.80 & 4.40 & 5.80 & 4.80 & 7.00 & 4.80 \\ 
		(400,100,10) & 6.60 & 4.80 & 5.60 & 5.40 & 6.60 & 5.20 \\ 
		(400,200,10) & 4.00 & 3.00 & 5.40 & 5.20 & 7.40 & 6.00 \\ 
		(400,400,10) & 4.40 & 4.40 & 5.80 & 4.20 & 5.00 & 3.60 \\ 
		\hline\hline
	\end{tabular}
	\caption{Empirical size of QF-CUSUM averaged over 500 experiments under different simulation settings. T stands for Toeplitz and CS stands for compound symmetric.} 
    \label{tab:size}
\end{table}

\Cref{tab:size_competing} summarizes the empirical size of BSA, VPC and SGL. Interestingly, the three algorithms exhibit quite different patterns. BSA almost never rejects the null hypothesis with an empirical size close to 0, which is later shown to negatively impact its ability to detect small-scale changes. VPC also under-rejects under temporal independence, but suffers from false positives for the case of temporal dependence, especially for the MA setting, where the rejection rate can be as high as 15\%. SGL suffers from over-rejection quite a bit across all simulation settings.

\begin{table}[ht]
	\centering
	\setlength{\tabcolsep}{0.4em} 
	\begin{tabular}{lcccccc|cccccc|cccccc}
		\hline\hline
		&\multicolumn{6}{c|}{Ind.} & \multicolumn{6}{c|}{AR} & \multicolumn{6}{c}{MA}\\\hline 
		&\multicolumn{2}{c|}{BSA} & \multicolumn{2}{c|}{VPC} & \multicolumn{2}{c|}{SGL} &\multicolumn{2}{c|}{BSA} & \multicolumn{2}{c|}{VPC} & \multicolumn{2}{c|}{SGL}
		&\multicolumn{2}{c|}{BSA} & \multicolumn{2}{c|}{VPC} & \multicolumn{2}{c}{SGL}\\\hline
$(n,p,s)~ \Large/ ~\Sigma$	& T & CS & T & CS & T & CS & T & CS & T & CS & T & CS & T & CS & T & CS & T & CS \\ 
\hline
		(200,100,5) & 0.0 & 0.0 & 1.2 & 0.0 & 70 & 78 & 0.0 & 0.0 & 11.8 & 8.2 & 71 & 76 & 0.0 & 0.0 & 12.6 & 8.2 & 72 & 74 \\ 
		(200,200,5) & 0.0 & 0.0 & 2.0 & 0.4 & 59 & 79 & 0.0 & 0.0 & 11.8 & 6.4 & 60 & 75 & 0.0 & 0.0 & 13.0 & 8.0 & 59 & 80 \\ 
		(200,400,5) & 0.0 & 0.0 & 1.6 & 0.4 & 75 & 76 & 0.0 & 0.0 & 9.4 & 6.2 & 75 & 74 & 0.0 & 0.0 & 12.2 & 6.0 & 79 & 79 \\ 
		(200,100,10) & 0.0 & 0.0 & 0.2 & 0.2 & 60 & 77 & 0.0 & 0.0 & 7.8 & 6.0 & 55 & 75 & 0.0 & 0.0 & 8.4 & 7.2 & 54 & 74 \\ 
		(200,200,10) & 0.0 & 0.0 & 0.4 & 0.0 & 60 & 77 & 0.0 & 0.0 & 8.4 & 5.2 & 66 & 71 & 0.0 & 0.0 & 8.6 & 5.6 & 62 & 74 \\ 
		(200,400,10) & 0.0 & 0.0 & 0.2 & 0.0 & 73 & 70 & 0.0 & 0.0 & 7.0 & 5.4 & 76 & 68 & 0.0 & 0.0 & 6.4 & 5.2 & 75 & 71 \\ \hline
		(400,100,5) & 0.2 & 0.4 & 2.4 & 0.0 & 92 & 91 & 0.0 & 0.2 & 13.4 & 7.2 & 94 & 92 & 0.2 & 0.4 & 15.2 & 11.4 & 93 & 91 \\ 
		(400,200,5) & 0.0 & 0.0 & 1.8 & 0.2 & 90 & 84 & 0.0 & 0.0 & 12.6 & 8.0 & 90 & 84 & 0.0 & 0.0 & 12.6 & 9.6 & 87 & 84 \\ 
		(400,400,5) & 0.0 & 0.0 & 1.0 & 0.0 & 81 & 82 & 0.0 & 0.0 & 12.8 & 6.2 & 81 & 81 & 0.0 & 0.0 & 15.2 & 8.2 & 82 & 85 \\ 
		(400,100,10) & 0.2 & 0.0 & 1.4 & 0.4 & 88 & 85 & 0.4 & 0.2 & 8.8 & 6.0 & 89 & 86 & 0.4 & 0.0 & 12.2 & 8.0 & 89 & 87 \\ 
		(400,200,10) & 0.0 & 0.0 & 0.2 & 0.0 & 87 & 73 & 0.0 & 0.0 & 8.2 & 5.4 & 88 & 72 & 0.0 & 0.0 & 12.8 & 5.6 & 83 & 76 \\ 
		(400,400,10) & 0.0 & 0.0 & 0.0 & 0.0 & 77 & 76 & 0.0 & 0.0 & 6.8 & 5.2 & 82 & 73 & 0.0 & 0.0 & 10.4 & 5.4 & 82 & 76 \\ 		
		\hline\hline
	\end{tabular}
	\caption{Empirical size of BSA, VPC and SGL averaged over 500 experiments under different simulation settings. T stands for Toeplitz and CS stands for compound symmetric.} 
\label{tab:size_competing}
\end{table}

\subsection{Power performance}\label{subsec_power}
In this section, we further investigate the power performance of QF-CUSUM under both single and multiple change-point settings. We further compare with BSA and VPC. We remove SGL from comparison as it is quite sensitive to false positives as shown in \Cref{subsec_size}. For the single change-point scenario, we consider
\begin{align*}
y_i=\begin{cases}
	&x_i^\top \beta^* +\epsilon_i,  \text{ if } 1\leq i\leq n/2\\
	&x_i^\top \beta^*(1+\kappa) +\epsilon_i,  \text{ if } n/2+1\leq i\leq n,
\end{cases}	
\end{align*}
and for the multiple change-point scenario, we consider
\begin{align*}
	y_i=\begin{cases}
		&x_i^\top \beta^* +\epsilon_i,  \text{ if } 1\leq i\leq n/3\\
		&x_i^\top \beta^*(1+\kappa) +\epsilon_i,  \text{ if } n/3+1\leq i\leq 2n/3,\\
		&x_i^\top \beta^* +\epsilon_i, \text{ if } 2n/3+1\leq i\leq n.
	\end{cases}	
\end{align*}
We follow the same simulation setting as in \Cref{subsec_size}. The only difference is that we vary $\kappa$ to control the change size. For the same $\kappa$, the change size across different simulation settings $(n,p,s,\Sigma)$ is the same as $\beta^*$ is normalized (see \Cref{subsec_size} for details). We vary the change size $\kappa^2$ across $\{0,0.125,0.25,0.5,0.75,1\}$ to study the power performance. Note that we reduce to the no change-point setting in \Cref{subsec_size} for $\kappa=0.$

\Cref{fig:power_ind_single}~(left column) gives the size-adjusted power curves of QF-CUSUM under the single change-point case. To conserve space, we present the result with temporal independence. \Cref{subsec:additional_num} further presents results under the AR and MA dependence, where the phenomenon observed is essentially the same, suggesting the robustness of QF-CUSUM to temporal dependence. In general, with the same signal $\kappa^2$, the power of QF-CUSUM decreases as the dimension $p$ and sparsity $s$ grow and increases as the sample size $n$ increases. In addition, changes under the CS covariance is more difficult to be detected than the ones under the Toeplitz covariance.

For comparison, \Cref{fig:power_ind_single}~(right column) gives the power curves of BSA and VPC under the same simulation setting. To conserve space, we only provide the result for $n=200$ as the result for $n=400$ is similar. Compared to QF-CUSUM, VPC provides similar~(and for some cases slightly better) power performance under the single change-point setting. On the other hand, BSA seems to suffer from power loss when the change size is small and is notably sensitive to the dimension $p.$

\Cref{fig:power_ind_multiple}~(left column) gives the size-adjusted power curves of QF-CUSUM for the case of multiple change-points. The phenomenon is similar to the one in \Cref{fig:power_ind_single}. Compared to \Cref{fig:power_ind_single}, it can be seen that QF-CUSUM experiences power loss due to the presence of non-monotonic change. However, QF-CUSUM is still able to detect the structural breaks for strong signals, which can be seen as numerical evidence for \Cref{corollary:iid power}.

For comparison, \Cref{fig:power_ind_multiple}~(right column) gives the power curves of BSA and VPC under the same simulation setting. Compared to QF-CUSUM, VPC provides similar but slightly worse power performance, especially when the change size is small. BSA again suffers from power loss and is notably sensitive to the dimension $p.$

\begin{figure}
	\centering
	\vspace{-0.8cm}
	\begin{subfigure}[b]{0.4\textwidth}
		\centering
		\includegraphics[width=\textwidth, angle=270]{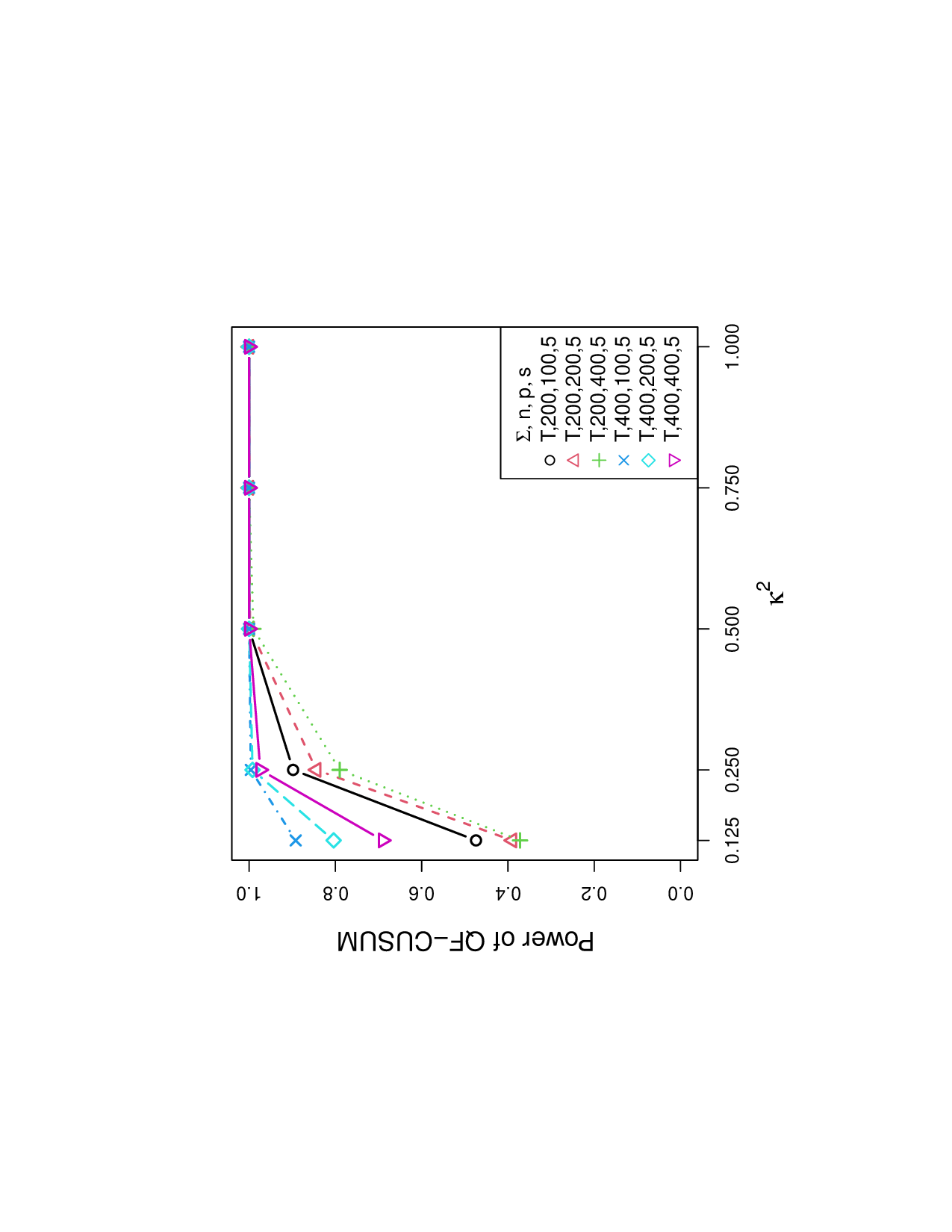}
	\end{subfigure}
	\hspace{-0.1cm} 	\vspace{-0.8cm}
	\begin{subfigure}[b]{0.4\textwidth}
		\centering
		\includegraphics[width=\textwidth, angle=270]{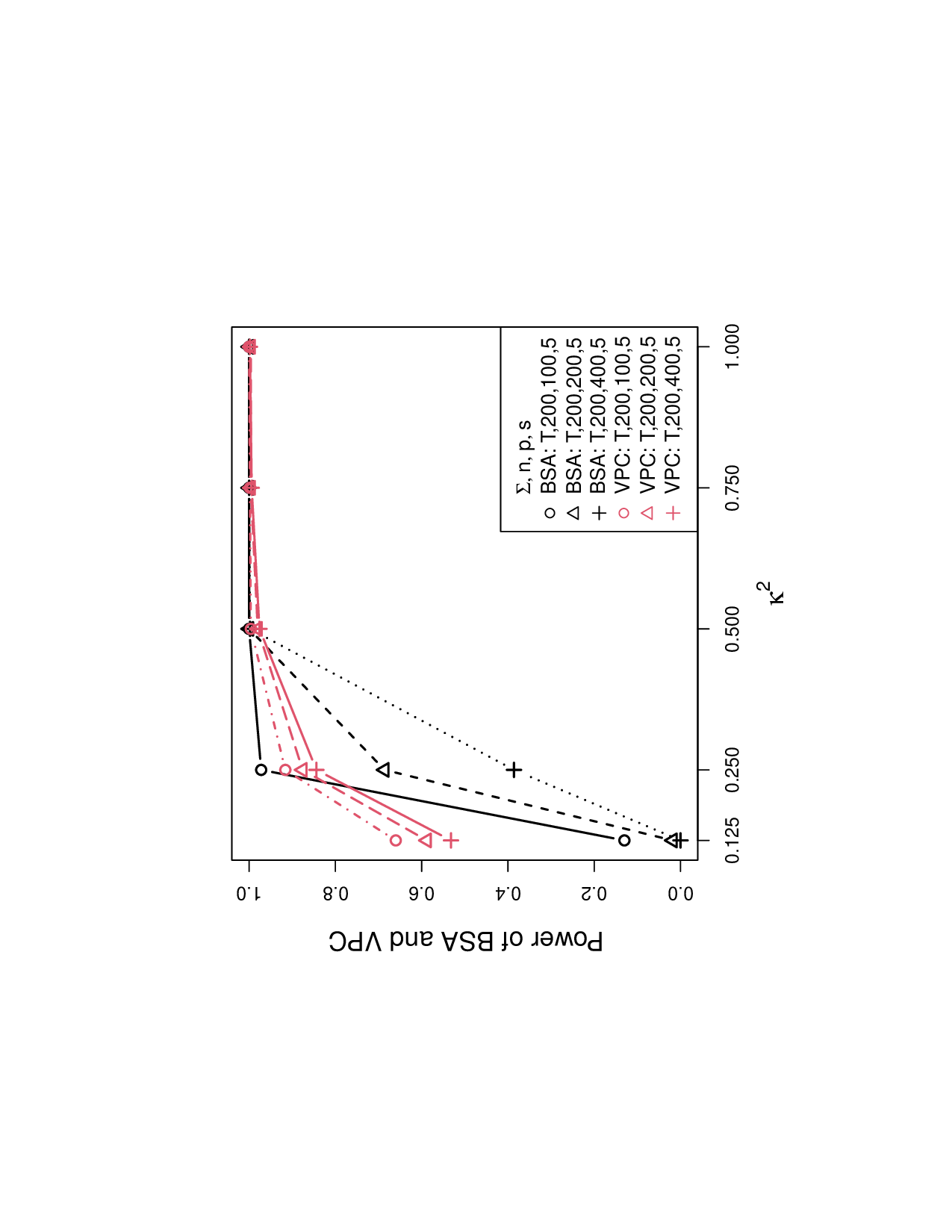}
	\end{subfigure}
	\vspace{-0.8cm}
	\begin{subfigure}[b]{0.4\textwidth}
		\centering
		\includegraphics[width=\textwidth, angle=270]{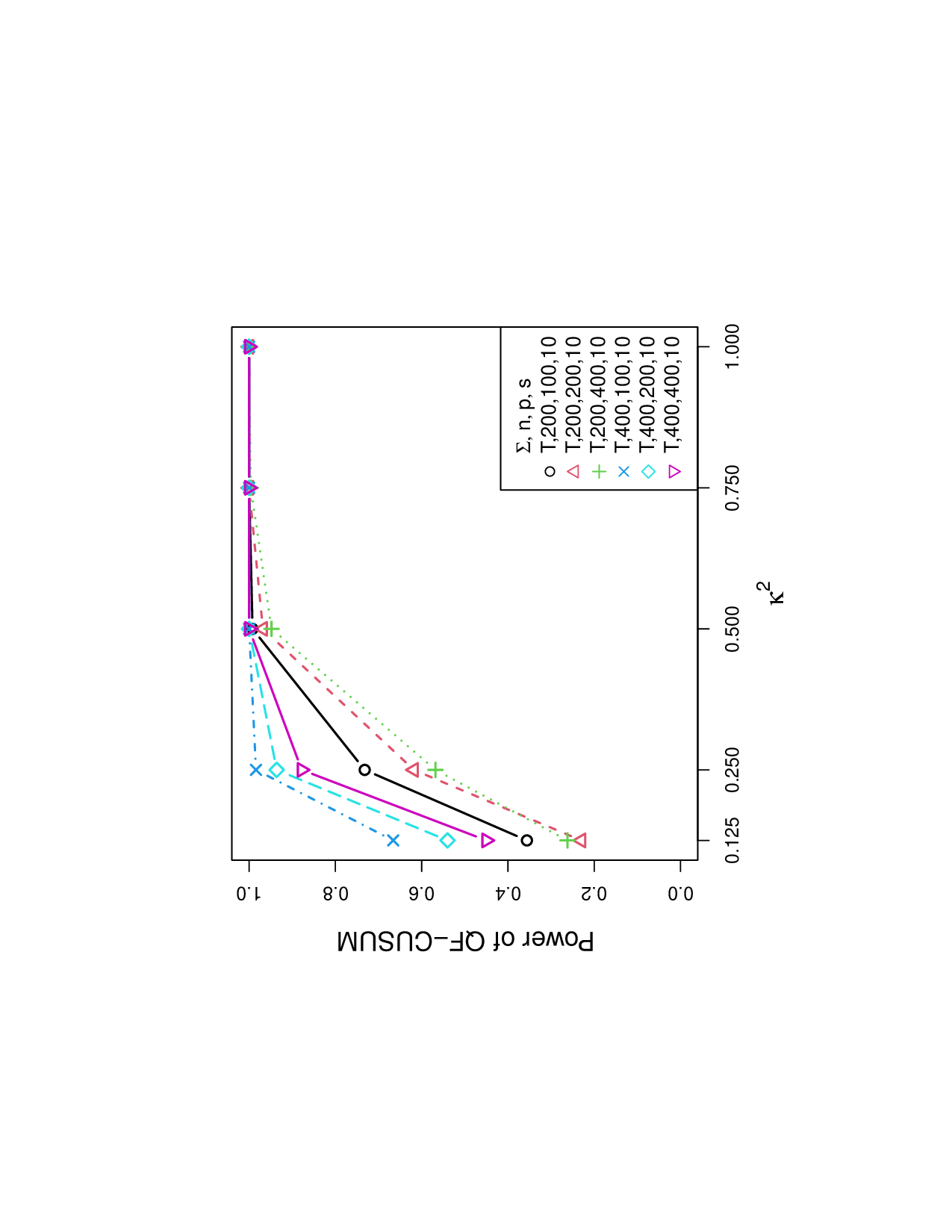}
	\end{subfigure}
	\hspace{-0.1cm}
	\begin{subfigure}[b]{0.4\textwidth}
		\centering
		\includegraphics[width=\textwidth, angle=270]{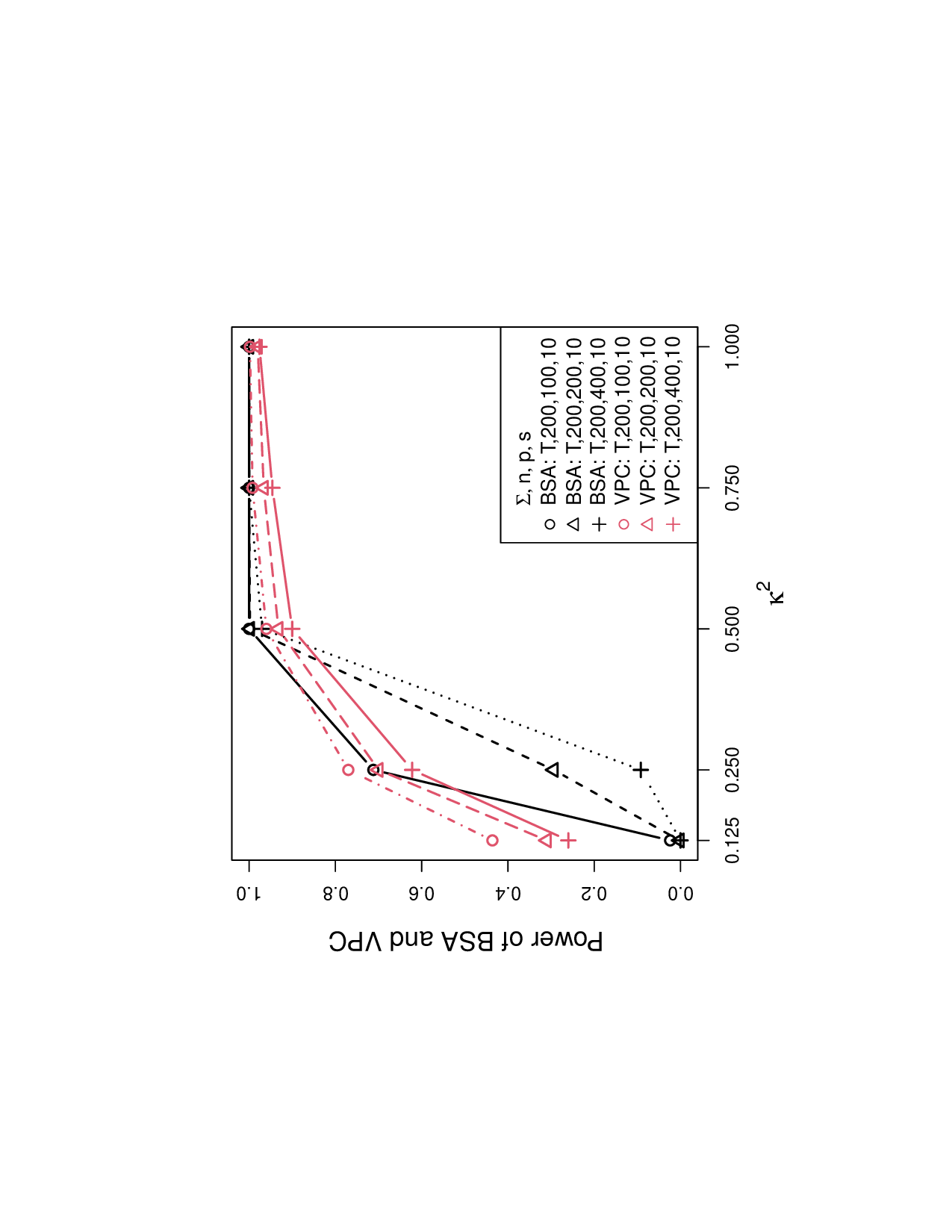}
	\end{subfigure}
	\vspace{-0.8cm}
	\begin{subfigure}[b]{0.4\textwidth}
		\centering
		\includegraphics[width=\textwidth, angle=270]{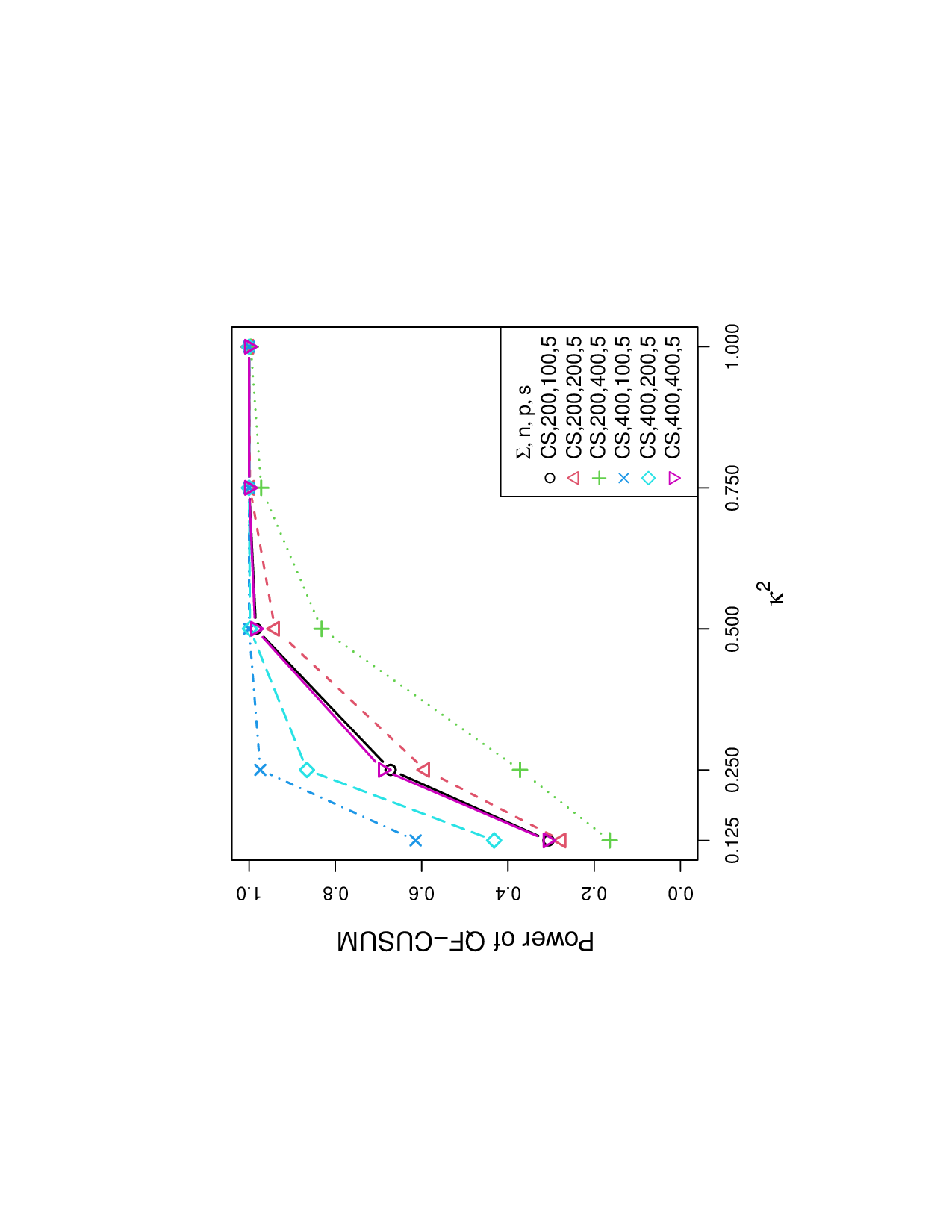}
	\end{subfigure}
	\hspace{-0.1cm}
	\begin{subfigure}[b]{0.4\textwidth}
		\centering
		\includegraphics[width=\textwidth, angle=270]{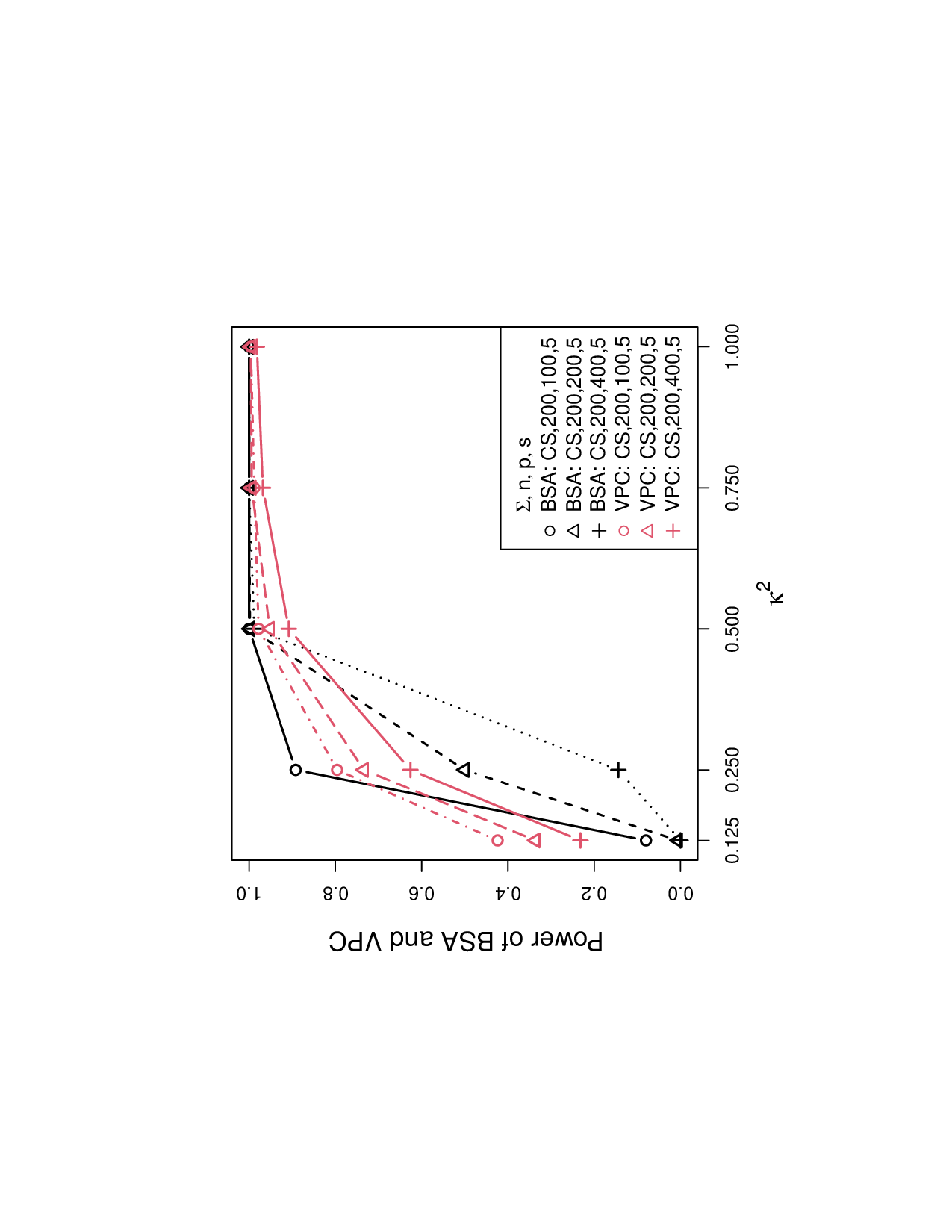}
	\end{subfigure}
	\vspace{-0.2cm}
	\begin{subfigure}[b]{0.4\textwidth}
		\centering
		\includegraphics[width=\textwidth, angle=270]{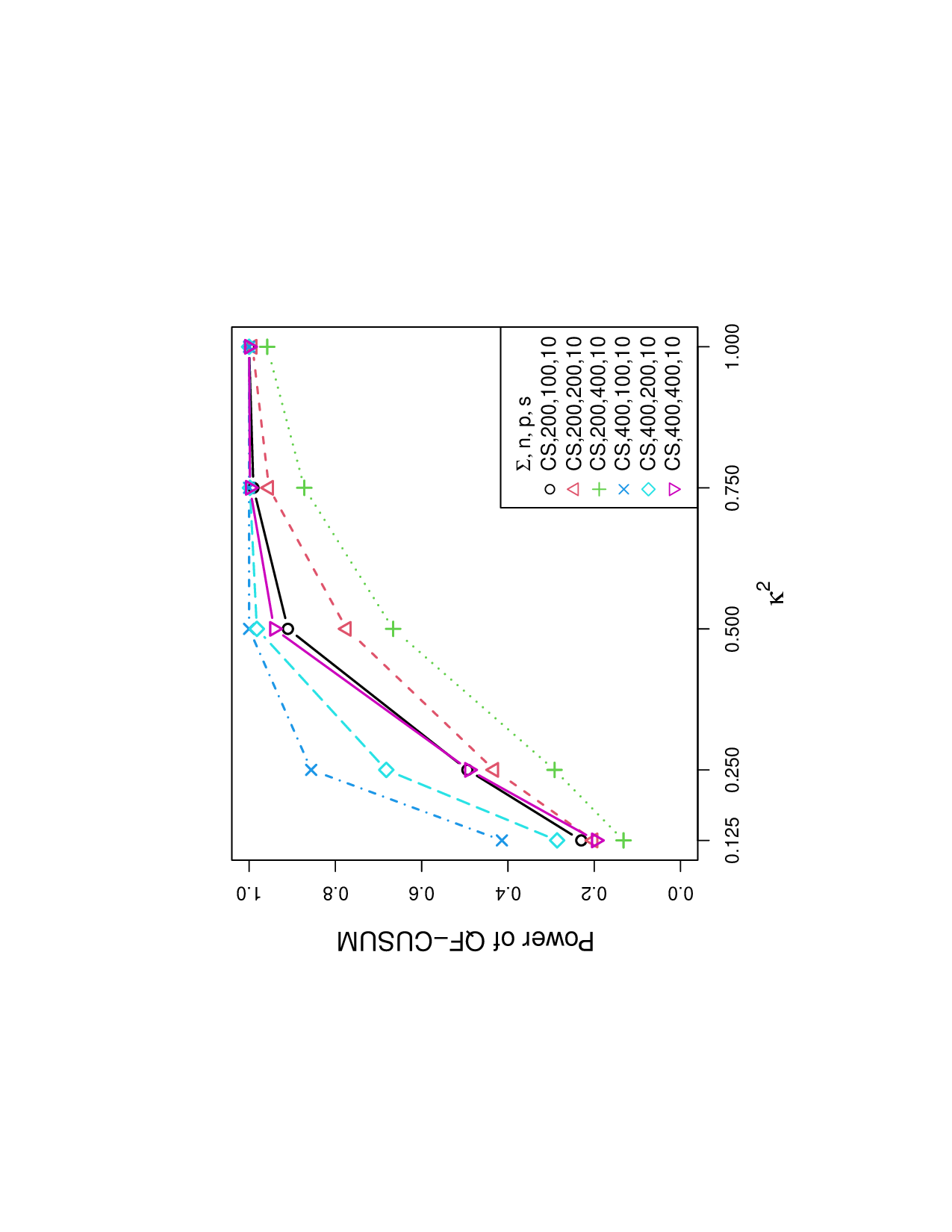}
	\end{subfigure}
	\hspace{-0.1cm}
	\begin{subfigure}[b]{0.4\textwidth}
		\centering
		\includegraphics[width=\textwidth, angle=270]{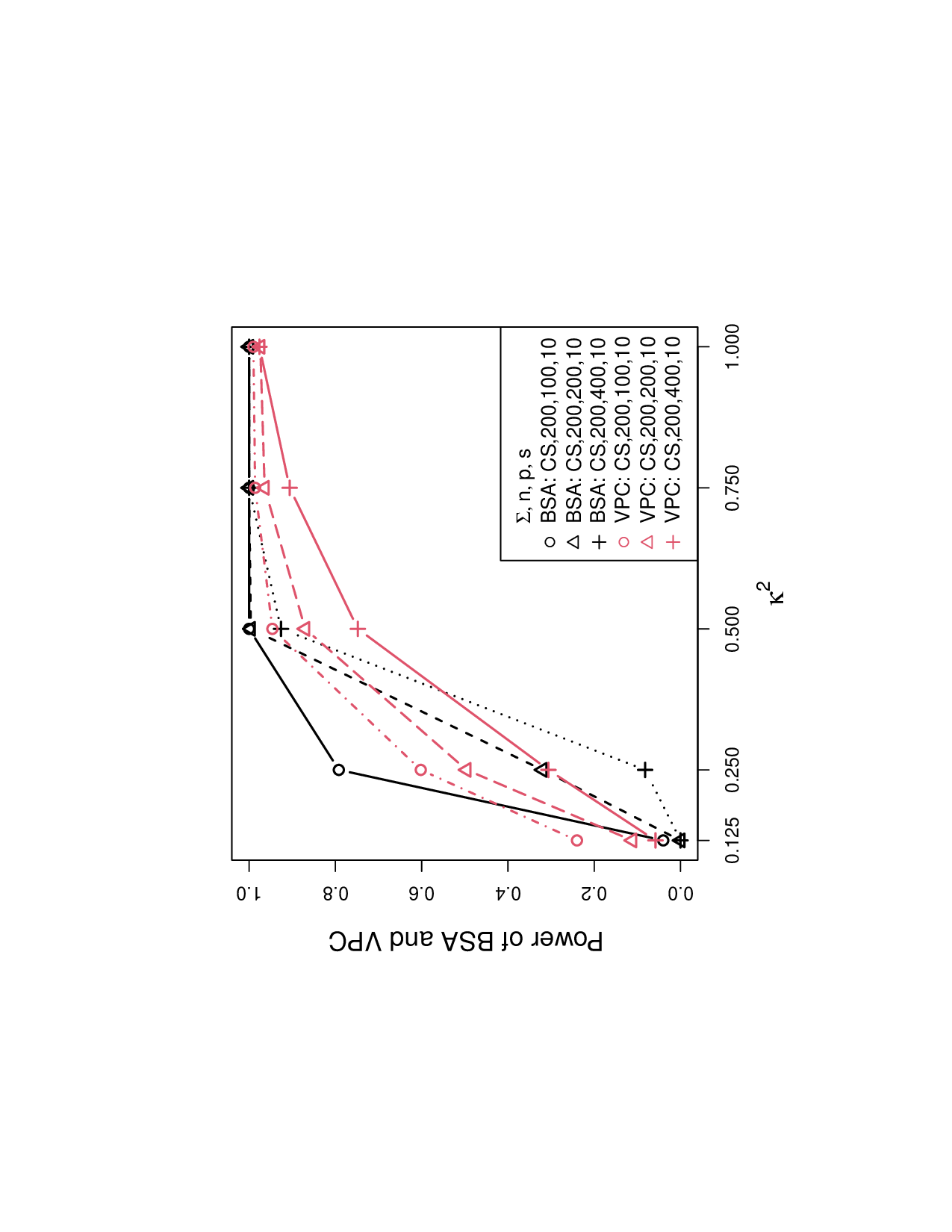}
	\end{subfigure}	
	\caption{Power performance of QF-CUSUM~(left column), BSA and VPC~(right column) for different simulation settings under the single change-point case with temporal independence.}
	\label{fig:power_ind_single}
\end{figure}

\begin{figure}
	\centering
	\vspace{-0.8cm}
	\begin{subfigure}[b]{0.4\textwidth}
		\centering
		\includegraphics[width=\textwidth, angle=270]{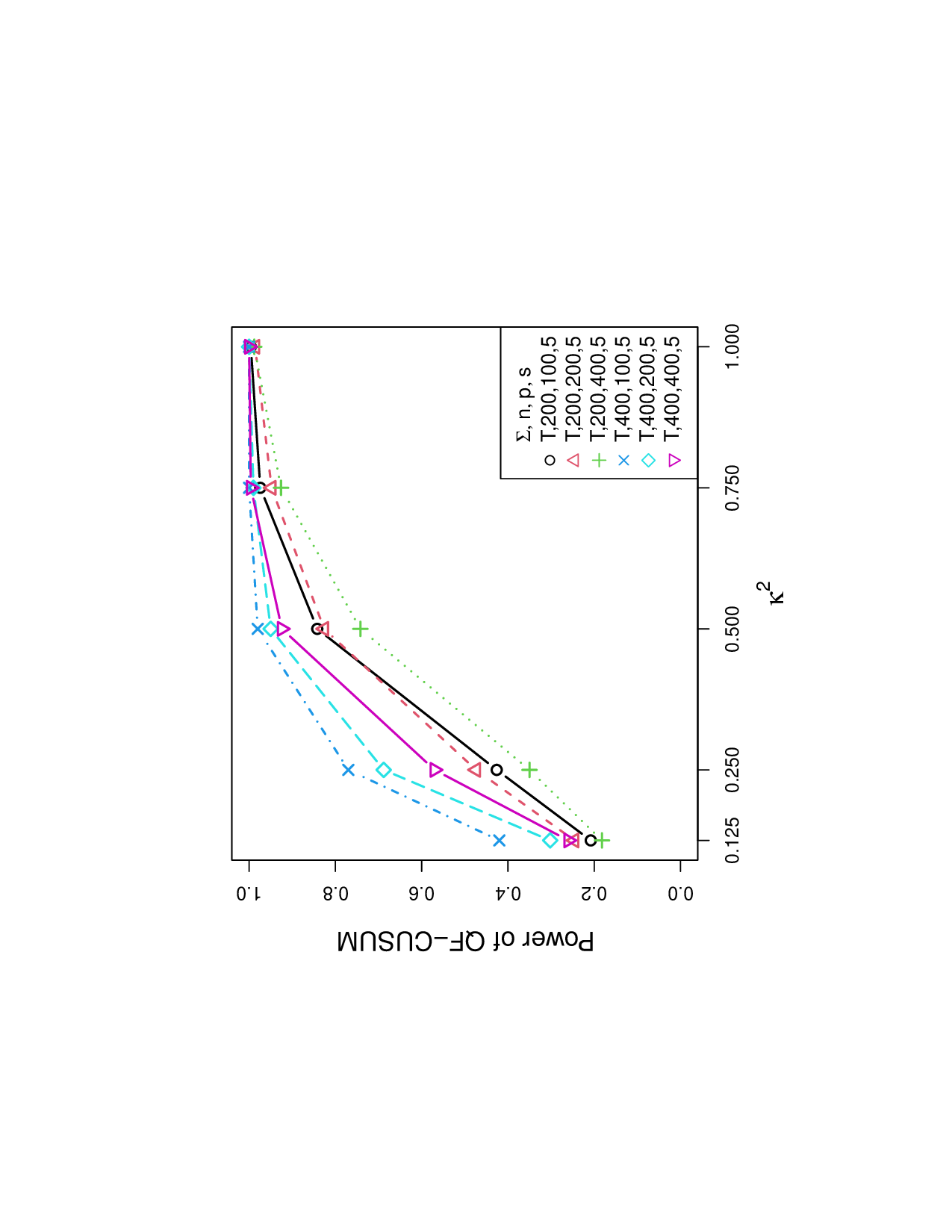}
	\end{subfigure}
	\hspace{-0.1cm} 	\vspace{-0.8cm}
	\begin{subfigure}[b]{0.4\textwidth}
		\centering
		\includegraphics[width=\textwidth, angle=270]{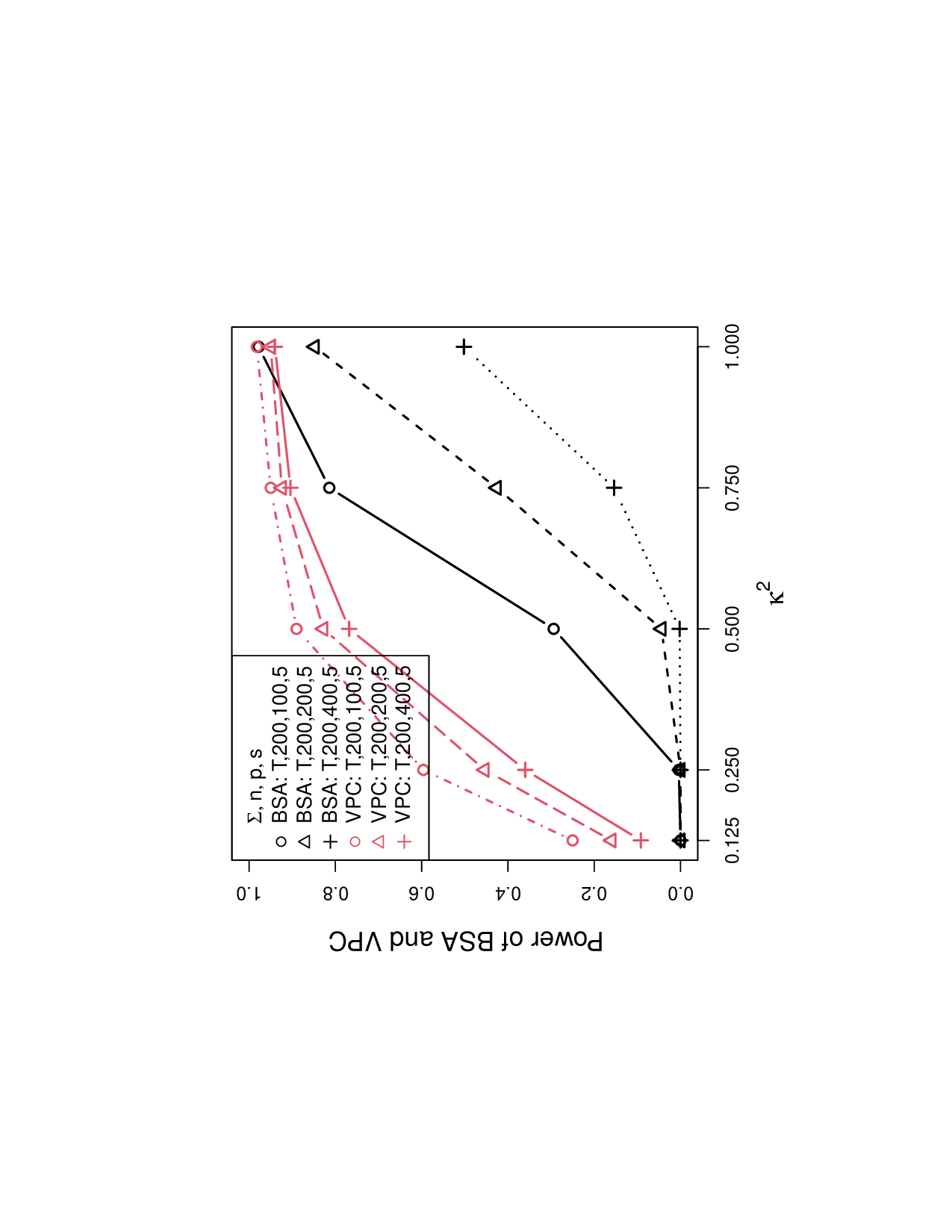}
	\end{subfigure}
	\vspace{-0.8cm}
	\begin{subfigure}[b]{0.4\textwidth}
		\centering
		\includegraphics[width=\textwidth, angle=270]{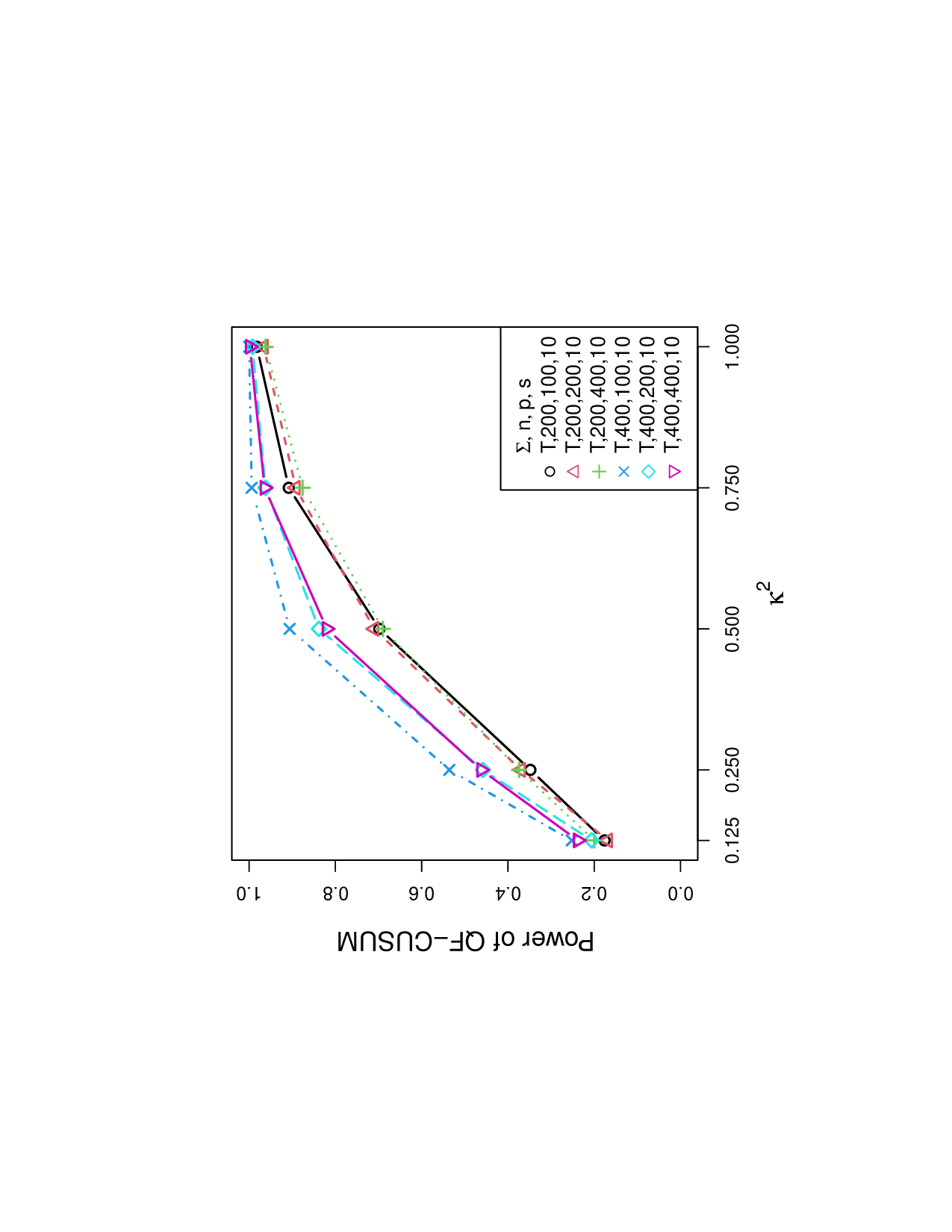}
	\end{subfigure}
	\hspace{-0.1cm}
	\begin{subfigure}[b]{0.4\textwidth}
		\centering
		\includegraphics[width=\textwidth, angle=270]{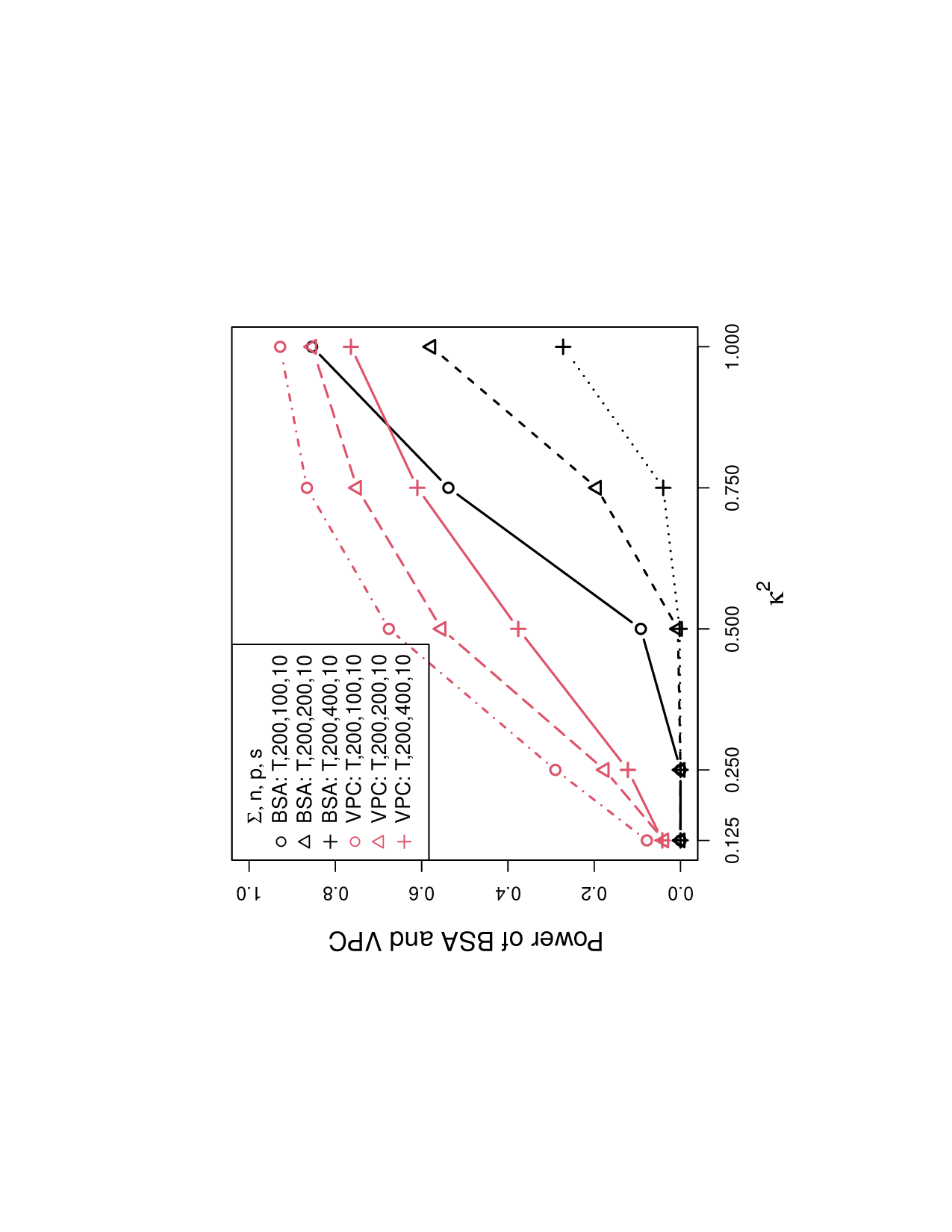}
	\end{subfigure}
	\vspace{-0.8cm}
	\begin{subfigure}[b]{0.4\textwidth}
		\centering
		\includegraphics[width=\textwidth, angle=270]{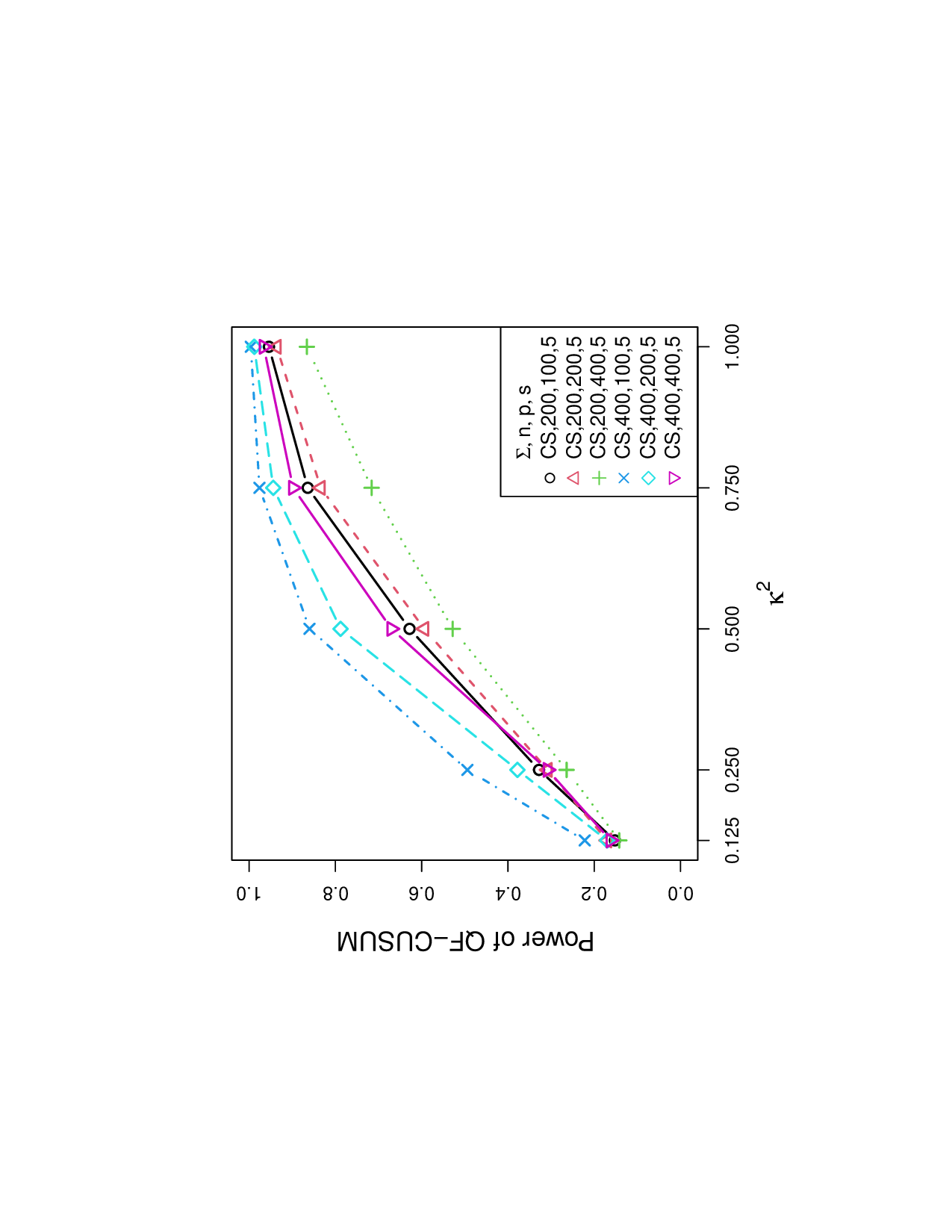}
	\end{subfigure}
	\hspace{-0.1cm}
	\begin{subfigure}[b]{0.4\textwidth}
		\centering
		\includegraphics[width=\textwidth, angle=270]{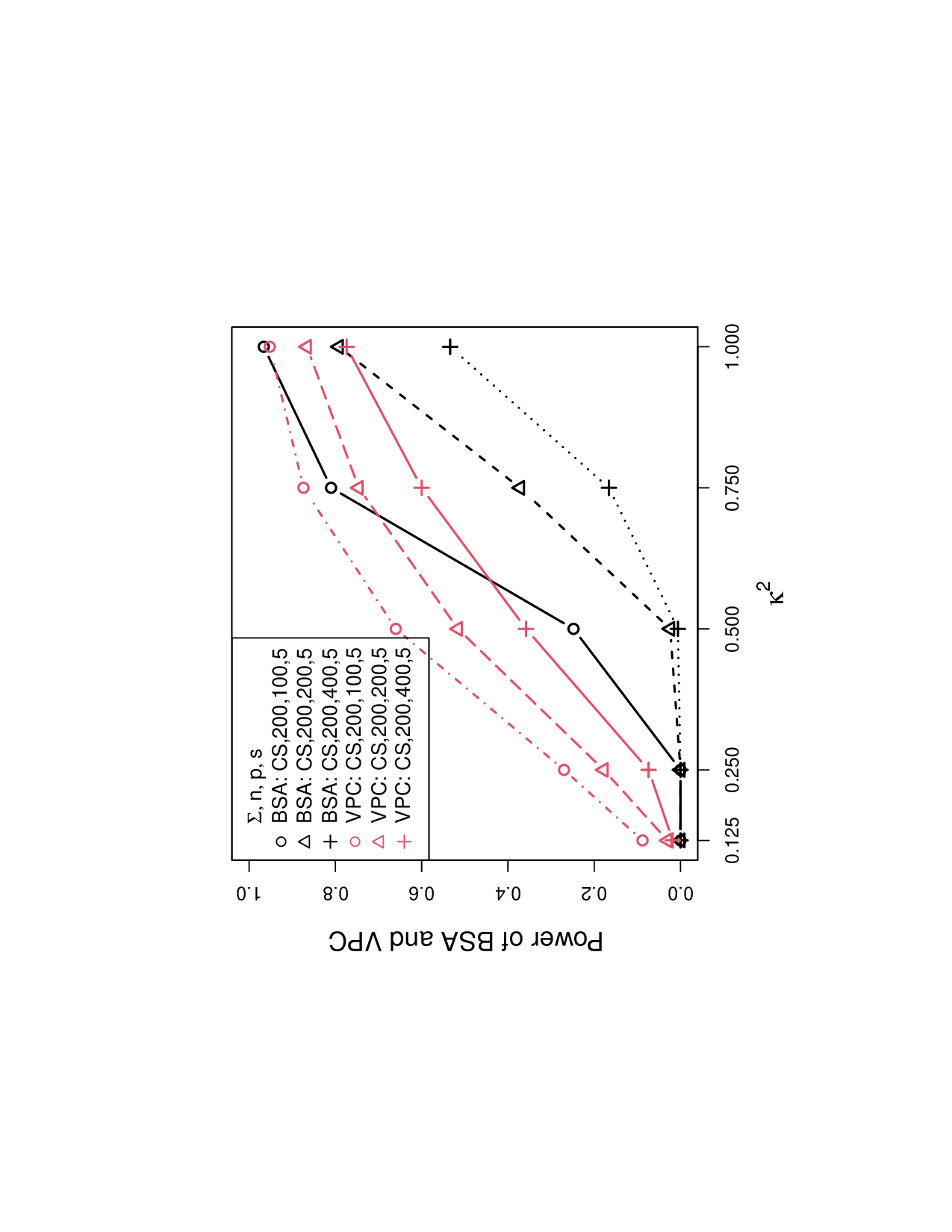}
	\end{subfigure}
	\vspace{-0.2cm}
	\begin{subfigure}[b]{0.4\textwidth}
		\centering
		\includegraphics[width=\textwidth, angle=270]{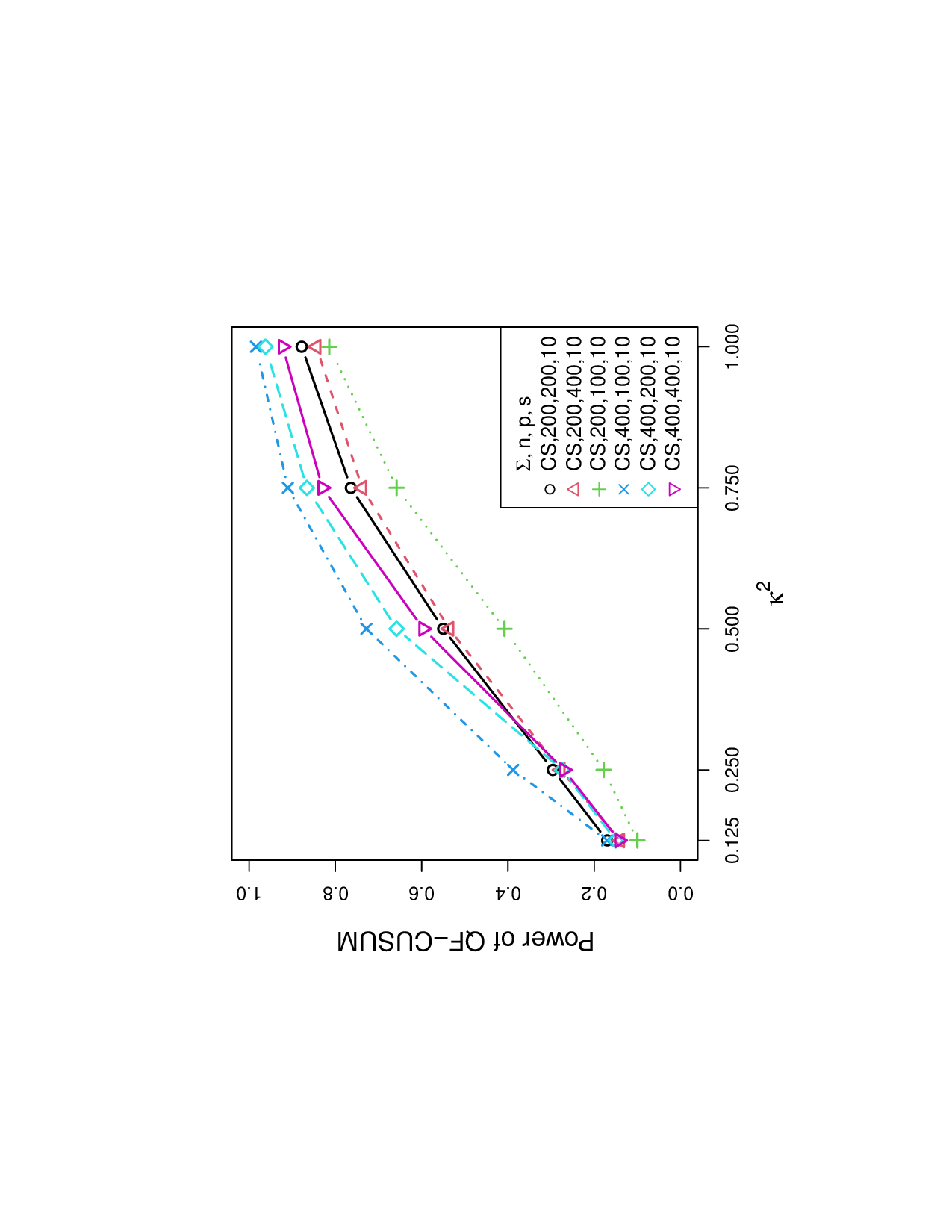}
	\end{subfigure}
	\hspace{-0.1cm}
	\begin{subfigure}[b]{0.4\textwidth}
		\centering
		\includegraphics[width=\textwidth, angle=270]{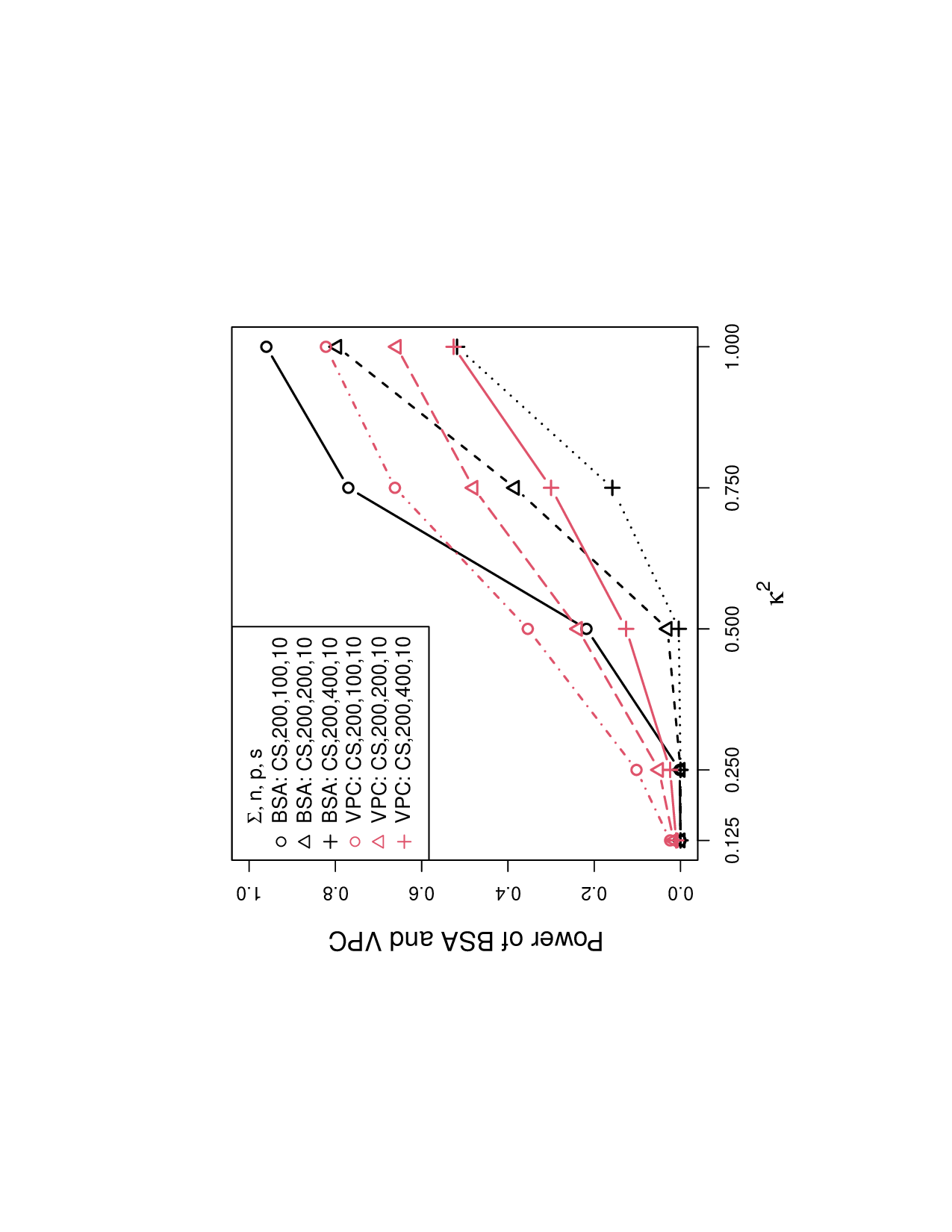}
	\end{subfigure}	
	\caption{Power performance of QF-CUSUM~(left column), BSA and VPC~(right column) for different simulation settings under the multiple change-point case with temporal independence.}
	\label{fig:power_ind_multiple}
\end{figure}

\textbf{Localization result}: QF-CUSUM is proposed for change-point testing, i.e.\ to detect the existence of change-points. On the other hand, after the null hypothesis is rejected, it is possible to further use the test statistic to estimate the change-point location. Though this is not the focus of our paper, we give a brief numerical illustration here for the case of single change-point estimation. Specifically, once QF-CUSUM rejects $H_0$, we estimate the location of the change-point as
\begin{align}\label{eq:cp_loc_est}
	\widehat{\eta}=\max_{t=\lfloor n\zeta\rfloor, \lfloor n\zeta\rfloor+1, \cdots, \lfloor n(1-\zeta)\rfloor}\widetilde{\T}_n(t),
\end{align}
where $\widetilde{\T}_n(t)$ equals to $\T_n(t)$ defined in \eqref{eq:qf_cusum} with $\xi_i\equiv 0$ for all $i$. We remove the randomized error $\xi$ for change-point localization as it is mainly introduced to ensure a pivotal and non-degenerate asymptotic distribution under $H_0$ but does not play a role under $H_a$.

For illustration, we focus on the simulation setting with temporal independence and Toeplitz covariance matrix, and set $\s=5$. We vary the sample size $n$ across $\{200,400\}$, the dimension $p$ across $\{100,200,400\}$ and vary the change size $\kappa^2$ across $\{0.25,0.5,1\}$. \Cref{fig:power_ind_loc} provides the kernel density estimation for $\widehat{\eta}/n$ based on 500 experiments for each simulation setting. As can be seen, $\widehat{\eta}$ centers around the true change-point and its accuracy improves as $n$ increases and is relatively robust to the dimension $p.$

\begin{figure}
	\centering
	\includegraphics[angle=270, width=0.9\textwidth]{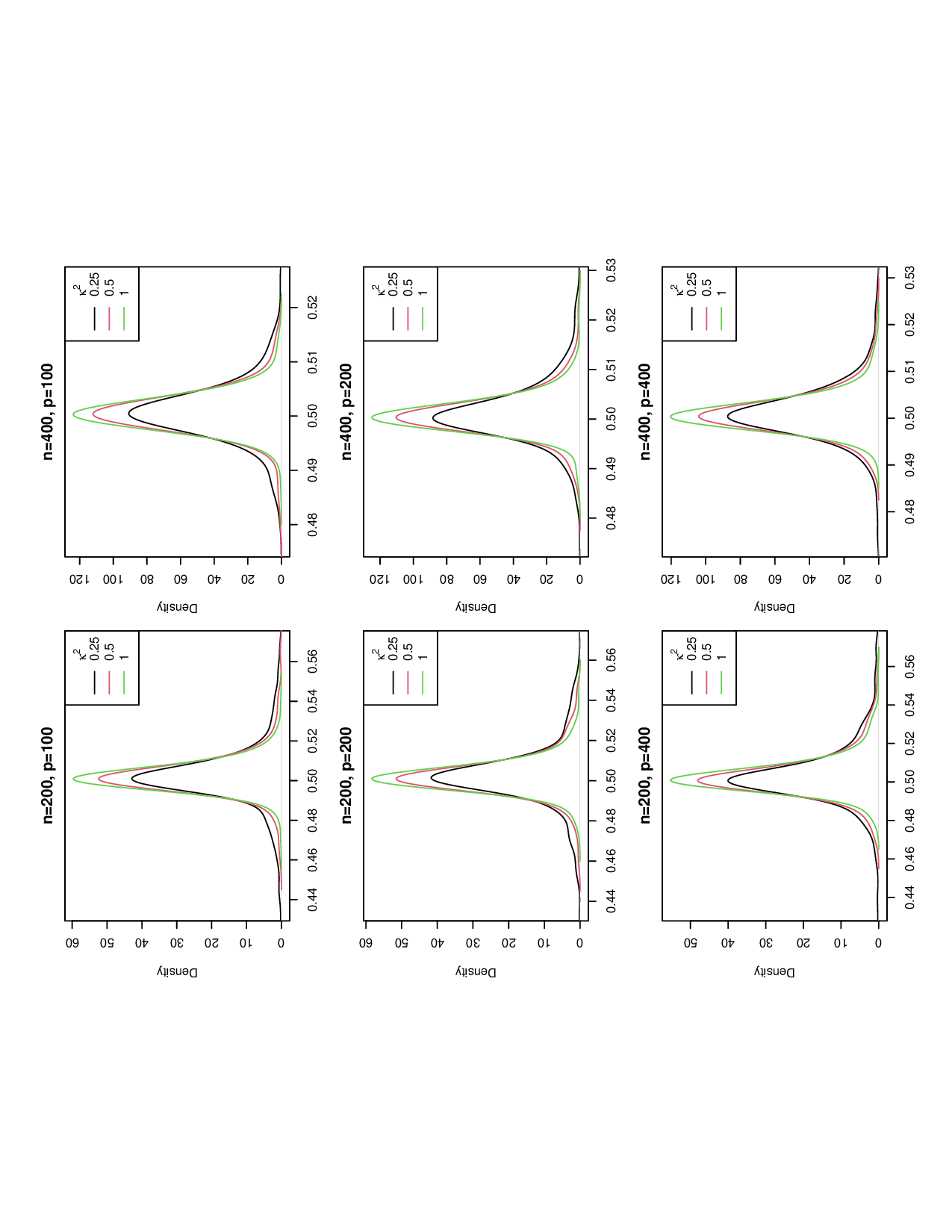}
	\caption{Localization performance of QF-CUSUM for difference simulation settings under the single change-point case with temporal independence and $\Sigma=$T and $s=5$.}
	\label{fig:power_ind_loc}
\end{figure}

\subsection{Real data application}\label{subsec_realdata}
In this section, we apply QF-CUSUM to test if there is any structural break in the relationship between the monthly growth rate of the US industrial production~(IP) index, an important indicator of macroeconomic activity, and 127 other macroeconomic variables from Federal Reserve Economic Database~(FRED-MD)\footnote{The dataset is publicly available at \url{https://research.stlouisfed.org/econ/mccracken/fred-databases}.}~\citep{McCracken2016}. The empirical analysis in \cite{He2022} suggests that a high-dimensional linear model based on the 127 predictors are overall significant for forecasting IP. Here, we further test if there is any change in the high-dimensional linear model.

Specifically, our response variable is $y_t=\log(\text{IP}_t/\text{IP}_{t-1})\times 100$, where $\text{IP}_t$ denotes the US industrial production index for the month $t$. In other words, $y_t$ measures the monthly growth rate of IP~(in percentage scale). The high-dimensional covariate includes 127 macroeconomic variables recorded for the month $t-1.$ We transform the raw data, i.e.\ the response and each individual covariate, into stationary time series and remove outliers using the MATLAB codes provided on the FRED-MD website, see also \cite{McCracken2016} for more details. We focus our analysis on the time period from June 2005 to March 2022, which includes $n=200$ observations.

The implementation of QF-CUSUM follows the same setting as in Sections \ref{subsec_size}-\ref{subsec_power}. The only difference is that we set the trimming parameter $\zeta=24/200=0.12$ instead of $0.15$ to make the choice more interpretable~(i.e.\ 2 years). The result based on $\zeta=0.15$ is essentially the same. \Cref{fig:realdata}~(left) plots the QF-CUSUM statistic $\T_n(t)$ computed over the sample, where the null hypothesis of no change-point is clearly rejected at the 5\% nominal level. \Cref{fig:realdata}~(right) further plots the statistic $\widetilde{\T}_n(t)$ defined in \eqref{eq:cp_loc_est}, which gives an estimated change-point at Oct.\ 2019, close to the beginning of the Covid-19 pandemic. Note that the two statistics $\T_n(t)$ and $\widetilde{\T}_n(t)$ are similar to each other, indicating that the randomized error $\{\xi_i\}_{i=1}^n$ is rather negligible compared to the change size, which thus suggests the robustness of our finding.

We further implement BSA, VPC and SGL for comparison. BSA detects no change-point. VPC estimates two change-points, one at Jul.\ 2009 and one at May 2019. SGL estimates three change-points, Oct.\ 2005, Jun.\ 2020 and Apr.\ 2021. These results provide additional evidence that the Covid-19 pandemic seems to alter the relationship between the US industrial production index and the macroeconomic predictors considered. 

\begin{figure}
	\centering
	\hspace{-2cm}
	\includegraphics[angle=270, width=1\textwidth]{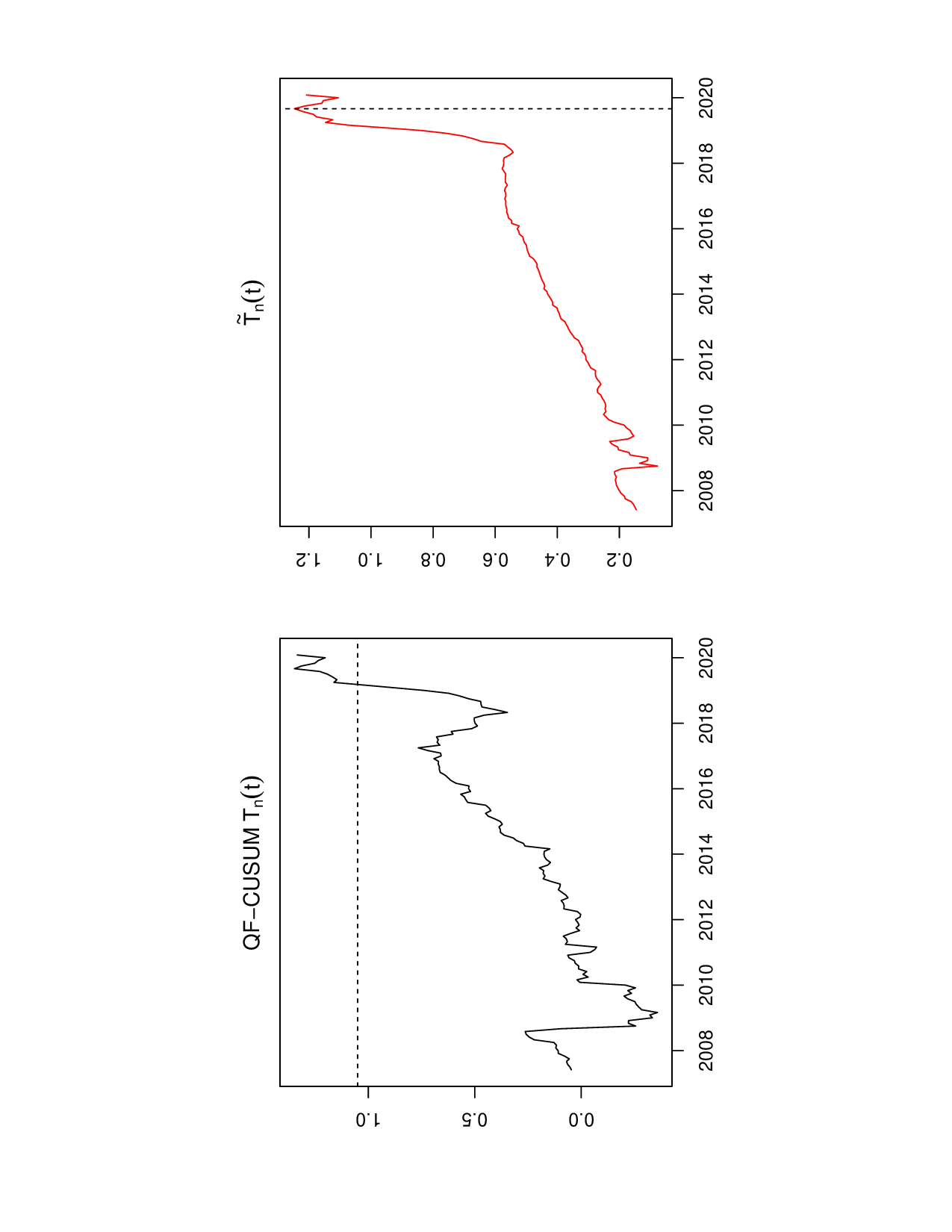}
	\caption{Left: The QF-CUSUM statistic $\T_n(t)$ computed on the FRED-MD data. The horizontal dashed line marks the 95\% critical value of QF-CUSUM. Right: The statistic $\widetilde{\T}_n(t)$ defined in \eqref{eq:cp_loc_est}. The vertical dashed line marks October 2019.}
	\label{fig:realdata}
\end{figure}

\section{Conclusion}\label{sec:conclusion}
In this paper, we study the problem of change-point testing for high-dimensional linear models. We propose QF-CUSUM, a quadratic-form based CUSUM test to inspect the stability of the regression coefficients in a high-dimensional linear model. QF-CUSUM can control the asymptotic type-I error at any desired level and is theoretically sound for temporally dependent observations. Furthermore, QF-CUSUM achieves the optimal minimax lower bound of the detection boundary for a wide class of high-dimensional linear models. Through extensive numerical experiments and a real data application in macroeconomics, we demonstrate the effectiveness and utility of QF-CUSUM. The focus of this paper is change-point testing. As for future research, a potential direction is to further investigate the theoretical properties of the change-point estimator based on QF-CUSUM, such as its consistency and optimal convergence rate.
 
\bibliographystyle{apalike}
\bibliography{reference}

\clearpage
\appendix
\section*{\LARGE Appendix}
Throughout the appendix, we  assume that $p\ge  n^{\alpha} $ for some $\alpha>0$. It follows that
$$ \log(pn) = O(  \log(p )). $$
This is a  convenience assumption commonly used in the high-dimensional statistical literature.  
\\
\\
In the appendix, we first justify \Cref{theorem:main_beta} in the i.i.d. sub-Gaussian setting. Then we use this framework to justify  \Cref{theorem:main_beta} in the beta-mixing sub-Weibull setting. 
\section{Main Results related to i.i.d. sub-Gaussian setting}
 Consider the following assumptions.

\begin{assumption}\label{assume: model assumption}
The observations $\{(x_i,y_i)\}_{i=1}^n$ follow model \eqref{eq:model} with independently and identically distributed covariates $\{x_i\}_{i=1}^n$ and random noise $\{\epsilon_i\}_{i=1}^n.$ 
\\  
 {\bf a.} (Sub-Gaussian) The random  covariate  $x_i$ is a p-dimensional sub-Gaussian random vector with  $\mathbb E(x_i)=0  $ and  $  \| x_i\|_{\psi_2 } =K_X $.  The random  noise $\epsilon_i$ is a sub-Gaussian random variable independent of $x_i$ with  $\mathbb E(\epsilon_i)=0$, $ \text{Var}(\epsilon_i ) = \sigma_\epsilon^2$ and  $  \| \epsilon_i\|_{\psi_2 } =K_\epsilon $.   Here, $K_X$ and $K_\epsilon$ are absolute constants $<\infty$.
\\
	{\bf b.} (Eigenvalue)   Denote $\Sigma=\text{Cov}(x_i)$, there exist  absolute constant $c_x$ and $C_x$ such that the minimal and maximal eigenvalues of $\Sigma$ satisfy 
	  $ \Lambda_{\min}   (\Sigma)\ge c_x >0  $   and $\Lambda_{\max} (\Sigma ) \le C_x < \infty .   $ 
\\	  
	{\bf c.} (Sparsity) For each $i=1,\ldots, n$, there exists a support set $S_i \subseteq\{1, \ldots, p \}$  such that 
	  $$  \beta^* _ { i,j} =0 \text{ for all } j \not \in S_i.$$
	  In addition, there exists a sparsity parameter $1\leq \s\leq p$ such that the cardinality of the support set satisfies $|S_i|\le \s$ for all $i=1,2,\cdots, n.$
\\
	{\bf d.} (Infill) Under $H_a$, there exist a fixed set of (relative) change-points 
	$ 0<  \eta_1^* < \eta_2^* < \ldots <\eta_K^* <1 $ 
	such that $\eta_k=\lfloor n\eta_k^*\rfloor$ for $k=1,\cdots,K.$
\\
	{\bf e.} (SNR) There exists a sufficiently large absolute constant $C_{snr}$ such that   $n \ge C_{snr}\s\log(p) $.
\end{assumption}

\begin{assumption}\label{assume:order}
	We have that $$\frac{\s\log p  }{\sqrt n } \to 0 \text{ and } \sigma_\xi= A_n\cdot \frac{ \s\log p}{\sqrt{n}}$$ for some diverging sequence $A_n \to \infty.$ 
\end{assumption}     
  
Our goal is to show the following theorem, which can be considered as a simplified version of  \Cref{theorem:main_beta}  in
 the i.i.d. sub-Gaussian setting.

\begin{theorem} \label{theorem:main null}
	Let $\zeta>0$ be any fixed constant in $(0,1/2)$. Suppose \Cref{assume: model assumption} and \Cref{assume:order} hold, and $\lambda = C_\lambda \sqrt { \log p }$ for some sufficiently large constant $C_\lambda $. Under $H_0$, we have that
	$$\T_n(\lfloor nr \rfloor) \Rightarrow \mathcal{G}(r), \text{ over } r \in [\zeta, 1-\zeta],$$
	where $\mathcal G(r)=[B(r)-rB(1)]/\sqrt{r(1-r)}$ for $r\in (0,1)$ and $B(\cdot)$ is the standard Brownian motion. Furthermore, we have
	$$\max_{t=\lfloor n\zeta\rfloor, \lfloor n\zeta\rfloor+1, \cdots, \lfloor n(1-\zeta)\rfloor}\T_n(t) \overset{d}{\to} \sup_{r\in [\zeta,1-\zeta]}\mathcal{G}(r).$$
\end{theorem}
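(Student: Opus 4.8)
The plan is to show that under $H_0$ the statistic $\T_n(\lfloor nr\rfloor)$ equals, uniformly in $r\in[\zeta,1-\zeta]$, a classical CUSUM process built from the i.i.d.\ sequence $\{\epsilon_i\xi_i\}$ plus a negligible remainder, and then to combine Donsker's invariance principle with the continuous mapping theorem.

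\textbf{Step 1 (Decomposition).} Under $H_0$, substitute $y_i=x_i^\top\beta^*+\epsilon_i$ into $\mathcal{S}_n(t)$ and use $\Delta_t^*=0$, so $\widehat\Delta_t=(\widehat\beta_{(0,t]}-\beta^*)-(\widehat\beta_{(t,n]}-\beta^*)$. The key algebraic identity is $\frac{2}{t}\sum_{i=1}^t\widehat\Delta_t^\top x_i(y_i-x_i^\top\widehat\beta_{(0,t]})=-2\widehat\Delta_t^\top\widehat\Sigma_{(0,t]}(\widehat\beta_{(0,t]}-\beta^*)+\frac{2}{t}\sum_{i=1}^t\widehat\Delta_t^\top x_i\epsilon_i$ (and its analogue on $(t,n]$): it shows the bias-correction terms are designed exactly to cancel the non-vanishing Lasso-bias contribution $2\widehat\Delta_t^\top\widehat\Sigma_{(0,t]}(\widehat\beta_{(0,t]}-\beta^*)$ that arises when one expands $\widehat\Delta_t^\top\widehat\Sigma_{(0,t]}\widehat\Delta_t$ around $\beta^*$. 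Combined with $\frac{1}{t}\sum_{i=1}^t\xi_i(y_i-x_i^\top\widehat\beta_{(0,t]})=\frac{1}{t}\sum_{i=1}^t\epsilon_i\xi_i-\frac{1}{t}\sum_{i=1}^t\xi_ix_i^\top(\widehat\beta_{(0,t]}-\beta^*)$, this yields $\mathcal{S}_n(t)=\big[\tfrac{1}{t}\sum_{i=1}^t\epsilon_i\xi_i-\tfrac{1}{n-t}\sum_{i=t+1}^n\epsilon_i\xi_i\big]+R_n(t)$, where $R_n(t)$ collects quadratic-in-estimation-error terms such as $\widehat\Delta_t^\top\widehat\Sigma_{(0,t]}\widehat\Delta_t$, the cross terms $\tfrac1t\sum\widehat\Delta_t^\top x_i\epsilon_i$, and the Gaussian cross terms $\tfrac1t\sum\xi_ix_i^\top(\widehat\beta_{(0,t]}-\beta^*)$.

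\textbf{Step 2 (Uniform remainder control).} Bound $R_n(t)$ uniformly over $t\in\{\lfloor n\zeta\rfloor,\dots,\lfloor n(1-\zeta)\rfloor\}$ on a single high-probability event, using \Cref{lemma:interval lasso} (applicable since both $(0,t]$ and $(t,n]$ have length $\ge\zeta n$ when $\zeta<1/2$) together with standard concentration bounds, uniform over intervals of length $\ge\zeta n$: a sparse-eigenvalue bound for $\widehat\Sigma_\I$, $\|\tfrac1{|\I|}\sum_{i\in\I}x_i\epsilon_i\|_\infty=O_p(\sqrt{\log p/n})$, and a conditional-Gaussian tail bound (given $\{x_i,\epsilon_i\}$) for the $\xi$-cross terms. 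The $\log n$ cost of the union over the $O(n)$ endpoints is absorbed into $\log p$ via $p\ge n^\alpha$. This gives $\widehat\Delta_t^\top\widehat\Sigma_\I\widehat\Delta_t=O_p(\s\log p/n)$, $|\tfrac1t\sum\widehat\Delta_t^\top x_i\epsilon_i|\le\|\widehat\Delta_t\|_1\|\tfrac1t\sum x_i\epsilon_i\|_\infty=O_p(\s\log p/n)$ by \Cref{lemma:interval lasso} and H\ou lder's inequality, and $|\tfrac1t\sum\xi_ix_i^\top(\widehat\beta_{(0,t]}-\beta^*)|=O_p(\sigma_\xi\sqrt{\s\log p}/n)$. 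Multiplying $R_n(t)$ by the CUSUM weight $\tfrac{1}{\sigma_\epsilon\sigma_\xi}\sqrt{t(n-t)/n}=O(\sqrt n/(\sigma_\epsilon\sigma_\xi))$ produces a total contribution of order $\s\log p/\sqrt n+\s\log p/(\sqrt n\,\sigma_\xi)$, which is $o_p(1)$ by \Cref{assume:order} since $\s\log p/\sqrt n\to0$ and $\s\log p/(\sqrt n\,\sigma_\xi)=A_n^{-1}\to0$.

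\textbf{Step 3 (Donsker and continuous mapping).} Write $\xi_i=\sigma_\xi\tilde\xi_i$ with $\tilde\xi_i\overset{i.i.d.}{\sim}N(0,1)$ independent of $(x_i,\epsilon_i)$; then $\epsilon_i\xi_i/(\sigma_\epsilon\sigma_\xi)=\epsilon_i\tilde\xi_i/\sigma_\epsilon$ are i.i.d.\ with mean $0$ and variance $1$. With $W_n(r):=\tfrac{1}{\sqrt n}\sum_{i\le\lfloor nr\rfloor}\epsilon_i\tilde\xi_i/\sigma_\epsilon$, elementary algebra shows the leading (CUSUM) term $\tfrac{1}{\sigma_\epsilon\sigma_\xi}\sqrt{\tfrac{t(n-t)}{n}}\big[\tfrac1t\sum_{i=1}^t\epsilon_i\xi_i-\tfrac1{n-t}\sum_{i=t+1}^n\epsilon_i\xi_i\big]$ (with $t=\lfloor nr\rfloor$) equals $\{nW_n(r)-\lfloor nr\rfloor W_n(1)\}/\sqrt{\lfloor nr\rfloor(n-\lfloor nr\rfloor)}$. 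By Donsker's theorem $W_n\Rightarrow B$ in $D[0,1]$; since $r(1-r)$ is bounded away from $0$ on $[\zeta,1-\zeta]$ and $\lfloor nr\rfloor/n\to r$ uniformly, the continuous mapping theorem applied to the continuous functional $f\mapsto\{r\mapsto(f(r)-rf(1))/\sqrt{r(1-r)}\}$ shows the leading term converges weakly to $\mathcal G$ on $D[\zeta,1-\zeta]$. Slutsky's lemma together with Step 2 then gives $\T_n(\lfloor nr\rfloor)\Rightarrow\mathcal G(r)$ on $D[\zeta,1-\zeta]$. Finally, $\max_{t=\lfloor n\zeta\rfloor,\dots,\lfloor n(1-\zeta)\rfloor}\T_n(t)=\sup_{r\in[\zeta,1-\zeta]}\T_n(\lfloor nr\rfloor)$, and since the supremum functional is continuous at the almost surely continuous limit $\mathcal G$, the continuous mapping theorem yields $\max_t\T_n(t)\overset{d}{\to}\sup_{r\in[\zeta,1-\zeta]}\mathcal G(r)$.

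\textbf{Main obstacle.} The delicate part is Steps 1--2: identifying every cross term in the expansion of $\mathcal{S}_n(t)$, verifying that the bias-correction terms \emph{cancel} (rather than merely dominate) the $O_p(\s\log p/n)$ Lasso-bias contribution, and establishing all remainder bounds \emph{uniformly} in $t$ --- hence uniformly over intervals $\I$ with $|\I|\ge\zeta n$ --- on one high-probability event. The interval-uniform \Cref{lemma:interval lasso}, together with companion sparse-eigenvalue and empirical-process bounds, does exactly this; everything after the decomposition is a routine Donsker-plus-continuous-mapping argument.
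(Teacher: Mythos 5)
Your proposal is correct and follows essentially the same route as the paper: the paper's Propositions~\ref{theorem:alternative} and~\ref{theorem:alternative counterpart} carry out exactly your Step~1 decomposition and Step~2 remainder bound (yielding an error of order $(1+\sigma_\xi)\s\log(pn)/n$, which after the CUSUM weighting is $o(1)$ by \Cref{assume:order}), with uniformity supplied by the interval-uniform deviation bounds of \Cref{condition:deviation} and \Cref{lemma:interval lasso}, and the paper's \Cref{condition:process} is precisely your Donsker-plus-continuous-mapping step. The only difference is organizational — the paper proves the decomposition once under a form valid for both $H_0$ and $H_a$ and then specializes, whereas you specialize to $H_0$ from the outset.
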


We now provide the power result of QF-CUSUM under $H_a$. Denote the change size $ \kappa_k: =  \| \beta^* _{\eta_k+1} - \beta_{\eta_k} ^* \|_2$ for $k=1,2,\cdots, K.$ \Cref{corollary:iid power} provides the detection boundary of QF-CUSUM.

\begin{theorem}   \label{corollary:iid power}
    Let $\zeta>0$ be any fixed constant in $(0,1/2)$. Suppose \Cref{assume: model assumption} and \Cref{assume:order} hold, and $\lambda = C_\lambda \sqrt { \log p }$ for some sufficiently large constant $C_\lambda $. Under $H_a$, suppose the maximum change size satisfies 
	\begin{align} \label{eq:detection boundary i.i.d}\max_{1\le k \le K } \kappa _k^2 \ge  B _n \frac{\s\log p}{n }
	\end{align}
	for some diverging  sequence $ B _n \to \infty $ and $B_n/A_n \to \infty  $ as $n \to \infty$. We have that
	\begin{align*}
		\mathbb{P}\left(\max_{t=\lfloor n\zeta\rfloor, \lfloor n\zeta\rfloor+1, \cdots, \lfloor n(1-\zeta)\rfloor}\T_n(t) >\mathcal{G}_\alpha(\zeta)\right) \to 1 \text{ as } n \to \infty.
	\end{align*}
\end{theorem}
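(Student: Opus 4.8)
The plan is to lower-bound the supremum by the value of the statistic at a single, well-chosen location $t^*$ and show that $\T_n(t^*)\to\infty$ in probability; since $\mathcal G_\alpha(\zeta)$ is a fixed finite constant, this immediately gives
\[
\mathbb P\Big(\max_{t=\lfloor n\zeta\rfloor,\ldots,\lfloor n(1-\zeta)\rfloor}\T_n(t)>\mathcal G_\alpha(\zeta)\Big)\ \ge\ \mathbb P\big(\T_n(t^*)>\mathcal G_\alpha(\zeta)\big)\ \to\ 1.
\]
For the choice of $t^*$, set $\Delta^*_t=\beta^*_{(0,t]}-\beta^*_{(t,n]}$ and take $t^*$ to be a change-point that, up to constants, maximizes the population quadratic form $(\Delta^*_{\eta_k})^\top\Sigma\,\Delta^*_{\eta_k}$ over $k=1,\ldots,K$ and is interior to the trimmed range $[\lfloor n\zeta\rfloor,\lfloor n(1-\zeta)\rfloor]$ --- this is precisely the role of the trimming $\zeta$, the test being designed to detect change-points bounded away from the sample endpoints. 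A purely deterministic argument, expressing each $\beta^{(k)}$ as a telescoping combination of the partial averages $\beta^*_{(0,\eta_j]},\beta^*_{(\eta_j,n]}$ and invoking \Cref{assume: model assumption}(d) so that $K$ is finite and every segment has length $\Theta(n)$, shows $\max_k\|\Delta^*_{\eta_k}\|_2\ge c_K\max_k\kappa_k$ for some constant $c_K>0$; combined with \Cref{assume: model assumption}(b) this yields $(\Delta^*_{t^*})^\top\Sigma\,\Delta^*_{t^*}\ge c\,\max_k\kappa_k^2$ for an absolute constant $c>0$.

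Next I would extend the approximate representation of $\T_n(t)$ obtained under $H_0$ (displayed just before \Cref{assume:order}) to the alternative at $t=t^*$. Using \Cref{lemma:interval lasso} to replace $\widehat\beta_{(0,t^*]},\widehat\beta_{(t^*,n]},\widehat\Delta_{t^*}$ by $\beta^*_{(0,t^*]},\beta^*_{(t^*,n]},\Delta^*_{t^*}$, the concentration of $\widehat\Sigma_{(0,t^*]},\widehat\Sigma_{(t^*,n]}$ around $\Sigma$, and --- crucially --- the fact that the bias-correction terms in $\mathcal S_n(t^*)$ are designed to cancel the leading first-order Lasso-error contributions (for $t^*$ chosen at a change-point these cancellations carry through because $\beta^*_i$ is piecewise constant on $(0,t^*]$ and on $(t^*,n]$, and the within-segment averages of $\beta^*_i-\beta^*_{(0,t^*]}$ and of $\beta^*_i-\beta^*_{(t^*,n]}$ vanish), one obtains
\[
\mathcal S_n(t^*)=(\Delta^*_{t^*})^\top\Sigma\,\Delta^*_{t^*}\ +\ \frac{1}{t^*}\sum_{i=1}^{t^*}\epsilon_i\xi_i-\frac{1}{n-t^*}\sum_{i=t^*+1}^{n}\epsilon_i\xi_i\ +\ R_n,
\]
where $|R_n|=O_p(\s\log p/n)+O_p\big((\max_k\kappa_k)\,\s\sqrt{\log p}/n\big)$ plus lower-order terms involving the $\xi_i$'s. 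Multiplying by $\tfrac{1}{\sigma_\epsilon\sigma_\xi}\sqrt{t^*(n-t^*)/n}=\Theta\big(\sqrt n/(\sigma_\epsilon\sigma_\xi)\big)$ and using $\sigma_\xi=A_n\s\log p/\sqrt n$, the signal term scales like
\[
\frac{\sqrt n}{\sigma_\epsilon\sigma_\xi}\,(\Delta^*_{t^*})^\top\Sigma\,\Delta^*_{t^*}\ \gtrsim\ \frac{n\,\max_k\kappa_k^2}{\sigma_\epsilon A_n\,\s\log p}\ \ge\ \frac{c\,B_n}{\sigma_\epsilon A_n}\ \to\ \infty
\]
by \eqref{eq:detection boundary i.i.d} and $B_n/A_n\to\infty$, while the $\epsilon_i\xi_i$-CUSUM term, after the same scaling, is exactly the pivotal quantity analyzed in \Cref{theorem:main null} at $r=t^*/n\in[\zeta,1-\zeta]$ and hence $O_p(1)$, and each scaled error term from $R_n$ is $o_p$ of the scaled signal (here $\s\log p=o(\sqrt n)$ and \eqref{eq:detection boundary i.i.d} are used to dominate the two pieces of $R_n$). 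Consequently $\T_n(t^*)\to\infty$ in probability, which completes the proof.

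I expect the main obstacle to be the second step: certifying that every error term in $\mathcal S_n(t^*)$ is genuinely negligible relative to the diverging signal under only the minimal separation \eqref{eq:detection boundary i.i.d}. This requires carefully propagating the quadratic-form error $\widehat\Delta_{t^*}^\top\widehat\Sigma\,\widehat\Delta_{t^*}-(\Delta^*_{t^*})^\top\Sigma\,\Delta^*_{t^*}$ through both the $\ell_1$- and $\ell_2$-bounds of \Cref{lemma:interval lasso} and the restricted/entrywise deviation of $\widehat\Sigma-\Sigma$, and --- unlike the null case --- re-deriving the cancellation of the leading Lasso-error terms by the bias-correction when $\beta^*_i$ is only piecewise (rather than globally) constant; this bookkeeping is the delicate part. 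The deterministic lower bound on the population quadratic form, and the reduction to an interior $t^*$, are comparatively routine but should also be stated with care.
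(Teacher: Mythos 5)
Your proposal is correct and follows essentially the same route as the paper: reduce to a single change-point $t^*$ in the trimmed range where the population quadratic form is at least $c\max_k\kappa_k^2$ (the paper's Lemma~\ref{eq:equivalence Hypothesis}), apply the bias-corrected decomposition of $\mathcal S_n(t^*)$ into signal plus the $\epsilon_i\xi_i$-CUSUM plus a remainder (the paper's Propositions~\ref{theorem:alternative} and~\ref{theorem:alternative counterpart}), and conclude via the scaling $\sqrt{n}\,\kappa^2/(\sigma_\epsilon\sigma_\xi)\gtrsim B_n/(\sigma_\epsilon A_n)\to\infty$. One small imprecision: your stated order for $R_n$ omits the stochastic cross terms such as $\tfrac{1}{t^*}\sum_i x_i^\top\Delta^*_{t^*}\epsilon_i$, which are $O_p(\kappa\sqrt{D_n/n})$ rather than $O_p(\kappa\,\s\sqrt{\log p}/n)$, but these are still $o_p(\kappa^2)$ under \eqref{eq:detection boundary i.i.d}, so the conclusion is unaffected.
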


\subsection{Generally conditions for \Cref{theorem:main null} and \Cref{corollary:iid power}}

We begin by introducing the following conditions.
\begin{condition} \label{condition:deviation}
Let $ \zeta \in (0,1/2)$ be  any  given constant and that $ \{ \xi_{i} \}_{i=1}^n \overset{i.i.d.} {\sim}  N(0, \sigma_\xi^2)  $ be a collection of user-generated random variables. Suppose that  for all interval $ \mathcal I  =(0,t] $ or $(t, n]$ such that $| \mathcal I | \ge \zeta n  $, the following additional conditions hold. 
\\
\\
 {\bf a.}  Let $\mathcal C_S: =\{v \in \mathbb R^p: \| v_{S^c}\|_1 \le 3\|v_S\|_1 \} $.   Suppose that
 $$\big|v^\top \big( \widehat \Sigma_{\I } -\Sigma \big) v  \big |\le C  \sqrt { \frac{\s \log(pn) }{ |\mathcal I|  }} \|v\|_2^2  \quad  \text{for all} \quad v \in \mathcal C_S.$$
 \\
 \\
{ \bf b.} Suppose that 
$$  \bigg | \frac{1}{| \mathcal I | } \sum_{i\in \I }  \epsilon_i x_i^\top \beta  \bigg|
\le C  \sqrt { \frac{\log(pn )}{ |\mathcal I|   }}\| \beta\|_1 \quad \text{for all} \quad \beta \in \mathbb R^p .  $$
\\
\\
{\bf c.} Let $\{u_i \}_{i=1}^n  $ be a collection of non-random vectors.  Suppose that $$
  \left |  \frac{1}{|\I | } \sum_{i \in \I }     u^\top_i  x_i   x_i^\top \beta  - \frac{1}{|\I | } \sum_{i \in \I }      u^\top  _i  \Sigma \beta  \right| \le C \left(  \max_{1\le i \le n }\|u_i\|_2  \right) \sqrt {  \frac{\log(pn)}{ | \I |  } } \|\beta\|_1   \quad  \text{for all } \beta \in \mathbb R^p  .$$
  \\
  \\
  {\bf d.} Suppose that $$  \bigg | \frac{1}{| \mathcal I | } \sum_{i\in \I }  \xi_i x_i^\top \beta  \bigg|
\le C  \sigma_\xi  \sqrt { \frac{\log(pn )}{ |\mathcal I|   }}\| \beta\|_1 \quad \text{for all} \quad \beta \in \mathbb R^p .  $$
\end{condition}

\begin{condition} \label{condition:process}
For any $r\in (0,1) $ let $t =\lfloor r n\rfloor   $ and 
$$\mathcal G_n (r ) =\frac{ \sqrt { n  }}{\sigma_\xi \sigma_\epsilon} \bigg\{  \frac{1}{t}\sum_{i=1}^t \epsilon_i\xi_i - \frac{1}{ n-t} \sum_{i=t+1}^{n } \epsilon_i\xi_i  \bigg\}   .$$ 
Suppose that for $r \in(0,1)$, 
$$ \mathcal G_n(r ) \overset{\mathcal D } {\to}  \mathcal G (r ), $$
where the convergence is in Skorokhod topology  and $\mathcal G(r ) $ is a Gaussian Process on $(0,1)$ with covariance function 
$$\sigma(r,s) =  \frac{1  }{ r(1-s)} \quad \text{when}  \quad  0< s\le r <1 .$$ 

\end{condition}
 
In \Cref{section:distribution}, we show that \Cref{condition:deviation} and  \Cref{condition:process}  hold  with high probability when the random quantities  $ \{x_i,\epsilon_i \}_{i=1}^n $ satisfy  \Cref{assume: model assumption}. 
 
\subsection{Null distribution under $H_0$}
\textbf{Proof of \Cref{theorem:main null}}:
\begin{proof}
 We justify all the technical results supporting \Cref{theorem:main null}  under \Cref{condition:deviation} and \Cref{condition:process}.  Let   $  \mathcal R_1 (t) 
   , \mathcal R_2(t)    , \mathcal R_3(t)      $ be defined as \Cref{theorem:alternative} and $  \mathcal R_1' (t) 
   , \mathcal R_2'(t)    , \mathcal R_3'(t)      $ be defined as \Cref{theorem:alternative counterpart}. 
   \
   \\
So with $t= \lfloor rn\rfloor $,
\begin{align*}   
\mathcal  T_n(r) :=  \frac{\sqrt {n }   }{2\sigma_\epsilon\sigma_\xi   }   \Bigg\{ & ( \widehat \beta_ {(0, t] }  -   \widehat \beta_{(t, n] } )^\top     \widehat \Sigma _{(0, t] } 
( \widehat \beta_{(0, t] }  -   \widehat \beta_{(0, t] }  )  + 
\mathcal R_1 (t) 
  +2 \mathcal R_2(t)  -2 \mathcal R_3(t)       
\\
 + &    ( \widehat \beta_ {(0, t] }  -   \widehat \beta_{(t, n] } )^\top     \widehat \Sigma _{(t,n ] } 
( \widehat \beta_{(0, t] }  -   \widehat \beta_{(0, t] }  )  + 
\mathcal R_1 '  (t) 
  +2 \mathcal R_2 ' (t)  -2 \mathcal R_3 '(t)       \Bigg\} .
\end{align*}
  \
  \\
Observe that  under $H_0 $, $ \beta_\ot ^* =\beta_\tn ^* =\beta_1^* $ for all $t\in  \{ 1,\dots, n\}$.
\
\\
 So  for all 
$t$, 
\begin{align*} 
& ( \beta^*_\ot  - \beta^*_\tn  ) ^\top  \Sigma  ( \beta^*_\ot  - \beta^*_\tn  )   = 0 
   \\
   & \mathfrak T_{n, \xi}(t) =  \frac{2}{t} \sum_{i=1}^t \epsilon_i \xi_i , \quad \mathfrak  S_n (t ) = 0 , \quad \mathfrak T_{n, \xi} '(t) =  \frac{2}{n-t} \sum_{i=t+1}^n \epsilon_i \xi_i , \quad \mathfrak S_n '(t )  
 = 0,
 \end{align*} 
where $ \mathfrak T_{n, \xi}(t),   \mathfrak  S_n (t ) $ are defined in \Cref{theorem:alternative} and 
$ \mathfrak T '_{n, \xi}(t),   \mathfrak S'_n (t ) $  are defined in  \Cref{theorem:alternative counterpart}.
Consequently  by \Cref{theorem:alternative}, it holds that 
 \begin{align*}
&  \left|   \frac{\sqrt {n }   }{\sigma_\epsilon\sigma_\xi   }   \Bigg\{   ( \widehat \beta_ {(0, t] }  -   \widehat \beta_{(t, n] } )^\top     \widehat \Sigma _{(0, t] } 
( \widehat \beta_{(0, t] }  -   \widehat \beta_{(0, t] }  )  + 
\mathcal R_1 (t) 
  +2 \mathcal R_2(t)  -2 \mathcal R_3(t)        - \frac{2}{t} \sum_{i=1}^t \epsilon_i \xi_i \bigg\}  \right| 
  \\
  \le &  \frac{\sqrt  {   n} }{\sigma_\epsilon\sigma_\xi    } C_1 (1+\sigma_\xi ) \frac{\s \log(pn)}{ n}   
  =    C_1 \frac{ \s \log(pn)}{ \sigma_\epsilon\sigma_\xi    \sqrt n } + C _1\frac{\s\log(pn )}{ \sigma_\epsilon    \sqrt n }  
  \\
  =&o(1).
 \end{align*}
 By \Cref{theorem:alternative counterpart}, it holds that 
 \begin{align*}
& \left|    \frac{\sqrt {n }   }{\sigma_\epsilon\sigma_\xi   }   \Bigg\{   ( \widehat \beta_ {(0, t] }  -   \widehat \beta_{(t, n] } )^\top     \widehat \Sigma _{ \tn } 
( \widehat \beta_{(0, t] }  -   \widehat \beta_{(0, t] }  )  + 
\mathcal R_1' (t) 
  +2 \mathcal R_2'(t)  -2 \mathcal R_3'(t)       +  \frac{2}{ n-t} \sum_{i=t+1}^n  \epsilon_i \xi_i \bigg\}  \right| 
  \\
  \le &  \frac{\sqrt  {   n} }{\sigma_\epsilon\sigma_\xi    } C_2 (1+\sigma_\xi ) \frac{\s \log(pn)}{ n}   
  =    C_2 \frac{ \s \log(pn)}{ \sigma_\epsilon\sigma_\xi    \sqrt n } + C _2\frac{\s \log(pn)}{ \sigma_\epsilon    \sqrt n }  
  \\
  =&o(1).
  \end{align*}
 The above two displays together with \Cref{condition:process} directly give  the desired result. 
 \end{proof}

\subsection{Power analysis under $H_1$}
\textbf{Proof of \Cref{corollary:iid power}}:
\begin{proof}  Suppose $H_a$ holds. In view of \Cref{eq:equivalence Hypothesis}, there exists    $t' \in [\zeta n , (1-\zeta)n]  $  such that  
\begin{align}
\label{eq:quadratic size}
(\beta^*_{(0, t'] }- \beta_ { (t', n ]}  ^*) ^\top  \Sigma  (\beta^*_{(0, t']} - \beta_{ (t', n ] } ^* ) : = \kappa^2 \ge B_n \frac{\s \log(pn) }{ n} .
\end{align}
  It suffices to show that for sufficiently large $n$,  with probability goes to $1$,
 \begin{align} \label{eq:power signal upper bound 1}    
 &  \frac{ \sqrt n }{2\sigma_\epsilon \sigma_\xi }   \bigg\{  ( \widehat \beta_{(0, t'] } -   \widehat \beta_{ (t', n ]}  )^\top     \widehat \Sigma _{(0, t'] } ( \widehat \beta_{(0, t'] } -   \widehat \beta_{ (t', n ]}   )   + 
\mathcal R_1(t')  +2 \mathcal R_2(t')  -2  \mathcal  R_3(t')   \bigg\}    >  \mathcal G_\alpha    ,
\\\label{eq:power signal upper bound 2} 
&    \frac{ \sqrt n }{2\sigma_\epsilon \sigma_\xi }    \bigg\{ ( \widehat \beta_{(  t', n ] } -   \widehat \beta_{ (0, t' ]}  )^\top     \widehat \Sigma _{( t', n ] } ( \widehat \beta_{(  t', n ] } -   \widehat \beta_{ (0, t' ]}   )   + 
\mathcal R_1 ' (t')  +2 \mathcal R_2' (t')  - 2 \mathcal   R_3' (t')    \bigg\}   >  \mathcal G_\alpha    .
 \end{align}   
The justification of  \Cref{eq:power signal upper bound 1} is provided as the  justification of  \Cref{eq:power signal upper bound 2}  is the same by symmetry. 
\\
\\
{\bf Step 1.} Under the alternative $H_a$,  note that by  \Cref{theorem:alternative},  for all $t \in [\zeta n, (1-\zeta)n]$,
\begin{align*}     
& \Bigg |   ( \widehat \beta_\ot -   \widehat \beta_\tn)^\top     \widehat \Sigma _\ot ( \widehat \beta_\ot -   \widehat \beta_\tn )   + 
\mathcal R_1(t )  +2 \mathcal R_2(t )  -2  \mathcal  R_3(t )    
 \\
 & -     ( \beta^*_\ot  - \beta^*_\tn  ) ^\top  \Sigma  ( \beta^*_\ot  - \beta^*_\tn  )      - (\mathfrak T_{n, \xi} (t) + \mathfrak  S_n (t)  )     
\Bigg|  \le
  C_1 (1+\sigma_\xi ) \frac{ \s \log(pn)}{ n}  
   . \end{align*}  
   \
   \\
  Then by \Cref{eq:quadratic size},   
\begin{align} \label{eq:power signal upper bound}   &     ( \widehat \beta_{(0, t'] } -   \widehat \beta_{ (t', n ]}  )^\top     \widehat \Sigma _{(0, t'] } ( \widehat \beta_{(0, t'] } -   \widehat \beta_{ (t', n ]}   )   + 
\mathcal R_1(t  ')  +2 \mathcal R_2(t  '  )  -2  \mathcal  R_3(t  ' )     
 \\ \nonumber
\ge    &   \kappa^2      -  | \mathfrak T_{n, \xi}   (t') | -| \mathfrak   S_n(t')   |    
-  C_1 (1+\sigma_\xi ) \frac{\s \log(pn)}{ n} .
 \end{align} 
 So it suffices to show that 
\begin{align} \label{eq:power signal upper bound2}   \kappa^2      -  | \mathfrak T_{n, \xi}  (t') | -| \mathfrak   S_n(t')   |    
-  C_1 (1+\sigma_\xi ) \frac{ \s \log(pn)}{ n}   \ge  \frac{ 2 \mathcal G_{\alpha   } \sigma_\epsilon\sigma_\xi }{ \sqrt  {   n} }  . 
\end{align} 
\
\\
{\bf Step 2.}
By definition of $\mathfrak T_{n, \xi} (t') $, 
\begin{align*} \mathfrak T_{n, \xi} (t')  = &\frac{2}{t '} \sum_{i=1}^{ t '}      x_i  ^\top     (  \beta^*_ {(0, t'] }    - \beta^*_ {(t', n]}   )      \epsilon_i    +  \frac{2}{t'} \sum_{i=1}^ { t '}      x_i  ^\top     (  \beta^*_i  - \beta^*_{(0, t']}    )      \xi_i        +  \frac{2}{t'} \sum_{i=1}^ { t '}    \xi_i\epsilon_i   
\\
  + &   ( \beta^*_{(0, t'] }   -     \beta_{(t', n]}   ^*  )  ^\top  (  \widehat \Sigma  _{(0, t'] }   -\Sigma  )  (  \beta^*_{(0, t'] }    -    \beta ^* _{(t', n]}     )   +
 \frac{2}{  t '} \sum_{i=1}^{t'}    (  \beta_i  ^* -\beta_{(0, t'] }   ^*    ) ^\top x_i   x_i^\top ( \beta_{(0, t'] }  ^*  -  \beta_{(t', n]}   ^*  )  . 
\end{align*} 
\
\\
By \Cref{assume: model assumption}, $\{ x_i\}_{i=1}^n   $ and   $\{ \epsilon_i\}_{i=1}^n   $ are sub-Gaussian random variables. Let $D _n $ be some slowly diverging sequence to be specified. By standard sub-exponential tail  bounds and the fact that
$ t' \in [\zeta n , (1-\zeta) n]$, it holds that with probability goes to $1$, 
\begin{align*}   &  \bigg| \frac{2}{t'} \sum_{i=1}^ {t'}     x_i  ^\top     (  \beta^*_{(0, t'] }    - \beta^*_ {(t', n]} )      \epsilon_i \bigg|  \le C_2 \sqrt {  \frac{D_n   }{n} }  \sqrt {(\beta^*_{(0, t'] } - \beta_{ (t', n ]}  ^*) ^\top  \Sigma  (\beta_{(0,t']}^*- \beta_{ (t', n ]}  ^* ) }   = C_2'  \sqrt {  \frac{D _n }{n} }  \kappa,
\\
 &(  \beta^*_{(0, t'] }  -     \beta_{ (t', n ]}   ^*  )  ^\top  (  \widehat \Sigma  _{(0, t'] }  -\Sigma  )  (  \beta^*_{(0, t'] }  -    \beta ^* _{ (t', n ]}    )   \le C_3  \sqrt  {  \frac{ D_n }   { t'}  }   \|  \beta^*_{(0, t'] }  -     \beta_{ (t', n ]}   ^*  \|_2^2 
 \le C_3' \sqrt  {  \frac{  D_n  }   {  n}  }    \kappa^2 ,   
\\
 &    \bigg|  \frac{2}{t'} \sum_{i=1}^{ t'}     \xi_i\epsilon_i  \bigg|  \le  C_4 \frac{\sigma_\xi  \sqrt { D_n  }  }{\sqrt n }\quad \text{and} \quad 
       \bigg| \frac{2}{t'} \sum_{i=1}^{t'}    x_i^\top(      \beta^*_i-  \beta^*_{(0, t'] } 
 )\xi_i  
\bigg|   \le  C_4' \frac{\sigma_\xi \sqrt {D_n } }{\sqrt n }.
 \end{align*} 
 In addition,  
 since 
 $$  \frac{1}{ t' } \sum_{i= 1  } ^ {t'}      
   ( \beta_ {(0, t'] }  ^{* } -\beta_{ (t', n ]}  ^*)  \Sigma   (\beta_ i ^* - \beta^*_{ (0, t' ]}   )   =0,
    $$ 
   by standard sub-exponential tail bounds, it holds that \begin{align*}
&\Bigg|   \frac{1}{ t ' } \sum_{i=1 } ^ {t'}      ( \beta_ {(0, t'] }  ^{* } -\beta_{ {(t', n]}  }  ^*)  x_ix_i^\top   (\beta_ i ^* - \beta^*_{ ( 0, t'  ]}   )  \Bigg| 
\\ 
= & \Bigg|   \frac{1}{ t' } \sum_{i= 1  } ^ {t'}     ( \beta_ {(0, t'] }  ^{* } -\beta_{ (t', n ]}  ^*)  x_ix_i^\top   (\beta_ i ^* - \beta^*_{ ( 0, t'  ]}  )  
-   \frac{1}{ t' } \sum_{i= 1  } ^ {t'}   ( \beta_ {(0, t'] }  ^{* } -\beta_{ (t', n ]}  ^*)  \Sigma   (\beta_ i ^* - \beta^*_{ ( 0, t'  ]}  )  \Bigg| 
\\
\le &  C_5  \sqrt {  \frac{ D_n  }{ t' } }  \kappa     \max_{1\le i \le n } \|\beta_ i ^* - \beta^*_{ ( 0, t'  ]}   \|_2 \le C_5'  \sqrt {  \frac{ D_n  }{n  } }  \kappa   ,
\end{align*} 
where $$  \sum_{i= 1  } ^ {t'}   ( \beta_ {(0, t'] }  ^{* } -\beta_{ (t', n ]}  ^*)  \Sigma   (\beta_ i ^* - \beta^*_{ ( 0, t'  ]}   ) =0$$
is used in the equality.  
So 
$$|\mathfrak T_n(t')  | \le C_6 \sqrt {  \frac{D_n  }{n} }  \kappa  +C_6 '  \sqrt  {  \frac{  D_n  }   {  n}  }    \kappa^2 +C_6'' \frac{\sigma_\xi  \sqrt {D_n }  }{\sqrt n } .$$
\
\\
By definition of $\mathfrak S_n  (t') $,
$$ \frac{1}{2 }\mathfrak  S_n(t')   =       \frac{1}{n-t '  } \sum_{i=t ' +1} ^n x_i^\top   ( \beta_ {(0, t'] }   ^{* } -\beta_{ (t', n ]}  ^*)  \epsilon_i 
  +  \frac{1}{n-t' } \sum_{i=t'+1} ^n     ( \beta_{ (t', n ]}  ^* - \beta_ {(0, t'] }  ^{* }  )  x_ix_i^\top   (\beta_ i ^* - \beta^*_{ (t', n ]}   )  .   $$
Note that    by standard sub-exponential tail bounds, it holds that with probability goes to $1$, 
  $$  \Bigg|  \frac{1}{n-t '  } \sum_{i=t ' +1} ^n x_i^\top   ( \beta_ {(0, t'] }   ^{* } -\beta_{ (t', n ]}  ^*)  \epsilon_i   \Bigg|  \le   \sqrt {  \frac{D_n }{n-t' } }  \kappa   $$
  and 
\begin{align*}
&\Bigg|   \frac{1}{n-t' } \sum_{i=t'+1} ^n     ( \beta_ {(0, t'] }  ^{* } -\beta_{ (t', n ]}  ^*)  x_ix_i^\top   (\beta_ i ^* - \beta^*_{ (t', n ]}   )  \Bigg| 
\\ 
= & \Bigg|   \frac{1}{n-t' } \sum_{i=t'+1} ^n     ( \beta_ {(0, t'] }  ^{* } -\beta_{ (t', n ]}  ^*)  x_ix_i^\top   (\beta_ i ^* - \beta^*_{ (t', n ]}   )  
-   ( \beta_ {(0, t'] }  ^{* } -\beta_{ (t', n ]}  ^*)  \Sigma   (\beta_ i ^* - \beta^*_{ (t', n ]}   )  \Bigg| 
\\
\le &  C_7  \sqrt {  \frac{ D_n  }{n-t' } }  \kappa     \max_{1\le i \le n } \|\beta_ i ^* - \beta^*_{ (t', n ]}  \|_2 \le C_7'  \sqrt {  \frac{D_n }{n  } }  \kappa   .
\end{align*}
So  with probability goes to $1$, 
$$|\mathfrak S_n (t') | \le C _8\sqrt {  \frac{ D_n }{n} }  \kappa    .$$
\
\\
{\bf Step 3.} Suppose $D_n $ is some slowly divergent sequence such that 
$D_n =o(\log(n)) $ and 
$ \sqrt { D_n} \le B_n/ A_n $, where we note that by assumption,  $B_n /A_n \to \infty $.
 So 
$$ \eqref{eq:power signal upper bound}\ge \kappa^2 -C_9 \left(   \sqrt {  \frac{D_n  }{n  } }  \kappa  + 
   \sqrt {  \frac{ D_n }{n  } }  \kappa ^2   +    \frac{ \sqrt { D_n } \sigma_\xi}{\sqrt n }  + (1+\sigma_\xi ) \frac{\s \log(pn)}{ n}    \right)    \ge \frac{1}{2}\kappa^2,$$
   where the last inequality holds because $  \sigma_\xi = A_n    \frac{ \s \log(pn) }{ \sqrt  {n}  }$,
   $\kappa^2  \ge B_n \frac{\s \log(pn) }{   n  }$. Since $\kappa^2  \ge B_n \frac{ \s \log(pn) }{   n  }$ and  $  \sigma_\xi =  A_n   \frac{ \s \log(pn) }{ \sqrt  {n}  }$,  for sufficiently large $B_n$,
   $$ \eqref{eq:power signal upper bound}\ge \frac{1}{2}\kappa^2 >   \frac{ 2 \mathcal G_{\alpha   } \sigma_\epsilon  }{ \sqrt  {   n} }   A_n  \frac{ \s \log(pn)}{ \sqrt n } =  \frac{ 2 \mathcal G_{\alpha   } \sigma_\epsilon  \sigma_\xi  }{ \sqrt  {   n} } . $$
   This above inequality directly implies that the power of test goes $ 1$ under $H_1(B_n)$.
\end{proof}

\subsection{Additional Technical Results}

\begin{proposition}  \label{theorem:alternative} 
Suppose $\{ \xi  _i \}_{i=1}^n \overset {i.i.d.} {\sim} N(0, \sigma^2_\xi)$.  Suppose \Cref{assume: model assumption}    and \Cref{condition:deviation} hold. 
Let \begin{align*} 
 \mathcal R_1(t)= & \frac{2}{t } \sum_{i=1}^t \left(  y_i -  x_i  ^\top   \widehat \beta_{\ot }     \right)  \left(   x_i  ^\top  (\widehat \beta _{ \ot }   - \widehat \beta_{(t,n]}) +\xi _i  \right),
 \\
 \mathcal R_2 (t)  =&  \frac{1}{ n-t }   \sum_{ i = t+1}^n  x_i^\top \widehat  \beta _\tn   \left(    y_i    -   x_i^\top\widehat \beta_\tn  \right) ,
 \\
 \mathcal R_3(t)  = & \frac{1}{n-t }    \sum_{ i=t+1}^{n }  x_i^\top  \widehat \beta _\ot    \left( y_i    -   x_i^\top \widehat \beta_\tn  \right).
 \end{align*} 
 Denote 
\begin{align*} 
  \mathfrak T_{n, \xi} (t)   = &\frac{2}{t} \sum_{i=1}^ t     x_i  ^\top     (  \beta^*_\ot  - \beta^*_\tn  )      \epsilon_i    +  \frac{2}{t} \sum_{i=1}^ t     x_i  ^\top     (  \beta^*_i  - \beta^*_\ot   )      \xi_i        +  \frac{2}{t} \sum_{i=1}^ t    \xi_i\epsilon_i   
\\
  + &   ( \beta^*_\ot  -     \beta_\tn ^*  )  ^\top  (  \widehat \Sigma  _\ot  -\Sigma  )  (  \beta^*_\ot  -    \beta ^* _\tn   )   +
 \frac{2}{  t } \sum_{i=1}^t x_i^\top (  \beta_i  ^* -\beta_\ot ^*    )   x_i^\top ( \beta_\ot ^*  -  \beta_\tn ^*  )  , 
 \\
  \mathfrak S_n    (t)  = &     \frac{2}{n-t } \sum_{i=t+1} ^n x_i^\top   ( \beta_\tn^* - \beta_ \ot^{* }  )  \epsilon_i 
  +  \frac{2}{n-t } \sum_{i=t+1} ^n     ( \beta_ \ot^{* } -\beta_\tn^*)  x_ix_i^\top   (\beta_ i ^* - \beta^*_\tn )       .
\end{align*} 
For all $t \in [\zeta n,  (1-\zeta) n ] $,  it holds that 
\begin{align*}    
& \Bigg |   ( \widehat \beta_\ot -   \widehat \beta_\tn)^\top     \widehat \Sigma _\ot ( \widehat \beta_\ot -   \widehat \beta_\tn )   + 
\mathcal R_1(t)  +2\mathcal R_2(t)  -2 \mathcal R_3(t)  
 \\
 -   & ( \beta^*_\ot  - \beta^*_\tn  ) ^\top  \Sigma  ( \beta^*_\ot  - \beta^*_\tn  )      -  \big\{   \mathfrak T_{n, \xi} (t) + \mathfrak  S_n(t)   \big\}    
\Bigg| 
 \\   \le
 & C (1+\sigma_\xi ) \frac{\s \log(pn)}{ n}  
   . \end{align*}
\end{proposition}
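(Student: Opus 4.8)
The plan is to expand the bias-corrected quadratic form around its target $\Delta_t^{*\top}\Sigma\Delta_t^*$, using for $\I\in\{\ot,\tn\}$ the residual identity
\[
y_i - x_i^\top\widehat\beta_\I = \epsilon_i + x_i^\top(\beta_i^* - \beta_\I^*) - x_i^\top e_\I, \qquad e_\I := \widehat\beta_\I - \beta_\I^* ,
\]
together with the split $\widehat\Delta_t := \widehat\beta_\ot - \widehat\beta_\tn = \Delta_t^* + e_1 - e_2$, where I abbreviate $e_1 := e_\ot$, $e_2 := e_\tn$. By \Cref{lemma:interval lasso}, uniformly over $t\in[\zeta n,(1-\zeta)n]$ we have $\|e_j\|_2^2 \lesssim \s\log p/n$, $\|e_j\|_1 \lesssim \s\sqrt{\log p/n}$, $e_1\in\mathcal C_{S_\ot}\subseteq\mathcal C_S$ and $e_2\in\mathcal C_{S_\tn}\subseteq\mathcal C_S$; moreover $\max_i\|\beta_i^*-\beta_\I^*\|_2$ and $\|\Delta_t^*\|_2$ are $O(1)$ with $\frac1{|\I|}\sum_{i\in\I}(\beta_i^*-\beta_\I^*)=0$, and $\Lambda_{\max}(\Sigma)\le C_x$.

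\emph{Step 1 (algebraic reduction and cancellation of the Lasso bias).} Plugging the residual identity into $\mathcal R_1(t)$ and combining with the quadratic form, the contributions linear in $e_1$ telescope away: one verifies that
\begin{align*}
\widehat\Delta_t^\top\widehat\Sigma_\ot\widehat\Delta_t + \frac{2}{t}\sum_{i\le t}(y_i - x_i^\top\widehat\beta_\ot)\, x_i^\top\widehat\Delta_t
&= \Delta_t^{*\top}\widehat\Sigma_\ot\Delta_t^* - 2\Delta_t^{*\top}\widehat\Sigma_\ot e_2 - \big(e_1^\top\widehat\Sigma_\ot e_1 - e_2^\top\widehat\Sigma_\ot e_2\big)\\
&\quad + \frac{2}{t}\sum_{i\le t}\epsilon_i\, x_i^\top\widehat\Delta_t + \frac{2}{t}\sum_{i\le t}x_i^\top(\beta_i^*-\beta_\ot^*)\, x_i^\top\widehat\Delta_t ,
\end{align*}
so the left-segment correction removes the non-negligible $e_1$-linear bias exactly, leaving a residual $e_2$-linear piece and a purely quadratic piece. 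Likewise $2\mathcal R_2(t) - 2\mathcal R_3(t) = -\frac{2}{n-t}\sum_{i>t}\widehat\Delta_t^\top x_i\, (y_i - x_i^\top\widehat\beta_\tn)$, expanded the same way, has leading $e_2$-linear term $+2\Delta_t^{*\top}\widehat\Sigma_\tn e_2$. Adding the two, the $e_2$-linear contributions combine into $-2\Delta_t^{*\top}(\widehat\Sigma_\ot-\widehat\Sigma_\tn)e_2 = -2\Delta_t^{*\top}(\widehat\Sigma_\ot-\Sigma)e_2 + 2\Delta_t^{*\top}(\widehat\Sigma_\tn-\Sigma)e_2$, the two $\Sigma$-parts cancelling exactly --- this cancellation is precisely what the bias correction is designed for. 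Writing $\Delta_t^{*\top}\widehat\Sigma_\ot\Delta_t^* = \Delta_t^{*\top}\Sigma\Delta_t^* + \Delta_t^{*\top}(\widehat\Sigma_\ot-\Sigma)\Delta_t^*$, substituting $\widehat\Delta_t = \Delta_t^* + (e_1-e_2)$ in the surviving sums $\frac2t\sum\epsilon_i x_i^\top\widehat\Delta_t$, $\frac2t\sum x_i^\top(\beta_i^*-\beta_\ot^*)x_i^\top\widehat\Delta_t$, and expanding the $\xi$-part of $\mathcal R_1$ as $\frac2t\sum\epsilon_i\xi_i + \frac2t\sum x_i^\top(\beta_i^*-\beta_\ot^*)\xi_i - \frac2t\sum\xi_i x_i^\top e_1$, one reads off directly that the surviving non-negligible terms are exactly $\Delta_t^{*\top}\Sigma\Delta_t^* + \mathfrak{T}_{n,\xi}(t) + \mathfrak{S}_n(t)$ (each displayed summand of $\mathfrak{T}_{n,\xi}$ and $\mathfrak{S}_n$ occurs here once).

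\emph{Step 2 (the remainder).} What remains is a fixed finite list, each term bounded by a single application of \Cref{condition:deviation} with the Lasso rates: (i) the noise~$\times$~estimation-error terms $\frac1{|\I|}\sum_{i\in\I}\epsilon_i x_i^\top(e_1-e_2)$ are $\le C\sqrt{\log(pn)/n}\,\|e_1-e_2\|_1 \lesssim \s\log(pn)/n$ by \Cref{condition:deviation}(b); (ii) $\frac2t\sum_{i\le t}\xi_i x_i^\top e_1$ is $\le C\sigma_\xi\sqrt{\log(pn)/n}\,\|e_1\|_1 \lesssim \sigma_\xi\s\log(pn)/n$ by \Cref{condition:deviation}(d) --- the sole source of the factor $\sigma_\xi$; (iii) the mixed terms $\frac1{|\I|}\sum_{i\in\I}(\beta_i^*-\beta_\I^*)^\top x_ix_i^\top(e_1-e_2)$ and $\Delta_t^{*\top}(\widehat\Sigma_\I-\Sigma)(e_1-e_2)$, $\Delta_t^{*\top}(\widehat\Sigma_\I-\Sigma)e_2$ are bounded by \Cref{condition:deviation}(c) with $u_i=\beta_i^*-\beta_\I^*$ or $u_i\equiv\Delta_t^*$, where $\frac1{|\I|}\sum(\beta_i^*-\beta_\I^*)^\top\Sigma(e_1-e_2)=0$ kills the centering, giving $\lesssim \s\log(pn)/n$; and (iv) the purely quadratic pieces $e_1^\top\widehat\Sigma_\I e_1 - e_2^\top\widehat\Sigma_\I e_2$ and $(e_1-e_2)^\top\widehat\Sigma_\I e_2$ are bounded by Cauchy--Schwarz for the PSD matrix $\widehat\Sigma_\I$ together with the restricted-eigenvalue bound $v^\top\widehat\Sigma_\I v \le (C_x + C\sqrt{\s\log(pn)/|\I|})\|v\|_2^2$ for $v\in\mathcal C_S$ (from \Cref{condition:deviation}(a) and \Cref{assume: model assumption}(b)), applied separately to $v=e_1$ and $v=e_2$, giving $\lesssim \s\log p/n$. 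Summing the finitely many contributions yields the claimed bound $C(1+\sigma_\xi)\s\log(pn)/n$.

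Uniformity over $t\in[\zeta n,(1-\zeta)n]$ is automatic, since \Cref{lemma:interval lasso} and \Cref{condition:deviation} already hold simultaneously for all intervals $\ot$ and $\tn$ of length at least $\zeta n$, so every bound above is uniform in $t$. The main obstacle is the bookkeeping of Step 1: checking that the particular combination $\mathcal R_1(t) + 2\mathcal R_2(t) - 2\mathcal R_3(t)$ annihilates \emph{every} first-order-in-$e_j$ contribution down to the harmless $\widehat\Sigma_\I-\Sigma$ deviation terms, which is exactly what lowers the post-correction error from the non-negligible $O_p\big(\sqrt{\s\log p/n}\big)$ of the naive plug-in to $O_p\big((1+\sigma_\xi)\s\log(pn)/n\big)$; once that is in place, Step 2 is a routine sequence of deviation-bound applications.
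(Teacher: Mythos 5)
Your proof is correct and is in substance the same argument as the paper's. The paper routes the identical expansion through three intermediate lemmas (\Cref{lemma:bias term R2}, \Cref{lemma:bias term R3}, \Cref{lemma:bias 3}, assembled in \Cref{lemma:test deviation bound}) and then bolts on the $\xi$-part exactly as in your item (ii); you instead carry out one global expansion, but the cancellation structure (the correction annihilating the linear-in-$e_1$ bias, and the $\Sigma$-parts of the two $e_2$-linear contributions cancelling between segments) and the term-by-term use of \Cref{condition:deviation} with the rates of \Cref{lemma:interval lasso} are the same. One genuine, if small, divergence: for the quadratic cross term coupling the two segments' Lasso errors, the paper bounds $(\widehat\beta_\ot-\beta^*_\ot)^\top(\Sigma-\widehat\Sigma_\tn)(\beta^*_\tn-\widehat\beta_\tn)$ via part (c) of \Cref{condition:deviation} by exploiting independence of the two data segments (see \Cref{eq:using independence}), whereas you bound the corresponding term $(e_1-e_2)^\top\widehat\Sigma_\tn e_2$ by Cauchy--Schwarz for the positive semi-definite matrix $\widehat\Sigma_\tn$ together with the restricted-eigenvalue bound; your route needs no independence, which is precisely the step the paper later has to patch separately in the $\beta$-mixing section. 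Two minor remarks: first, your expansion produces the second term of $\mathfrak S_n(t)$ with prefactor $(\beta^*_\tn-\beta^*_\ot)$, which matches the version in \Cref{lemma:test deviation bound} but is opposite in sign to the $(\beta^*_\ot-\beta^*_\tn)$ printed in the proposition statement --- this is an internal inconsistency of the paper (the term is mean-zero either way, so no magnitude bound changes), not an error of yours; second, when applying Cauchy--Schwarz you must, as you indicate, first split $(e_1-e_2)^\top\widehat\Sigma_\tn e_2$ into $e_1^\top\widehat\Sigma_\tn e_2$ and $e_2^\top\widehat\Sigma_\tn e_2$, since $e_1-e_2$ need not lie in the cone $\mathcal C_S$.
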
  
  
\begin{proof} In view of  \Cref{lemma:test deviation bound}, it suffices to bound 
$| \mathcal R_1(t) - \mathcal R_4(t) | $.
Observe  that 
\begin{align*}  \mathcal R_1(t) - \mathcal R_4(t) =  \frac{2}{t} \sum_{i=1}^t \left(     y_i -  x_i  ^\top\widehat \beta  _\ot \right)    \xi_i      = &
 \frac{2}{t } \sum_{i=1}^t   \epsilon_i   \xi_i  + \frac{2}{t } \sum_{i=1}^t   x_i^\top(    \beta^*_i   - \beta^*_\ot + \beta^*_\ot -\widehat \beta_\ot  
 )\xi_i
   \end{align*} 
Note that by \Cref{condition:deviation}{\bf d} and \Cref{lemma:interval lasso},
\begin{align*}
\bigg| \frac{2}{t} \sum_{i=1}^t   x_i^\top(     \beta^*_\ot   -\widehat \beta_\ot  
 )\xi_i   \bigg| 
 \le C_1 \sigma_\xi \sqrt { \frac{\log(p) }{t } } \|  \beta^*_\ot   -\widehat \beta_\ot  \|_1 \le C_2\sigma_\xi   \frac{\s \log(pn) }{n} .
\end{align*}
\end{proof} 

 \begin{proposition}  \label{theorem:alternative counterpart} 
 Suppose $\{ \xi  _i \}_{i=1}^n  \overset {i.i.d.} {\sim} N(0, \sigma^2_\xi)$.  Suppose \Cref{assume: model assumption}    and \Cref{condition:deviation} hold. 
Let \begin{align*} 
 \mathcal R_1 ' (t)  = & \frac{2}{n-t } \sum_{i=t+1}^n \left(  y_i -  x_i  ^\top   \widehat \beta_{\tn }     \right)  \left(   x_i  ^\top  (\widehat \beta _{ \tn }   - \widehat \beta_{\ot })- \xi _i  \right),
 \\
 \mathcal R_2' (t)  =&  \frac{1}{  t }   \sum_{ i =1}^t   x_i^\top \widehat  \beta _\ot    \left(    y_i    -   x_i^\top\widehat \beta_\ot   \right) ,
 \\
 \mathcal R_3' (t)  = & \frac{1}{t }    \sum_{ i= 1}^{t  }  x_i^\top  \widehat \beta _\tn    \left( y_i    -   x_i^\top \widehat \beta_\ot   \right).
 \end{align*} 
 Denote 
\begin{align*} 
  \mathfrak T'_{n,\xi} (t)   = &\frac{2}{n-t} \sum_{i=t+1}^ n     x_i  ^\top     (  \beta^*_\tn  - \beta^*_\ot    )      \epsilon_i    +  \frac{2}{n-t} \sum_{i=t+1}^ n     x_i  ^\top     (  \beta^*_i  - \beta^*_\tn   )      \xi_i        -  \frac{2}{n-t} \sum_{i=t+1}^ n    \xi_i\epsilon_i   
\\
  + &   ( \beta^*_\tn   -     \beta_\ot  ^*  )  ^\top  (  \widehat \Sigma  _\tn   -\Sigma  )  (  \beta^*_\tn  -    \beta ^* _\ot   )   +
 \frac{2}{ n- t } \sum_{i=t+1}^n x_i^\top (  \beta_i  ^* -\beta_\tn ^*    )   x_i^\top ( \beta_\tn ^*  -  \beta_\ot ^*  )  , 
 \\
   \mathfrak S'_n  (t)  = &     \frac{2 }{ t  } \sum_{i=1 } ^ t  x_i^\top   ( \beta_\ot ^* - \beta_ \tn ^{* }  )  \epsilon_i 
  +  \frac{2 } {  t } \sum_{i= 1} ^t      ( \beta_ \tn ^{* } -\beta_\ot ^*)  x_ix_i^\top   (\beta_ i ^* - \beta^*_\ot  )       .
\end{align*} 
It holds that 
\begin{align*}    
& \Bigg |   ( \widehat \beta_\tn  -   \widehat \beta_\ot )^\top     \widehat \Sigma _\tn  ( \widehat \beta_\tn  -   \widehat \beta_\ot )   + 
\mathcal R_1 ' (t)  +2\mathcal R_2 ' (t)  -2 \mathcal R_3 ' (t)  
 \\
 -   & ( \beta^*_\tn  - \beta^*_\ot   ) ^\top  \Sigma  ( \beta^*_\tn   - \beta^*_\ot  )      -  \big\{   \mathfrak T'_{n,\xi} (t) + \mathfrak  S_n'(t)   \big\}    
\Bigg| 
 \\   \le
 & C (1+\sigma_\xi ) \frac{\s \log(pn)}{ n}  
   . \end{align*}
\end{proposition}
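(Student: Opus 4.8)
The plan is to obtain \Cref{theorem:alternative counterpart} as the mirror image of \Cref{theorem:alternative} under the interchange of the two half-intervals $\ot$ and $\tn$, keeping in mind that the randomization enters $\mathcal S_n(t)$ with a minus sign on the right segment. Concretely, $\mathcal R_1'(t),\mathcal R_2'(t),\mathcal R_3'(t)$ and $\mathfrak T'_{n,\xi}(t),\mathfrak S'_n(t)$ are obtained from $\mathcal R_1(t),\mathcal R_2(t),\mathcal R_3(t)$ and $\mathfrak T_{n,\xi}(t),\mathfrak S_n(t)$ by swapping $\widehat\beta_\ot\leftrightarrow\widehat\beta_\tn$, $\widehat\Sigma_\ot\leftrightarrow\widehat\Sigma_\tn$, $\beta^*_\ot\leftrightarrow\beta^*_\tn$, replacing $\frac1t\sum_{i=1}^t$ by $\frac1{n-t}\sum_{i=t+1}^n$, and flipping $\xi_i\mapsto-\xi_i$ on the right block. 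Since $t\in[\zeta n,(1-\zeta)n]$ we have $\min\{t,n-t\}\ge\zeta n$, so \Cref{lemma:interval lasso} and \Cref{condition:deviation} apply to each of the intervals $\ot$ and $\tn$ appearing below, exactly as in the proof of \Cref{theorem:alternative}.

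First I would invoke the deviation-bound lemma (\Cref{lemma:test deviation bound}) used in the proof of \Cref{theorem:alternative}, now with $\tn$ playing the role of $\ot$. This controls
$$(\widehat\beta_\tn-\widehat\beta_\ot)^\top\widehat\Sigma_\tn(\widehat\beta_\tn-\widehat\beta_\ot)+\mathcal R_4'(t)+2\mathcal R_2'(t)-2\mathcal R_3'(t)$$
around $(\beta^*_\tn-\beta^*_\ot)^\top\Sigma(\beta^*_\tn-\beta^*_\ot)+\mathfrak T'_{n,\xi}(t)+\mathfrak S'_n(t)$ up to an additive error $O((1+\sigma_\xi)\s\log(pn)/n)$, where $\mathcal R_4'(t):=\frac2{n-t}\sum_{i=t+1}^n(y_i-x_i^\top\widehat\beta_\tn)\,x_i^\top(\widehat\beta_\tn-\widehat\beta_\ot)$ is $\mathcal R_1'(t)$ with the randomization removed. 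Because \Cref{lemma:test deviation bound} is proved for a generic interval and generic reference vectors, this step is an immediate transcription and requires no new estimates; what it produces is the present claim with $\mathcal R_1'(t)$ replaced by $\mathcal R_4'(t)$.

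It then remains to control $\mathcal R_1'(t)-\mathcal R_4'(t)$ and to check that its leading pieces are already accounted for in $\mathfrak T'_{n,\xi}(t)$. Writing $y_i-x_i^\top\widehat\beta_\tn=\epsilon_i+x_i^\top(\beta_i^*-\beta^*_\tn)+x_i^\top(\beta^*_\tn-\widehat\beta_\tn)$ and using that $\mathcal R_1'(t)$ carries $-\xi_i$,
\begin{align*}
\mathcal R_1'(t)-\mathcal R_4'(t)
&=-\frac{2}{n-t}\sum_{i=t+1}^n(y_i-x_i^\top\widehat\beta_\tn)\xi_i\\
&=-\frac{2}{n-t}\sum_{i=t+1}^n\epsilon_i\xi_i
-\frac{2}{n-t}\sum_{i=t+1}^n x_i^\top(\beta_i^*-\beta^*_\tn)\xi_i
-\frac{2}{n-t}\sum_{i=t+1}^n x_i^\top(\beta^*_\tn-\widehat\beta_\tn)\xi_i.
\end{align*}
The first two sums are exactly the $\xi_i\epsilon_i$ and $x_i^\top(\beta_i^*-\beta^*_\tn)\xi_i$ contributions of $\mathfrak T'_{n,\xi}(t)$ (with the signs one reads off from the $-\xi_i$ in $\mathcal R_1'$), so they combine with the output of the previous step to reconstruct $\mathfrak T'_{n,\xi}(t)$. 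For the last sum, \Cref{condition:deviation}{\bf d} with $\beta=\beta^*_\tn-\widehat\beta_\tn$ gives $C\sigma_\xi\sqrt{\log(pn)/(n-t)}\,\|\beta^*_\tn-\widehat\beta_\tn\|_1$, and \Cref{lemma:interval lasso} (valid since $n-t\ge\zeta n$) bounds $\|\beta^*_\tn-\widehat\beta_\tn\|_1\le C\s\sqrt{\log(pn)/n}$, so this term is $O(\sigma_\xi\s\log(pn)/n)$. Adding it to the $O((1+\sigma_\xi)\s\log(pn)/n)$ error from the lemma yields the claimed bound.

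I expect the only genuinely delicate point to be the sign bookkeeping. Under the interchange $\ot\leftrightarrow\tn$ together with $\xi_i\mapsto-\xi_i$ on the right block, one must verify that every term of $\mathfrak T_{n,\xi},\mathfrak S_n$ maps to the corresponding term of $\mathfrak T'_{n,\xi},\mathfrak S'_n$ with the correct sign: the $\xi_i\epsilon_i$ term and the cross term $x_i^\top(\beta_i^*-\beta^*)\xi_i$ change sign on the right segment (they are linear in $\xi_i$), whereas the $x_i^\top(\beta^*_\tn-\beta^*_\ot)\epsilon_i$ cross term and the $(\widehat\Sigma-\Sigma)$-quadratic term, which do not involve $\xi$, do not. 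Once this correspondence is confirmed, the argument is a verbatim copy of the proof of \Cref{theorem:alternative}.
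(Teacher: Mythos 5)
Your proposal is correct and takes essentially the same route as the paper: the paper's own proof of \Cref{theorem:alternative counterpart} consists of the single observation that it is the symmetric counterpart of \Cref{theorem:alternative}, and your argument is exactly the detailed transcription of that symmetry (reduction to \Cref{lemma:test deviation bound} on the interval $\tn$, then handling $\mathcal R_1'(t)-\mathcal R_4'(t)$ via \Cref{condition:deviation}\textbf{d} and \Cref{lemma:interval lasso}). One remark on your ``delicate point'': your sign bookkeeping is in fact the correct one --- expanding $-\frac{2}{n-t}\sum_{i=t+1}^n(y_i-x_i^\top\widehat\beta_\tn)\xi_i$ produces the cross term $\frac{2}{n-t}\sum_{i=t+1}^n x_i^\top(\beta_i^*-\beta_\tn^*)\xi_i$ with a \emph{minus} sign, whereas the displayed $\mathfrak T'_{n,\xi}(t)$ prints it with a plus; this appears to be a typo in the statement rather than an error in your derivation, and it is harmless downstream because $\xi_i$ is symmetric and independent of the data, so only the variance of that term ever enters.
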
   
 \begin{proof}
 Observe that
\Cref{theorem:alternative counterpart}  is the symmetric counterpart of \Cref{theorem:alternative}.
The proof of  \Cref{theorem:alternative counterpart} is identical to \Cref{theorem:alternative}  
 and is therefore omitted. 
 \end{proof}

\subsection*{Proofs Related to \Cref{theorem:alternative} and \Cref{theorem:alternative counterpart}}
Throughout this subsection, we assume that $\zeta\in (0, 1/2)$and  that \Cref{condition:deviation} and  \Cref{condition:process}  hold.  The justification of these conditions will be postponed to \Cref{section:distribution}
  
\begin{lemma} \label{lemma:bias term R2}
For $t\in \{ 1, \ldots, n\}$, denote 
$$ \mathcal R_2 (t)  =  \frac{1}{ n-t }   \sum_{ i = t+1}^n  x_i^\top \widehat  \beta _\tn   \left(    y_i    -   x_i^\top\widehat \beta_\tn  \right)   .$$
If $ n-t \ge \zeta n$, then  it holds that 
\begin{align*}
 \bigg| &\left(   \widehat  \beta_\tn ^{\top}  \Sigma  \widehat  \beta _\tn  + 2 \mathcal  R_2(t)      \right)   
- \beta_\tn^{*\top}  \Sigma  \beta^*_\tn  +  \Bigg\{ 
-2 \beta_{\tn}^{*\top} 
\left (\Sigma  -  \widehat \Sigma_\tn   \right ) ( \widehat \beta_\tn  -\beta^*_\tn )   
 \\ - &      \frac{ 2 }{n-t } \sum_{i=t+1}^n     \epsilon_i x_i^\top  \beta _\tn ^*        - 
\frac{   2 }{n-t } \sum_{i=t+1}^n          \beta_\tn     ^ {*\top}      x_i     x_i  ^\top 
 ( \beta_i ^*  - \beta^*_{(t,n]}  )     \Bigg \}    \bigg | \le  O \left( \frac{ \s \log(pn)}{n} \right )
\end{align*}
\end{lemma}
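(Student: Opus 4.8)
The plan is to reduce the lemma to a single exact algebraic identity for $\widehat\beta_\tn^\top\Sigma\widehat\beta_\tn+2\mathcal R_2(t)$ and then kill the resulting error terms using the deviation bounds of \Cref{condition:deviation} and the Lasso rates of \Cref{lemma:interval lasso}. Abbreviate $\I=\tn$, $m=|\I|=n-t\ge\zeta n$, $\widehat\beta=\widehat\beta_\I$, $\beta^*=\beta^*_\I$, $\widehat\Sigma=\widehat\Sigma_\I$, $\delta=\widehat\beta-\beta^*$, and recall $p\ge n^\alpha$ so that $\log(pn)=O(\log p)$. First I would write $\mathcal R_2(t)=\widehat\beta^\top\big(\frac1m\sum_{i\in\I}x_iy_i\big)-\widehat\beta^\top\widehat\Sigma\widehat\beta$, substitute $y_i=x_i^\top\beta_i^*+\epsilon_i$, and split $\frac1m\sum_{i\in\I}x_ix_i^\top\beta_i^*=\widehat\Sigma\beta^*+\frac1m\sum_{i\in\I}x_ix_i^\top(\beta_i^*-\beta^*)$ using the defining identity $\beta^*=\frac1m\sum_{i\in\I}\beta_i^*$. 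Adding $\widehat\beta^\top\Sigma\widehat\beta$ and using the elementary identities $-2\widehat\beta^\top\widehat\Sigma\widehat\beta+2\widehat\beta^\top\widehat\Sigma\beta^*=-2\widehat\beta^\top\widehat\Sigma\delta=-2\beta^{*\top}\widehat\Sigma\delta-2\delta^\top\widehat\Sigma\delta$ and $\widehat\beta^\top\Sigma\widehat\beta=\beta^{*\top}\Sigma\beta^*+2\beta^{*\top}\Sigma\delta+\delta^\top\Sigma\delta$, the quantity inside the absolute value in the statement becomes \emph{exactly}
\[
\frac{2}{m}\sum_{i\in\I}\delta^\top x_i\epsilon_i+\frac{2}{m}\sum_{i\in\I}\delta^\top x_ix_i^\top(\beta_i^*-\beta^*)+\delta^\top(\Sigma-2\widehat\Sigma)\delta,
\]
so it suffices to bound these three terms by $O(\s\log(pn)/n)$.

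For the first term I would apply \Cref{condition:deviation}(b) with $\beta=\delta$, which gives $\big|\frac1m\sum_{i\in\I}\epsilon_i x_i^\top\delta\big|\le C\sqrt{\log(pn)/m}\,\|\delta\|_1$; this is $O(\s\log(pn)/n)$ by the $\ell_1$-rate $\|\delta\|_1\le C\s\sqrt{\log p/n}$ of \Cref{lemma:interval lasso} and $m\ge\zeta n$. For the second term I would apply \Cref{condition:deviation}(c) with the non-random vectors $u_i=\beta_i^*-\beta^*$ and $\beta=\delta$, bounding it by $\big|\big(\frac1m\sum_{i\in\I}u_i\big)^\top\Sigma\delta\big|+C\big(\max_i\|u_i\|_2\big)\sqrt{\log(pn)/m}\,\|\delta\|_1$; the first piece vanishes identically because $\frac1m\sum_{i\in\I}(\beta_i^*-\beta^*)=0$, and the second is again $O(\s\log(pn)/n)$ since $\max_i\|\beta_i^*-\beta^*\|_2=O(1)$ (finitely many change-points of bounded jump size).

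For the quadratic remainder, $|\delta^\top\Sigma\delta|\le\Lambda_{\max}(\Sigma)\|\delta\|_2^2=O(\s\log p/n)$ by \Cref{assume: model assumption}(b) and the $\ell_2$-rate $\|\delta\|_2^2\le C\s\log p/n$ of \Cref{lemma:interval lasso}; and since \Cref{lemma:interval lasso} also places $\delta$ in the cone $\{v:\|v_{S_\I^c}\|_1\le 3\|v_{S_\I}\|_1\}$, \Cref{condition:deviation}(a) gives $|\delta^\top(\widehat\Sigma-\Sigma)\delta|\le C\sqrt{\s\log(pn)/m}\,\|\delta\|_2^2=o(\|\delta\|_2^2)$ under \Cref{assume: model assumption}(e), so $|\delta^\top\widehat\Sigma\delta|=O(\s\log p/n)$ as well. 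Adding the three bounds completes the proof.

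The only step that is not routine bookkeeping is the cross term $\frac1m\sum_{i\in\I}\delta^\top x_ix_i^\top(\beta_i^*-\beta^*)$: $\delta$ is the data-dependent Lasso error, so a crude Cauchy--Schwarz split of $\delta$ from the data would be off by a factor of $\sqrt n$ ($O(\s\log p/\sqrt n)$ instead of $O(\s\log p/n)$). The resolution is that \Cref{condition:deviation}(c) is precisely a uniform-in-$\beta$ deviation inequality for the form $\frac1m\sum u_i^\top x_ix_i^\top\beta$, and the population term it leaves behind is annihilated by the very definition of $\beta^*_\I$ as the within-interval average of the $\beta_i^*$. Everything else reduces to inserting the estimation rates of \Cref{lemma:interval lasso}; the same scheme (with (b) and (c) replaced by their one-sided analogues) handles the symmetric statement for $\mathcal R_2'$ and the counterparts used in \Cref{theorem:alternative} and \Cref{theorem:alternative counterpart}.
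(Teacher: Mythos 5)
Your proof is correct and follows essentially the same route as the paper's: the exact remainder identity you derive, $\delta^\top(\Sigma-2\widehat\Sigma)\delta+\frac{2}{m}\sum\delta^\top x_i\epsilon_i+\frac{2}{m}\sum\delta^\top x_ix_i^\top(\beta_i^*-\beta^*)$, is precisely what the paper's three-term decomposition reduces to after its Steps 1--3, and you invoke the same ingredients (Condition~\ref{condition:deviation}(a)--(c), the vanishing of the population cross term via $\sum_{i\in\I}(\beta_i^*-\beta_\I^*)=0$, the boundedness of $\max_i\|\beta_i^*-\beta_\I^*\|_2$, and the Lasso rates of Lemma~\ref{lemma:interval lasso}) to bound each piece by $O(\s\log(pn)/n)$. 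No gaps.
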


\begin{proof}
Note that 
\begin{align} \nonumber 
&  \widehat  \beta_\tn ^{\top}  \Sigma   \widehat  \beta _\tn  + 2 \mathcal  R_2   (t) - \beta_\tn^{*\top} \Sigma  \beta_\tn^{* }
 \\ 
 \label{eq:bias 1 term 1}
 =&    \frac{   2 }{n-t } \sum_{i=t+1}^n     x_i^\top \widehat  \beta _\tn   \left ( \epsilon_i + x_i^\top 
 ( \beta_i ^*  -  \beta_{(t,n]} ^* ) \right ) 
 \\ \label{eq:bias 1 term 2}
 + & 2\widehat \beta _\tn ^\top  \Sigma   ( \widehat \beta _\tn   -\beta^*_\tn   )  -2\widehat \beta _\tn ^\top  \widehat  \Sigma_\tn   ( \widehat \beta _\tn   -\beta^*_\tn   ) 
 \\ \label{eq:bias 1 term 3}
   -&  ( \widehat \beta_\tn -\beta^*_\tn  ) ^\top \Sigma  (\widehat \beta_\tn -\beta^*_\tn).
\end{align}
\
\\
{\bf Step 1.} For \Cref{eq:bias 1 term 1}, note that 
\begin{align*} & 
\bigg| \frac{ 2 }{n-t } \sum_{i=t+1}^n     x_i^\top \widehat  \beta _\tn     \epsilon_i   -   \frac{ 2 }{n-t } \sum_{i=t+1}^n     x_i^\top  \beta _\tn ^*     \epsilon_i  \bigg|
\\
 = &\bigg|   \frac{ 2 }{n-t } \sum_{i=t+1}^n     x_i^\top  ( \widehat  \beta _\tn -\beta^*_\tn )      \epsilon_i   \bigg| 
 \\
 \le & C_1 \sqrt { \frac{\log(pn)   }{  n-t } } \| \widehat  \beta _\tn -\beta^*_\tn  \| _1  
 \le C_1'    \frac{ \s \log(pn)   }{  n-t },
\end{align*} 
where the first inequality follows  from  \Cref{condition:deviation}{\bf b} and the  second  inequality  follows from \Cref{lemma:interval lasso}.  
In addition,
\begin{align*}
  &\bigg| \frac{   2 }{n-t } \sum_{i=t+1}^n      (  \widehat  \beta _\tn   - \beta^* _\tn ) ^\top  x_i     x_i  ^\top 
 ( \beta_i ^*  - \beta^*_{(t,n]}  ) -         \frac{   2 }{n-t } \sum_{i=t+1}^n    (  \widehat  \beta _\tn   - \beta^* _\tn ) ^\top  \Sigma   
 ( \beta_i ^*  - \beta^*_{(t,n]}  ) \bigg|
 \\
 \le & \max_{ t+1 \le i \le n } \| \beta_i ^*  - \beta^*_{(t,n]} \| _2   \sqrt { \frac{\log(pn) }{n-t } }   \|     \widehat  \beta _\tn   - \beta^* _\tn    \|_1  
 \\
 \le &  C_2    \frac{\s \log(pn)   }{  n-t }, 
\end{align*}
where the first inequality follows from \Cref{condition:deviation}{\bf c} and the second inequality follows from \Cref{lemma:interval lasso} and  \Cref{lemma:beta bounded 1}.  
Note that 
$$ \sum_{i=t+1}^n    (  \widehat  \beta _\tn   - \beta^* _\tn ) ^\top  \Sigma   
 ( \beta_i ^*  - \beta^*_{(t,n]}  )   =    (  \widehat  \beta _\tn   - \beta^* _\tn ) ^\top  \Sigma   
 \sum_{i=t+1}^n   ( \beta_i ^*  - \beta^*_{(t,n]}  )   =0 . $$
Consequently, 
\begin{align*} 
 \left | \eqref{eq:bias 1 term 1} -   \frac{ 2 }{n-t } \sum_{i=t+1}^n     \epsilon_i x_i^\top  \beta _\tn ^*        - 
\frac{   2 }{n-t } \sum_{i=t+1}^n          \beta_\tn     ^ {*\top}      x_i     x_i  ^\top 
 ( \beta_i ^*  - \beta^*_{(t,n]}  )  \right| \le C_3      \frac{\s \log(pn)   }{  n-t }. 
\end{align*} 
\
\\
{\bf Step 2.}
By \Cref{lemma:interval lasso}, 
\begin{align}
\label{eq:restricted vectors} \|  ( \widehat \beta_\tn  -\beta^*_\tn)_{S^c} \|_1\le 3 \|  ( \widehat \beta_\tn  -\beta^*_\tn )_S\|_1.
\end{align}
So 
\begin{align*}
&\frac{1}{2}\cdot \eqref{eq:bias 1 term 2} - \beta_{\tn}^{*\top} 
\left (\Sigma  - \widehat  \Sigma_\tn    \right ) ( \widehat \beta_\tn  -\beta^*_\tn )  
\\ = 
& ( \widehat \beta_\tn  -\beta^*_\tn )^\top 
 \left (\Sigma  - \widehat  \Sigma_\tn    \right ) ( \widehat \beta_\tn  -\beta^*_\tn ) 
\\
\le &C_4 \sqrt { \frac{\s \log(pn)   }{  n-t }}  \| \widehat \beta_\tn  -\beta^*_\tn \| _2 ^2  \le  C _4'     \frac{\s \log(pn)   }{  n-t }. 
\end{align*} 
where the first inequality follows from 
\Cref{condition:deviation}{\bf a}  and \Cref{eq:restricted vectors}, and the second inequality follows from \Cref{lemma:interval lasso} and the fact that 
$$  \frac{\s \log(pn)   }{  n-t } \le \frac{ \s \log(pn) }{ \zeta n } \le  \frac{1 }{ C_{snr} \zeta  }  <\infty .$$
\\
\\
{\bf Step 3.}
\Cref{lemma:interval lasso} implies that 
$$ ( \widehat \beta_\tn -\beta^*_\tn  ) ^\top \Sigma  (\widehat \beta_\tn -\beta^*_\tn) \le 
C_x^2 \| \widehat \beta_\tn -\beta^*_\tn \|_2^ 2 \le   C _5    \frac{\s \log(pn)   }{  n-t } .$$
\
\\
The desired result follows from putting all the previous steps together.

\end{proof}

\begin{lemma}\label{lemma:bias term R3}
For $t\in \{ 1, \ldots, n\}$, denote 
$$\mathcal R_3(t)  =  \frac{1}{n-t }    \sum_{ i=t+1}^{n }  x_i^\top  \widehat \beta _\ot    \left( y_i    -   x_i^\top \widehat \beta_\tn  \right)$$
If $ \min\{t, n-t \} \ge \zeta n$, then with probability goes to $1$, it holds that 
\begin{align*}
 \Bigg| &   \beta^{*\top} _\ot  \Sigma    ( \beta_\tn^*-\widehat \beta_\tn )   -\mathcal R_3(t)            +  
  \Bigg  \{  \frac{1}{n-t } \sum_{i=t+1} ^n x_i^\top    \beta_ \ot^{* } \epsilon_i    
  \\
  & +  \frac{1}{n-t } \sum_{i=t+1} ^n   \beta_\ot^{*  \top}  x_ix_i^\top   (\beta_ i ^* - \beta^*_\tn )   - \beta^{*\top} _\ot    (  \Sigma- \widehat \Sigma_\tn  )  (\beta_\tn^* -\widehat \beta_\tn )  \Bigg  \}    \Bigg| \le C  \frac {\s \log(pn)} {  n   }  .
\end{align*}
\end{lemma}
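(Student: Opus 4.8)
The plan is to substitute the model $y_i = x_i^\top\beta_i^* + \epsilon_i$ (for $i\in\tn$) and the population coefficient $\beta_\ot^*$ into $\mathcal R_3(t)$, reduce the bracketed quantity in the statement to a sum of three explicit remainder terms by an exact algebraic identity, and then bound each remainder by $O(\s\log(pn)/n)$ using \Cref{lemma:interval lasso} together with \Cref{condition:deviation}. Concretely, for $i\in\tn$ write $y_i - x_i^\top\widehat\beta_\tn = x_i^\top(\beta_i^*-\beta_\tn^*) + x_i^\top(\beta_\tn^*-\widehat\beta_\tn) + \epsilon_i$, and split $\widehat\beta_\ot = \beta_\ot^* + (\widehat\beta_\ot-\beta_\ot^*)$ inside $\mathcal R_3(t)$. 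The terms that carry $\beta_\ot^*$ produce exactly $\frac1{n-t}\sum_{i\in\tn}\beta_\ot^{*\top}x_ix_i^\top(\beta_i^*-\beta_\tn^*) + \beta_\ot^{*\top}\widehat\Sigma_\tn(\beta_\tn^*-\widehat\beta_\tn) + \frac1{n-t}\sum_{i\in\tn}x_i^\top\beta_\ot^*\epsilon_i$; combining this with $\beta_\ot^{*\top}\Sigma(\beta_\tn^*-\widehat\beta_\tn) = \beta_\ot^{*\top}(\Sigma-\widehat\Sigma_\tn)(\beta_\tn^*-\widehat\beta_\tn) + \beta_\ot^{*\top}\widehat\Sigma_\tn(\beta_\tn^*-\widehat\beta_\tn)$ and using $\frac1{n-t}\sum_{i\in\tn}\beta_i^* = \beta_\tn^*$, the entire bracketed expression in the statement collapses to $-(R_1 + R_2 + R_3)$ with $R_1 = \frac1{n-t}\sum_{i\in\tn}(\widehat\beta_\ot-\beta_\ot^*)^\top x_ix_i^\top(\beta_i^*-\beta_\tn^*)$, $R_2 = (\widehat\beta_\ot-\beta_\ot^*)^\top\widehat\Sigma_\tn(\widehat\beta_\tn-\beta_\tn^*)$, and $R_3 = \frac1{n-t}\sum_{i\in\tn}\epsilon_i\,x_i^\top(\widehat\beta_\ot-\beta_\ot^*)$.

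The terms $R_3$ and $R_1$ are handled by the deviation conditions. For $R_3$ I would apply \Cref{condition:deviation}{\bf b} on the interval $\tn$ with $\beta = \widehat\beta_\ot-\beta_\ot^*$ to get $|R_3|\le C\sqrt{\log(pn)/(n-t)}\,\|\widehat\beta_\ot-\beta_\ot^*\|_1$; since the hypothesis $\min\{t,n-t\}\ge\zeta n$ makes \Cref{lemma:interval lasso} applicable on both $\ot$ and $\tn$, this gives $\|\widehat\beta_\ot-\beta_\ot^*\|_1\le C\s\sqrt{\log p/n}$ and hence $|R_3| = O(\s\log(pn)/n)$. For $R_1$ I would invoke \Cref{condition:deviation}{\bf c} with the non-random vectors $u_i = \beta_i^*-\beta_\tn^*$, interval $\tn$, and $\beta = \widehat\beta_\ot-\beta_\ot^*$; the $\Sigma$-surrogate term $\frac1{n-t}\sum_{i\in\tn}(\beta_i^*-\beta_\tn^*)^\top\Sigma(\widehat\beta_\ot-\beta_\ot^*)$ vanishes because $\sum_{i\in\tn}(\beta_i^*-\beta_\tn^*)=0$, so $|R_1|\le C\big(\max_i\|\beta_i^*-\beta_\tn^*\|_2\big)\sqrt{\log(pn)/(n-t)}\,\|\widehat\beta_\ot-\beta_\ot^*\|_1$, which is $O(\s\log(pn)/n)$ after using the boundedness of $\max_i\|\beta_i^*-\beta_\tn^*\|_2$ (via \Cref{lemma:beta bounded 1}) and \Cref{lemma:interval lasso}.

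The hard part is $R_2$, a bilinear form pairing two Lasso errors against the sample covariance $\widehat\Sigma_\tn$, whose operator norm does not concentrate in the regime $p > n-t$, so polarization into cone quantities is not available directly. My plan is to stay inside the semi-inner-product of the PSD matrix $\widehat\Sigma_\tn$: by Cauchy--Schwarz, $|R_2|\le (u^\top\widehat\Sigma_\tn u)^{1/2}(v^\top\widehat\Sigma_\tn v)^{1/2}$ with $u = \widehat\beta_\ot-\beta_\ot^*$ and $v = \widehat\beta_\tn-\beta_\tn^*$. The last inequality of \Cref{lemma:interval lasso} places both $u$ and $v$ in the cone $\mathcal C_S$, so \Cref{condition:deviation}{\bf a} gives $|u^\top(\widehat\Sigma_\tn-\Sigma)u|\le C\sqrt{\s\log(pn)/(n-t)}\,\|u\|_2^2$ and the same for $v$; together with $\Lambda_{\max}(\Sigma)\le C_x$ and $\s\log(pn)/(n-t) = O(1)$ from \Cref{assume: model assumption}(e), this yields $u^\top\widehat\Sigma_\tn u\le C\|u\|_2^2$ and $v^\top\widehat\Sigma_\tn v\le C\|v\|_2^2$, hence $|R_2|\le C\|u\|_2\|v\|_2\le C'\s\log p/n$ by \Cref{lemma:interval lasso}.

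Summing the three bounds and using $n-t\ge\zeta n$ gives $|R_1+R_2+R_3| = O(\s\log(pn)/n)$, which is exactly the claimed estimate. The only non-routine ingredient is the treatment of $R_2$ described above; everything else is a direct application of the Lasso consistency bounds and the deviation conditions, and all the displayed cancellations are elementary rearrangements that I would verify in the write-up.
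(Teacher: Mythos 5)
Your proposal is correct, and its skeleton coincides with the paper's: the exact cancellation of all terms carrying $\beta^*_\ot$ and $\widehat\Sigma_\tn$, and the bounds on $R_1$ and $R_3$ via \Cref{condition:deviation}{\bf c} (with the non-random $u_i=\beta_i^*-\beta^*_\tn$, whose $\Sigma$-surrogate vanishes) and \Cref{condition:deviation}{\bf b} together with \Cref{lemma:interval lasso}, are exactly what the paper does (a minor bookkeeping point: with your sign conventions the bracketed quantity collapses to $-R_1+R_2-R_3$ rather than $-(R_1+R_2+R_3)$, which is immaterial for the bound). Where you genuinely diverge is the bilinear term $R_2=u^\top\widehat\Sigma_\tn v$ with $u=\widehat\beta_\ot-\beta^*_\ot$ and $v=\widehat\beta_\tn-\beta^*_\tn$. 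The paper writes $u^\top\widehat\Sigma_\tn v=u^\top\Sigma v-u^\top(\Sigma-\widehat\Sigma_\tn)v$, bounds the first piece by $C_x\|u\|_2\|v\|_2$, and controls the second by applying \Cref{condition:deviation}{\bf c} with the random direction $u$ treated as fixed — legitimate only because $\widehat\beta_\ot$ is independent of $\{(x_i,\epsilon_i)\}_{i=t+1}^n$; this is precisely the step the paper singles out as \eqref{eq:using independence} and has to re-justify through the extra uniform-over-the-cone deviation bound in \Cref{condition:deviation 2} for the $\beta$-mixing case. Your route — Cauchy--Schwarz in the PSD semi-inner product $(a,b)\mapsto a^\top\widehat\Sigma_\tn b$, then $u^\top\widehat\Sigma_\tn u\le C\|u\|_2^2$ and $v^\top\widehat\Sigma_\tn v\le C\|v\|_2^2$ from \Cref{condition:deviation}{\bf a} and the cone membership supplied by the third conclusion of \Cref{lemma:interval lasso} — is valid, yields the same $O(\s\log(pn)/n)$ rate, and buys something concrete: it never invokes independence of the two halves of the sample, so it would carry over unchanged to the temporally dependent setting. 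The one detail to make explicit in the write-up is that the cone in \Cref{condition:deviation}{\bf a} is $\mathcal C_S$ for the union support $S$, whereas \Cref{lemma:interval lasso} gives the cone condition relative to $S_\I$; since $S_\I\subseteq S$ the implication $\|w_{S^c}\|_1\le\|w_{S_\I^c}\|_1\le 3\|w_{S_\I}\|_1\le 3\|w_S\|_1$ goes the right way, but it should be recorded.
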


\begin{proof}
Note that 
\begin{align} \nonumber 
&   \beta^{*\top} _\ot  \Sigma    ( \beta_\tn^*-\widehat \beta_\tn )   -\mathcal R_3(t)    
\\ 
\label{eq:bias 2 term 1}
=& \beta^{*\top} _\ot  \Sigma    ( \beta_\tn^*-\widehat \beta_\tn )   -  \widehat \beta^{\top} _\ot  \widehat  \Sigma_\tn     ( \beta_\tn^*-\widehat \beta_\tn )
\\\label{eq:bias 2 term 2}
- & \frac{1}{n-t } \sum_{i=t+1} ^n x_i^\top  \widehat  \beta_ \ot \epsilon_i   - \frac{1}{n-t } \sum_{i=t+1} ^n  \widehat  \beta_ \ot ^\top x_ix_i^\top (\beta_ i^* - \beta^*_\tn )
 \end{align}
\
\\
{ \bf Step 1.}  Observe that 
\begin{align*}
\eqref{eq:bias 2 term 1}  =&  (\beta_\ot^* -\widehat \beta_\ot ) ^\top  \Sigma  (\beta_\tn^* -\widehat \beta_\tn ) 
+ \widehat \beta_\ot ^\top  ( \Sigma- \widehat \Sigma_\tn  )  (\beta_\tn^* -\widehat \beta_\tn )
\\
=
& (\beta_\ot^* -\widehat \beta_\ot ) ^\top  \Sigma   (\beta_\tn^* -\widehat \beta_\tn ) 
\\
+ & (  \widehat \beta_\ot  - \beta^*_\ot ) ^\top ( \Sigma- \widehat \Sigma_\tn  )  (\beta_\tn^* -\widehat \beta_\tn  )  
\\
+ &   \beta^{*\top} _\ot    (  \Sigma- \widehat \Sigma_\tn  )  (\beta_\tn^* -\widehat \beta_\tn ) .
\end{align*}
It holds that 
$$  (\beta_\ot^* -\widehat \beta_\ot )  ^\top  \Sigma  (\beta_\tn^* -\widehat \beta_\tn )   \le C_x \| \beta_\ot^* -\widehat \beta_\ot\|_2 
\| \beta_\tn^* -\widehat \beta_\tn \| _2  \le  C_1 \frac{\s\log (pn) }{ \sqrt { t(n-t) } } \le C_1' \frac{\s\log (pn) }{ n  }  ,$$
where the  second inequality follows from \Cref{lemma:interval lasso}. 
In addition, 
\begin{align} \nonumber&(  \widehat \beta_\ot  - \beta^*_\ot ) ^\top ( \Sigma- \widehat \Sigma_\tn  )  (\beta_\tn^* -\widehat \beta_\tn ) 
\\
\le& C_2 \sqrt { \frac{\log(pn)}{n } } \| \widehat \beta_\ot  - \beta^*_\ot  \|_2    \| \beta_\tn^* -\widehat \beta_\tn\| _1 \label{eq:using independence}
\\
\le & C _2 '\frac{ \s \log(pn) }{ \sqrt  { n(n-t) } } \le C_2'' \frac{ \s \log(pn)}{n } ,\nonumber
  \end{align}
  where the first inequality follows from  \Cref{condition:deviation}{\bf c} and the fact that the two collections of data $\{x_i,  \epsilon_i\}_{i=1}^{t} $ and $ \{x_i, \epsilon_i \}_{i=t+1}^n$  are   independent, and   the  second inequality follows from \Cref{lemma:interval lasso}.  So 
  $$ \left| \eqref{eq:bias 2 term 1}  - \beta^{*\top} _\ot    (  \Sigma- \widehat \Sigma_\tn  )  (\beta_\tn^* -\widehat \beta_\tn ) \right|  \le C_3 \frac{ \s \log(pn) }{ n } .$$
  \
  \\
  { \bf Step 2.} For \Cref{eq:bias 2 term 2},
  note that
  \begin{align*} 
  &\bigg| \frac{1}{n-t } \sum_{i=t+1} ^n x_i^\top  \widehat  \beta_ \ot \epsilon_i   - \frac{1}{n-t } \sum_{i=t+1} ^n x_i^\top    \beta_ \ot^{* } \epsilon_i  \bigg|
  \\=
  &\bigg| \frac{1}{n-t } \sum_{i=t+1} ^n x_i^\top  ( \widehat  \beta_ \ot -\beta_\ot^*)  \epsilon_i  \bigg| 
  \\
 \le & \sqrt { \frac{\log(pn)}{(n-t) }}  \| \widehat  \beta_ \ot -\beta_\ot^* \| _1 
 \le C\frac{\s \log(pn)}{\sqrt {t(n-t)}  }.
 \end{align*} 
 In addition,
 \begin{align*}
 &\bigg| \frac{1}{n-t } \sum_{i=t+1} ^n  (  \widehat  \beta_ \ot   -\beta_\ot^* )^\top x_ix_i^\top (\beta_ i^* - \beta^*_\tn ) 
 - \frac{1}{n-t } \sum_{i=t+1} ^n  (  \widehat  \beta_ \ot   -\beta_\ot^* )^\top  \Sigma   (\beta_i^* - \beta^*_\tn ) \bigg| 
 \\
 \le &  \sqrt { \max_{1\le i  \le n }\|  \beta_ i^* - \beta^*_\tn   \| _2  } C\sqrt { \frac {\log(pn)} {(n-t) }}  \|  \widehat  \beta_ \ot   -\beta_\ot^*\|_1
 \le C\s \frac {\log(pn)} { \sqrt { t(n-t) }  },
 \end{align*} 
 where the first inequality follows from \Cref{condition:deviation}{\bf c} and the second inequality follows from \Cref{lemma:interval lasso}.
Note that 
$$ \frac{1}{n-t } \sum_{i=t+1} ^n  (  \widehat  \beta_ \ot   -\beta_\ot^* )^\top  \Sigma   (\beta_i^* - \beta^*_\tn )  
=    (  \widehat  \beta_ \ot   -\beta_\ot^* )^\top  \Sigma  \frac{1}{n-t } \sum_{i=t+1} ^n  (\beta_ i^* - \beta^*_\tn ) =0.  $$
So
$$  \left| \eqref{eq:bias 2 term 2}  +  \frac{1}{n-t } \sum_{i=t+1} ^n x_i^\top    \beta_ \ot^{* } \epsilon_i     +  \frac{1}{n-t } \sum_{i=t+1} ^n   \beta_\ot^{*  \top}  x_ix_i^\top   (\beta_ i ^* - \beta^*_\tn )   \right|  \le C  \frac {\s \log(pn)} {  n   }.$$
\end{proof}

\begin{lemma} \label{lemma:bias 3}
For $t\in \{ 1, \ldots, n\}$,  let 
 \begin{align*} 
\mathcal R_4(t) = & \frac{2}{t } \sum_{i=1}^t \left(  y_i -  x_i  ^\top   \widehat \beta_{\ot }    \right)   x_i  ^\top  (\widehat \beta _{ \ot }   - \widehat \beta_{(t,n]})  . 
 \end{align*} 
 If $ \min\{t, n-t \} \ge \zeta n$,  then 
 \begin{align*}
 \Bigg | & ( \widehat \beta_\ot -   \widehat \beta_\tn)^\top     \widehat \Sigma _\ot ( \widehat \beta_\ot -   \widehat \beta_\tn )   + \mathcal R_4 (t) -
(\beta^*_\ot  -\widehat   \beta _\tn  ) ^\top 
 \Sigma  ( \beta^*_\ot  -  \widehat \beta _\tn    )    
 \\
& +\Bigg \{  -\frac{2}{  t } \sum_{i=1}^t x_i^\top ( \beta_\ot ^*  -   \beta_\tn ^*  )\epsilon_i   -( \beta_\ot ^*  -  \beta_\tn  ^*   )  ^\top  ( \widehat \Sigma_\ot  -\Sigma)( \beta_\ot ^*  -  \beta_\tn  ^*   )  
\\
& - \frac{2}{  t } \sum_{i=1}^t x_i^\top (  \beta_i  ^* -\beta_\ot ^*    )   x_i^\top ( \beta_\ot ^*  -  \beta_\tn ^*  )  \Bigg \}  \Bigg|  
 \le C\s \frac{\log(pn)}{n}. 
 \end{align*}
\end{lemma}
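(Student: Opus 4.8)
\textbf{Proof proposal for \Cref{lemma:bias 3}.}
The plan is to turn the left-hand side into an exact algebraic identity and then dispatch the leftover pieces with \Cref{condition:deviation} and \Cref{lemma:interval lasso}. Introduce $u := \widehat\beta_\ot - \beta^*_\ot$ and $w := \beta^*_\ot - \widehat\beta_\tn$, so that $\widehat\beta_\ot - \widehat\beta_\tn = u + w$, and set $a_i := \epsilon_i + x_i^\top(\beta^*_i - \beta^*_\ot)$, so that $y_i - x_i^\top\widehat\beta_\ot = a_i - x_i^\top u$ for $i \le t$. Expanding $(\widehat\beta_\ot - \widehat\beta_\tn)^\top\widehat\Sigma_\ot(\widehat\beta_\ot - \widehat\beta_\tn) + \mathcal R_4(t)$ and writing $x_i^\top(\widehat\beta_\ot - \widehat\beta_\tn) = x_i^\top u + x_i^\top w$, the two occurrences of $u^\top\widehat\Sigma_\ot w$ cancel and one is left with the identity
\[
(\widehat\beta_\ot - \widehat\beta_\tn)^\top\widehat\Sigma_\ot(\widehat\beta_\ot - \widehat\beta_\tn) + \mathcal R_4(t) = -\,u^\top\widehat\Sigma_\ot u \;+\; w^\top\widehat\Sigma_\ot w \;+\; \frac{2}{t}\sum_{i=1}^t a_i\, x_i^\top u \;+\; \frac{2}{t}\sum_{i=1}^t a_i\, x_i^\top w .
\]

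Next I would split $w = d + v$ with $d := \beta^*_\ot - \beta^*_\tn$ (deterministic, supported on $S$) and $v := \beta^*_\tn - \widehat\beta_\tn$, and write $w^\top\widehat\Sigma_\ot w = w^\top\Sigma w + w^\top(\widehat\Sigma_\ot - \Sigma)w$. Collecting the summands carrying $d$ reproduces exactly the three quantities displayed in the lemma: $\tfrac{2}{t}\sum_{i=1}^t x_i^\top d\,\epsilon_i$ (from $\tfrac{2}{t}\sum a_i x_i^\top w$), the quadratic deviation $d^\top(\widehat\Sigma_\ot-\Sigma)d$ (from $w^\top(\widehat\Sigma_\ot-\Sigma)w$), and $\tfrac{2}{t}\sum_{i=1}^t x_i^\top(\beta^*_i - \beta^*_\ot)\,x_i^\top d$; while $w^\top\Sigma w = (\beta^*_\ot - \widehat\beta_\tn)^\top\Sigma(\beta^*_\ot - \widehat\beta_\tn)$ is the advertised leading term. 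It then remains to show that the residue
\[
-\,u^\top\widehat\Sigma_\ot u + 2d^\top(\widehat\Sigma_\ot-\Sigma)v + v^\top(\widehat\Sigma_\ot-\Sigma)v + \tfrac{2}{t}\!\sum\epsilon_i x_i^\top u + \tfrac{2}{t}\!\sum\epsilon_i x_i^\top v + \tfrac{2}{t}\!\sum x_i^\top(\beta^*_i - \beta^*_\ot)x_i^\top u + \tfrac{2}{t}\!\sum x_i^\top(\beta^*_i - \beta^*_\ot)x_i^\top v
\]
is $O(\s\log(pn)/n)$.

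Each residual term is a one-line estimate. For $u^\top\widehat\Sigma_\ot u$ and $v^\top(\widehat\Sigma_\ot - \Sigma)v$ I would use \Cref{condition:deviation}{\bf a}, noting that $u, v \in \mathcal C_S$ (since $\widehat\beta_\I - \beta^*_\I$ lies in the cone of $S_\I$ by \Cref{lemma:interval lasso} and $S_\I \subseteq S$ whenever $|\I| \ge \zeta n$), together with $\|u\|_2^2, \|v\|_2^2 = O(\s\log p/n)$. For $d^\top(\widehat\Sigma_\ot-\Sigma)v$ use \Cref{condition:deviation}{\bf c} with constant weights $u_i \equiv d$ and $\beta = v$, invoking $\|d\|_2 = O(1)$ (from \Cref{lemma:beta bounded 1}) and $\|v\|_1 = O(\s\sqrt{\log p/n})$. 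For $\tfrac{1}{t}\sum\epsilon_i x_i^\top u$ and $\tfrac{1}{t}\sum\epsilon_i x_i^\top v$ use \Cref{condition:deviation}{\bf b}, which is uniform over $\beta \in \mathbb R^p$ and hence applies to the random $u, v$. For the mixed terms $\tfrac{1}{t}\sum x_i^\top(\beta^*_i - \beta^*_\ot)x_i^\top u$ and $\tfrac{1}{t}\sum x_i^\top(\beta^*_i - \beta^*_\ot)x_i^\top v$ use \Cref{condition:deviation}{\bf c} with weights $u_i = \beta^*_i - \beta^*_\ot$; crucially, the associated population term $\tfrac{1}{t}\sum_{i\le t}(\beta^*_i - \beta^*_\ot)^\top\Sigma(\cdot)$ vanishes because $\sum_{i=1}^t(\beta^*_i - \beta^*_\ot) = 0$, leaving only a deviation of order $(\max_i\|\beta^*_i - \beta^*_\ot\|_2)\sqrt{\log(pn)/t}\,\|\cdot\|_1 = O(\s\log(pn)/n)$. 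All these bounds use the SNR condition $n \ge C_{snr}\s\log p$ to collapse products such as $\sqrt{\s\log(pn)/n}\cdot(\s\log p/n)$ into $O(\s\log(pn)/n)$, and the convention $\log(pn) = O(\log p)$.

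The only genuinely delicate part is the bookkeeping in the first two steps: verifying that the $2u^\top\widehat\Sigma_\ot w$ cross term cancels, checking the cone membership $u, v \in \mathcal C_S$ needed to apply \Cref{condition:deviation}{\bf a}, and, above all, noticing that $\sum_{i=1}^t(\beta^*_i - \beta^*_\ot)$ telescopes to zero — this is precisely what keeps the two mixed terms at order $\s\log(pn)/n$ rather than the much larger $\sqrt{\s\log(pn)/n}$. Once the identity and these observations are in place, the remaining estimates are routine applications of \Cref{condition:deviation} and \Cref{lemma:interval lasso}.
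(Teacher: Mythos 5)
Your proof is correct and follows essentially the same route as the paper's: the same exact cancellation of the cross term $2(\widehat\beta_\ot-\widehat\beta_\tn)^\top\widehat\Sigma_\ot(\widehat\beta_\ot-\beta^*_\ot)$ against the quadratic part of $\mathcal R_4(t)$, the same splitting into population terms plus residuals controlled by \Cref{condition:deviation} and \Cref{lemma:interval lasso}, and the same crucial use of $\sum_{i=1}^t(\beta^*_i-\beta^*_\ot)=0$ to kill the population contribution of the mixed terms. The only difference is bookkeeping — you group residuals by whether they carry $u=\widehat\beta_\ot-\beta^*_\ot$ or $v=\beta^*_\tn-\widehat\beta_\tn$, while the paper organizes them by the three quadratic-form pieces — which changes nothing of substance.
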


\begin{proof} Note that 
  \begin{align}  \nonumber 
 & ( \widehat \beta_\ot -   \widehat \beta_\tn)^\top     \widehat \Sigma _\ot ( \widehat \beta_\ot -   \widehat \beta_\tn )    -
(\beta^*_\ot  -\widehat   \beta _\tn  ) ^\top 
 \Sigma  ( \beta^*_\ot  -  \widehat \beta _\tn    )    
 \\ \label{eq:bias 3 term 1}
 = 
 &   2  ( \widehat \beta_\ot -   \widehat \beta_\tn)^\top     \widehat \Sigma _\ot  (\widehat \beta_\ot  -\beta^*_\ot  ) 
 \\ \label{eq:bias 3 term 2}
+ 
 & (\beta^*_\ot  -\widehat   \beta _\tn  ) ^\top 
 (\widehat \Sigma_\ot - \Sigma )  ( \beta^*_\ot  -  \widehat \beta _\tn    )     
 \\ \label{eq:bias 3 term 3}
 - &   \left   (\widehat \beta_\ot      -    \beta_\ot^*       \right  )^\top \widehat \Sigma_\ot 
 \left   (\widehat \beta_\ot      -    \beta_\ot^*            \right  )   
  \end{align} 
  \
  \\
{  \bf Step 1. } Note that 
 \begin{align*}
 \eqref{eq:bias 3 term 1}  + \mathcal R_4 (t) 
  = 
 & \frac{2}{  t } \sum_{i=1}^t x_i^\top (\widehat \beta_\ot - \widehat \beta_\tn )\epsilon_i  + 
 \frac{2}{  t } \sum_{i=1}^t x_i^\top (  \beta_i  ^* -\beta_\ot ^*    )   x_i^\top (\widehat \beta_\ot - \widehat \beta_\tn ).
 \end{align*} 
 It holds that 
\begin{align*} &\bigg|   \frac{2}{  t } \sum_{i=1}^t x_i^\top (\widehat \beta_\ot - \widehat \beta_\tn )\epsilon_i   -  \frac{2}{  t } \sum_{i=1}^t x_i^\top ( \beta_\ot ^*  -   \beta_\tn ^*  )\epsilon_i  \bigg| 
\\
 \le 
 & C_1\sqrt {\frac{\log(pn)}{t}} \left(  \| \widehat \beta_\ot -\beta_\ot ^*\|_ 1+   \|    \widehat \beta_\tn-\beta_\tn^* \|_1   \right) 
 \\
 \le &C_1' \frac{\s \log(pn)}{n },
 \end{align*}
 where the first inequality follows from \Cref{condition:deviation}{\bf b}, and the second inequality follows from \Cref{lemma:interval lasso}.
 In addition,
 \begin{align*} 
 & \left| \frac{2}{  t } \sum_{i=1}^t x_i^\top (  \beta_i  ^* -\beta_\ot ^*    )   x_i^\top (\widehat \beta_\ot - \widehat \beta_\tn )
  -\frac{2}{  t } \sum_{i=1}^t x_i^\top (  \beta_i  ^* -\beta_\ot ^*    )   x_i^\top ( \beta_\ot ^*  -  \beta_\tn ^*  )\right| 
  \\
\le  &\left| \frac{2}{  t } \sum_{i=1}^t x_i^\top (  \beta_i  ^* -\beta_\ot ^*    )   x_i^\top (\widehat \beta_\ot -  \beta_\ot ^*  )
  \right| + \left| \frac{2}{  t } \sum_{i=1}^t x_i^\top (  \beta_i  ^* -\beta_\ot ^*    )   x_i^\top ( \widehat \beta_\tn  -  \beta_\tn ^*  )\right|  
  \\
=
&\left|\frac{2}{  t } \sum_{i=1}^t x_i^\top (  \beta_i  ^* -\beta_\ot ^*    )   x_i^\top (\widehat \beta_\ot -  \beta_\ot ^*  )
  -   \frac{2}{  t } \sum_{i=1}^t  (  \beta_i  ^* -\beta_\ot ^*    ) ^\top   \Sigma (\widehat \beta_\ot -  \beta_\ot ^*  )  \right|   
+
  \\
  +    &\left| \frac{2}{  t } \sum_{i=1}^t x_i^\top (  \beta_i  ^* -\beta_\ot ^*    )   x_i^\top   ( \widehat \beta_\tn  -  \beta_\tn ^*  ) 
  -  \frac{2}{  t } \sum_{i=1}^t  (  \beta_i  ^* -\beta_\ot ^*    ) ^\top   \Sigma   ( \widehat \beta_\tn  -  \beta_\tn ^*  )   \right|   
  \\
  \le  &  C_2\max_{1\le i \le t } \| \beta_i  ^* -\beta_\ot ^*  \|  _2 \sqrt{ \frac{\log(pn)}{ t  } }  \left( \| \widehat \beta_\ot -  \beta_\ot ^* \| _1  
  + \| \widehat \beta_\tn -  \beta_\tn ^* \| _1\right) 
  \\
  \le 
  &  C_2'  \s\frac{\log(pn)}{ n }  ,
   \end{align*}
   where the  equality follows from the observation that 
   $$  \sum_{i=1}^t  (  \beta_i  ^* -\beta_\ot ^*    ) ^\top   \Sigma (\widehat \beta_\ot -  \beta_\ot ^*  )  =0 , \quad  \sum_{i=1}^t  (  \beta_i  ^* -\beta_\ot ^*    ) ^\top   \Sigma   ( \widehat \beta_\tn  -  \beta_\tn ^*  ) =0  ,$$
   the second to last inequality follows from  \Cref{condition:deviation}{\bf c}, and the last inequality follows from  \Cref{lemma:interval lasso}.
   \\
   \\
  { \bf Step 2.} It holds that 
\begin{align*}
&\eqref{eq:bias 3 term 2}   - ( \beta_\ot ^*  -  \beta_\tn  ^*   )  ^\top  ( \widehat \Sigma_\ot  -\Sigma)( \beta_\ot ^*  -  \beta_\tn  ^*   )   
\\ 
=
&
2 ( \beta_\tn  ^* - \beta_\ot ^*  )  ^\top  ( \widehat \Sigma_\ot  -\Sigma)( \widehat \beta_\tn -  \beta_\tn  ^*    ) + 
( \widehat \beta_\tn -  \beta_\tn  ^*    )  ^\top  ( \widehat \Sigma_\ot  -\Sigma)( \widehat \beta_\tn -  \beta_\tn  ^*    ),
\end{align*}

Note that 
\begin{align*}
& ( \beta_\tn  ^* - \beta_\ot ^*  )  ^\top  ( \widehat \Sigma_\ot  -\Sigma)( \widehat \beta_\tn -  \beta_\tn  ^*    ) 
\\
\le& C_3\|\beta_\tn  ^* - \beta_\ot ^*  \| _2 \sqrt { \frac{\log(pn) }{t }}  \| \widehat \beta_\tn -  \beta_\tn  ^*  \| _ 1 
\\
\le  & C_3' \s \frac{\log(pn) }{n }
\end{align*} 
where the first inequality follows from \Cref{condition:deviation}{\bf c} and the second inequality follows from \Cref{lemma:interval lasso}.
In addition, 
$$( \widehat \beta_\tn -  \beta_\tn  ^*    )  ^\top  ( \widehat \Sigma_\ot  -\Sigma)( \widehat \beta_\tn -  \beta_\tn  ^*    )  
\le C_4 \| \widehat \beta_\tn -  \beta_\tn  ^*  \|_2^2 \le C_4'\s \frac{\log(pn) }{n } .  $$
where the first inequality follows from \Cref{condition:deviation}{\bf a} and the second inequality follows from \Cref{lemma:interval lasso}.
So 
\begin{align*}
 \left|  \eqref{eq:bias 3 term 2}   - ( \beta_\ot ^*  -  \beta_\tn  ^*   )  ^\top  ( \widehat \Sigma_\ot  -\Sigma)( \beta_\ot ^*  -  \beta_\tn  ^*   )   \right| 
 \le C\s \frac{\log(pn) }{n } .
\end{align*} 
{\bf Step 3.} Similarly  
\begin{align*} \eqref{eq:bias 3 term 3}
\le & 
  \bigg |  (\widehat \beta_\ot      -    \beta_\ot^*        )^\top \{ \widehat \Sigma_\ot  -\Sigma\big\} 
    (\widehat \beta_\ot      -    \beta_\ot^*              )  \bigg| + \bigg |  (\widehat \beta_\ot      -    \beta_\ot^*         )^\top  \Sigma    (\widehat \beta_\ot      -    \beta_\ot^*              )   \bigg| 
 \\   
 \le 
 &
  C_5 \sqrt { \frac{\s \log(pn) }{ | \mathcal I | }}\| \widehat \beta_\ot      -    \beta_\ot^*     \|_2^2 +C_x  \| \widehat \beta_\ot      -    \beta_\ot^*     \|_2^2  
  \\
  \le&  C_5'       \frac{\s \log(pn) }{n },
\end{align*}
where the first inequality follows from \Cref{condition:deviation}{\bf a} and  the second inequality follows from \Cref{lemma:interval lasso}.
\end{proof}

\begin{theorem}   \label{lemma:test deviation bound} 
Suppose $\min\{ t, n-t\}\ge \zeta n$.
Let \begin{align*}  
 \mathcal R_4(t) =  \frac{2}{t } \sum_{i=1}^t \left(  y_i -  x_i  ^\top   \widehat \beta_{\ot }    \right)   x_i  ^\top  (\widehat \beta _{ \ot }   - \widehat \beta_{(t,n]})  . 
 \end{align*} 
  Denote 
\begin{align*}
\mathfrak  T_n (t)  = &\frac{2}{t} \sum_{i=1}^ t     x_i  ^\top     (  \beta^*_\ot  - \beta^*_\tn  )     \epsilon_i      +   ( \beta^*_\ot  -     \beta_\tn ^*  )  ^\top  (  \widehat \Sigma  _\ot  -\Sigma  )  (  \beta^*_\ot  -    \beta ^* _\tn   )      
\\
+ & \frac{2}{  t } \sum_{i=1}^t x_i^\top (  \beta_i  ^* -\beta_\ot ^*    )   x_i^\top ( \beta_\ot ^*  -  \beta_\tn ^*  )  ,  
\\
\mathfrak  S _n (t)   = &     \frac{2 }{n-t } \sum_{i=t+1} ^n x_i^\top   ( \beta_\tn^*-\beta_ \ot^{* }  )  \epsilon_i 
  +  \frac{2 }{n-t } \sum_{i=t+1} ^n     (\beta_\tn^* - \beta_ \ot^{* }  )  x_ix_i^\top   (\beta_ i ^* - \beta^*_\tn )       
\end{align*} 
It holds that 
\begin{align*}    
 \bigg| &  ( \widehat \beta_\ot -   \widehat \beta_\tn)^\top     \widehat \Sigma _\ot ( \widehat \beta_\ot -   \widehat \beta_\tn )    -  
 ( \beta^*_\ot  - \beta^*_\tn  ) ^\top 
 \Sigma  ( \beta^*_\ot  - \beta^*_\tn  )     
\\
&
+ 
\mathcal R_4(t)  +2 \mathcal R_2(t)  -2 \mathcal R_3(t)  
     -
 \big\{ \mathfrak  T_n (t) +\mathfrak S_n  (t)    \big\}
 \bigg|  
   \le
  C\frac{\s \log(pn)}{ n} 
   \end{align*}
\end{theorem}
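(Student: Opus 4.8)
The identity is obtained by adding the three preceding lemmas and doing elementary bookkeeping; all statements below hold deterministically on the event carved out by \Cref{condition:deviation} and \Cref{condition:process}, which is assumed throughout this subsection. First I would apply \Cref{lemma:bias 3} to replace $( \widehat \beta_\ot - \widehat \beta_\tn)^\top \widehat \Sigma_\ot ( \widehat \beta_\ot - \widehat \beta_\tn) + \mathcal R_4(t)$ by $(\beta^*_\ot - \widehat \beta_\tn)^\top \Sigma (\beta^*_\ot - \widehat \beta_\tn) + \mathfrak T_n(t) + O\!\left(\s\log(pn)/n\right)$, noting that the bracketed expression produced by \Cref{lemma:bias 3} is exactly $-\mathfrak T_n(t)$. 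Next, writing $\beta^*_\ot - \widehat \beta_\tn = (\beta^*_\ot - \beta^*_\tn) + (\beta^*_\tn - \widehat \beta_\tn)$ and completing the square, the pure term $(\beta^*_\ot - \beta^*_\tn)^\top \Sigma (\beta^*_\ot - \beta^*_\tn)$ cancels against the corresponding term of the target, the diagonal remainder $(\beta^*_\tn - \widehat \beta_\tn)^\top \Sigma (\beta^*_\tn - \widehat \beta_\tn)$ is $O\!\left(\s\log(pn)/n\right)$ by $\Lambda_{\max}(\Sigma)\le C_x$ and \Cref{lemma:interval lasso}, leaving a single cross term $2(\beta^*_\ot - \beta^*_\tn)^\top \Sigma (\beta^*_\tn - \widehat \beta_\tn)$.

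The next step is to feed in $2\mathcal R_2(t)-2\mathcal R_3(t)$. Linearizing $\widehat \beta_\tn^\top \Sigma \widehat \beta_\tn - \beta^{*\top}_\tn \Sigma \beta^*_\tn = -2\beta^{*\top}_\tn\Sigma(\beta^*_\tn-\widehat\beta_\tn) + O\!\left(\s\log(pn)/n\right)$ via \Cref{lemma:interval lasso} and invoking \Cref{lemma:bias term R2}, one gets $2\mathcal R_2(t) = 2\beta^{*\top}_\tn\Sigma(\beta^*_\tn-\widehat\beta_\tn) + 2\beta^{*\top}_\tn(\Sigma-\widehat\Sigma_\tn)(\widehat\beta_\tn-\beta^*_\tn) + \frac{2}{n-t}\sum_{i=t+1}^n\epsilon_ix_i^\top\beta^*_\tn + \frac{2}{n-t}\sum_{i=t+1}^n\beta^{*\top}_\tn x_ix_i^\top(\beta^*_i-\beta^*_\tn) + O\!\left(\s\log(pn)/n\right)$, and \Cref{lemma:bias term R3} gives the analogous expansion of $\mathcal R_3(t)$ with $\beta^*_\tn$ in the outer slots replaced by $\beta^*_\ot$. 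Subtracting, one checks that all $\Sigma$-weighted terms carrying $\widehat\beta_\tn$ cancel identically: the cross term from the square above, $+2\beta^{*\top}_\tn\Sigma(\beta^*_\tn-\widehat\beta_\tn)$ from $2\mathcal R_2$, and $-2\beta^{*\top}_\ot\Sigma(\beta^*_\tn-\widehat\beta_\tn)$ from $-2\mathcal R_3$ sum to zero.

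What survives is, besides the already collected remainders, (i) the two sample-covariance deviation contributions, which combine into $2(\beta^*_\tn-\beta^*_\ot)^\top(\widehat\Sigma_\tn-\Sigma)(\widehat\beta_\tn-\beta^*_\tn)$, and (ii) the linear-in-$\epsilon$ sums and the $x_ix_i^\top$ sums. For (i) I would apply \Cref{condition:deviation}{\bf c} with the deterministic vector $u_i\equiv \beta^*_\tn-\beta^*_\ot$ (whose $\ell_2$ norm is bounded by \Cref{lemma:beta bounded 1}) and $\beta=\widehat\beta_\tn-\beta^*_\tn$, so that this term is at most $C\sqrt{\log(pn)/(n-t)}\,\|\widehat\beta_\tn-\beta^*_\tn\|_1 \le C'\s\log(pn)/n$ using \Cref{lemma:interval lasso} and $n-t\ge\zeta n$. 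For (ii), the $\epsilon$-sums collapse to $\frac{2}{n-t}\sum_{i=t+1}^n x_i^\top(\beta^*_\tn-\beta^*_\ot)\epsilon_i$ and the covariance sums collapse to $\frac{2}{n-t}\sum_{i=t+1}^n(\beta^*_\tn-\beta^*_\ot)x_ix_i^\top(\beta^*_i-\beta^*_\tn)$, whose sum is precisely $\mathfrak S_n(t)$. Assembling everything yields the claimed bound $C\s\log(pn)/n$.

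\textbf{Main obstacle.}
The proof is essentially mechanical assembly of \Cref{lemma:bias 3}, \Cref{lemma:bias term R2}, \Cref{lemma:bias term R3} together with repeated use of \Cref{lemma:interval lasso}; the only genuinely non-routine points are keeping the $\Sigma$-weighted $\widehat\beta_\tn$ terms straight so that they cancel exactly, and verifying that the leftover deviation cross term $2(\beta^*_\tn-\beta^*_\ot)^\top(\widehat\Sigma_\tn-\Sigma)(\widehat\beta_\tn-\beta^*_\tn)$ is $O\!\left(\s\log(pn)/n\right)$ — which works because $\beta^*_\tn-\beta^*_\ot$ is deterministic with bounded norm, so \Cref{condition:deviation}{\bf c} applies, and because $\widehat\beta_\tn-\beta^*_\tn$ obeys the $\ell_1$ rate of \Cref{lemma:interval lasso}. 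Care is also needed that every diagonal remainder from completing squares is absorbed by the $\ell_2$ and $\ell_1$ Lasso rates.
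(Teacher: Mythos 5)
Your proposal is correct and follows essentially the same route as the paper: decompose the expression into the $\widehat\Sigma_\ot$ block handled by \Cref{lemma:bias 3} (producing $\mathfrak T_n(t)$) and the $(\beta^*_\ot-\widehat\beta_\tn)^\top\Sigma(\beta^*_\ot-\widehat\beta_\tn)+2\mathcal R_2-2\mathcal R_3$ block handled by \Cref{lemma:bias term R2} and \Cref{lemma:bias term R3} (producing $\mathfrak S_n(t)$), with the $\Sigma$-weighted $\widehat\beta_\tn$ terms cancelling exactly and the leftover $(\beta^*_\ot-\beta^*_\tn)^\top(\Sigma-\widehat\Sigma_\tn)(\widehat\beta_\tn-\beta^*_\tn)$ term controlled via \Cref{condition:deviation}\textbf{c} and \Cref{lemma:interval lasso}, just as in the paper. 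The only cosmetic difference is that the bound on $\|\beta^*_\tn-\beta^*_\ot\|_2$ is more directly given by \Cref{eq:cumsum upper bound} than by \Cref{lemma:beta bounded 1}, but either suffices.
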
 
 
\begin{proof}
Observe that 
\begin{align} \nonumber 
& ( \widehat \beta_\ot -   \widehat \beta_\tn)^\top     \widehat \Sigma _\ot ( \widehat \beta_\ot -   \widehat \beta_\tn )   + 
\mathcal R_4(t)  +2 \mathcal R_2(t)  -2 \mathcal R_3(t)   
 -  ( \beta^*_\ot  - \beta^*_\tn  ) ^\top 
 \Sigma  ( \beta^*_\ot  - \beta^*_\tn  )      \\
 \label{eq:bias with tilde beta}
  =
&  ( \widehat \beta_\ot -   \widehat \beta_\tn)^\top     \widehat \Sigma _\ot ( \widehat \beta_\ot -   \widehat \beta_\tn )   + \mathcal R_4(t) -
(\beta^*_\ot  -\widehat   \beta _\tn  ) ^\top 
 \Sigma  ( \beta^*_\ot  -  \widehat \beta _\tn    )  
 \\
 \label{eq:bias with hat beta 2}
 + &  (\beta^*_\ot  -\widehat   \beta _\tn  ) ^\top 
 \Sigma  ( \beta^*_\ot  -  \widehat \beta _\tn    )    -  ( \beta^*_\ot  - \beta^*_\tn  ) ^\top 
 \Sigma  ( \beta^*_\ot  - \beta^*_\tn  )    +2  \mathcal R_2(t)   -2 \mathcal  R_3(t) .
\end{align}  
\
\\
{\bf Step 1.}
Note that 
\begin{align*}
\eqref{eq:bias with hat beta 2}
 =
 & 
  2  \beta^{*\top} _\ot \Sigma   (   \beta_\tn ^*  -\widehat \beta_\tn        ) 
  -2 \mathcal  R_3 
   \\
    +  &     \widehat \beta_\tn ^\top  \Sigma   \widehat \beta_\tn   - \beta_\tn ^{*\top}   \Sigma     \beta_\tn ^ {*\top}        +2 \mathcal  R_2 . 
 \end{align*} 
 By \Cref{lemma:bias term R2} and  \Cref{lemma:bias term R3}, it follows that 
 \begin{align*}
 \Bigg | &  \eqref{eq:bias with hat beta 2}
 + \Bigg \{  
- 2 \beta_{\tn}^{*\top} 
\left (\Sigma  -  \widehat \Sigma_\tn   \right ) ( \widehat \beta_\tn  -\beta^*_\tn )   
 -       \frac{ 2 }{n-t } \sum_{i=t+1}^n     \epsilon_i x_i^\top  \beta _\tn ^*        - 
\frac{   2 }{n-t } \sum_{i=t+1}^n          \beta_\tn     ^ {*\top}      X_i     x_i  ^\top 
 ( \beta_i ^*  - \beta^*_{(t,n]}  )     \Bigg \}  
 \\
 +& 2 \Bigg \{ \frac{ 1  }{n-t } \sum_{i=t+1} ^n x_i^\top    \beta_ \ot^{* } \epsilon_i     +  \frac{1}{n-t } \sum_{i=t+1} ^n   \beta_\ot^{*  \top}  x_ix_i^\top   (\beta_ i ^* - \beta^*_\tn )      -    \beta^{*\top} _\ot    (  \Sigma- \widehat \Sigma_\tn  )  (\beta_\tn^* -\widehat \beta_\tn )  \Bigg \} 
  \Bigg  | 
  \\ \le & C_1\frac{\s \log(pn)}{n}.
 \end{align*}  
\
\\
Note that 
\begin{align*}
&-\beta_{\tn}^{*\top} 
\left (\Sigma  -  \widehat \Sigma_\tn   \right ) ( \widehat \beta_\tn  -\beta^*_\tn )    -    \beta^{*\top} _\ot    (  \Sigma- \widehat \Sigma_\tn  )  (\beta_\tn^* -\widehat \beta_\tn )   
\\
= 
&( \beta^{* } _\ot -\beta_{\tn}^{*} ) ^\top  
\left (\Sigma  -  \widehat \Sigma_\tn   \right ) ( \widehat \beta_\tn  -\beta^*_\tn ) 
\\
\le &C_2  \|\beta^{* } _\ot -\beta_{\tn}^{*} \| _2     \sqrt   { \frac{  \log(p) } { n-t }}  \| \widehat \beta_\tn  -\beta^*_\tn\| _1
\\
\le & C_2'  \frac{ \s \log(pn) } {  \sqrt {(n-t) t}  } \le C_2''\frac{\s \log(pn) }{n }, 
\end{align*}   
where the first inequality follows from \Cref{condition:deviation}{\bf c} and the second inequality follows from \Cref{lemma:interval lasso}.
  So 
  \begin{align*}
 \Bigg | &  \eqref{eq:bias with hat beta 2}
-\Bigg \{   
      \frac{ 2 }{n-t } \sum_{i=t+1}^n     \epsilon_i x_i^\top  (   \beta _\tn ^*-\beta^*_\ot  )        + 
\frac{   2 }{n-t } \sum_{i=t+1}^n        (\beta_\tn     ^ {* }    -  \beta^*_\ot        )   ^\top  x_i     x_i  ^\top 
 ( \beta_i ^*  - \beta^*_{(t,n]}  )     \Bigg\}  
  \Bigg  |  \le & C_3\frac{\s \log(pn)}{n}.
 \end{align*}   
  \
  \\
 { \bf Step 2.}
  By \Cref{lemma:bias 3},
\begin{align*}
&  \Bigg |   \eqref{eq:bias with tilde beta}-\frac{2}{  t } \sum_{i=1}^t x_i^\top ( \beta_\ot ^*  -   \beta_\tn ^*  )\epsilon_i 
 - ( \beta_\ot ^*  -  \beta_\tn  ^*   )  ^\top  ( \widehat \Sigma_\ot  -\Sigma  )( \beta_\ot ^*  -  \beta_\tn  ^*   )  
\\ 
&  -\frac{2}{  t } \sum_{i=1}^t x_i^\top (  \beta_i  ^* -\beta_\ot ^*    )   x_i^\top ( \beta_\ot ^*  -  \beta_\tn ^*  )  \Bigg|  
 \le C_4 \frac{\s \log(pn)}{n}. 
 \end{align*} 
 The desired result follows from the previous two steps. 
\end{proof}

\subsection{Lasso Consistency}
\textbf{Proof of \Cref{lemma:interval lasso}}:
\begin{proof}   It follows from the definition of $\widehat{\beta}_\I$ that 
	\[
	 \frac{1}{|\I | }	\sum_{ i \in I} (y_i - x_i^{\top}\widehat{\beta}_\I )^2 + \frac{ \lambda}{ \sqrt{  |\I|  }}    \|\widehat{\beta} _\I\|_1 \leq  
	 \frac{1}{|\I | } \sum_{t \in \I} (y_i - x_i^{\top}\beta^*_\I )^2 + \frac{ \lambda}{ \sqrt{  |\I|  }}  \|\beta^*_\I  \|_1.
	\]
	This gives 
	\begin{align*}
		 \frac{1}{|\I | } \sum_{i \in \I} \bigl\{x_i ^{\top}(\widehat{\beta}_\I - \beta^*_\I )\bigr\}^2 +    \frac{ 2 }{|\I | }   \sum_{ i \in \I}(y_i - x_i^{\top}\beta^*_\I)x_i^{\top}(\beta^*_\I  - \widehat{\beta}_\I )  +  \frac{ \lambda}{ \sqrt{  |\I|  }}   \bigl\|\widehat{\beta}_\I \bigr\|_1 
		\leq  \frac{ \lambda}{ \sqrt{  |\I|  }}    \bigl\|\beta^*_\I \bigr\|_1,
	\end{align*}
	and therefore
	\begin{align}
		& \frac{1}{|\I | }	  \sum_{i  \in \I} \bigl\{x_i ^{\top}(\widehat{\beta} _\I  - \beta^*_\I )\bigr\}^2 + \frac{ \lambda}{ \sqrt{  |\I|  }} \bigl\|\widehat{\beta}_\I \bigr\|_1  \nonumber \\
		 \leq  &  \frac{ 2}{|\I | }	 \sum_{i \in  \I } \epsilon_i  x_i ^{\top}(\widehat{\beta}_\I  - \beta^*_\I ) 
+ 2   (\widehat{\beta}_\I  - \beta^*_\I )^{\top}\frac{  1 }{|\I | }	 \sum_{i\in   \I} x_ i  x_i^{\top}(    \beta^*_i -\beta^*_\I   )		 
		+  \frac{ \lambda}{ \sqrt{  |\I|  }}    \bigl\|\beta^*_\I \bigr\|_1   .	 \label{eq-lem10-pf-2}
	\end{align}
To bound	
		$\left\|\sum_{ i  \in \I} x_ i  x_i ^\top (\beta^*  _\I  -\beta^*_i ) \right\|_{\infty},  
$
	note  that  for any $j \in  \{ 1, \ldots, p\}$, the $j$-th entry of 
	\\$\sum_{i \in \I} x _ i x _i^\top (\beta^*_\I   -\beta_i )$ satisfies  
	\begin{align*}
		  \E \left\{\sum_{ i  \in \I}   x_{ij} x _ i ^\top (\beta^*_\I   - \beta^*_ i  )\right\} = \sum_{ i \in \I}  \E \{x_{ij} x _ i  ^\top \}\{\beta^*_\I  - \beta^*_ i \}  
		=     \mathbb{E}\{x _{1j}  x _1 ^\top \} \sum_{ i \in \I}\{\beta^*_\I    - \beta^*_ i  \} = 0.
	\end{align*}
	So   $  \E\{ \sum_{ i  \in \I} x_ i  x_i ^\top (\beta^*  _\I  -\beta^*_i )\} =0 \in \mathbb R^p.$ 
By \Cref{condition:deviation}{\bf c},
	\begin{align*}
	  \bigg|  (    \beta^*_i -\beta^*_\I   ) ^\top  \frac{  1 }{|\I | }	 \sum_{i\in   \I} x_i x_i^{\top} (\widehat{\beta}_\I  - \beta^*_\I )	 	   \bigg| \le & C_1 \big(\max_{1\le i \le n }  \|\beta^*_i -\beta^*_\I \|_2 \big) \sqrt { \frac{\log(pn) }{ | \I|  }} \| \widehat{\beta}_\I  - \beta^*_\I  \|_1   
	  \\
	  \le & C_2 \sqrt { \frac{\log(pn) }{ | \I| }} \| \widehat{\beta}_\I  - \beta^*_\I  \|_1  
	  \\
	  \le &  \frac{\lambda}{8\sqrt { |\I| } }   \|\widehat{\beta}_\I  - \beta^*_\I \|_1 
 	\end{align*}
 	where the second inequality follows from \Cref{lemma:beta bounded 1} and the last inequality follows from $\lambda = C_\lambda \sqrt { \log(pn) }$ with sufficiently large constant $C_\lambda$.
	In addition  by \Cref{condition:deviation}{\bf b},
	$$  \frac{ 2}{|\I | }	 \sum_{i \in  \I } \epsilon_i  x_i ^{\top}(\widehat{\beta}_\I  - \beta^*_\I )  \le C  \sqrt { \frac{ \log(pn) }{ |\I| } }\|\widehat{\beta}_\I  - \beta^*_\I \|_1  \le  \frac{\lambda}{8\sqrt { |\I| } }   \|\widehat{\beta}_\I  - \beta^*_\I \|_1 .$$
	So   \eqref{eq-lem10-pf-2}  gives 
	\begin{align*}
		  \frac{1}{|\I | }	  \sum_{i  \in \I} \bigl\{x_i ^{\top}(\widehat{\beta} _\I  - \beta^*_\I )\bigr\}^2 + \frac{ \lambda}{ \sqrt{  |\I|  }} \bigl\|\widehat{\beta}_\I \bigr\|_1  
		 \leq  \frac{\lambda}{4\sqrt { |\I| } }   \|\widehat{\beta}_\I  - \beta^*_\I \|_1  
		+  \frac{ \lambda}{ \sqrt{  |\I|  }}    \bigl\|\beta^*_\I \bigr\|_1   .	 
	\end{align*} 
Let $\Theta = \widehat  \beta _\I   - \beta^*_\I   $. The above inequality implies
\begin{align}
\label{eq:two sample lasso deviation 1} \frac{1}{|\I|} \sum_{i \in \I } \left( x_i^\top  \Theta \right)^2 + \frac{ \lambda}{2\sqrt{  |\I|  } }\|  (\widehat \beta _\I)   _{ S ^c}\|_1  
 \le & \frac{3\lambda}{2\sqrt{  |\I|  }  } \| ( \widehat  \beta _\I  -  \beta^*_\I  )    _{S} \| _1  ,
 \end{align}  
which also implies that 
$$ \frac{\lambda }{2}\| \Theta _{S^c}  \|_1 = \frac{ \lambda}{2 }\|   (\widehat \beta_\I)  _{ S ^c}\|_1    \le  
\frac{3\lambda}{2 } \|  ( \widehat  \beta _\I   - \beta^* _\I  ) _{S} \| _1  = \frac{3\lambda}{2 } \|  \Theta  _{S} \| _1 . $$
The above inequality and  \Cref{condition:deviation}{\bf a} imply that
$$ \frac{1}{|\I| } \sum_{i \in \I }  \left( x_i^\top  \Theta \right)^2  = \Theta^\top \widehat \Sigma _\I  \Theta  
\ge 
 \Theta^\top \Sigma \Theta - C_3\sqrt{ \frac{\s \log(pn)}{ |\I| }}  \| \Theta\|_2^2   \ge  \frac{c_x}{2} \|\Theta\|_2 ^2   ,$$
 where the last inequality follows from \Cref{assume: model assumption}{\bf e}.
Therefore \Cref{eq:two sample lasso deviation 1} gives
\begin{align}
\label{eq:two sample lasso deviation 2}  c'\|\Theta\|_2 ^2  + \frac{ \lambda}{2\sqrt{  |\I|  }  }\| ( \widehat \beta_\I  - \beta^*_\I ) _{ S ^c}\|_1  
 \le   \frac{3\lambda}{2\sqrt{  |\I|  }  } \| \Theta_{S} \| _1  \le \frac{3\lambda \sqrt \s }{2 \sqrt{  |\I|  } } \| \Theta  \| _2 
 \end{align}  
 and so 
 $$ \|\Theta\|_2  \le   \frac{C \lambda \sqrt  s}{\sqrt{| \I |} } . $$ 
 This  and the last inequality of  \Cref{eq:two sample lasso deviation 2} also implies that 
 $  \| \Theta_{S} \| _1 \le  \frac{C\lambda \s}{\sqrt{|\I| }}. $ 
 Since 
  $ \| \Theta_{S^c } \| _1 \le 3 \| \Theta_{S} \| _1 ,$ 
it also holds that 
$$\| \Theta  \| _1 = \| \Theta_{S  } \| _1 +\| \Theta_{S^c } \| _1  \le 4 \| \Theta_{S} \| _1 \le  \frac{4C\lambda \s}{\sqrt{|\I|} } .$$
 \end{proof}

\subsection{Distributions Satisfying \Cref{condition:deviation} and \Cref{condition:process}}
\label{section:distribution}
  
 In this subsection, we show that \Cref{condition:deviation}  and \Cref{condition:process}  hold  with high probability when  $ \{x_i,\epsilon_i \}_{i=1}^n $ are sampled from independent sub-Gaussian distributions. We begin by noting that 
  \Cref{condition:process} is a well known functional CLT result. See e.g., Theorem  A.4 of \cite{francq2019garch} and references therein.
 Throughout the section, we write 
 $$z\sim SG( \sigma_z^2) $$
 if $z$ is a sub-Gaussian random variable such that 
 $ \|z\|_{\psi_2}  \le \sigma_z.  $

\begin{theorem}
\label{theorem:restricted eigenvalues 2}
Suppose   \Cref{assume: model assumption}  holds. Denote     $  \E(x_ix_i^\top ) =\Sigma   $ and  $\mathcal C_S  := \{ v : \mathbb R^p : \| v_{S^c }\|_1 \le 3\| v_{S}\|_1 \}  $.
Then there exists constants $c$ and $C$ such that for all $\eta\le 1$, 
\begin{align} 
\mathbb P \left( \sup_{v \in \mathcal C _S , \|v\|_2=1   } \left | v^\top ( \widehat \Sigma -\Sigma )  v \right| \ge C\eta   \Lambda_{\max} (\Sigma)   \right)
\le 2\exp( -c n\eta ^2 + 2s \log(p) ) .
\end{align}
\end{theorem}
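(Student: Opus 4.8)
The plan is to pass from the supremum of the quadratic form over the $\ell_1$-cone to a maximum over operator norms of $2s\times 2s$ coordinate sub-blocks, control each sub-block by the standard sub-Gaussian covariance concentration bound, and then union bound over supports. Throughout write $M:=\widehat\Sigma-\Sigma$ and $s:=|S|$. For $v\in\mathcal C_S$ with $\|v\|_2=1$ we have $\|v\|_1=\|v_S\|_1+\|v_{S^c}\|_1\le 4\|v_S\|_1\le 4\sqrt s$, so $v$ is ``effectively $s$-sparse''. Ordering the coordinates of $v_{S^c}$ by decreasing magnitude, splitting them into consecutive blocks $T_1,T_2,\dots$ of size $s$, and putting $A_0:=S$, the usual shelling estimates $\|v_{T_{j+1}}\|_2\le\|v_{T_j}\|_1/\sqrt s$ give $\sum_{j\ge2}\|v_{T_j}\|_2\le\|v_{S^c}\|_1/\sqrt s\le 3\|v_S\|_2\le 3$, hence $\|v_{A_0}\|_2+\sum_{j\ge1}\|v_{T_j}\|_2\le 5$. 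Expanding $v=v_{A_0}+\sum_{j\ge1}v_{T_j}$ block by block,
\[
|v^\top Mv|\le\sum_{B,B'}\|v_B\|_2\|v_{B'}\|_2\,\|M_{B\cup B'}\|_{\mathrm{op}}\le 25\max_{|T|\le 2s}\|M_{TT}\|_{\mathrm{op}},
\]
since every block union has at most $2s$ coordinates, so that $\sup_{v\in\mathcal C_S\cap S^{p-1}}|v^\top Mv|\le 25\max_{|T|=2s}\|M_{TT}\|_{\mathrm{op}}$.

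Next I would bound a single block. Fix $T$ with $|T|=2s$; then $(x_i)_T$ is a centered sub-Gaussian vector in $\mathbb R^{2s}$ with $\psi_2$-norm $\le K_X$ by \Cref{assume: model assumption}(a), and $c_x\le\Lambda_{\max}(\Sigma_{TT})\le\Lambda_{\max}(\Sigma)\le 2K_X^2$ by \Cref{assume: model assumption}(b). The standard covariance estimate --- take a $1/4$-net $\mathcal N$ of $S^{2s-1}$ with $|\mathcal N|\le 9^{2s}$, so $\|M_{TT}\|_{\mathrm{op}}\le 2\max_{u\in\mathcal N}|u^\top M_{TT}u|$, and apply Bernstein's inequality to the i.i.d.\ centered sub-exponential variables $(x_i^\top u)^2-\mathbb E(x_i^\top u)^2$ (whose $\psi_1$-norm is $\lesssim K_X^2$ and whose mean is $u^\top\Sigma u\le\Lambda_{\max}(\Sigma)$) --- yields, for all $\eta\le1$,
\[
\mathbb P\Big(\|M_{TT}\|_{\mathrm{op}}\ge c_1\,\eta\,\Lambda_{\max}(\Sigma)\Big)\le 2\cdot 9^{2s}\exp(-c_2 n\eta^2),
\]
where $c_1,c_2$ are absolute (the upper bound $\Lambda_{\max}(\Sigma)\le 2K_X^2$ keeps $t=c_1\eta\Lambda_{\max}(\Sigma)$ in the quadratic regime of Bernstein, and the lower bound $\Lambda_{\max}(\Sigma)\ge c_x$ turns the Bernstein rate $\asymp n\eta^2\Lambda_{\max}(\Sigma)^2/K_X^4$ into an absolute multiple of $n\eta^2$).

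Finally, combining the two displays above and union bounding over the $\binom{p}{2s}\le e^{2s\log p}$ choices of $T$ (valid for $s\ge2$, since then $\log(ep/(2s))\le\log p$), with $C:=25c_1$ one gets
\[
\mathbb P\Big(\sup_{v\in\mathcal C_S\cap S^{p-1}}|v^\top(\widehat\Sigma-\Sigma)v|\ge C\,\eta\,\Lambda_{\max}(\Sigma)\Big)\le 2\exp\big(2s\log p+2s\log 9-c_2 n\eta^2\big)\le 2\exp\big(2s\log p-c\,n\eta^2\big),
\]
after relabelling $c$ (the $2s\log 9$ term is absorbed into $2s\log p$ for $p\ge9$, at the cost of a smaller $c$); for $\eta$ small enough that the right-hand side exceeds $1$ there is nothing to prove. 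The main obstacle is Step 1: one must arrange the cone-to-sparse reduction so the effective sparsity inflates only to $O(s)$ (here exactly $2s$), so that the union bound over supports contributes just $O(s\log p)$ rather than, e.g., $O(s\log^2 p)$; simultaneously one must keep the deviation scale in the concentration inequality pinned to $\Lambda_{\max}(\Sigma)$ itself --- which is precisely where both halves of \Cref{assume: model assumption}(b) enter, the upper bound for the Bernstein validity range and the lower bound for producing a clean $n\eta^2$ rate.
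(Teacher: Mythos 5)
Your proof is correct, and it is essentially the same argument the paper relies on: the paper does not prove this theorem itself but cites \cite{zhou2009restricted} and \cite{loh2011high}, and your cone-to-sparse shelling reduction followed by a net-plus-Bernstein bound on $2\s\times 2\s$ sub-blocks and a union bound over supports is exactly the standard proof underlying those references. The only mildly loose spot is absorbing the $2\s\log 9$ net cardinality and the extra $2\s\log p$ from $\binom{p}{2\s}\cdot 9^{2\s}$ into the stated exponent, but your reduction to the case where the bound is nontrivial (so $c\,n\eta^2>2\s\log p$) handles this at the cost of shrinking $c$, as you indicate.
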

\begin{proof}
This is a well known restricted eigenvalue property for sub-Gaussian design. The proof can be found  in \cite{zhou2009restricted} or \cite{loh2011high}.   
\end{proof}  
\
\\
The following corollary, being a direct consequence of \Cref{theorem:restricted eigenvalues 2}, justifies  \Cref{condition:deviation}{\bf a}. 
\begin{corollary} \label{corollary:restricted eigenvalues 2} Denote $\mathcal C_S  := \{ v : \mathbb R^p : \| v_{S^c }\|_1 \le 3\| v_{S}\|_1 \}  $. Under   \Cref{assume: model assumption}, with probability at least $1- n ^{-5}$, it holds that
$$  \left | v^\top ( \widehat \Sigma_\I  -\Sigma )  v \right| \le C \sqrt { \frac{\s \log(pn) }{|\I| } }     \|v\|_2^2   
 $$
   for all $v \in \mathcal C _S $ and all $\I \subset  (0, n]$ such that $| \I |\ge \zeta n $. 
   
\end{corollary}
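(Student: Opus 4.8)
The plan is to deduce \Cref{corollary:restricted eigenvalues 2} from \Cref{theorem:restricted eigenvalues 2} by (i) applying that theorem to a \emph{fixed} interval, with $|\I|$ playing the role of the sample size, (ii) taking a union bound over all $O(n^2)$ integer intervals contained in $(0,n]$, and (iii) removing the unit-norm restriction by homogeneity of the quadratic form over the cone $\mathcal C_S$. No probabilistic input beyond \Cref{theorem:restricted eigenvalues 2} is needed.

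First I would fix an interval $\I=(a,b]\subseteq(0,n]$ with $|\I|\ge\zeta n$. Since $\{x_i\}_{i=1}^n$ are i.i.d.\ by \Cref{assume: model assumption}, the matrix $\widehat\Sigma_\I=|\I|^{-1}\sum_{i\in\I}x_ix_i^\top$ is the empirical covariance of $|\I|$ i.i.d.\ sub-Gaussian vectors, so \Cref{theorem:restricted eigenvalues 2} applies verbatim with $|\I|$ in place of $n$: for every $\eta\le 1$,
$$\mathbb P\Big(\sup_{v\in\mathcal C_S,\,\|v\|_2=1}\big|v^\top(\widehat\Sigma_\I-\Sigma)v\big|\ge C\eta\,\Lambda_{\max}(\Sigma)\Big)\le 2\exp\big(-c|\I|\eta^2+2\s\log p\big).$$
I would then take $\eta=\eta_\I:=\sqrt{K\s\log(pn)/|\I|}$ for a large absolute constant $K$. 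With this choice the deviation bound becomes $C\eta_\I\Lambda_{\max}(\Sigma)\le CC_x\sqrt{K}\,\sqrt{\s\log(pn)/|\I|}$, which has the desired form, while the exponent becomes
$$-c|\I|\eta_\I^2+2\s\log p=-cK\s\log(pn)+2\s\log p\le-(cK-2)\,\s\log(pn),$$
using $\log p\le\log(pn)$. Since $\s\ge 1$ and $\log(pn)\ge\log n$, the failure probability for a fixed $\I$ is at most $2n^{-(cK-2)}$. Admissibility $\eta_\I\le 1$ holds because $|\I|\ge\zeta n\ge\zeta C_{snr}\s\log p\gtrsim\zeta C_{snr}\s\log(pn)$ by \Cref{assume: model assumption}(e) together with the standing convention $\log(pn)=O(\log p)$; choosing $C_{snr}$ large relative to $K$ makes this hold.

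There are at most $\binom{n+1}{2}\le n^2$ intervals of the form $(a,b]$ with $0\le a<b\le n$ integers, so a union bound shows that, with probability at least $1-2n^{\,2-(cK-2)}$, the inequality $|v^\top(\widehat\Sigma_\I-\Sigma)v|\le C'\sqrt{\s\log(pn)/|\I|}$ holds simultaneously for all unit $v\in\mathcal C_S$ and all such $\I$ with $|\I|\ge\zeta n$. Fixing $K$ large enough that $cK-2\ge 8$ makes $2n^{\,2-(cK-2)}\le 2n^{-6}\le n^{-5}$ for all $n\ge 2$. Finally, $\mathcal C_S$ is a cone, so for an arbitrary $v\in\mathcal C_S$ the vector $v/\|v\|_2$ also lies in $\mathcal C_S$; applying the bound to $v/\|v\|_2$ and multiplying through by $\|v\|_2^2$ yields the stated estimate.

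The only real work is bookkeeping: one must check that the exponent of \Cref{theorem:restricted eigenvalues 2}, after the substitution $n\mapsto|\I|$ and the choice of $\eta_\I$, dominates the $\log(n^2)$ contributed by the union bound over intervals — which it does precisely because $\s\log(pn)\gg\log n$ — and that $\eta_\I$ stays below $1$, which is where the effective sample size condition \Cref{assume: model assumption}(e) enters. I do not expect any genuine difficulty beyond tracking these two quantitative margins.
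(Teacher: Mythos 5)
Your proposal is correct and follows essentially the same route as the paper's own proof: apply \Cref{theorem:restricted eigenvalues 2} to each fixed interval with $|\I|$ as the sample size, set $\eta \asymp \sqrt{\s\log(pn)/|\I|}$ so the exponential failure probability dominates the $n^2$ union bound over intervals, and use homogeneity of the cone $\mathcal C_S$ to drop the unit-norm restriction. Your additional checks (admissibility $\eta_\I \le 1$ via \Cref{assume: model assumption}(e) and the explicit constant bookkeeping) are details the paper leaves implicit but do not constitute a different argument.
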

\begin{proof} 
For any $\I \subset  (0, n]$ such that $| \I |\ge \zeta n $, by \Cref{theorem:restricted eigenvalues 2}, it holds that 
\begin{align*} 
\mathbb P  \left( \sup_{v \in \mathcal C _S , \|v\|_2=1   } \left | v^\top ( \widehat \Sigma _\I -\Sigma )  v \right| \ge C\eta   \Lambda_{\max} (\Sigma)   \right)
\le 2\exp( -c \zeta n\eta ^2 + 2s \log(p) ) .
\end{align*}
Let  $\eta= C_1 \sqrt { \frac{\s \log(pn) }{|\I | } } $ for sufficiently large constant $C_1$. With probability at least 
$(pn)^{-7} $, 
$$\sup_{v \in \mathcal C _S , \|v\|_2=1   } \left | v^\top ( \widehat \Sigma _\I -\Sigma )  v \right| \ge C_2 \sqrt { \frac{\s \log(pn) }{|\I |  } } .$$
Since there are at most $n^2$ many different  choice of $ \I$, the desire result follows from a union bound argument. 
\end{proof} 
\
\\
The following  lemma  justifies  \Cref{condition:deviation}{\bf b} and {\bf c}.  Note that \Cref{condition:deviation}{\bf d}  follows from the same argument as  that of \Cref{condition:deviation}{\bf b}.

\begin{lemma} \label{lemma:consistency}
Suppose $n\ge C\s\log(p)$ for sufficiently large $C$ and that \Cref{assume: model assumption} holds. Then  with probability at least $1-n^{-5} $, it holds that
\begin{align}
    \left |  \frac{1}{|\I | } \sum_{i \in \mathcal I }  \epsilon_i  x_i^\top \beta \right| \le C\sqrt { \frac{\log(pn)}{|\I |   } } \|\beta\|_1 
\end{align}
for all $ \beta \in \mathbb R^p$ and all $\mathcal I \subset (0, n]$ such that $|\I|\ge \zeta n$.    
\\
\\
 Let $\{ u_i\}_{i=1}^n \subset  \mathbb R^p$  be a collection of  deterministic vectors. Then  it holds that with probability at least $1-n^{-5} $
\begin{align}\label{eq:independent condition 1c}
       \left |  \frac{1}{|\I |   } \sum_{i \in \I }    u^\top_i  x _i   x _i^\top \beta  - \frac{1}{ |\I |  } \sum_{i \in \mathcal I }   u^\top  _i  \Sigma \beta  \right| \le C \left(  \max _{1\le i \le n }\|u_i\|_2  \right) \sqrt {  \frac{\log(pn)}{|\I |   } } \|\beta\|_1    
 \end{align}
 for all $ \beta \in \mathbb R^p$ and all $\mathcal I \subset (0, n]$ such that $|\I|\ge \zeta n$.    
\end{lemma}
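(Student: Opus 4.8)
The plan is to reduce both inequalities to an $\ell_\infty$-bound on a single random vector via H\"older's inequality, and then obtain that bound by a coordinatewise Bernstein inequality followed by a union bound over coordinates and over the polynomially many admissible intervals. For the \textbf{first inequality}, H\"older gives
$$ \left| \frac{1}{|\I|}\sum_{i\in\I}\epsilon_i x_i^\top\beta \right| \le \|\beta\|_1 \,\max_{1\le j\le p}\left| \frac{1}{|\I|}\sum_{i\in\I}\epsilon_i x_{ij} \right|, $$
so it suffices to control $\max_j |\,|\I|^{-1}\sum_{i\in\I}\epsilon_i x_{ij}|$ uniformly over intervals $\I$ with $|\I|\ge\zeta n$. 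Fix such an $\I$ and an index $j$. Under \Cref{assume: model assumption}(a), $\epsilon_i$ and $x_{ij}$ are independent, mean-zero, sub-Gaussian with norms at most $K_\epsilon$ and $K_X$ (the latter because $\|x_i\|_{\psi_2}=K_X$ dominates every coordinate), so each product $\epsilon_i x_{ij}$ is mean-zero and sub-exponential with $\psi_1$-norm $\lesssim K_\epsilon K_X$. Bernstein's inequality for a sum of $|\I|$ independent sub-exponential variables gives, for $t\le cK_\epsilon K_X$,
$$ \mathbb{P}\!\left( \left| \frac{1}{|\I|}\sum_{i\in\I}\epsilon_i x_{ij} \right| > t \right) \le 2\exp\!\left(-c'|\I|\,t^2\right), $$
with $c,c'$ depending only on $K_\epsilon,K_X$. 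Choosing $t=C\sqrt{\log(pn)/|\I|}$ with $C$ large, the right-hand side is at most $2(pn)^{-7}$; the deviation level lands in the quadratic (sub-Gaussian) regime of Bernstein because $|\I|\ge\zeta n\gtrsim \log(pn)$ by \Cref{assume: model assumption}(e) and $p\ge n^\alpha$. Since there are at most $p$ choices of $j$ and at most $n^2$ choices of $\I$, a union bound yields the first claim with probability at least $1-n^{-5}$.

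For the \textbf{second inequality}, the same H\"older step gives
$$ \left| \frac{1}{|\I|}\sum_{i\in\I}u_i^\top x_i x_i^\top\beta - \frac{1}{|\I|}\sum_{i\in\I}u_i^\top\Sigma\beta \right| \le \|\beta\|_1 \,\max_{1\le j\le p}\left| \frac{1}{|\I|}\sum_{i\in\I}Z_{ij} \right|, \qquad Z_{ij}:=(u_i^\top x_i)x_{ij}-u_i^\top\Sigma e_j . $$
Fix $\I$ and $j$. For each $i$, $u_i^\top x_i$ is sub-Gaussian with norm $\le K_X\|u_i\|_2$ and $x_{ij}$ is sub-Gaussian with norm $\le K_X$, so $(u_i^\top x_i)x_{ij}$ is sub-exponential with $\psi_1$-norm $\lesssim K_X^2\|u_i\|_2$, and subtracting its mean $u_i^\top\Sigma e_j$ preserves mean-zero and the $\psi_1$-norm up to a constant. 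The $Z_{ij}$ are independent across $i$ (though not identically distributed, since $u_i$ varies) with $\psi_1$-norms uniformly bounded by $cK_X^2\max_{1\le i\le n}\|u_i\|_2$, so the independent (non-i.i.d.) version of Bernstein's inequality applies exactly as above, giving
$$ \mathbb{P}\!\left( \left| \frac{1}{|\I|}\sum_{i\in\I}Z_{ij} \right| > \Big(\max_{1\le i\le n}\|u_i\|_2\Big) t \right) \le 2\exp\!\left(-c'|\I|\,t^2\right) $$
for $t$ in the quadratic regime. Taking $t=C\sqrt{\log(pn)/|\I|}$ and union bounding over $j$ and $\I$ yields \eqref{eq:independent condition 1c} with probability at least $1-n^{-5}$.

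The argument is essentially bookkeeping; the two points needing care are: (i) checking that the deviation level $t=C\sqrt{\log(pn)/|\I|}$ indeed falls in the sub-Gaussian branch of Bernstein's inequality, so the exponent is $-c'|\I|t^2\asymp -c'C^2\log(pn)$ rather than the weaker linear term $-c'|\I|t$---this is exactly where \Cref{assume: model assumption}(e) and $p\ge n^\alpha$ are used; and (ii) making the per-pair failure probability (e.g.\ $(pn)^{-7}$) small enough to survive the union bound over the at most $p\cdot n^2$ pairs $(j,\I)$ while still leaving probability $\ge 1-n^{-5}$. The only non-routine structural point is in the second inequality, where the vectors $u_i$ differ across $i$; this rules out an i.i.d.\ Bernstein bound, but the non-identically-distributed version only needs a uniform bound on the individual $\psi_1$-norms, namely $cK_X^2\max_i\|u_i\|_2$, so the proof goes through verbatim. \Cref{condition:deviation}(d) follows by the identical argument applied to $\xi_i x_{ij}$ in place of $\epsilon_i x_{ij}$, with $K_\epsilon$ replaced by $\sigma_\xi$.
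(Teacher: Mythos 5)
Your proof is correct and follows essentially the same route as the paper's: H\"older's inequality to reduce each bound to a coordinatewise $\ell_\infty$ deviation, a sub-exponential (Bernstein) tail bound in the quadratic regime for each coordinate and interval, and a union bound over the at most $p\cdot n^2$ pairs $(j,\I)$. Your explicit checks that the deviation level stays in the sub-Gaussian branch and that the non-identically-distributed Bernstein inequality suffices for varying $u_i$ are exactly the points the paper relies on (it writes $u_i^\top x_i x_{i,j}\sim SE(C_x^4\|u_i\|_2^2)$ and takes $\gamma<1$), so nothing further is needed.
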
 
 \begin{proof}
 The first bound is a well-known inequality. The proof of the first bound is also similar and simpler  than the second one. For conciseness, the proof of the first bound is omitted. 
 \\
 \\
 For \Cref{eq:independent condition 1c}, it suffices to show that 
 \begin{align*} 
      P \bigg\{  \left |  \frac{1}{ |\I |  } \sum_{i \in \I }    u^\top_i  x_i   x_i^\top \beta  - \frac{1}{|\I |  } \sum_{i \in \mathcal I }   u^\top  _i  \Sigma \beta  \right| \ge C \left(  \max _{1\le i \le n }\|u_i\|_2  \right) \sqrt {  \frac{\log(pn)}{ |\I |   } } \|\beta\|_1    \bigg\} \le n^{-7}
 \end{align*} 
 for any $\I  $ such that $|\I|\ge \zeta n $. Note that 
 \begin{align*}
 &\left|  \frac{1}{ |\I |  } \sum_{i\in \I }  u^\top  _i x_i   x_i^\top \beta   - \frac{1}{ |\I |  }  \sum_{i\in \I }   u^\top_i  \Sigma  \beta \right| 
 \\
 =
 & \left|  \left(  \frac{1}{|\I |  } \sum_{i\in \I }   u^\top _i   x_i   x_i^\top     - \frac{1}{|\I |  }    \sum_{i\in \I }   u^\top_i  \Sigma  \right) \beta  \right| 
 \\
 \le 
 & \max_{1\le j \le p }  \left |  \frac{1}{|\I |  } \sum_{i\in \I }   u^\top  _i   X_i   X_{i,j}      - \frac{1}{|\I |  }    \sum_{i\in \I }   u^\top_i  \Sigma  (, j)  \right| \|\beta\|_1.
 \end{align*}
 Note that 
 $ \E(u^\top  _i  x_i   x_{i,j} ) =u^\top _i \Sigma  (, j)    $ and in addition, 
 $$u^\top _ i   x_i   \sim SG ( C_x^2 \| u_i \|_2^2)    \quad \text{ and } \quad x_{i,j} \sim SG ( C_x^2  )   .$$
 So $u^\top _i   x_i   x_ {i,j} $  is a sub-exponential random variable such that 
 $$ u^\top _i   x_i   x_ {i,j} \sim SE( C_x^4 \| u_i \|_2^2 ).$$
 As a result, for $\gamma<1$ and every $j$,
$$ P \left( \left |  \frac{1}{|\I |  } \sum_{i \in \I }  u^\top _ i    x_i   x_{i,j}      - u^\top \Sigma  (, j)  \right|  \ge \gamma \sqrt {  \max_{1\le i\le n }    C_x^4 \| u_i \|_2^2   }\right) \le \exp(-c \gamma^2 |\I |  ). $$
By union bound,
$$ P \left(  \sup_{1\le j \le p }\left |  \frac{1}{|\I |  } \sum_{i \in \I}  u^\top _ i     x_i   x_{i,j}      - \frac{1}{|\I |  } \sum_{i \in\I}  u^\top _i  \Sigma  (, j)  \right|  \ge \gamma 
C_x^2  \left(  \max _{1\le i \le n }\|u_i\|_2  \right)  \right) \le p\exp(-c \gamma^2 n). $$
The desired result follows from  taking $\gamma =C_\gamma \sqrt { \frac{\log(pn) }{ n}  }$ for sufficiently large $C_\gamma$ and an union bound over all possible interval $\I $ such that $|\I| \ge \zeta n. $
 \end{proof}

\section{  $\beta$-mixing Time Series  }
\subsection{The Null Distribution under $\beta$-mixing }\label{sec:The Null Distribution under beta-mixing}
\textbf{Proof of \Cref{theorem:main_beta} (under $H_0$)}:
 \begin{proof}
 We follow the same strategy used  in the proof of \Cref{theorem:main null}.
 Consider the following conditions:
\begin{condition} \label{condition:deviation 2}
\
\\
{\bf c.} There exists an absolute constant $C$ such that given a fixed $ u \in \mathbb R^p  $,
we have that $$
  \left |  \frac{1}{|\I | } \sum_{i \in \I }   \big\{   u^\top   x_i   x_i^\top \beta  -      u^\top  \Sigma \beta   \big\} \right| \le C     \|u \|_2    \sqrt {  \frac{\log(pn)}{ | \I |  } } \|\beta\|_1   \quad  \text{for all } \beta \in \mathbb R^p  .$$ 
 \
 \\
 \\
{\bf e.}   Suppose that $$
  \bigg |  \frac{1}{|\I | } \sum_{i \in \I }   \big\{   u^\top   x_i   x_i^\top \beta    -       u^\top   \Sigma  \beta \big\}    \bigg | \le C   \sqrt {    \frac{\s \log(pn)}{ | \I |  } } \|\beta\|_1  $$
  for all $u \in \mathbb R^p$ such that $ \|u\|_2\le 1$ and $ u\in \mathcal C(S)   $ and all $\beta \in \mathbb R^p$.
\end{condition}

\ 
\\ 
To justify the limiting distribution under  the null hypothesis $H_0$, it suffices to justify \Cref{condition:deviation} {\bf a}, {\bf b}, {\bf d}, \Cref{condition:process} and \Cref{condition:deviation 2} under the new $\beta$-mixing assumption.
\
\\
\\
Observe that \Cref{condition:deviation 2}{\bf c} is a  less general  version of \Cref{condition:deviation}{\bf c}. However, under the null hypothesis  
$$H_0:  \beta_1^* = \cdots =\beta^*_n,$$
all we need is \Cref{condition:deviation 2}{\bf c}, as  the general version in  \Cref{condition:deviation}{\bf c} is used for power analysis and alternative distribution. 
\\
\\
 We also note that for the  independent  setting,   in the proof of    \Cref{lemma:bias term R3},  we  verify  \Cref{eq:using independence}   using the fact that 
$ \widehat \beta^*_\ot -\beta _\ot  $ is independent of the data
$\{ x_i, \epsilon_i\}_{i=t+1}^n.  $
   In the presence of temporal dependence, we  need to  
 justify \Cref{eq:using independence} using \Cref{condition:deviation 2}{\bf e}.  
 \\
 \\
 In view of the proof of \Cref{theorem:main null},  the desired result follows immediately from  \Cref{lemma:interval lasso consistency beta mixing}, \Cref{lemma: beta deviation 2},  \Cref{lemma: beta deviation 3}   and \Cref{lemma:beta to gaussian process}  in \Cref{subsection: additional beta}. 
 \\
 \\
For illustration purposes, we  show  \Cref{eq:using independence} as follows.
 Suppose all the good events hold in \Cref{lemma:beta interval lasso}. Then 
\begin{align*} \nonumber&(  \widehat \beta_\ot  - \beta^*_\ot ) ^\top ( \Sigma- \widehat \Sigma_\tn  )  (\beta_\tn^* -\widehat \beta_\tn ) 
\\
\le& C_2 \sqrt { \frac{\s \log(pn)}{n } } \| \widehat \beta_\ot  - \beta^*_\ot  \|_2    \| \beta_\tn^* -\widehat \beta_\tn\| _1 \nonumber 
\\
\le & C _2'  \sqrt { \frac{\s \log(pn)}{n } }  \sqrt { \frac{C_3\s \log(pn) }{ n }}    s\sqrt { \frac{C_3'  \log(pn) }{ n }}  
\\
=& C_4 \s ^2 \bigg( \frac{  \log(pn)}{n } \bigg)^{3/2 } \le C_4' \frac{\s \log(pn)}{n },
  \end{align*} 
where the first inequality follows from \Cref{condition:deviation 2}{\bf e},   the second inequality follows from \Cref{lemma:beta interval lasso}, and the last inequality follows from the assumption  that 
$ \s\sqrt { \frac{\log(pn)}{n}} =o(1)$.
\end{proof}

\subsection{Power Analysis under $\beta$-mixing}  
\textbf{Proof of \Cref{theorem:main_beta} (under $H_a  $)}:
  \begin{proof}
  The proof is analogous to the proof of \Cref{corollary:iid power}. 
    Suppose $H_a$ holds.
    \\
    \\
   {\bf Step 1.} Denote 
$$ \kappa = \max_{1\le k \le K }\kappa_k . $$    
    It follows from  \Cref{eq:equivalence Hypothesis} that there exists $\eta$, being a change point of $\{ \beta^*_i\}_{i=1}^n $ such that 
    $$ ( \beta^*_{(0, \eta] }    - \beta^*_{ (\eta,n] }  ) ^\top  \Sigma  ( \beta^*_{(0, \eta] }    - \beta^*_{ (\eta,n] }   )   \ge c_1 \kappa.  $$
Therefore, to justify the test $\psi$ has power going to 1 under $H_a$,  it suffices to show that for sufficiently large $n$,   
 \begin{align} \label{eq:beta power signal upper bound 1}    
 & \frac{ \sqrt n }{2\sigma_\epsilon \sigma_\xi }\bigg\{      ( \widehat \beta_{(0,  \eta ] } -   \widehat \beta_{ ( \eta , n ]}  )^\top     \widehat \Sigma _{(0, \eta] } ( \widehat \beta_{(0, \eta] } -   \widehat \beta_{ (\eta, n ]}   )   + 
\mathcal R_1(\eta)  +2 \mathcal R_2(\eta)  -2  \mathcal  R_3(\eta)   \bigg\}    >  \mathcal G_\alpha    ,
\\\label{eq:beta power signal upper bound 2} 
&  \frac{ \sqrt n }{2\sigma_\epsilon \sigma_\xi }   \bigg\{    ( \widehat \beta_{(  \eta , n ] } -   \widehat \beta_{ (0, \eta  ]}  )^\top     \widehat \Sigma _{( \eta , n ] } ( \widehat \beta_{(  \eta , n ] } -   \widehat \beta_{ (0, \eta  ]}   )   + 
\mathcal R_1 ' ( \eta )  +2 \mathcal R_2' ( \eta )  - 2 \mathcal   R_3' (\eta )    \bigg\}   >  \mathcal G_\alpha    .
 \end{align}   
The justification of  \Cref{eq:beta power signal upper bound 1} is provided as the justification of  \Cref{eq:beta power signal upper bound 2}  is the same by symmetry. 
\\
\\
{\bf Step 2.} In view of \Cref{corollary:iid power}, we need to justify that under $H_a $, \Cref{condition:deviation} {\bf a}, {\bf b}, {\bf c}, {\bf d},  and \Cref{condition:deviation 2}
continue to hold    in  the intervals $(0, \eta]$ and $(\eta, n]$. For illustration purpose, in \Cref{corollary: beta deviation 1 with change points}  we show that  \Cref{condition:deviation} {\bf a} and {\bf b} holds in the interval 
   $(0, \eta]$.  The justification of other conditions follow from similar arguments and is omitted for brevity. 
\
\\
\\
 {\bf Step 3.} Using  \Cref{condition:deviation} {\bf a}, {\bf b}, {\bf c}, {\bf d},  and \Cref{condition:deviation 2} in  the intervals $(0, \eta]$ and $(\eta, n]$, it follows from   the same argument as in   \Cref{theorem:alternative},   
\begin{align*}     
& \Bigg |   ( \widehat \beta_{(0, \eta] } -   \widehat \beta_{ (\eta,n] })^\top     \widehat \Sigma _{(0, \eta] } ( \widehat \beta_{(0, \eta] }  -   \widehat \beta_{ (\eta,n] }  )   + 
\mathcal R_1(\eta  )  +2 \mathcal R_2(\eta )  -2  \mathcal  R_3(\eta )    
 \\
 & -     ( \beta^*_{(0, \eta] }    - \beta^*_{ (\eta,n] }  ) ^\top  \Sigma  ( \beta^*_{(0, \eta] }    - \beta^*_{ (\eta,n] }   )      -  \big\{ \mathfrak T_{n, \xi} (\eta ) +\mathfrak S_n   (\eta )  \big\}    
\Bigg|  \le
  C_1 (1+\sigma_\xi ) \frac{\s \log(pn)}{ n}  
   . \end{align*}  
Here 
 \begin{align} 
 \label{eq:Gn at the change point beta mixing 0}   \mathfrak T_{n, \xi} ( \eta )  = &\frac{2}{\eta } \sum_{i=1}^{ \eta }      x_i  ^\top     (  \beta^*_ {(0, \eta] }    - \beta^*_ {( \eta , n]}   )      \epsilon_i   
 	 +  \frac{2}{\eta   } \sum_{i=1}^ { \eta }      x_i  ^\top     (  \beta^*_1 - \beta^*_{(0, \eta ]}    )      \xi_i    
 	   +  \frac{2}{\eta } \sum_{i=1}^ {\eta  }    \xi_i\epsilon_i   
 \\  \nonumber
   + &   ( \beta^*_{(0, \eta  ] }   -     \beta_{(\eta   , n]}   ^*  )  ^\top  (  \widehat \Sigma  _{(0, \eta   ] }   -\Sigma  )  (  \beta^*_{(0, \eta  ] }    -    \beta ^* _{(\eta  , n]}     )   +
  \frac{2}{  \eta   } \sum_{i=1}^{ \eta   }    (  \beta_i ^* -\beta_{(0, \eta] }   ^*    ) ^\top x_i   x_i^\top ( \beta_{(0,\eta  ] }  ^*  -  \beta_{(\eta  , n]}   ^*  )   , \
  \\ \label{eq:Gn at the change point beta mixing}
\mathfrak S_n  (\eta  )   =  &     \frac{2 }{n-\eta  } \sum_{i=\eta   +1} ^n x_i^\top   ( \beta_ {(0, \eta   ] }   ^{* } -\beta_{ (\eta , n ]}  ^*)  \epsilon_i 
  +  \frac{2 }{n-\eta   } \sum_{i=\eta  +1} ^n     ( \beta_{ (\eta   , n ]}  ^* - \beta_ {(0, \eta   ] }  ^{* }  )  x_ix_i^\top   (\beta_ i ^* - \beta^*_{ (\eta   , n ]}   )  .
 \end{align}      
   \
   \\
  Then under   $H_a$, 
\begin{align} \label{eq:beta power signal upper bound}   &    ( \widehat \beta_{(0, \eta] } -   \widehat \beta_{ (\eta,n] })^\top     \widehat \Sigma _{(0, \eta] } ( \widehat \beta_{(0, \eta] }  -   \widehat \beta_{ (\eta,n] }  )   + 
\mathcal R_1(\eta  )  +2 \mathcal R_2(\eta )  -2  \mathcal  R_3(\eta )     
 \\ \nonumber
\ge    &   \kappa^2      -  | \mathfrak T_{n, \xi}   ( \eta ) | -| \mathfrak  S_n( \eta )   |    
-  C_1 (1+\sigma_\xi ) \frac{\s \log(pn)}{ n} .
 \end{align} 
 So it suffices to show that 
\begin{align} \label{eq:beta power signal upper bound2}   \kappa^2      -  | \mathfrak T_{n, \xi}  (\eta ) | -| \mathfrak   S_n(\eta )   |    
-  C_1 (1+\sigma_\xi ) \frac{\s \log(pn)}{ n}   \ge  \frac{ 2 \mathcal G_{\alpha   } \sigma_\epsilon\sigma_\xi }{ \sqrt  {   n} }  . 
\end{align} 
\
\\
The rest of the proof follows from the  the  same  argument of \Cref{corollary:iid power} and the  union bound argument  (similar as that in \Cref{corollary: beta deviation 1 with change points}). 
\\
\\
For illustration purposes, in \Cref{lemma:Gn at the change point beta mixing},     we demonstrate how to handle  the term $| \mathfrak   S_n(\eta )   |$ in \Cref{eq:beta power signal upper bound2}. More precisely, we show that  there exists a constant $C_2$  such that with probability goes to 1, 
$$|\mathfrak  S_n (\eta ) | \le C_2 \sqrt {  \frac{ D_n }{n} }  \kappa    ,$$
where $D_n  $ is any slowly diverging sequence. 
\end{proof} 

\begin{lemma}
\label{lemma:Gn at the change point beta mixing} 
Suppose all the assumptions in \Cref{theorem:main_beta} hold.  Let $\mathfrak{S}_n (\eta)$ be defined as in 
\Cref{eq:Gn at the change point beta mixing}. Then 
 with probability goes to 1, 
$$|\mathfrak  S_n (\eta ) | \le C_2 \sqrt {  \frac{ D_n }{n} }  \kappa    ,$$
where  $ \kappa = \max_{1\le k \le K }\kappa_k  $ and   $D_n  $ is any slowly diverging sequence. 
\end{lemma}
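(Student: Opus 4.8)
The plan is to mirror Step~2 in the proof of \Cref{corollary:iid power}, where the analogous quantity $\mathfrak{S}_n(t')$ is controlled, replacing the i.i.d.\ sub-exponential (Bernstein) tail bounds used there with Bernstein-type concentration for geometrically $\beta$-mixing sub-Weibull sums (the same machinery behind \Cref{lemma: beta deviation 1}, \Cref{lemma: beta deviation 2} and \Cref{lemma: beta deviation 3}). Throughout I would use three preliminary facts: (i) by \Cref{assume:beta}(e), $\eta=\lfloor n\eta_k^*\rfloor$ for some fixed $\eta_k^*\in(0,1)$, so $\min\{\eta,n-\eta\}\ge cn$ and every maximal subinterval of $(\eta,n]$ on which $\beta^*_i$ is constant has length of order $n$; (ii) since $\beta^*$ takes at most $K+1$ distinct values and $K$ is finite, $\|\beta^*_{(0,\eta]}-\beta^*_{(\eta,n]}\|_2\le C\kappa$ and, by \Cref{lemma:beta bounded 1}, $\max_{1\le i\le n}\|\beta^*_i-\beta^*_{(\eta,n]}\|_2\le C$; (iii) for sub-Weibull variables $\|ab\|_{\psi_{\gamma_2/2}}\le 2\|a\|_{\psi_{\gamma_2}}\|b\|_{\psi_{\gamma_2}}$, so products of the form $(x_i^\top u)\epsilon_i$ and $(u^\top x_i)(x_i^\top u')$ are sub-Weibull with parameter $\gamma_2/2$ and norm $\lesssim\|u\|_2$, resp.\ $\lesssim\|u\|_2\|u'\|_2$, and inherit geometric $\beta$-mixing from $\{(x_i,\epsilon_i)\}$.

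First I would split $\mathfrak{S}_n(\eta)$, as written in \eqref{eq:Gn at the change point beta mixing}, into its linear and quadratic summands. For the linear part $\frac{2}{n-\eta}\sum_{i=\eta+1}^n(x_i^\top v)\epsilon_i$ with $v:=\beta^*_{(0,\eta]}-\beta^*_{(\eta,n]}$: the summands are mean zero (because $\epsilon_i$ is independent of $x_i$ and $\mathbb E\epsilon_i=0$), geometrically $\beta$-mixing, and sub-Weibull$(\gamma_2/2)$ with norm $\lesssim\kappa$, hence the associated long-run variance is $\lesssim\kappa^2$ by summability of autocovariances under geometric mixing. Applying the $\beta$-mixing Bernstein inequality with a deviation equal to a $\sqrt{D_n}$-multiple of the standard deviation $\kappa/\sqrt{n-\eta}$, the failure probability is at most $\exp(-cD_n)+o(1)\to 0$ — the Gaussian regime is the relevant one because $D_n$ diverges slowly (cf.\ \Cref{assume:order_betamixing}) — so this term is $\lesssim\sqrt{D_n/(n-\eta)}\,\kappa\lesssim\sqrt{D_n/n}\,\kappa$ with probability tending to one.

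For the quadratic part, set $w:=\beta^*_{(\eta,n]}-\beta^*_{(0,\eta]}$ and $z_i:=\beta^*_i-\beta^*_{(\eta,n]}$, and partition $(\eta,n]$ into the $Q\le K$ maximal subintervals $\mathcal J_1,\dots,\mathcal J_Q$ on which $z_i\equiv z^{(q)}$ is constant. On each $\mathcal J_q$ the summands $(w^\top x_i)(x_i^\top z^{(q)})$ are geometrically $\beta$-mixing, sub-Weibull$(\gamma_2/2)$ with norm $\lesssim\|w\|_2\|z^{(q)}\|_2\lesssim\kappa$, so the same Bernstein inequality gives $\big|\sum_{i\in\mathcal J_q}\big((w^\top x_i)(x_i^\top z^{(q)})-\mathbb E[(w^\top x_i)(x_i^\top z^{(q)})]\big)\big|\lesssim\sqrt{D_n|\mathcal J_q|}\,\kappa\le\sqrt{D_n n}\,\kappa$ outside an event of probability $\exp(-cD_n)+o(1)$; a union bound over the $Q$ segments — whose mutual dependence is immaterial, since each sum is controlled about its own mean — makes all these bounds hold simultaneously. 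Summing over $q$, the total subtracted mean equals $\sum_q|\mathcal J_q|\,w^\top\mathbb E[x_1x_1^\top]z^{(q)}=w^\top\mathbb E[x_1x_1^\top]\sum_{i=\eta+1}^n z_i=0$ because $\tfrac1{n-\eta}\sum_{i=\eta+1}^n\beta^*_i=\beta^*_{(\eta,n]}$, so $\big|\sum_{i=\eta+1}^n(w^\top x_i)(x_i^\top z_i)\big|\lesssim\sqrt{D_n n}\,\kappa$; dividing by $n-\eta\asymp n$ shows the quadratic term is $\lesssim\sqrt{D_n/n}\,\kappa$ as well. Combining the two bounds (the prefactor $2$ being absorbed into the constant) yields $|\mathfrak{S}_n(\eta)|\le C_2\sqrt{D_n/n}\,\kappa$ with probability tending to one.

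The main obstacle I anticipate is the concentration step itself: it requires a Bernstein-type tail bound valid for geometrically $\beta$-mixing and possibly heavy-tailed summands — here sub-Weibull with parameter $\gamma_2/2\le 1$ — and one must check that the $\sqrt{D_n}$-standard-deviation deviation we use lies in the sub-Gaussian part of that bound rather than in its polynomial-correction part; this is exactly where the slow divergence of $D_n$ together with \Cref{assume:order_betamixing} enter. Everything else is routine: the piecewise-constant decomposition of $\{z_i\}$ into finitely many segments, the telescoping bounds recorded in (ii) above, and the identity $\sum_{i=\eta+1}^n z_i=0$ that makes the relevant means vanish.
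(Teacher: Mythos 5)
Your proposal is correct and follows essentially the same route as the paper: split $\mathfrak S_n(\eta)$ into the linear term and the quadratic term, bound the linear term by applying the $\beta$-mixing Bernstein inequality (\Cref{theorem:beta deviation}) to the mean-zero sub-Weibull$(\gamma_2/2)$ summands $(x_i^\top v)\epsilon_i$ with deviation level $\delta\asymp\sqrt{D_n/n}$, and handle the quadratic term by partitioning $(\eta,n]$ into the finitely many segments on which $\beta_i^*$ is constant, concentrating each segment sum about its mean, union-bounding over segments, and using $\sum_{i=\eta+1}^n(\beta_i^*-\beta^*_{(\eta,n]})=0$ to kill the aggregate mean. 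The only cosmetic differences are notational (the paper writes the product sub-Weibull norm with index $\gamma_1/2$, apparently a typo for $\gamma_2/2$, which your version states correctly).
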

\begin{proof}
{\bf Step 1.} 
We show that 
$$ \frac{1}{n - \eta } \sum_{i=\eta+1 }^{ n  }      x_i  ^\top     (  \beta^*_ {(0, \eta] }    - \beta^*_ {( \eta , n]}   )      \epsilon_i  =O\bigg(\kappa  \sqrt { \frac{D_n}{n}} \bigg)$$
with probability goes to 1.
Let 
$$z_i =       x_i  ^\top     (  \beta^*_{(0,  \eta ] }    - \beta^*_ {(\eta , n]} )      \epsilon_i      .$$
As a result,  $z_i$ is strictly stationary.
Since $z_i$ is measurable to $\sigma(x_i, \epsilon_i )$, the process $\{ z_i\}_{i=1}^\eta $ is $\beta$-mixing with the same mixing coefficient as $\{x_i ,\epsilon_i \}_{i=1}^\eta  $.
In addition, 
\begin{align*}  \| \frac{1}{\kappa }z_i\|_{\psi _{\gamma_1/2 } }  =& \frac{1}{\kappa } \|  x_i  ^\top     (  \beta^*_{(0,  \eta ] }    - \beta^*_ {(\eta , n]} )  \epsilon_i \|_{\psi _{\gamma_1/2 } } 
\\
\le &  \frac{c_ 1}{\kappa } \| x_i  ^\top     (  \beta^*_{(0,  \eta ] }    - \beta^*_ {(\eta , n]} )    \|_{ \psi_{ \gamma_1}}  \|  \epsilon_i    \|_{ \psi_{ \gamma_1}} 
\\
\le &   \frac{c_1 }{\kappa } K_X   \|  \beta^*_{(0,  \eta ] }    - \beta^*_ {(\eta , n]} \|_2   K _\epsilon 
=c_1' K_X  K_\epsilon,  
\end{align*} 
where $c_1$ is a constant only depending on $\gamma_1$ and the last equality follows from \Cref{eq:cumsum upper bound}. 
Therefore by \Cref{theorem:beta deviation},
\begin{align*}  \mathbb P \bigg\{ \frac{1}{ (n- \eta)   } \bigg|   \frac{1}{\kappa }\sum_{i =\eta+  1}^ n   z_i \bigg| \ge \delta \bigg\} 
\le  n \exp\big (-c_2 (\delta \eta )^\gamma\big) + \exp(- c_3\delta^2 \eta ), 
\end{align*} 
where 
$ \gamma  < 1  $ is defined in \Cref{assume:order_betamixing}.  Let $ \delta = C_\delta \sqrt { \frac{ D_n}{ n }} $ for sufficiently large $C_\delta$.
Since  $ n -  \eta \ge \zeta n$, this 
 leads to 
\begin{align*}  \mathbb P \bigg\{ \frac{1}{  n -  \eta  } \big|   \sum_{i =\eta+ 1}^ n  
  x_i  ^\top     (  \beta^*_{(0, \eta] }    - \beta^*_ {(\eta, n]} )      \epsilon_i 
 \big| \ge C_\delta \kappa  \sqrt { \frac{ D_n}{ n }}   \bigg\} =o(1),
\end{align*}  
as desired. 
\\
\\
{\bf Step 2.} In this step, we show that 
\begin{align*} 
\mathbb P  \bigg\{\Bigg|   \frac{1}{n- \eta } \sum_{i=\eta +1} ^n     ( \beta_ {(0, \eta  ] }  ^{* } -\beta_{ (\eta  , n ]}  ^*)  x_ix_i^\top   (\beta_ i ^* - \beta^*_{ (\eta  , n ]}   )  \Bigg|  
\ge    C_3 \kappa  \sqrt { \frac{ D_n}{ n }}   \bigg\} =o(1)
\end{align*}  
 Let $\eta= \eta_q $ for some $q \in \{ 1,\ldots, K\} $.  Denote 
$\mathcal J_k  =(\eta_{k-1},\eta_k] $. Then 
$$ (\eta, n ] = \bigcup _{ k=q+1}^{K+1} \mathcal J_k .  $$
    Observe that

\begin{align} \nonumber 
&\Bigg|   \frac{1}{n- \eta } \sum_{i=\eta +1} ^n     ( \beta_ {(0, \eta  ] }  ^{* } -\beta_{ (\eta  , n ]}  ^*)  x_ix_i^\top   (\beta_ i ^* - \beta^*_{ (\eta  , n ]}   )  \Bigg| 
\\   \nonumber 
= & \Bigg|   \frac{1}{n- \eta } \sum_{i= \eta  +1} ^n     ( \beta_ {(0, \eta ] }  ^{* } -\beta_{ (\eta , n ]}  ^*)  x_ix_i^\top   (\beta_ i ^* - \beta^*_{ (\eta , n ]}   )  
-   ( \beta_ {(0, \eta ] }  ^{* } -\beta_{ (\eta , n ]}  ^*)  \Sigma   (\beta_ i ^* - \beta^*_{ (\eta , n ]}   )  \Bigg| 
\\  \nonumber  
\le & \sum_{k=q+1 }^{K+1 }\Bigg|   \frac{1}{n- \eta }  \sum_{i \in \mathcal J_k } ^n     ( \beta_ {(0, \eta ] }  ^{* } -\beta_{ (\eta , n ]}  ^*)  x_ix_i^\top   (\beta_ i ^* - \beta^*_{ (\eta , n ]}   )  
-   ( \beta_ {(0, \eta ] }  ^{* } -\beta_{ (\eta , n ]}  ^*)  \Sigma   (\beta_ i ^* - \beta^*_{ (\eta , n ]}   )  \Bigg|  
\\ \label{eq:beta mixing power step 2}
= &  \sum_{k=q+1 }^{K+1 }\Bigg|  \frac{1}{n- \eta }  \sum_{i \in \mathcal J_k } ^n     ( \beta_ {(0, \eta ] }  ^{* } -\beta_{ (\eta , n ]}  ^*)  x_ix_i^\top   (\beta_  {\mathcal J_k } ^* - \beta^*_{ (\eta , n ]}   )  
-   ( \beta_ {(0, \eta ] }  ^{* } -\beta_{ (\eta , n ]}  ^*)  \Sigma   (\beta_  {\mathcal J_k } ^* - \beta^*_{ (\eta , n ]}   )  \Bigg|   
.\end{align}
where we use the fact that $\beta_i^*$ is unchanged in each of the interval $ \mathcal J_k$.
For each interval $\mathcal J_k$, the time series 
$$  w_i := ( \beta_ {(0, \eta ] }  ^{* } -\beta_{ (\eta , n ]}  ^*)  x_ix_i^\top   (\beta_  {\mathcal J_k } ^* - \beta^*_{ (\eta , n ]}   )  
 $$
is strictly stationary beta-mixing. In addition, by similar calculations as in  the previous step, it follows that 
\begin{align*}  
\| \frac{1}{\kappa   } w _i\|_{\psi _{\gamma_1/2 } }  =& \frac{1}{\kappa   } \|   ( \beta_ {(0, \eta ] }  ^{* } -\beta_{ (\eta , n ]}  ^*)  x_ix_i^\top   (\beta_  {\mathcal J_k } ^* - \beta^*_{ (\eta , n ]}   )    \|_{\psi _{\gamma_1/2 } } 
\\
\le &  \frac{c_ 1}{\kappa   } \| ( \beta_ {(0, \eta ] }  ^{* } -\beta_{ (\eta , n ]}  ^*)  X_i     \|_{ \psi_{ \gamma_1}}  \|    x_i^\top   ( \beta_  {\mathcal J_k } ^* - \beta^*_{ (\eta , n ]}  )       \|_{ \psi_{ \gamma_1}} 
\\
\le &   \frac{c_1 }{\kappa   } K_X ^2    \|  \beta^*_{(0,  \eta ] }    - \beta^*_ {(\eta , n]} \|_2   \|\beta_  {\mathcal J_k } ^* - \beta^*_{ (\eta , n ]} \|_2 
=c_2  K_X ^2 .
\end{align*}  
\
\\
Therefore by \Cref{theorem:beta deviation},
\begin{align*}  \mathbb P \bigg\{ \frac{1}{ \kappa   |\mathcal J_k|    } \big|   \sum_{i  \in \mathcal J_k }^ n  w _i -E(w_i ) \big| \ge \delta \bigg\} 
\le  n \exp\big (-c_2 (\delta \eta )^\gamma\big) + \exp(- c_3\delta^2 \eta ), 
\end{align*} 
where 
$ \gamma  < 1  $ is defined in \Cref{assume:order_betamixing}.  Let $ \delta = C_\delta \sqrt { \frac{ D_n}{ n }} $ for sufficiently large $C_\delta$.
Since  $$ n -  \eta \asymp n \asymp | \mathcal J_k| ,$$ this 
 leads to 
\begin{align*}  \mathbb P \bigg\{   \Bigg|  \frac{1}{n- \eta }  \sum_{i \in \mathcal J_k } ^n     ( \beta_ {(0, \eta ] }  ^{* } -\beta_{ (\eta , n ]}  ^*)  x_ix_i^\top   (\beta_  {\mathcal J_k } ^* - \beta^*_{ (\eta , n ]}   )  
-   ( \beta_ {(0, \eta ] }  ^{* } -\beta_{ (\eta , n ]}  ^*)  \Sigma   (\beta_  {\mathcal J_k } ^* - \beta^*_{ (\eta , n ]}   )  \Bigg|    \ge C_\delta \kappa  \sqrt { \frac{ D_n}{ n }}   \bigg\} =o(1).
\end{align*}  
 By a union bound argument and \Cref{eq:beta mixing power step 2},   it holds that 
\begin{align*} 
\mathbb P  \bigg\{\Bigg|   \frac{1}{n- \eta } \sum_{i=\eta +1} ^n     ( \beta_ {(0, \eta  ] }  ^{* } -\beta_{ (\eta  , n ]}  ^*)  x_ix_i^\top   (\beta_ i ^* - \beta^*_{ (\eta  , n ]}   )  \Bigg|  
\ge    C_\delta (K+1) \kappa  \sqrt { \frac{ D_n}{ n }}   \bigg\} =o(1)
\end{align*} 
as desired.

 \end{proof}

\subsection{Alternative Distribution under $\beta$-mixing}   
      Consider the alternative hypothesis
\begin{align*}
H_a: &   \text{ there exists at least one change point in } \{ \beta_i^*\}_{i=1}^n .  
\end{align*}
 Let  $r  \in [\zeta, 1-\zeta]  $  and  $t = \lfloor r n\rfloor  $.  Denote 
\begin{align*}   
\mathfrak S(r ) :=   & ( \widehat \beta_ {(0, t] }  -   \widehat \beta_{(t, n] } )^\top     \widehat \Sigma _{(0, t] } 
( \widehat \beta_{(0, t] }  -   \widehat \beta_{(t, n] }  )  + 
\mathcal R_1 (t) 
  +2 \mathcal R_2(t)  -2 \mathcal R_3(t)       
\\
 + &    ( \widehat \beta_ {(0, t] }  -   \widehat \beta_{(t, n] } )^\top     \widehat \Sigma _{(t,n ] } 
( \widehat \beta_{(0, t] }  -   \widehat \beta_{(t, n] }  )  + 
\mathcal R_1 '  (t) 
  +2 \mathcal R_2 ' (t)  -2 \mathcal R_3 '(t)        ,
\end{align*}  
 where  $  \mathcal R_1 (t) 
, \mathcal R_2(t)    , \mathcal R_3(t)      $ are  defined as in  \Cref{theorem:alternative} and $  \mathcal R_1' (t) 
, \mathcal R_2'(t)    , \mathcal R_3'(t)      $ are defined as in  \Cref{theorem:alternative counterpart}. 
Furthermore, denote 
\begin{align*}  \mu (r )  = &  \lim_{n \to \infty }    ( \beta^*_{(0, t] }  - \beta^*_ {(t,n] } ) ^\top  \Sigma  ( \beta^*_{(0, \eta] }   - \beta^*_ {(\eta,n] }  ), 
 \\
\Phi _L (r ) = & \lim_{n\to \infty } \bigg\{ 16  \sigma_{L,1}^2  +   \frac{4}{t  }   \sum_{i=1}^t   \sigma_\xi^2  ( \beta_i^* -   \beta^*_{ (0, t ] }   ) ^\top \Sigma   ( \beta_i^* -   \beta^*_{ (0, t ] }   )  
 + 4 \sigma_\epsilon^2    \sigma_\xi^2    + \sigma_{L,2}^2  \bigg\}  , \quad \text{and} 
\\
\Phi _R (r )  = &  \lim_{n\to \infty } \bigg\{ 16  \sigma_{R,1}^2  +   \frac{4}{ n- t  }   \sum_{i=t +1}^{n } \sigma_\xi^2  ( \beta_i^* -   \beta^*_{ (t , n]  }   ) ^\top \Sigma   ( \beta_i^* -  \beta^*_{ (t , n]  }   ) 
  + 4 \sigma_\epsilon^2    \sigma_\xi^2    +  \sigma^2_{R,2}\bigg\} ,
\end{align*}   
where 
\begin{align}\label{eq:long run variance term 1}
\sigma_{L,1} ^2 = &\lim_{n \to\infty }Var \bigg\{ \frac{1}{ \sqrt {t  }   } \sum_{i=1}^ t    x_i  ^\top     (  \beta^*_{ (0,  t ] }  - \beta^*_ { ( t , n] }  )      \epsilon_i  \bigg\}  ,
\\\label{eq:long run variance term 2}
 \sigma_{R,1} ^2 =& \lim_{n \to\infty }  Var \bigg\{ \frac{1}{ \sqrt { n - t  }   } \sum_{i=t+1}^ n    x_i  ^\top     (  \beta^*_{ (0,  t ] }  - \beta^*_ { ( t , n] }  )      \epsilon_i  \bigg\} ,
  \\\label{eq:long run variance term 3}
  \sigma_{L,2} ^2 =& \lim_{n \to\infty }  Var \bigg\{ \frac{1}{ \sqrt {   t  }   } \sum_{i= 1}^ t    ( \beta^* _{ (0, t ] }  -     \beta _ { (t , n] } ^*  )  ^\top  (  \widehat \Sigma  _{ (0, t ] }   -\Sigma  )  (  \beta^*_{ (0, t ] }   -    \beta ^* _ { (t , n] }   )  \bigg\} ,
 \\\label{eq:long run variance term 4}
  \sigma_{R,2} ^2 =& \lim_{n \to\infty }  Var \bigg\{ \frac{1}{ \sqrt { n - t  }   } \sum_{i=t+1}^ n   ( \beta^* _{ (0, t ] }  -     \beta _ { (t , n] } ^*  )  ^\top  (  \widehat \Sigma  _{ (t , n ] }   -\Sigma  )  (  \beta^*_{ (0, t ] }   -    \beta ^* _ { (t , n] }   )  \bigg\} .
\end{align}
Note that by \Cref{lemma:bound for specific long-run variance}, all of $\sigma_{L,1}^2 ,\sigma_{R,1}^2,\sigma_{L,2}^2,\sigma_{R,2}^2$ are finite.

 \begin{proof}[Proof of  \Cref{theorem:alternative 2_beta}] Let  $r  \in [\zeta, 1-\zeta]  $  and  $t = \lfloor r n\rfloor  $.    Without loss of generality, assume that $\sigma_\epsilon=1  $. 
Following the same argument as \Cref{corollary:iid power},  \Cref{theorem:alternative}  and   \Cref{theorem:alternative counterpart} continue to hold under \Cref{assume:beta}. 
 Let    $\mathfrak T_{n, \xi} (t)$ and   $\mathfrak S_n    (t) $  be  defined  as in \Cref{theorem:alternative},  and 
 $\mathfrak T'_{n,\xi} (t)$ and   $ \mathfrak S'_n  (t) $ be  defined  as  in \Cref{theorem:alternative counterpart}. 
 Denote 
 $$ \mathfrak T_{n, \xi} (t  )  +\mathfrak S_n    (t  ) + \mathfrak T'_{n,\xi} ( t )  +  \mathfrak S'_n  ( t  ) =  \mathcal  M  (r)       , $$
 where 
\begin{align} \nonumber  &\mathcal M  (r)     
\\ = &\frac{4}{ t  } \sum_{i=1}^ t    x_i  ^\top     (  \beta^*_{ (0,  t ] }  - \beta^*_ { ( t , n] }  )      \epsilon_i  +\frac{4}{n- t } \sum_{i=t  +1}^ n     x_i  ^\top     (  \beta^*_ { (t , n] }    - \beta^*_{ (0, t ] }    )      \epsilon_i  
 \label{eq:beta mixing alternative term 1}
\\\label{eq:beta mixing alternative term 2}
  +&   \frac{2}{  t } \sum_{i=1}^  t     x_i  ^\top     (  \beta^*_i  - \beta^*_{ (0,  t ] }    )      \xi_i       +  \frac{2}{n-  t } \sum_{i=t  +1}^ n     x_i  ^\top     (  \beta^*_i  - \beta^*_ { (t , n] }     )      \xi_i       
  \\\label{eq:beta mixing alternative term 3}
   +&   \frac{2}{  t } \sum_{i=1}^ t    \xi_i\epsilon_i    - \frac{2}{n-   t   } \sum_{i= t  +1}^ n    \xi_i\epsilon_i      
\\\label{eq:beta mixing alternative term 4}
  + &   ( \beta^* _{ (0, t ] }  -     \beta _ { (t , n] } ^*  )  ^\top  (  \widehat \Sigma  _{ (0, t ] }   -\Sigma  )  (  \beta^*_{ (0, t ] }   -    \beta ^* _ { (t , n] }   )  +  ( \beta^*_ { (t , n] }    -     \beta _{ (0, t ] }  ^*  )  ^\top  (  \widehat \Sigma  _ { (t , n] }     -\Sigma  )  (  \beta^*_ { (t , n] }    -    \beta ^* _{ (0, t ] }    )   .
\end{align}    
  \
 \\
 { \bf Step 1.} Recall that as defined in \Cref{eq:qf_random}, with $t =\lfloor nr  \rfloor $,
\begin{align*} 
	\mathcal{S}_n(r)=&\frac{1}{2}(\widehat{\Delta}_t^\top \widehat{\Sigma}_{(0,t]}\widehat{\Delta}_t+\widehat{\Delta}_t^\top\widehat{\Sigma}_{(t,n]} \widehat{\Delta}_t)\nonumber\\
	+&\frac{1}{t}\sum_{i=1}^t(2\widehat{\Delta}_t^\top x_i+\xi_i)(y_i-x_i^\top\widehat{\beta}_{(0,t]})-\frac{1}{n-t}\sum_{i=t+1}^n(2\widehat{\Delta}_t^\top x_i+\xi_i) (y_i-x_i^\top\widehat{\beta}_{(t,n]}) .\end{align*} 
  Note that by assumption, 
\begin{align}\label{eq:lower bound of left}  \Phi_L (r )  \ge 4 \sigma_\xi^2\quad \text{and} \quad \Phi_R     (r )  \ge4 \sigma_\xi^2.
\end{align}
With  $t =\lfloor nr  \rfloor $,  denote  $	\mu (r)  =   ({\beta}_{(0,t]}^*-{\beta}_{(t,n]}^* )^\top   \Sigma ({\beta}_{(0,t]}^*-{\beta}_{(t,n]}^* ) $.
 We have that with probability goes to 1, 
 \begin{align*}
  & \bigg(  \sqrt {  \frac{ \Phi_L (r ) }{ n r  }   +  \frac{  \Phi_R (r ) } {n(1-r)   }  } \bigg)^{-1}      \bigg| \mathcal S_n  (r   ) -2 \mu(r)   -   \mathcal M   (r)       \bigg| 
  \\
 =&  \bigg(  \sqrt {  \frac{ \Phi_L(r ) }{  r }  +  \frac{  \Phi_R (r ) } {n-r   }  } \bigg)^{-1}      \bigg| \mathcal S_n   ( r ) -2 \mu (r)   -   \mathfrak T_{n, \xi} (t  )  -\mathfrak S_n    (t  ) -  \mathfrak T'_{n,\xi} (t )  -  \mathfrak S'_n  (t  ) \bigg| 
 \\
  \le 
  &  C (1+\sigma_\xi ) \frac{\s \log(pn)}{     n }   \bigg(  \sqrt {  \frac{ \Phi_L(r ) }{ n r   }    +   \frac{  \Phi_R(r )  } {n (1-r)   }  } \bigg)^{-1}  
  \\
   \le & C' (1+\sigma_\xi )   \frac{ \s \log(pn)  }{  n }   \bigg(  \sqrt {  \frac{  \sigma_\xi ^2  }{ nr    }   +   \frac{  \sigma_\xi ^2    } { n(1-r)      }  } \bigg)^{-1}  
  \\
  \le & C' (1+\sigma_\xi )   \frac{ \s \log(pn)  }{\sqrt n }   \bigg(  \sqrt {  \frac{  \sigma_\xi ^2   }{r   }   +   \frac{\sigma_\xi ^2    } {1- r  }  } \bigg)^{-1}  
  \\
  \le & C''   \frac{ (1+\sigma_\xi )  \s \log(pn) }{  \sigma_\xi  \sqrt { n }  },
 \end{align*}
 where the first inequality follows from  \Cref{theorem:alternative}  and   \Cref{theorem:alternative counterpart}, the second inequality follows from \Cref{eq:lower bound of left}.
By assumption, $ \frac{\s \log(pn)}{   \sqrt { n  }   } = o(1)$,
    $\sigma_\xi  = A_n \frac{ \s \log(pn)}{\sqrt n} $ for some diverging sequence $A_n$.  So $  \frac{ (1+\sigma_\xi )  \s \log(pn) }{ \sigma_\xi  \sqrt { n   }  } =o(1)$.  
  \\
  \\
Consequently it suffices to   show that 
$$ \bigg(  \sqrt {  \frac{ \Phi_L (r ) }{ n r  }   +  \frac{  \Phi_R (r ) } {n(1-r)   }  } \bigg)^{-1}        \mathcal M   (r)          $$  converges  to $N(0,1)$.  
\\
\\
{\bf Step 2.} Since $ \{ X_i\}_{i=1}^n,\{ \epsilon_i\}_{i=1}^n $ and $\{ \xi_i\}_{i=1}^n$ are independent, so \eqref{eq:beta mixing alternative term 1}, \eqref{eq:beta mixing alternative term 2}, \eqref{eq:beta mixing alternative term 3} and \eqref{eq:beta mixing alternative term 4} are pairwise uncorrelated. Therefore, it suffices to show that each of \eqref{eq:beta mixing alternative term 1}$\sim$\eqref{eq:beta mixing alternative term 4} are converging to normal random variables, as this would imply that \eqref{eq:beta mixing alternative term 1}$\sim$\eqref{eq:beta mixing alternative term 4} are asymptotically independent. For brevity,   the justifications of \eqref{eq:beta mixing alternative term 1} and \eqref{eq:beta mixing alternative term 2} are provided only, as the justification of \eqref{eq:beta mixing alternative term 3} and \eqref{eq:beta mixing alternative term 4} are either similar or simpler. 
\
\\
\\
{\bf Step 3.} For \eqref{eq:beta mixing alternative term 1},  note that each $ x_i  ^\top     (  \beta^*_{ (0,  t ] }  - \beta^*_ { ( t , n] }  )      \epsilon_i$ is sub-Weibull($2\gamma_2$) $\beta$-mixing time series with mixing coefficient $\exp(-cn^{\gamma_1})$.
So  by \Cref{theorem:beta mixing CLT}, 
$$\frac{1}{ \sqrt {t }  \sigma_{L, 1}  } \sum_{i=1}^ t    x_i  ^\top     (  \beta^*_{ (0,  t ] }  - \beta^*_ { ( t , n] }  )      \epsilon_i  \to N(0,1)\quad \text{and} \quad \frac{1}{\sqrt { (n- t) }\sigma_{R, 1}     } \sum_{i=t  +1}^ n     x_i  ^\top     (  \beta^*_ { (t , n] }    - \beta^*_{ (0, t ] }    )      \epsilon_i   \to N(0,1),  $$
where $ \sigma_{L, 1}^2$ are defined in \Cref{eq:long run variance term 1} and $\sigma_{L,2}^2$ are defined in \Cref{eq:long run variance term 2}.   
\\
\\
By \Cref{lemma:partial sum independent}, $\frac{1}{ \sqrt {t }    } \sum_{i=1}^ t    x_i  ^\top     (  \beta^*_{ (0,  t ] }  - \beta^*_ { ( t , n] }  )      \epsilon_i $  and $ \frac{1}{\sqrt { (n- t) }    } \sum_{i=t  +1}^ n     x_i  ^\top     (  \beta^*_ { (t , n] }    - \beta^*_{ (0, t ] }    )      \epsilon_i    $ are asymptotically independent, it follows that 
$$  \frac{1}{ \sqrt {t }    } \sum_{i=1}^ t    x_i  ^\top     (  \beta^*_{ (0,  t ] }  - \beta^*_ { ( t , n] }  )      \epsilon_i   +  \frac{1}{\sqrt { (n- t) }     } \sum_{i=t  +1}^ n     x_i  ^\top     (  \beta^*_ { (t , n] }    - \beta^*_{ (0, t ] }    )      \epsilon_i   \to N(0, \sigma_{L,1}^2 + \sigma^2_{L,2} ).  $$
\
\\
\\
{\bf Step 4.} For \eqref{eq:beta mixing alternative term 2}, let $\{ \eta_q ^* \}_{q=1}^Q   = \{\eta_{k} ^*\}_{k=1}^K \cap (0,r] $, where $Q =0$ indicates that $(0,r]$ contains no change points.  Denote $\eta_k =\lfloor n\eta_k^* \rfloor $ and  
$$ \mathcal J_1= (0, \eta_1], \  \mathcal J_2 = (\eta_1, \eta_2] \  \ldots  \  \mathcal J_Q= (\eta_{Q_1}, \eta_Q], \ \mathcal J_{Q+1} = (\eta_{Q}, t].$$
  Note that 
$
          x_i  ^\top     (  \beta^*_i  - \beta^*_{ (0, t] }    )      \xi_i    
$ and $ x_j  ^\top     (  \beta^*_j  - \beta^*_{ (0, t] }    )      \xi_j $  are uncorrelated because $\{ \xi_i\}_{i=1}^n $ are i.i.d. 
By \Cref{theorem:beta mixing CLT}, for  each $q\in \{ 1,\ldots, Q+1\} $
$$\frac{ 1}{\sqrt {|\mathcal J_q|  }\nu_q } \sum_{ i \in \mathcal J_q } x_i  ^\top     (  \beta^*_i  - \beta^*_{ (0, t] }    )      \xi_i  \to N(0,1) .$$
Here  $$ \nu^2_q   = \lim_{n\to \infty }  Var\bigg\{ \frac{ 1}{\sqrt {|\mathcal J_q|  }  } \sum_{ i \in \mathcal J_q } x_i  ^\top     (  \beta^*_i  - \beta^*_{ (0, t] }    )      \xi_i  \bigg\} = \lim_{n\to \infty }  \frac{4}{ |\mathcal J_q|   }   \sum_{i \in \mathcal J_q } \sigma_\xi^2  ( \beta_i^* -   \beta^*_{ (t , n]  }   ) ^\top \Sigma   ( \beta_i^* -  \beta^*_{ (t , n]  }   ) , $$
where the second  equality follows from the fact that $ \{  x_i  ^\top     (  \beta^*_i  - \beta^*_{ (0, t] }    )      \xi_i \}_{i=1}^n$ are pairwise uncorrelated.
\\
\\
  In addition,
by \Cref{lemma:partial sum independent},   the collection 
$$\bigg \{   \frac{ 1}{\sqrt {|\mathcal J_q|  }\nu_q } \sum_{ i \in \mathcal J_q } x_i  ^\top     (  \beta^*_i  - \beta^*_{ (0, t] }    )      \xi_i \bigg\}_{q=1}^{Q+1}$$
 are asymptotically independent.  Since $\bigcup_{q=1}^{Q+1} \mathcal J_q =(1, t] $,
 it follows that  
 \begin{align}\label{eq:beta mixing non-stationary term 1}\frac{ 1}{\sqrt {  t  }\nu_L } \sum_{i=1   }^t  x_i  ^\top     (  \beta^*_i  - \beta^*_{ (0, t] }    )      \xi_i  \to N(0,1), 
 \end{align}
 where 
 $   \nu^2_L= \frac{1}{t  }   \sum_{i=1}^t   \sigma_\xi^2  ( \beta_i^* -   \beta^*_{ (0, t ] }   ) ^\top \Sigma   ( \beta_i^* -   \beta^*_{ (0, t ] }   ) .  $
 Similarly
 \begin{align}\label{eq:beta mixing non-stationary term 2}\frac{ 1}{\sqrt {  n-t  }\nu_R } \sum_{i=t+1   }^n  x_i  ^\top     (  \beta^*_i  - \beta^*_{ (0, t] }    )      \xi_i  \to N(0,1), \end{align}
 where 
 $   \nu^2_R= \frac{ 1}{n-t  }   \sum_{i=t+1}^n   \sigma_\xi^2  ( \beta_i^* -   \beta^*_{ (0, t ] }   ) ^\top \Sigma   ( \beta_i^* -   \beta^*_{ (0, t ] }   ) .  $ Since  by \Cref{lemma:partial sum independent},   \eqref{eq:beta mixing non-stationary term 1}  and \eqref{eq:beta mixing non-stationary term 2} are asymptotically independent, it follows that 
 \begin{align*} \frac{ 1}{\sqrt {  t  }  } \sum_{i=1   }^t  x_i  ^\top     (  \beta^*_i  - \beta^*_{ (0, t] }    )     \xi_i +  \frac{ 1}{\sqrt {  n-t  }  } \sum_{i=1   }^t  x_i  ^\top     (  \beta^*_i  - \beta^*_{ (0, t] }    )      \xi_i   \to N(0,\nu_L^2 + \nu_R^2), 
 \end{align*} as desired.
\
\\
\\
{\bf Step 5.} Let  $r  \in [\zeta, 1-\zeta]  $  and  $t = \lfloor r n\rfloor  $.  To show that   $ \sqrt n  \mathcal {S}_n(r)$ converges to Gaussian process, by {\bf Step 1}  it suffices to show that  $\sqrt n    \mathcal M(r)$ converges to a Gaussian process. For any $N \in \mathbb Z^{+ }$, let $\{ r_\ell\}_{\ell=1}^N  \subset [\zeta, 1-\zeta ]$ be given.  Applying the Cramer-World rule and the same argument as in {\bf Step 1 }-{ \bf 4},   the joint distribution of the vector 
$$ \{ \sqrt { n} \mathcal M (r_1) , \ldots, \sqrt { n} \mathcal M (r_N) \} $$
converges to a joint normal law.  Therefore it suffices to show the asymptotic tightness of the process $\sqrt n \mathcal M (r)$. By Theorem 12.3 in \cite{billingsley2013convergence}, it suffices to show that for any $ r, r' \in  \subset [\zeta, 1-\zeta ]  $, 
$$\E \big[ ( \sqrt n \mathcal M(r) -\sqrt n \mathcal M(r')   )^4 \big]  \le C  |r-r'|^2  $$
for some absolute constant $C$ independent of $n$ and $p$. 
Denote 
\begin{align*}  
 \mathcal M_1(r)  = &\frac{4}{ t  } \sum_{i=1}^ t    x_i  ^\top     (  \beta^*_{ (0,  t ] }  - \beta^*_ { ( t , n] }  )      \epsilon_i  +\frac{4}{n- t } \sum_{i=t  +1}^ n     x_i  ^\top     (  \beta^*_ { (t , n] }    - \beta^*_{ (0, t ] }    )      \epsilon_i  
\\ 
   \mathcal M_2 (r) = &   \frac{2}{  t } \sum_{i=1}^  t     x_i  ^\top     (  \beta^*_i  - \beta^*_{ (0,  t ] }    )      \xi_i       +  \frac{2}{n-  t } \sum_{i=t  +1}^ n     x_i  ^\top     (  \beta^*_i  - \beta^*_ { (t , n] }     )      \xi_i       
  \\ \mathcal M_3(r)  = &
      \frac{2}{  t } \sum_{i=1}^ t    \xi_i\epsilon_i    - \frac{2}{n-   t   } \sum_{i= t  +1}^ n    \xi_i\epsilon_i      
\\ \mathcal M_4(r) =  & ( \beta^* _{ (0, t ] }  -     \beta _ { (t , n] } ^*  )  ^\top  (  \widehat \Sigma  _{ (0, t ] }   -\Sigma  )  (  \beta^*_{ (0, t ] }   -    \beta ^* _ { (t , n] }   )  +  ( \beta^*_ { (t , n] }    -     \beta _{ (0, t ] }  ^*  )  ^\top  (  \widehat \Sigma  _ { (t , n] }     -\Sigma  )  (  \beta^*_ { (t , n] }    -    \beta ^* _{ (0, t ] }    )   .
\end{align*}    
So
$$   \mathcal M(r) =  \sum_{j=1}^4 \sqrt n \mathcal M_j (r)   $$
and therefore 
$$\E \big[ ( \sqrt n \mathcal M(r) -\sqrt n \mathcal M(r')   )^4 \big]  \le 8\sum_{j=1}^p \E \big[ ( \sqrt n \mathcal M_j (r) -\sqrt n \mathcal M_j (r')   )^4 \big] . $$
The moment bound related  $\sqrt n \mathcal M_1$   is shown here,   as the other moment bounds can be shown in the same way.  
\
\\
\\
{\bf Step 6.} Denote  $t = \lfloor r n\rfloor  $ and $t' = \lfloor r' n\rfloor  $, in  this step, it is shown that 
\begin{align}
\label{eq:tightness term 1}  
   \E \bigg[  \bigg ( \frac{1}{ t  } \sum_{i=1}^ t    x_i  ^\top     (  \beta^*_{ (0,  t ] }  - \beta^*_ { ( t , n] }  )      \epsilon_i  -  \frac{1}{ t '  } \sum_{i=1}^ {t'}     x_i  ^\top     (  \beta^*_{ (0,  t' ] }  - \beta^*_ { ( t' , n] }  )      \epsilon_i    \bigg )^4 \bigg]  \le C_1 \frac{ |t-t' |^2}{ n^4 }. 
   \end{align} 
 Note that by symmetry, 
$$   
   \E \bigg[  \bigg ( \frac{1}{n- t } \sum_{i=t  +1}^ n     x_i  ^\top     (  \beta^*_ { (t , n] }    - \beta^*_{ (0, t ] }    )      \epsilon_i -\frac{1}{n- t' } \sum_{i=t'  +1}^ n     x_i  ^\top     (  \beta^*_ { (t' , n] }    - \beta^*_{ (0, t' ] }    )      \epsilon_i 
        \bigg )^4 \bigg]  \le C_2 \frac{ |t-t' |^2}{ n^4 } . $$
The above two inequalities imply that $\E \big[ ( \sqrt n \mathcal M_1 (r) -\sqrt n \mathcal M_1 (r')   )^4 \big] \le C_3 |r-r'|  $.
\\
\\
{\bf Step 7.} Without loss of generality assume that $t<t'$, where
$t = \lfloor r n\rfloor  $ and $t' = \lfloor r' n\rfloor  $.  To this end, observe that
\begin{align*}
 \frac{1}{ t  } \sum_{i=1}^ t    x_i  ^\top       \beta^*_{ (0,  t ] }     \epsilon_i  -  \frac{1}{ t '  } \sum_{i=1}^ {t'}     x_i  ^\top    \beta^*_{ (0,  t' ] }    \epsilon_i   
=   \bigg( \frac{1}{t}- \frac{1}{t'} \bigg) \sum_{i=1}^ t    x_i  ^\top       \beta^*_{ (0,  t ] }     \epsilon_i   + \frac{1}{t'} \sum_{i=t+1}^t  x_i  ^\top       \beta^*_{ (0,  t '  ] }\epsilon_i .
\end{align*}
Since the vector time series $\{(x_i,\epsilon_i) \}_{i=1}^n $ is beta-mixing with exponential decay rate  and both $ \{ x_i\}_{i=1}^n$ and $ \{ \epsilon_i\}_{i=1}^n  $ are sub-Weibull$(\gamma_2)$,  the time series 
$\{ x_i^\top \beta^*_{ (0,  t ] }     \epsilon_i \}_{i=1}^n$ is also beta-mixing with the same exponential decay rate and marginal  sub-Weibull$(2\gamma_2)$. So all the moments of $ x_i^\top \beta^*_{ (0,  t ] }     \epsilon_i$ exist  and are finite.   Therefore,
by Theorem B.5 in \cite{kirch2006resampling}
$$\E\bigg[ \bigg( \sum_{i=1}^{t }x_i  ^\top       \beta^*_{ (0,  t ] }     \epsilon_i \bigg)^4 \bigg]\le C_5 t^{2}\quad \text{and} \quad \E\bigg[ \bigg(\sum_{i=t+1}^t  x_i  ^\top       \beta^*_{ (0,  t '  ] }\epsilon_i       \bigg)^4 \bigg]  \le C_6 (t'-t)^2
  $$
for some absolute constant $C_5$ and $C_6$. Since $t , t' \in [\zeta n, (1-\zeta) n ] $, it follows that 
\begin{align*}
\E \bigg[  \bigg(\frac{1}{ t  } \sum_{i=1}^ t    x_i  ^\top       \beta^*_{ (0,  t ] }     \epsilon_i  -  \frac{1}{ t '  } \sum_{i=1}^ {t'}     x_i  ^\top    \beta^*_{ (0,  t' ] }    \epsilon_i    \bigg)^4  \bigg] \le C_5 t^2 \bigg( \frac{1}{t}- \frac{1}{t'} \bigg) ^4 + C_6 \frac{(t-t')^2}{ t'^4} \le C_7 \frac{(t-t')^2}{n^4}
\end{align*}
for some $C_7$ depending on $ C_5$, $C_6$ and $\zeta$. 
By the same argument,  there exists $C_8$ such that
$$\E \bigg[  \bigg ( \frac{1}{n- t } \sum_{i=t  +1}^ n     x_i  ^\top       \beta^*_{ (0, t ] }          \epsilon_i -\frac{1}{n- t' } \sum_{i=t'  +1}^ n     x_i  ^\top       \beta^*_{ (0, t' ] }          \epsilon_i 
        \bigg )^4 \bigg]  \le C_8 \frac{ |t-t' |^2}{ n^4 }.$$
      The above two moment bounds directly lead to \eqref{eq:tightness term 1}.
 
  \end{proof} 
   
\subsubsection{Additional Technical Results}   
\label{subsection:basic properties}
   Throughout \Cref{subsection:basic properties}, assume that  $ \{ z_i\}_{i=1}^n$ is a sequence of time series  satisfying the following additional conditions. 
\begin{assumption} \label{assume:beta 1D}
	\
	\\ 
	{\bf a.} 
	Suppose that the time series $\{ z_i\}_{i=1}^n  $
	is strictly stationary and geometrically $\beta$-mixing; i.e., there exist constant
	$c $ and $\gamma_1$ such that the $\beta$-coefficients of $\{ z_i\}_{i=1}^n $ satisfy 
	$$ \beta(n) \le \exp(-c n^{\gamma_1})\quad \text{for all} \quad 
	n \in \mathbb N. $$
	\
	\\
	{\bf b.} Each  s $ z_i $   follows a sub-Weibull$(\gamma_2)$ distribution with  $\|z_i\|_{\psi_{\gamma_2}} \le K_\epsilon$ for $ i \in \{ 1,\ldots, n\}. $
	\
	\\
	\\
	{\bf c.} It holds that  $ \E(z_i)=0 $ for all $i\in \{ 1,\ldots, n\}.$
\end{assumption} 
\begin{lemma} \label{lemma:bound for long-run variance}
Suppose $\{ z_i\}_{i=1}^n $ satisfies  \Cref{assume:beta 1D}. Then 
$$Var\bigg(  \frac{ 1}{\sqrt n }\sum_{i=1}^n z_i  \bigg )  \le \sum_{l=-\infty}^{  \infty }  | Cov(z_1, z_{t+l})|   < \infty  $$
\end{lemma}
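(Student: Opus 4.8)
The plan is to reduce the statement to the absolute summability of the autocovariance sequence, and then to obtain that summability from a standard covariance inequality for mixing sequences. First I would expand
$\mathrm{Var}\!\left( n^{-1/2}\sum_{i=1}^n z_i\right) = n^{-1}\sum_{i=1}^n\sum_{j=1}^n \mathrm{Cov}(z_i,z_j)$.
By strict stationarity (\Cref{assume:beta 1D}), $\mathrm{Cov}(z_i,z_j)$ depends only on $|i-j|$; writing $\gamma(l):=\mathrm{Cov}(z_1,z_{1+l})$ and counting, for each $l$, the number of index pairs at distance $l$, this double sum equals $\sum_{l=-(n-1)}^{n-1}\bigl(1-|l|/n\bigr)\gamma(l)$. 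Taking absolute values and using $1-|l|/n\le 1$ bounds this by $\sum_{l\in\mathbb Z}|\gamma(l)|$, so it remains only to show the latter series is finite.

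For the summability I would treat $l=0$ separately, where $\gamma(0)=\mathrm{Var}(z_1)$ is finite because $z_1$ is sub-Weibull with $\|z_1\|_{\psi_{\gamma_2}}\le K_\epsilon$; the cases $l\le -1$ are identical to $l\ge 1$ by the symmetry $\gamma(-l)=\gamma(l)$. For $l\ge 1$, note $z_1$ is measurable with respect to $\sigma(\{z_s:s\le 1\})$ and $z_{1+l}$ with respect to $\sigma(\{z_s:s\ge 1+l\})$, so by the definition of the $\beta$-mixing coefficient and \Cref{assume:beta 1D} the $\alpha$-mixing (hence also $\beta$-mixing) coefficient between these two $\sigma$-fields is at most $\beta(l)\le\exp(-c\,l^{\gamma_1})$. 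Then I would apply Davydov's covariance inequality (equivalently, Berbee's coupling lemma combined with H\"older's inequality) with exponents $p=q=4$, which is legitimate since sub-Weibull variables have all moments finite, with in particular $\|z_i\|_4\le 4^{1/\gamma_2}K_\epsilon$. This yields $|\gamma(l)|\le C\,\alpha(l)^{1/2}\|z_1\|_4\|z_{1+l}\|_4\le C'\exp\!\bigl(-\tfrac{c}{2}l^{\gamma_1}\bigr)$ for a constant $C'$ depending only on $\gamma_2$ and $K_\epsilon$.

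Finally, since $\gamma_1>0$ we have $l^{\gamma_1}/\log l\to\infty$, so $\exp(-\tfrac{c}{2}l^{\gamma_1})\le l^{-2}$ for all sufficiently large $l$, and therefore $\sum_{l\ge 1}\exp(-\tfrac{c}{2}l^{\gamma_1})<\infty$. Combining the three parts gives $\sum_{l\in\mathbb Z}|\gamma(l)|=\gamma(0)+2\sum_{l\ge 1}|\gamma(l)|<\infty$, and the chain of inequalities above then delivers the claimed bound. The only step that needs genuine care is the covariance inequality: one must invoke its correct form for $\alpha$-mixing sequences and check that the stretched-exponential mixing decay dominates the (merely polynomial) growth of the $L^p$ norms of sub-Weibull variables. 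No real difficulty arises here because we may simply fix $p=q=4$ rather than optimize over the exponents; everything else is routine bookkeeping.
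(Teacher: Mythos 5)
Your proof is correct and follows essentially the same route as the paper, which simply notes that sub-Weibull variables have all moments finite and that the stretched-exponential $\beta$-mixing decay beats any polynomial, then cites a standard covariance-summability result (Corollary A.2 of \cite{francq2019garch}); your argument via the stationarity expansion of the variance and Davydov's inequality with $p=q=4$ is precisely the standard proof of that cited fact, with all details checked correctly.
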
   
   \begin{proof}
   This is a well known property for  $\beta$-mixing time series. 
 Since $z_i$ is sub-Weibull, all the moments of $z_i$ exist  and are finite. Since the $\beta$-coefficient of $\{z_i\}_{i=1}^n$ decay exponentially, it is  faster than any polynomial decay. The desired result follows from Corollary A.2 of \cite{francq2019garch}.
   
   \end{proof}
   
\begin{theorem}[$\beta$-mixing CLT] \label{theorem:beta mixing CLT}
Suppose $\{ z_i\}_{i=1}^n $ satisfies  \Cref{assume:beta 1D}. Then
$$ \frac{1}{\sqrt { n  }  \sigma_{z}}\sum_{i=1}^n z_i \to N(0,1), $$
where 
 $ \sigma_z^2 =   Var\bigg(  \frac{ 1}{\sqrt n }\sum_{i=1}^n z_i  \bigg ) $.
\end{theorem}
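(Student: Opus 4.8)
The plan is to prove the statement by the classical Bernstein big-block/small-block method, using Berbee's coupling lemma (which is tailored to $\beta$-mixing) to replace the big-block sums by independent copies, and then applying the Lindeberg--Feller theorem. Before the main argument I would make two reductions. First, since $\{z_i\}$ is sub-Weibull$(\gamma_2)$, all polynomial moments of $z_1$ are finite; in particular $\mathbb E|z_1|^4<\infty$, so Lyapunov-type conditions with exponent $4$ are available. Second, by \Cref{lemma:bound for long-run variance} we have $\sigma_z^2=\mathrm{Var}(n^{-1/2}\sum_{i=1}^n z_i)\to\sigma_\infty^2:=\sum_{l=-\infty}^\infty\mathrm{Cov}(z_0,z_l)\in[0,\infty)$; I would assume $\sigma_\infty^2>0$ (in the degenerate case $\sigma_\infty^2=0$ one has $n^{-1/2}\sum z_i\to 0$ in $L^2$, so the normalized statement is vacuous or trivial in the way it is used). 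By Slutsky's lemma it then suffices to prove $n^{-1/2}\sum_{i=1}^n z_i\overset{d}{\to}N(0,\sigma_\infty^2)$.

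Next I would carry out the blocking. Choose block lengths $p_n\to\infty$ and $q_n\to\infty$ with $q_n/p_n\to 0$ and $p_n/n\to 0$; concretely $q_n\asymp(\log n)^{2/\gamma_1}$ and $p_n\asymp\sqrt n\,q_n$ work, where $\gamma_1$ is the mixing exponent in \Cref{assume:beta 1D}. Partition $\{1,\dots,n\}$ into $k_n\asymp n/p_n$ alternating ``big'' blocks of length $p_n$ and ``small'' blocks of length $q_n$ (plus a negligible remainder), and let $U_j,V_j$ denote the partial sums of $z_i$ over the $j$-th big, resp.\ small, block. Applying \Cref{lemma:bound for long-run variance} to block sums and using strict stationarity gives $\mathrm{Var}\big(\sum_j V_j\big)\le k_n q_n\sum_l|\mathrm{Cov}(z_0,z_l)|=o(n)$, so the small blocks are $L^2$-negligible and $n^{-1/2}\sum_{i=1}^n z_i=n^{-1/2}\sum_{j=1}^{k_n}U_j+o_p(1)$. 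By Berbee's coupling lemma, on a possibly enlarged space there exist independent $\widetilde U_j$ with $\widetilde U_j\overset{d}{=}U_j$ and $\mathbb P(U_j\ne\widetilde U_j)\le\beta(q_n)\le\exp(-c\,q_n^{\gamma_1})$; hence $\mathbb P\big(\sum_j U_j\ne\sum_j\widetilde U_j\big)\le k_n\exp(-c\,q_n^{\gamma_1})\to 0$ for the chosen $q_n$, and it remains to show $n^{-1/2}\sum_{j=1}^{k_n}\widetilde U_j\overset{d}{\to}N(0,\sigma_\infty^2)$.

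Finally I would invoke the Lindeberg--Feller CLT for the independent triangular array $\{\widetilde U_j/\sqrt n\}$. The variance converges, $n^{-1}\sum_j\mathrm{Var}(\widetilde U_j)=n^{-1}k_n\mathrm{Var}(U_1)\to\sigma_\infty^2$, again by \Cref{lemma:bound for long-run variance} together with $k_n p_n/n\to 1$. For the Lindeberg condition I would use a fourth-moment bound for block sums, $\mathbb E|U_1|^4\le C\,p_n^2$, obtained from a Rosenthal-type inequality for geometrically $\beta$-mixing, sub-Weibull sequences; this yields $n^{-2}\sum_j\mathbb E|\widetilde U_j|^4\le C\,k_n p_n^2/n^2\asymp p_n/n\to 0$, i.e.\ the Lyapunov condition with exponent $4$, hence the Lindeberg condition. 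Combining the three steps proves the claim.

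The only place genuine care is needed is the fourth-moment estimate $\mathbb E|U_1|^4=O(p_n^2)$ for a sum of $p_n$ consecutive terms: this is where the sub-Weibull tail assumption and the geometric mixing rate in \Cref{assume:beta 1D} must be used together, via a Rosenthal/Marcinkiewicz--Zygmund inequality for dependent sequences; everything else is routine bookkeeping. Alternatively, the whole statement follows at once by citing a standard CLT for geometrically $\beta$-mixing sequences with finite $(2+\delta)$ moments --- of the same flavor as the functional CLT (\cite{francq2019garch}) already invoked for \Cref{condition:process} --- after checking the moment and mixing-rate hypotheses, both of which are immediate from \Cref{assume:beta 1D}.
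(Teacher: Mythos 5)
Your proposal is correct, but it takes a genuinely different route from the paper: the paper disposes of this statement in one line by citing a standard CLT for geometrically $\beta$-mixing stationary sequences (Theorem A.4 of \cite{francq2019garch}), exactly as you suggest in your closing paragraph, whereas your main argument reconstructs that CLT from scratch via the Bernstein big-block/small-block scheme, Berbee's coupling to render the big blocks independent, and the Lindeberg--Feller theorem. Your block calibration is sound ($q_n\asymp(\log n)^{2/\gamma_1}$ makes $k_n\beta(q_n)\to 0$ and keeps the small blocks $L^2$-negligible via \Cref{lemma:bound for long-run variance}), and the one step you flag as delicate --- the Rosenthal-type bound $\mathbb E|U_1|^4=O(p_n^2)$ --- is indeed the only non-routine ingredient; it holds for geometrically mixing sub-Weibull sequences but would itself require a citation, so the self-contained route ultimately leans on an external moment inequality just as the paper's route leans on an external CLT. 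What your approach buys is transparency about where each hypothesis of \Cref{assume:beta 1D} enters (the mixing rate in the coupling step, the sub-Weibull tails in the moment bound) and an explicit treatment of the degenerate case $\sigma_\infty^2=0$, which the paper's statement silently assumes away; what the paper's approach buys is brevity and avoidance of re-deriving a textbook result.
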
   
    \begin{proof}
   This is a well known property for  $\beta$-mixing time series. 
   The desired result follows  from Theorem  A.4 of \cite{francq2019garch}.
   
   \end{proof}

   \begin{lemma} \label{lemma:partial sum independent}
   Let $a\in (0,1) $. Then 
   $$  \lim_{n \to \infty }Cov \bigg(\frac{1}{\sqrt n } \sum_{i=1}^{ \lfloor a n \rfloor} z_i, \ 
   \frac{1}{\sqrt n } \sum_{j =  \lfloor a n \rfloor +1}^n  z_j    \bigg ) = 0.$$ 
   Therefore $\frac{1}{\sqrt n } \sum_{i=1}^{ \lfloor a n \rfloor} z_i$ and $\frac{1}{\sqrt n } \sum_{j =  \lfloor a n \rfloor +1}^n  z_j    $ are asymptotically independent. 
   \end{lemma}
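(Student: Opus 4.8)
The plan is to reduce the cross-covariance to a weighted sum of autocovariances and then exploit summability of the autocovariance sequence guaranteed by \Cref{lemma:bound for long-run variance}. Write $m=\lfloor an\rfloor$ and, for $l\in\mathbb Z$, let $\gamma(l)=\mathrm{Cov}(z_1,z_{1+l})$ be the lag-$l$ autocovariance, well defined by strict stationarity and finite by \Cref{assume:beta 1D}. Expanding bilinearly,
\[
\mathrm{Cov}\!\left(\frac{1}{\sqrt n}\sum_{i=1}^{m} z_i,\ \frac{1}{\sqrt n}\sum_{j=m+1}^{n} z_j\right)=\frac{1}{n}\sum_{i=1}^{m}\sum_{j=m+1}^{n}\gamma(j-i).
\]
Each pair $(i,j)$ with $1\le i\le m<j\le n$ has $j-i\ge 1$, and for a fixed lag $l\ge 1$ the number of such pairs with $j-i=l$ is at most $\min\{l,\,n-m\}$ (the index $j$ ranges over $\{m+1,\dots,\min(n,m+l)\}$). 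Hence
\[
\left|\mathrm{Cov}\!\left(\frac{1}{\sqrt n}\sum_{i=1}^{m} z_i,\ \frac{1}{\sqrt n}\sum_{j=m+1}^{n} z_j\right)\right|\le\frac{1}{n}\sum_{l=1}^{n-1}\min\{l,\,n-m\}\,|\gamma(l)|.
\]

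Next I would invoke summability. By \Cref{lemma:bound for long-run variance} (or, directly, the covariance inequality for $\beta$-mixing sequences combined with the geometric decay and sub-Weibull tails in \Cref{assume:beta 1D}), $G:=\sum_{l\in\mathbb Z}|\gamma(l)|<\infty$. Fix $L\in\mathbb N$ and split the last sum at $l=L$: the head obeys $\frac{1}{n}\sum_{l=1}^{L}l\,|\gamma(l)|\le \frac{L\,G}{n}\to 0$ as $n\to\infty$ for fixed $L$, while the tail obeys $\frac{1}{n}\sum_{l=L+1}^{n-1}\min\{l,\,n-m\}\,|\gamma(l)|\le\frac{n-m}{n}\sum_{l>L}|\gamma(l)|\le\sum_{l>L}|\gamma(l)|$. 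Therefore $\limsup_{n\to\infty}\bigl|\mathrm{Cov}(\cdot,\cdot)\bigr|\le\sum_{l>L}|\gamma(l)|$ for every $L$, and letting $L\to\infty$ (using $G<\infty$) yields that the limit is $0$, which is the first claim. As an alternative shortcut, the geometric $\beta$-mixing and sub-Weibull conditions in fact give $\sum_{l\ge 1}l\,|\gamma(l)|<\infty$, so the whole sum is bounded by $\frac{1}{n}\sum_{l\ge1}l\,|\gamma(l)|=O(1/n)$.

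For the ``therefore'' part, vanishing covariance alone is not enough; I would establish joint asymptotic normality of the pair and then use that a bivariate Gaussian with a zero off-diagonal covariance entry has independent coordinates. By the Cram\'er--Wold device it suffices to show that for every $(s,t)\in\mathbb R^2$ the linear combination $\frac{1}{\sqrt n}\bigl(s\sum_{i=1}^{m}z_i+t\sum_{j=m+1}^{n}z_j\bigr)=\frac{1}{\sqrt n}\sum_{i=1}^{n}c_iz_i$, with $c_i=s$ for $i\le m$ and $c_i=t$ otherwise, is asymptotically normal; this is a standard CLT for bounded-weighted sums of stationary geometrically $\beta$-mixing sub-Weibull sequences, proved by the same blocking argument underlying \Cref{theorem:beta mixing CLT}. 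Its limiting variance equals $s^2\,\mathrm{Var}\bigl(\tfrac{1}{\sqrt n}\sum_{i\le m}z_i\bigr)+t^2\,\mathrm{Var}\bigl(\tfrac{1}{\sqrt n}\sum_{j>m}z_j\bigr)+2st\,\mathrm{Cov}(\cdot,\cdot)\to a\sigma_z^2s^2+(1-a)\sigma_z^2t^2$ by the first part together with stationarity and \Cref{lemma:bound for long-run variance}, where $\sigma_z^2=\sum_l\gamma(l)$. Hence $\bigl(\tfrac{1}{\sqrt n}\sum_{i\le m}z_i,\ \tfrac{1}{\sqrt n}\sum_{j>m}z_j\bigr)\Rightarrow(\sqrt a\,\sigma_zB_1,\ \sqrt{1-a}\,\sigma_zB_2)$ with $B_1,B_2$ independent standard normals, i.e.\ the two partial sums are asymptotically independent.

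Main obstacle: the covariance bound itself is routine bookkeeping, and the only point that genuinely needs care is the joint-CLT step in the last paragraph, since \Cref{theorem:beta mixing CLT} as quoted covers only a single stationary sequence and must be upgraded (via Cram\'er--Wold to a weighted sum, or equivalently to a bivariate statement) before ``uncorrelated in the limit'' can be promoted to asymptotic independence.
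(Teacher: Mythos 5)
Your proof of the vanishing-covariance claim is correct and is essentially the paper's argument in different bookkeeping: the paper organizes the double sum by the index $i$ (splitting off the last $M$ summands near the boundary), while you organize it by the lag $l$ and truncate at $L$; both reduce to absolute summability of the autocovariances from \Cref{lemma:bound for long-run variance} plus a two-parameter limit, and your lag-counting bound $\min\{l,\,n-m\}$ is accurate. Where you genuinely add value is the second claim: the paper's proof stops at ``Since $\epsilon$ is arbitrary, the desired result follows'' and never explains why a vanishing covariance upgrades to asymptotic independence. You correctly flag that this requires joint asymptotic normality, and your Cram\'er--Wold argument with piecewise-constant weights (reducing to a CLT for a bounded-weighted sum of the stationary geometrically $\beta$-mixing sub-Weibull sequence, in the spirit of \Cref{theorem:beta mixing CLT}) together with the variance computation $s^2 a\sigma_z^2+t^2(1-a)\sigma_z^2$ is the right way to close that gap. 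In short: same route for the covariance limit, and a more complete treatment of the ``therefore'' than the paper itself provides.
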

   \begin{proof}
   Denote $  b_{ ij}= Cov(z_i, z_j)$ and 
   $$\sigma_{LV}^2  = \sum_{k=-\infty }^\infty |  Cov (z_1, z_k)|.$$  Then 
   \begin{align*}
    Cov \bigg(\frac{1}{\sqrt n } \sum_{i=1}^{ \lfloor a n \rfloor} z_i, \ 
   \frac{1}{\sqrt n } \sum_{j =  \lfloor a n \rfloor +1}^n  z_j    \bigg )  
    =    \frac{1}{n } \sum_{i=1}^{ \lfloor a n \rfloor -M } \sum_{j =  \lfloor a n \rfloor +1}^n Cov \big (   z_i,  
     z_j    \big  )   +\frac{1}{n } \sum_{i=  \lfloor a n \rfloor -M+1  }^{ \lfloor a n \rfloor  } \sum_{j =  \lfloor a n \rfloor +1}^n Cov \big (   z_i,  
     z_j    \big  )    
   \end{align*}
   Note that for $i \in \{ 1, \ldots, \lfloor a n \rfloor -M \}  $  
   $$ \sum_{j =  \lfloor a n \rfloor +1}^n |Cov \big (   z_i,  
     z_j    \big  ) |  \le \sum_{k=M}^\infty |Cov(z_1,z_k)|   .$$
   So for sufficiently large $M$ and $i \in \{ 1, \ldots, \lfloor a n \rfloor -M \}  $, 
   $$ \sum_{j =  \lfloor a n \rfloor +1}^n |Cov \big (   z_i,  
     z_j    \big  ) |  \le \epsilon. $$
     Therefore 
     $$  \frac{1}{n } \sum_{i=1}^{ \lfloor a n \rfloor -M } \sum_{j =  \lfloor a n \rfloor +1}^n Cov \big (   z_i,  
     z_j    \big  )  \le \frac{1}{n } \sum_{i=1}^{ \lfloor a n \rfloor -M }  \epsilon \le a \epsilon. $$
   In addition,
   \begin{align*}
    \frac{1}{n } \sum_{i=  \lfloor a n \rfloor -M+1  }^{ \lfloor a n \rfloor  } \sum_{j =  \lfloor a n \rfloor +1}^n Cov \big (   z_i,  
     z_j    \big  )     \le \frac{1}{n } \sum_{i=  \lfloor a n \rfloor -M+1  }^{ \lfloor a n \rfloor  } \sum_{j = -\infty }^{\infty } | Cov \big (   z_i,  
     z_j    \big  ) | 
     \le   \frac{1}{n } \sum_{i=  \lfloor a n \rfloor -M+1  }^{ \lfloor a n \rfloor  } \sigma_{LV}^2  \le \frac{M}{n}   \sigma_{LV}^2 .
   \end{align*} 
  So 
  \begin{align*}
    \frac{1}{n } \sum_{i=1}^{ \lfloor a n \rfloor   } \sum_{j =  \lfloor a n \rfloor +1}^n  | Cov \big (   z_i,  
     z_j    \big  )   |  \le a\epsilon + \frac{M}{n}   \sigma_{LV}^2 .
   \end{align*}
   Therefore \begin{align*}
   \lim_{n\to \infty } \frac{1}{n } \sum_{i=1}^{ \lfloor a n \rfloor   } \sum_{j =  \lfloor a n \rfloor +1}^n  | Cov \big (   z_i,  
     z_j    \big  )   |  \le a\epsilon   .
   \end{align*} 
   Since $\epsilon$ is arbitrary, the desired result follows. 
   \end{proof}

   \begin{lemma}  \label{lemma:bound for specific long-run variance}The long-run variances $ \sigma_{L,1}^2 ,\sigma_{R,1}^2,\sigma_{L,2}^2,\sigma_{R,2}^2$ defined in 
\eqref{eq:long run variance term 1}$\sim$\eqref{eq:long run variance term 4} are all finite.
   \end{lemma}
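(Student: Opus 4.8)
The plan is to reduce each of the four quantities to the long-run variance of a scalar, zero-mean, strictly stationary, geometrically $\beta$-mixing, sub-Weibull time series, and then invoke \Cref{lemma:bound for long-run variance}. First consider $\sigma_{L,1}^2$ and $\sigma_{R,1}^2$ from \eqref{eq:long run variance term 1}--\eqref{eq:long run variance term 2}. With $\Delta_t^* = \beta^*_\ot - \beta^*_\tn$ set $z_i = x_i^\top \Delta_t^*\, \epsilon_i$. Since $x_i$ and $\epsilon_i$ are independent with mean zero, $\mathbb{E}(z_i)=0$; the sequence $\{z_i\}$ is a measurable function of $\{(x_i,\epsilon_i)\}$, hence strictly stationary and geometrically $\beta$-mixing with the same mixing coefficient as $\{(x_i,\epsilon_i)\}$ by \Cref{assume:beta}(c). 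For the tail, $x_i^\top\Delta_t^*$ is sub-Weibull$(\gamma_2)$ with $\|x_i^\top\Delta_t^*\|_{\psi_{\gamma_2}} \le K_X\|\Delta_t^*\|_2$, where $\|\Delta_t^*\|_2$ is bounded uniformly in $n$ by \Cref{lemma:beta bounded 1}; the standard product bound $\|uv\|_{\psi_{\gamma_2/2}} \le C\|u\|_{\psi_{\gamma_2}}\|v\|_{\psi_{\gamma_2}}$ then gives $\|z_i\|_{\psi_{\gamma_2/2}} \le C' K_X K_\epsilon$, a bound free of $n$. Thus $\{z_i\}$ meets \Cref{assume:beta 1D}, and \Cref{lemma:bound for long-run variance} yields, for every $n$, that $\text{Var}\bigl(\tfrac{1}{\sqrt{t}}\sum_{i=1}^{t} z_i\bigr) \le \sum_{l=-\infty}^{\infty}|\text{Cov}(z_1,z_{1+l})| < \infty$; the same applies on the segment $(t,n]$.

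Next, to upgrade ``bounded for each $n$'' to ``the limit is finite,'' I would observe that the covariance bound is in fact uniform in $n$: combining Davydov's covariance inequality for $\beta$-mixing sequences with the uniform moment bounds coming from the uniform sub-Weibull norm of $z_i$ gives $|\text{Cov}(z_1,z_{1+l})| \le C''\exp(-c'' l^{\gamma})$ with $\gamma = (1/\gamma_1 + 2/\gamma_2)^{-1}$ and $C'', c''$ independent of $n$, so that $\sup_n \text{Var}\bigl(\tfrac{1}{\sqrt{t}}\sum_{i=1}^{t}z_i\bigr) < \infty$. In particular the $\limsup$ is finite; and since the infill structure of \Cref{assume: model assumption}(d) forces $\beta^*_\ot$ and $\beta^*_\tn$, hence $\Delta_t^*$, to converge to fixed vectors as $n\to\infty$ with $t=\lfloor rn\rfloor$, the finite-dimensional distributions of $\{z_i\}$ converge, and dominated convergence (using the summable dominating sequence just produced) shows the limit $\sigma_{L,1}^2$ exists and is finite. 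The same argument handles $\sigma_{R,1}^2$.

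For $\sigma_{L,2}^2$ and $\sigma_{R,2}^2$ in \eqref{eq:long run variance term 3}--\eqref{eq:long run variance term 4}, note that the summand $(\Delta_t^*)^\top(\widehat\Sigma_\ot - \Sigma)\Delta_t^*$ does not depend on $i$, so after the $1/\sqrt{t}$ normalization the expression inside the variance equals $\tfrac{1}{\sqrt{t}}\sum_{i=1}^{t} w_i$ with $w_i = (x_i^\top\Delta_t^*)^2 - \mathbb{E}(x_i^\top\Delta_t^*)^2$, since $\widehat\Sigma_\ot = \tfrac{1}{t}\sum_{i\le t} x_ix_i^\top$ and $\mathbb{E}(x_ix_i^\top)=\Sigma$. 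Again $\mathbb{E}(w_i)=0$, and $\{w_i\}$ is strictly stationary, geometrically $\beta$-mixing, and (being a centered square of a sub-Weibull$(\gamma_2)$ variable with uniformly bounded norm) sub-Weibull$(\gamma_2/2)$ with norm bounded uniformly in $n$. The argument of the previous two paragraphs then applies verbatim, giving $\sigma_{L,2}^2, \sigma_{R,2}^2 < \infty$.

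The main obstacle, modest in scale, is the uniform-in-$n$ control: the $z_i$ and $w_i$ form a triangular array (they depend on $n$ through $\Delta_t^*$ and through the partial averages $\beta^*_\ot, \beta^*_\tn$), so \Cref{lemma:bound for long-run variance} applied naively only bounds each term for fixed $n$; one must additionally check that the sub-Weibull norms — and hence the covariance tail bound — are uniform in $n$, and then use the convergence of $\Delta_t^*$ under the infill assumption to conclude the stated limits exist. All remaining steps are the routine sub-Weibull product and centered-square estimates together with Davydov's inequality.
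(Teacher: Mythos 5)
Your proposal is correct and follows essentially the same route as the paper: reduce each quantity to the long-run variance of a scalar, zero-mean, strictly stationary, geometrically $\beta$-mixing, sub-Weibull sequence (for $\sigma_{L,2}^2,\sigma_{R,2}^2$ via the rewriting $t\,\Delta_t^{*\top}(\widehat\Sigma_\ot-\Sigma)\Delta_t^*=\sum_{i\le t}\bigl[(x_i^\top\Delta_t^*)^2-\Delta_t^{*\top}\Sigma\Delta_t^*\bigr]$) and invoke \Cref{lemma:bound for long-run variance}. Your extra care about uniformity in $n$ of the sub-Weibull norms and about the existence of the limit under the infill assumption goes beyond the paper's one-line argument but is consistent with it.
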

   \begin{proof}
   The argument for each long-run variance  is the same. For brevity, only $\sigma_{L,1}^2$ is shown in detailed. Observe that   $ \{ x_i  ^\top     (  \beta^*_{ (0,  t ] }  - \beta^*_ { ( t , n] }  )      \epsilon_i \}_{i=1}^n $ are sub-Weibull($2\gamma_2$) $\beta$-mixing time series with mixing coefficient $\exp(-cn^{\gamma_1})$. The desired result follows immediately from \Cref{lemma:bound for long-run variance}.
   \end{proof}

\subsection{Technical results for \Cref{theorem:main_beta} }
\label{subsection: additional beta}
\begin{lemma} 
 \label{corollary: beta deviation 1 with change points}  Suppose  $\I \subset (0,n]$ is any  generic interval such that 
	$$	\frac{ \max\{ \log^{1/\gamma}(p),  ( \s\log p )^{ \frac{2}{\gamma}- 1}  \} }{|\I| } \to 0.$$
Under the same assumptions as in \Cref{theorem:main_beta}, then it holds that for sufficiently large constant $C$,
\begin{align}
\label{eq:res eigen beta} 
&\mathbb P \bigg\{ \big|v^\top \big( \widehat \Sigma_{\I } -\Sigma \big) v  \big |\ge C  \sqrt { \frac{\s \log(pn) }{ |\mathcal I|  }} \|v\|_2^2  \   \forall  \  v \in \mathcal C_S \bigg\}\le 2\exp\big\{-c_1 \log(pn) \big\}   ,\quad \text{and}
\\\label{eq:dev beta} 
&\mathbb P \bigg\{  \bigg | \frac{1}{| \mathcal I | } \sum_{i\in \I }  \epsilon_i x_i^\top \beta  \bigg|
\ge C  \sqrt { \frac{\log(pn )}{ |\mathcal I|   }}\| \beta\|_1  \ \forall  \  \beta \in \mathbb R^p   \bigg\} \le 
2 np\exp\big\{-c_2 n^\gamma  \big\}  ,
\end{align}
where $ c_1 ,c_2 \ge 3$ are positive constants. 
\end{lemma}
\begin{proof}
The deviation bound in \Cref{eq:res eigen beta} is a straight forward  adaption of Proposition  8 in Lasso Guarantees for    $\beta$-mixing heavy-tailed time series  by \cite{Wong2020}, and the deviation bound in \Cref{eq:dev beta} is Proposition  7 \cite{Wong2020}.  
\end{proof}

\begin{lemma} \label{lemma:interval lasso consistency beta mixing}
Suppose  $\I \subset (0,n]$ is any  generic interval such that 
	$$	\frac{ \max\{ \log^{1/\gamma}(p),  ( \s\log p )^{ \frac{2}{\gamma}- 1}  \} }{|\I| } \to 0.$$
     Suppose in addition that the interval $\I$ does not contain any change points.    Let $\widehat \beta_\I$ be the Lasso estimator defined in \eqref{eq:interval lasso}. 
   There exists $\lambda = C_\lambda \sqrt { \log p }$ with some sufficiently large constant $C_\lambda $, such that  with probability $1-n^{-3}$,
	\begin{align*}
	 \| \widehat \beta_\I -\beta^*_\I \| _2^2 \le \frac{C \s\log p}{n }, ~ \| \widehat \beta_\I  -\beta^*_\I \| _1  \le     C \s\sqrt {  \frac{\log p} {n } }, ~ \| (\widehat \beta_\I -\beta^*_\I)_{S_\I^c} \| _1 \le  3 \| (\widehat \beta _\I  -\beta^*_\I )_{S_\I } \| _1   .
	\end{align*}
where $C$ is an absolute constant and $S^c_\I=\{1,2,\cdots,p\} \setminus S_\I$ with $S_\I$ being the support set of $\beta_\I^*.$
\end{lemma}  
\begin{proof}
Using the deviation bounds in  \Cref{corollary: beta deviation 1 with change points}, the proof of \Cref{lemma:interval lasso consistency beta mixing} follow from the same argument as in the proof  \Cref{lemma:interval lasso}.

\end{proof}

\begin{lemma}\label{lemma: beta deviation 2} Let $ \zeta \in (0, 1/2)$ and $ u\in \mathbb R^p   $ be any   deterministic vector.
Under the same assumptions as in \Cref{theorem:main_beta}, it holds that 
 \begin{align*}
 &  \mathbb P\bigg\{ \bigg|  \frac{1}{|\I | } \sum_{i \in \I }   \big\{   u^\top   x_i   x_i^\top \beta  -  u^\top  \Sigma \beta \big\}  \bigg|   \ge C     \|u \|_2   \sqrt {  \frac{\log(pn)}{ | \I |  } } \|\beta\|_1   \ \forall \   \beta \in \mathbb R^p,   \ | \I| \ge \zeta n   \bigg\}
  \\
   \le
   & C'   \exp\bigg (-c  (n \log(pn) )^{\gamma/2} \bigg) +   \exp \bigg(- c ' \log(pn)  \bigg)  ,
  \end{align*}
  where $ C, C',c,c'$ are all absolute constants and that $c, c'\ge 3 $.
\end{lemma}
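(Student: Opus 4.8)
The plan is to follow the proof of the i.i.d.\ bound \eqref{eq:independent condition 1c} in \Cref{lemma:consistency}, replacing the sub-exponential Bernstein inequality by the $\beta$-mixing sub-Weibull concentration bound of \cite{Wong2020} (cf.\ \Cref{lemma: beta deviation 1} and \Cref{theorem:beta deviation}). First I would reduce the statement, which is uniform in $\beta\in\mathbb R^p$, to a bound on a single random vector. Writing $v\in\mathbb R^p$ with $j$-th entry $v_j = \frac{1}{|\I|}\sum_{i\in\I}\big(u^\top x_i x_{i,j} - u^\top\Sigma e_j\big)$, Hölder's inequality gives
$$\left| \frac{1}{|\I|}\sum_{i\in\I}\big\{ u^\top x_i x_i^\top\beta - u^\top\Sigma\beta \big\} \right| = |v^\top\beta| \le \|v\|_\infty\,\|\beta\|_1,$$
so it suffices to bound $\mathbb P\big(\exists\,\I \text{ with } |\I|\ge\zeta n:\ \|v\|_\infty \ge C\|u\|_2\sqrt{\log(pn)/|\I|}\big)$. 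Since there are at most $n^2$ eligible intervals and $p$ coordinates, a union bound reduces matters to controlling, for each fixed $\I$ with $m:=|\I|\ge\zeta n$ and each fixed $j$, the probability that $|v_j|$ exceeds the stated threshold.

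For a fixed $j$, set $w_i := u^\top x_i x_{i,j} - u^\top\Sigma e_j$. Since $\mathbb E[x_i x_{i,j}] = \Sigma e_j$, the sequence $\{w_i\}$ is mean zero and, by \Cref{assume:beta}, strictly stationary; being a measurable function of $x_i$, it inherits the geometric $\beta$-mixing coefficient $\beta(l)\le\exp(-c\,l^{\gamma_1})$. For the tails, $u^\top x_i$ is sub-Weibull$(\gamma_2)$ with $\|u^\top x_i\|_{\psi_{\gamma_2}}\le K_X\|u\|_2$ and $x_{i,j}$ is sub-Weibull$(\gamma_2)$ with norm $\le K_X$, so their product — and hence $w_i$, since centering preserves the sub-Weibull norm up to a constant — is sub-Weibull$(\gamma_2/2)$ with $\|w_i\|_{\psi_{\gamma_2/2}} \le C K_X^2\|u\|_2$. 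This places $\{w_i\}$ exactly in the scope of the $\beta$-mixing Bernstein inequality with effective exponent $\gamma = \big(1/\gamma_1 + 2/\gamma_2\big)^{-1}$, the factor $2$ on $1/\gamma_2$ being precisely the price of the sub-Weibull$(\gamma_2/2)$ product tail.

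Applying that inequality to $\frac1m\sum_{i\in\I}w_i$ yields, for every $\delta>0$,
$$\mathbb P\!\left( \Big| \tfrac1m\sum_{i\in\I}w_i \Big| \ge C\|u\|_2\,\delta \right) \le C_1\exp\!\big(-c_1 (m\delta)^{\gamma}\big) + C_2\exp\!\big(-c_2 m\delta^2\big).$$
Choosing $\delta = C_\delta\sqrt{\log(pn)/m}$ with $C_\delta$ a sufficiently large absolute constant gives $m\delta^2 = C_\delta^2\log(pn)$ and $(m\delta)^{\gamma} = C_\delta^{\gamma}(m\log(pn))^{\gamma/2}\ge C_\delta^{\gamma}(\zeta n\log(pn))^{\gamma/2}$, so the probability for each pair $(\I,j)$ is at most $C\exp\!\big(-c(n\log(pn))^{\gamma/2}\big) + C\exp\!\big(-c'\log(pn)\big)$ with $c,c'$ increasing in $C_\delta$. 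A union bound over the $\le n^2 p = \exp\!\big(O(\log(pn))\big)$ pairs then gives the claimed bound: the second term survives with final exponent $c'-O(1)\ge 3$ after enlarging $C_\delta$, and the first survives because, under the hypothesis $\max\{\log^{1/\gamma}(p),(\s\log(np))^{2/\gamma-1}\}/n\to0$ of \Cref{assume:order_betamixing} (which via $\s\ge1$ forces $n\gg(\log(pn))^{2/\gamma-1}$, hence $n^{\gamma/2}\gg(\log(pn))^{1-\gamma/2}$) one has $(n\log(pn))^{\gamma/2}\gg\log(pn)$, so that $\exp\!\big(-c(n\log(pn))^{\gamma/2}\big)\cdot\exp\!\big(O(\log(pn))\big)\to0$ at the required rate.

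The step I expect to be most delicate is this final calibration: one must choose $C_\delta$ and verify simultaneously that the stretched-exponential tail $\exp(-c_1(m\delta)^\gamma)$ and the sub-Gaussian tail $\exp(-c_2 m\delta^2)$ each dominate the $n^2 p$ union-bound factor — and it is exactly here that the two pieces of the SNR-type condition in \Cref{assume:order_betamixing} are used, the $(\s\log(np))^{2/\gamma-1}/n\to0$ piece being what controls the $\gamma<1$ stretched-exponential term. Secondarily, some care is needed in invoking the \cite{Wong2020} bound with the correct exponent, i.e.\ in tracking that $w_i$ is a product of two sub-Weibull$(\gamma_2)$ variables and therefore only sub-Weibull$(\gamma_2/2)$, which is what replaces $\gamma_2$ by $\gamma_2/2$ in $\gamma=(1/\gamma_1+2/\gamma_2)^{-1}$; the remainder of the argument is routine and parallels \Cref{lemma: beta deviation 1} and \eqref{eq:independent condition 1c}.
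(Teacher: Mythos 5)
Your proposal is correct and follows essentially the same route as the paper's own proof: reduce to an $\ell_\infty$ bound on the coordinates via H\"older, observe that $u^\top x_i x_{i,j} - u^\top\Sigma e_j$ is a centered, strictly stationary, geometrically $\beta$-mixing, sub-Weibull$(\gamma_2/2)$ sequence with norm $O(K_X^2\|u\|_2)$, apply the Wong et al.\ Bernstein-type bound (\Cref{theorem:beta deviation}) with $\delta \asymp \sqrt{\log(pn)/|\I|}$, and close with a union bound over the $\le n^2p$ pairs using $\log^{2/\gamma-1}(np)/n = o(1)$. No gaps.
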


\begin{proof}
Without loss of generality, assume   $\| u\|_2=1 .$   For any given $ j\in \{ 1,\ldots, p\}$,
let 
$$z_i = u^\top x_ix_{i,j}      -   u^\top  \Sigma (, j)   = u^\top x_ix_ {i,j}     -  \E ( u^\top x_i x_ {i, j}   )  .$$
As a result,  $z_i$ is strictly stationary.
Since $z_i$ is measurable to $\sigma(x_i)$, the process $\{ z_i\}_{i=1}^t$ is $\beta$-mixing with the same mixing coefficient as $\{x_i \}_{i=1}^n $.
In addition, 
$$ \| z_i\|_{\psi _{\gamma_2/2 } }  =\| u^\top x_i   x_ {i,j}\|_{\psi _{\gamma_2/2 } } 
\le c_1 \|u^\top x_i  \|_{ \psi_{ \gamma_2}}  \|  x_{i, j}   \|_{ \psi_{ \gamma_2}} \le c_1 K_X^2,  $$
where $c_1$ is a constant only depending on $\gamma_2 $.   Therefore by \Cref{theorem:beta deviation},
\begin{align}\label{eq:deviation 2 beta one vector}  \mathbb P \bigg\{ \frac{1}{|\I| } \big|   \sum_{i\in \I }z_i \big| \ge \delta \bigg\} 
\le C_2 n \exp\big (-c_2 (\delta n)^\gamma\big) + \exp(- c_3\delta^2n), 
\end{align} 
where 
$ \gamma  < 1  $ is defined in \Cref{assume:order_betamixing}. Consequently, taking an union bound   overall coordinates, it holds that 
\begin{align}\nonumber  & \mathbb P \bigg\{  \max_{1\le j \le  p }\bigg|  \frac{1}{|\I | } \sum_{i \in \I }   \big \{   u^\top    x_i   x_ {i,j}      -      u^\top  \Sigma (, j)  \big \}   \bigg | \ge C_1 \delta    \ \forall \|u\|_2=1     \bigg\}  
\\
\le& C_2 n p\exp\big (-c_2 (\delta n)^\gamma\big) +p \exp(- c_3\delta^2n) \nonumber 
\end{align}  
Taking a union bound over all $\I $ such that $|\I|\ge \zeta n$, it holds that 
\begin{align*} & \mathbb P \bigg\{  \max_{1\le j \le  p }\bigg|  \frac{1}{|\I | } \sum_{i \in \I }   \big \{   u^\top    x_i    x_ { i,j}     -      u^\top  \Sigma (, j)  \big \}   \bigg | \ge \delta  \text{ for all }   \I \subset (0, n] \text{ such that } |\I| \ge \zeta n    \bigg\}  
\\
\le& C_2 n^3p \exp\big (-c_2 (\delta n)^\gamma\big) +n^2 p  \exp(- c_3\delta^2n).
\end{align*}
\
\\
Taking $\delta = C_1 \sqrt { \frac{\log(pn) }{ | \I| }} $, it follows that 
\begin{align*} & \mathbb P \bigg\{  \max_{1\le j \le  p }\bigg|  \frac{1}{|\I | } \sum_{i \in \I }   \big \{   u^\top    x_i    x_ { i,j}      -      u^\top  \Sigma (, j)  \big \}   \bigg | \ge C_1 \sqrt { \frac{\log(pn) }{ | \I| }}   \text{ for all }   \I \subset (0, n] \text{ such that } |\I| \ge \zeta n    \bigg\}  
\\
\le& C_2   \exp\bigg (-c_2'  (n \log(pn) )^{\gamma/2} \bigg) +   \exp \bigg(- c_3' \log(pn)  \bigg),
\end{align*} 
where $ \frac{\log^{ \frac{2}{\gamma } -1 } (np)}{ n} =o(1) $ is used in the last inequality. 
The desired result follows from the observation that for all $\beta\in \mathbb R^p$, 
\begin{align*}
\bigg|  \frac{1}{|\I | } \sum_{i \in \I }     u^\top   x_i   x_i^\top \beta  - \frac{1}{|\I | } \sum_{i \in \I }      u^\top  \Sigma \beta  \bigg|  
\le \max_{1\le j \le  p }\bigg|  \frac{1}{|\I | } \sum_{i \in \I }   \big \{   u^\top   x_i   x_i  (j)   -      u^\top  \Sigma (, j)  \big \}   \bigg | \|\beta\|_1. 
\end{align*}
\end{proof}

\begin{lemma}\label{lemma: beta deviation 3}
Let $ \zeta \in (0, 1/2)$ and $ u\in \mathbb R^p   $ be any   deterministic vector.
Under the same assumptions as in \Cref{theorem:main_beta}, it holds that 
 \begin{align*}
 &  \mathbb P\bigg\{ \bigg|  \frac{1}{|\I | } \sum_{i \in \I }   \big\{   u^\top   x_i   x_i^\top \beta  -  u^\top  \Sigma \beta \big\}  \bigg|   \ge C     \sqrt {  s \frac{\log(pn)}{ | \I |  } } \|\beta\|_1    \ \forall \   \beta \in \mathbb R^p,  \ \forall  | \I| \ge \zeta n, \ \forall   \|u \|_2 =1, u\in \mathcal C_S   \bigg\}
  \\
   \le
   &C   \exp\big (-c  ( \s \log(pn)  n)^{\gamma/2} \big) +  C '   \exp(- c  '  \s \log(pn))  .  
  \end{align*} 
  where $ C, C',c,c'$ are all absolute constants and that $c, c'\ge 3 $.

\end{lemma}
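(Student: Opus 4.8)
The plan is to recognize that the quantity in question equals $u^\top(\widehat\Sigma_\I-\Sigma)\beta$, where $\widehat\Sigma_\I=\frac1{|\I|}\sum_{i\in\I}x_ix_i^\top$, and to reduce the claim to a restricted entrywise control of $\widehat\Sigma_\I-\Sigma$. Since $\beta$ enters only through $\|\beta\|_1$, Hölder's inequality gives $|u^\top(\widehat\Sigma_\I-\Sigma)\beta|\le\|(\widehat\Sigma_\I-\Sigma)^\top u\|_\infty\,\|\beta\|_1$, so it suffices to bound $\sup_{u\in\mathcal C_S,\,\|u\|_2\le1}\|(\widehat\Sigma_\I-\Sigma)^\top u\|_\infty$ by $C\sqrt{\s\log(pn)/|\I|}$ uniformly over all $\I$ with $|\I|\ge\zeta n$. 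This is the $\beta$-mixing, sub-Weibull counterpart of the restricted-eigenvalue-type statement of \Cref{corollary:restricted eigenvalues 2} for i.i.d.\ sub-Gaussian designs, and of \Cref{lemma: beta deviation 2}, in which $u$ was a \emph{fixed} vector rather than ranging over the cone; the whole proof runs parallel to those, with the extra work concentrated in the discretization of the cone.

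First I would use the cone structure. Because $|S_\I|\le K\s$ (by \Cref{assume: model assumption}(d)), any $u$ with $\|u\|_2\le1$ and $\|u_{S^c}\|_1\le3\|u_S\|_1$ satisfies $\|u\|_1\le4\|u_S\|_1\le4\sqrt{|S_\I|}\le4\sqrt{K\s}$. Invoking the standard fact that $\{u:\|u\|_1\le\sqrt k,\ \|u\|_2\le1\}\subseteq 2\,\mathrm{conv}\{v:\|v\|_0\le k,\ \|v\|_2\le1\}$ with $k\asymp\s$, and convexity of $v\mapsto\|(\widehat\Sigma_\I-\Sigma)^\top v\|_\infty$, reduces the supremum over the cone (up to the factor $2$) to a supremum over $k$-sparse unit vectors. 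For a fixed support $T$ with $|T|=k$, the quantity $\sup_{\mathrm{supp}(v)\subseteq T,\,\|v\|_2\le1}\max_{1\le j\le p}|v^\top(\widehat\Sigma_\I-\Sigma)_{T,j}|$ is handled by a self-improving $\tfrac12$-net of the unit ball of $\mathbb R^{T}$ (cardinality $\le 6^{k}$), so that it remains only to bound $|v^\top(\widehat\Sigma_\I-\Sigma)_{T,j}|$ for a \emph{fixed} net vector $v$, coordinate $j$, and interval $\I$.

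For such a fixed $(T,v,j)$, put $w_i=v^\top x_{i,T}\,x_{ij}-\mathbb E[v^\top x_{1,T}\,x_{1j}]$. Since $\{x_i\}$ is strictly stationary and geometrically $\beta$-mixing (\Cref{assume:beta}), so is $\{w_i\}$ with the same rate; $w_i$ is centered and, being a product of two sub-Weibull$(\gamma_2)$ variables up to centering, is sub-Weibull$(\gamma_2/2)$ with $\|w_i\|_{\psi_{\gamma_2/2}}\lesssim\|v\|_2K_X^2\le K_X^2$. Applying the $\beta$-mixing concentration inequality (\Cref{theorem:beta deviation}, already used in the proof of \Cref{lemma: beta deviation 2}) at level $\delta=C_1\sqrt{\s\log(pn)/|\I|}$ gives, using $|\I|\ge\zeta n$ and $\gamma=(1/\gamma_1+2/\gamma_2)^{-1}$,
\[
\mathbb P\!\left\{\Big|\tfrac1{|\I|}\sum_{i\in\I}w_i\Big|\ge\delta\right\}\le C_2\,n\exp\!\big(-c_2(\s|\I|\log(pn))^{\gamma/2}\big)+\exp\!\big(-c_3\s\log(pn)\big).
\]
A union bound over the $\binom{p}{k}\le e^{C\s\log p}$ supports, the $6^{k}$ net vectors per support, the $p$ coordinates, and the $\le n^2$ intervals multiplies this by a factor $e^{O(\s\log(pn))}$. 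This factor is absorbed into the first tail term because $\s\log(pn)=o\big((\s n\log(pn))^{\gamma/2}\big)$, which is exactly the content of $(\s\log p)^{2/\gamma-1}/n\to0$ in \Cref{assume:order_betamixing} (together with $p\ge n^\alpha$), and into the second term by taking $C_1$, hence $c_3$, large (which also makes the surviving constants $\ge3$). Undoing the reductions — Hölder, the factor $2$ from the convex-hull embedding, and the $\tfrac12$-net — yields the stated inequality with a universal constant $C$; the case $\|u\|_2<1$ follows by scaling since $\mathcal C_S$ is a cone.

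The main obstacle is obtaining the $\sqrt{\s}$-sharp rate (and the resulting $e^{-c\s\log(pn)}$ tail) uniformly over $u\in\mathcal C_S$ in the presence of heavy tails: the crude bound $\|u\|_1\,\|\widehat\Sigma_\I-\Sigma\|_{\max}$, which works for a single $u$, overshoots the target by a factor $\sqrt\s$, so one must run the net argument \emph{directly} at scale $\delta\asymp\sqrt{\s\log(pn)/|\I|}$ and verify that the $e^{O(\s\log p)}$ cardinality of the net over the restricted cone is still dominated by the stretched-exponential/polynomial tails of \Cref{theorem:beta deviation} under \Cref{assume:order_betamixing}. This is the one place where the heavy-tailed $\beta$-mixing machinery of \cite{Wong2020} must be invoked with care; everything else mirrors the i.i.d.\ sub-Gaussian argument behind \Cref{corollary:restricted eigenvalues 2}.
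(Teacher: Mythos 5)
Your proposal is correct and follows essentially the same route as the paper's proof: embed the cone $\mathcal C_S$ into (a multiple of) the convex hull of $\s$-sparse unit vectors, reduce the supremum over that hull to sparse vectors by convexity, discretize each sparse slice with a $1/2$-net, apply the $\beta$-mixing sub-Weibull deviation bound (\Cref{theorem:beta deviation}) to the fixed-vector statistic at level $\delta\asymp\sqrt{\s\log(pn)/|\I|}$, and absorb the $e^{O(\s\log(pn))}$ union-bound factor using \Cref{assume:order_betamixing}. The only cosmetic difference is that you pass through the $\ell_1$-ball characterization of the cone rather than citing the cone-to-convex-hull embedding directly, which changes nothing of substance.
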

 \begin{proof} 
  { \bf Step 1.}
By \Cref{eq:deviation 2 beta one vector}, it holds that for any $u \in \mathbb R ^p $  such that $ \|u\|_2=1$, 
 \begin{align*}   \mathbb P \bigg\{  \bigg|  \frac{1}{|\I | } \sum_{i \in \I }   \big\{   u^\top    x_i    x_ { i,j}         -  u^\top  \Sigma (, j)  \big\}   \bigg| \ge \delta  \bigg\} 
\le C_2 n \exp\big (-c_2 (\delta n)^\gamma\big) + \exp(- c_3\delta^2n), 
\end{align*} 
where 
$ \gamma = \bigg( \frac{1}{\gamma_1} + \frac{2}{\gamma_2}\bigg) ^{-1}  $  as defined in \Cref{assume:order_betamixing}.
Denote 
$$ \mathcal K (s) = \{ u \in \mathbb R^p:  \|u\|_2= 1, \|u\|_0 \le s  \}.$$
\
\\
By Lemma F1 in the appendix of  BasuMichailidis(2015),
$$ \mathcal C_S \cap \{ u \in \mathbb R^p : \|u\|_2=1 \} \subset 5cl \{ conv\{ \mathcal K(s) \},$$
where  $cl \{ conv\{ \mathcal K(s) \} $ is the closure of  convex hull of $\mathcal K(s)$. 
So
\begin{align*}  & \mathbb P \bigg\{   \bigg|  \frac{1}{|\I | } \sum_{i \in \I }   \big\{   u^\top    x_i    x_ { i,j}        -  u^\top  \Sigma (, j)  \big\}   \bigg| \ge \delta \  \forall  \ \| u\|_2=1, u \in \mathcal C_S  \bigg\}
\\ 
\le &  \mathbb P \bigg\{   \sup_{ u \in  5cl\{ cov\{ \mathcal K( s) \} \} } \bigg|  \frac{1}{|\I | } \sum_{i \in \I }   \big\{   u^\top    x_i    x_ { i,j}       -  u^\top  \Sigma (, j)  \big\}   \bigg| \ge \delta     \bigg\} 
\end{align*}  
\
\\
By \Cref{lemma: approximation of convex hull}, with $ V=  \frac{1}{|\I| } \sum_{i\in \I }x_i   x_i^\top (j)    -     \Sigma (, j)  $, 
it holds that 
\begin{align*}   
&  \mathbb P \bigg\{   \sup_{ u \in  5cl\{ cov\{ \mathcal K( s) \} \} } \bigg|  \frac{1}{|\I | } \sum_{i \in \I }   \big\{   u^\top    x_i    x_ { i,j}        -  u^\top  \Sigma (, j)  \big\}   \bigg| \ge \delta    \bigg\} 
\\   \le &
  \mathbb P \bigg\{   \sup_{ u \in  5 \mathcal K(s)   } \bigg|  \frac{1}{|\I | } \sum_{i \in \I }   \big\{   u^\top    x_i    x_ { i,j}         -  u^\top  \Sigma (, j)  \big\}   \bigg| \ge \delta    \bigg\}  .
\end{align*}   
\\
By \Cref{lemma:sparse discretization}, it holds that
\begin{align*} \mathbb P \bigg\{   \sup_{ u \in   \mathcal K( s) }\bigg|  \frac{1}{|\I | } \sum_{i \in \I }   \big\{   u^\top    x_i    x_ { i,j}         -  u^\top  \Sigma (, j)  \big\}   \bigg| \ge \delta  \bigg\} 
\le C_2 n p^s 5^s  \exp\big (-c_2 (\delta n)^\gamma\big) + p^s 5^s  \exp(- c_3\delta^2n)  . 
\end{align*} 
As a result 
\begin{align*}  \mathbb P \bigg\{   \sup_{ u \in  5  \mathcal K( s) }\bigg|  \frac{1}{|\I | } \sum_{i \in \I }   \big\{   u^\top    x_i    x_ { i,j}         -  u^\top  \Sigma (, j)  \big\}   \bigg| \ge 5 \delta  \bigg\} 
\le C_2 n p^s 5^s  \exp\big (-c_2 (\delta n)^\gamma\big) + p^s 5^s  \exp(- c_3\delta^2n)  . 
\end{align*} 
Combining the calculations in this step, it holds that 
\begin{align} \nonumber 
&  \mathbb P \bigg\{   \bigg|  \frac{1}{|\I | } \sum_{i \in \I }   \big\{   u^\top    x_i  x_ { i,j}        -  u^\top  \Sigma (, j)  \big\}   \bigg| \ge   \delta  \quad -  \forall  \ \| u\|_2=1, u \in \mathcal C_S  \bigg\} 
\\ \label{eq:condition 2 beta step 1} 
 \le & C_2 n p^s 5^s  \exp\big (-c_2 (\delta n)^\gamma\big) + p^s 5^s  \exp(- c_3\delta^2n)  . 
\end{align}  
 \
 \\
{ \bf Step 2.} Since 
 \begin{align*}
\bigg|  \frac{1}{|\I | } \sum_{i \in \I }     u^\top   x_i   x_i^\top \beta  - \frac{1}{|\I | } \sum_{i \in \I }      u^\top  \Sigma \beta  \bigg|  
\le \max_{1\le j \le  p }\bigg|  \frac{1}{|\I | } \sum_{i \in \I }   \big \{   u^\top    x_i    x_ { i,j}       -      u^\top  \Sigma (, j)  \big \}   \bigg | \|\beta\|_1,
\end{align*} 
 it suffices to bound 
  \begin{align*}
 &  \mathbb P\bigg\{  \max_{1\le j \le p } \bigg|  \frac{1}{|\I | } \sum_{i \in \I }   \big\{   u^\top    x_i    x_ { i,j}         -  u^\top  \Sigma (, j)  \big\}  \bigg|   \ge C     \sqrt {  \frac{\log(pn)}{ | \I |  } }      \ \forall  | \I| \ge \zeta n, \ \forall   \|u \|_2 =1, u\in \mathcal C_S   \bigg\}. 
  \end{align*}  
  Since there are at most $n^2$ many intervals $\I  $ such that $|\I   |\ge \zeta n$,   by \Cref{eq:condition 2 beta step 1}, 
   \begin{align*}
 &  \mathbb P\bigg\{  \max_{1\le j \le p } \bigg|  \frac{1}{|\I | } \sum_{i \in \I }   \big\{   u^\top   x_i   x_i^\top (j)    -  u^\top  \Sigma (, j)  \big\}  \bigg|   \ge  \delta      \ \forall  | \I| \ge \zeta n, \ \forall   \|u \|_2 =1, u\in \mathcal C_S   \bigg\}
 \\
 \le &C_2 n^3 p^{\s+1} 5^\s  \exp\big (-c_2 (\delta n)^\gamma\big) + n ^2  p^{\s+1} 5^\s  \exp(- c_3\delta^2n)  .  
  \end{align*}   
  \
  \\
Taking 
$\delta =C_3 \sqrt { \frac{\s \log(pn) }{ n }}   $ for sufficiently large constant $C_3$ and using the assumption that 
$ \frac{( \s\log( np) )^{ \frac{2}{\gamma}- 1}   }{n} =o(1) $, it holds that 
 \begin{align*}
&  \mathbb P \bigg\{   \sup_{ u \in \mathcal K( s) }\bigg|  \frac{1}{|\I | } \sum_{i \in \I }   \big\{   u^\top   x_i   x_i^\top (j)    -  u^\top  \Sigma (, j)  \big\}   \bigg| \ge 5C_3 \sqrt { \frac{\s \log(pn) }{ n }}   \  \forall  \ \| u\|_2=1, u \in \mathcal C_S ,  | \I | \ge \zeta n  \bigg\} 
\\
 \le & C_2'   \exp\big (-c_2' ( \s \log(pn)  n)^{\gamma/2} \big) +  C_3'   \exp(- c_3 '  \s \log(pn))  . 
\end{align*} 
 \end{proof}

\begin{lemma}\label{lemma:beta to gaussian process}
Suppose $ \{ \xi_i\}_{i=1}^n$ is a collection of i.i.d. Gaussian random variables $N(0,\sigma^2_\xi)$   and $\{ \epsilon_i\}_{i=1}^n $  satisfies \Cref{assume:beta} with 
 $Var(\epsilon_i)=\sigma_\epsilon^2$. 
For any $r\in (0,1) $, let $t =\lfloor r n\rfloor   $ and 
$$\mathcal G_n (r ) =\frac{ \sqrt { n  }}{\sigma_\xi \sigma_\epsilon} \bigg\{  \frac{1}{t}\sum_{i=1}^t \epsilon_i\xi_i - \frac{1}{ n-t} \sum_{i=t+1}^{n } \epsilon_i\xi_i  \bigg\}   .$$ 
Then  
$$ \mathcal G_n(r ) \overset{\mathcal D } {\to}  \mathcal G (r ), $$
where the convergence is in Skorokhod topology  and $\mathcal G(r ) $ is a  Gaussian Process on $r\in (0,1)$,  with covariance function 
$$\sigma(r,s) =  \frac{1  }{ r(1-s)} \quad \text{when}  \quad   0< s\le r <1   .$$

\end{lemma}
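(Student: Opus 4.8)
The plan is to exploit the fact that the user-generated sequence $\{\xi_i\}$ kills all serial correlation in the products $\eta_i:=\epsilon_i\xi_i$, so that although $\{\epsilon_i\}$ is only $\beta$-mixing, the sequence $\{\eta_i\}$ behaves like white noise and its long-run variance collapses to the marginal variance $\sigma_\epsilon^2\sigma_\xi^2$. First I would record the structural facts. Since $\{\xi_i\}$ is i.i.d.\ $N(0,\sigma_\xi^2)$ and independent of the strictly stationary, geometrically $\beta$-mixing sequence $\{\epsilon_i\}$ (\Cref{assume:beta}), the sequence $\{\eta_i\}$ is strictly stationary with $\E(\eta_i)=0$, $\E(\eta_i^2)=\sigma_\epsilon^2\sigma_\xi^2$, and $\mathrm{Cov}(\eta_i,\eta_j)=\E(\epsilon_i\epsilon_j)\,\E(\xi_i\xi_j)=0$ for $i\ne j$. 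Relative to the filtration $\mathcal F_i=\sigma(\{\epsilon_j\}_{j=1}^n,\xi_1,\ldots,\xi_i)$ one has $\E(\eta_i\mid\mathcal F_{i-1})=\epsilon_i\,\E(\xi_i)=0$, so $\{\eta_i\}$ is a stationary, ergodic martingale difference sequence; it also inherits geometric $\beta$-mixing and sub-Weibull tails from $(\epsilon_i,\xi_i)$, hence has finite moments of every order.

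Second, I would establish the invariance principle $W_n(\cdot)\Rightarrow B(\cdot)$ in the Skorokhod space $D[0,1]$, where $W_n(r):=(\sigma_\epsilon\sigma_\xi\sqrt n)^{-1}\sum_{i=1}^{\lfloor rn\rfloor}\eta_i$ and $B$ is standard Brownian motion. This is the functional central limit theorem for a stationary ergodic martingale difference sequence with finite second moment (alternatively, it follows from the $\beta$-mixing invariance principle, e.g.\ \cite{francq2019garch}): the required conditional-variance convergence $n^{-1}\sum_{i=1}^{\lfloor rn\rfloor}\E(\eta_i^2\mid\mathcal F_{i-1})=(\sigma_\xi^2/n)\sum_{i=1}^{\lfloor rn\rfloor}\epsilon_i^2\to r\,\sigma_\epsilon^2\sigma_\xi^2$ follows from Birkhoff's ergodic theorem applied to the integrable stationary sequence $\{\epsilon_i^2\}$, and the (conditional) Lindeberg condition is immediate from strict stationarity together with $\E(\eta_1^2)<\infty$.

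Third, I would translate $\mathcal G_n$ into a continuous functional of $W_n$. Writing $t=\lfloor rn\rfloor$ and $S_k=\sum_{i=1}^k\eta_i$, the identity $\sum_{i=t+1}^n\eta_i=S_n-S_t$ gives, after collecting terms,
\[
\mathcal G_n(r)=\frac{n^2}{t(n-t)}\Bigl(W_n(r)-\tfrac{t}{n}\,W_n(1)\Bigr).
\]
For any fixed $\zeta\in(0,1/2)$, the step functions $r\mapsto n^2/\{t(n-t)\}$ and $r\mapsto t/n$ converge uniformly on $[\zeta,1-\zeta]$ to the continuous limits $1/\{r(1-r)\}$ and $r$, and the map $w\mapsto\bigl(r\mapsto\{r(1-r)\}^{-1}(w(r)-r\,w(1))\bigr)$ is continuous from $D[0,1]$ to $D[\zeta,1-\zeta]$ at every continuous $w$; since $B$ has a.s.\ continuous paths, the continuous mapping theorem yields $\mathcal G_n(\cdot)\Rightarrow\mathcal G(\cdot)$ on $[\zeta,1-\zeta]$ with $\mathcal G(r)=\{r(1-r)\}^{-1}(B(r)-rB(1))$, and letting $\zeta\downarrow0$ gives the convergence on $(0,1)$. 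Finally, $\mathcal G$ is Gaussian as a linear image of $B$, and for $0<s\le r<1$ the computation $\mathrm{Cov}(B(r)-rB(1),\,B(s)-sB(1))=s-sr-rs+rs=s(1-r)$, using $\mathrm{Cov}(B(a),B(b))=\min(a,b)$, yields $\mathrm{Cov}(\mathcal G(r),\mathcal G(s))=s(1-r)/\{r(1-r)s(1-s)\}=1/\{r(1-s)\}$, which is the claimed kernel.

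The only genuinely delicate point is the invariance principle of Step 2, i.e.\ verifying that the martingale functional CLT applies in the presence of the $\beta$-mixing noise; but since $\{\eta_i\}$ is an honest stationary ergodic martingale difference sequence with finite (in fact sub-exponential) second moment, this reduces to quoting a standard theorem, and the remaining work is the elementary algebra of Step 3 together with an application of the continuous mapping theorem.
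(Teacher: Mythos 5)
Your proof is correct, and it reaches the result by a slightly different route than the paper for the one nontrivial step. The paper's own proof is a two-line remark: it makes exactly your observation that $\mathrm{Cov}(\epsilon_i\xi_i,\epsilon_j\xi_j)=\E(\epsilon_i\epsilon_j)\,\E(\xi_i)\,\E(\xi_j)=0$ for $i\neq j$, so the long-run variance of the products collapses to the marginal variance $\sigma_\epsilon^2\sigma_\xi^2$, and then cites the functional CLT for geometrically $\beta$-mixing sequences (Theorem A.4 of Francq and Zako\"ian, the same result invoked to justify \Cref{condition:process}). You instead observe that $\eta_i=\epsilon_i\xi_i$ is a stationary ergodic martingale difference sequence and apply the martingale invariance principle, checking the conditional-variance condition $n^{-1}\sum_{i\le nr}\E(\eta_i^2\mid\mathcal F_{i-1})=\sigma_\xi^2 n^{-1}\sum_{i\le nr}\epsilon_i^2\to r\sigma_\epsilon^2\sigma_\xi^2$ via the ergodic theorem. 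This is a genuine gain in self-containedness: the MDS FCLT needs only a finite second moment and makes the identification of the limiting variance automatic, whereas the mixing FCLT is stated in terms of a long-run variance that must then be computed separately. Your explicit continuous-mapping step --- the identity $\mathcal G_n(r)=\tfrac{n^2}{t(n-t)}\bigl(W_n(r)-\tfrac{t}{n}W_n(1)\bigr)$, uniform convergence of the deterministic weights on $[\zeta,1-\zeta]$, and the covariance computation yielding $1/\{r(1-s)\}$ --- is also correct and fills in details the paper leaves implicit. One cosmetic remark: your filtration $\mathcal F_i=\sigma(\{\epsilon_j\}_{j=1}^n,\xi_1,\dots,\xi_i)$ depends on $n$; if you prefer to stay inside the standard stationary-ergodic-MDS framework, take the natural filtration of the pairs, $\sigma(\{(\epsilon_j,\xi_j)\}_{j\le i})$, with respect to which $\eta_i$ is still a martingale difference because $\xi_i$ is independent of $\epsilon_i$ and of the past jointly.
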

\begin{proof} This is a direct consequence of \Cref{theorem:beta mixing CLT} and the observation that   
$$ Cov( \epsilon_i\xi_i,\epsilon_j\xi_j ) = \E (\epsilon_i\xi_i \epsilon_j\xi_j)  = \E (\epsilon_i\epsilon_j)\E(\xi_i)\E(\xi_j ) = 0. $$

\end{proof}

 \begin{lemma} \label{lemma:beta interval lasso} 
Suppose all the assumptions in \Cref{theorem:main_beta} holds and that 
$$\beta^*_1 =\ldots=\beta^*_n =\beta^* .$$ 
 Let $ \zeta \in (0,1/2)$ be  a   given constant. 
For any generic interval $\I$,   let $\widehat \beta_ \I $ be defined as in  \Cref{eq:interval lasso}.
   If   $\lambda = C_\lambda \sqrt { \log(pn) }$ for some sufficiently large constant $C _ \lambda $, then 
it holds that 
\begin{align*}
&  \mathbb P\bigg\{   \| \widehat \beta_\I -\beta^*  \| _2^2 \le \frac{C\s \log(pn)}{n }   \ \forall \I \subset (0,n] , |\I| \ge \zeta n \bigg\} \ge 1- (pn)^{-3} ;
 \\
 &   \mathbb P\bigg\{   \| \widehat \beta_\I  -\beta^*  \| _1  \le     C s\sqrt {  \frac{\log(pn)} {n } }  \ \forall \I \subset (0,n] , |\I| \ge \zeta n \bigg\}\ge 1-  (pn)^{-3} ;
 \\
 &  \mathbb P\bigg\{    \| (\widehat \beta_\I -\beta^* )_{S^c} \| _1 \le  3 \| (\widehat \beta_\I  -\beta^*)_{S } \| _1  \ \forall \I \subset (0,n] , |\I| \ge \zeta n \bigg\}\ge 1- (pn)^{-3} . 
\end{align*} 
\begin{proof}Note that the number of possible interval $\I$ is bounded  by $n^2$. 
As a result, the proof is a simple  consequence of \Cref{lemma:interval lasso consistency beta mixing}   together with a   union bound over all possible intervals $\I$. 
\end{proof}
 \end{lemma}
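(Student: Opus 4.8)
The plan is to replay the i.i.d.\ argument from the proof of \Cref{lemma:interval lasso} verbatim, but with the two high-probability deviation inputs now supplied by the $\beta$-mixing bounds of \Cref{lemma: beta deviation 1} instead of by \Cref{condition:deviation}. The decisive simplification under the null is that $\beta^*_1=\cdots=\beta^*_n=\beta^*$, so for every interval $\I$ we have $\beta^*_\I=\beta^*$ and hence the cross-bias term $\frac{1}{|\I|}\sum_{i\in\I}x_ix_i^\top(\beta^*_i-\beta^*_\I)$ vanishes identically. This removes precisely the term that forced the use of \Cref{condition:deviation}{\bf c} in the general case, leaving only the design-deviation and noise-deviation terms to control.

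First I would fix a single interval $\I$ with $|\I|\ge\zeta n$ and write the optimality (basic) inequality for $\widehat\beta_\I$ exactly as in \eqref{eq-lem10-pf-2}, which under $H_0$ collapses to
\begin{align*}
\frac{1}{|\I|}\sum_{i\in\I}\{x_i^\top\Theta\}^2+\frac{\lambda}{\sqrt{|\I|}}\|\widehat\beta_\I\|_1 \le \frac{2}{|\I|}\sum_{i\in\I}\epsilon_i x_i^\top\Theta+\frac{\lambda}{\sqrt{|\I|}}\|\beta^*\|_1,
\end{align*}
with $\Theta=\widehat\beta_\I-\beta^*$. On the event of \eqref{eq:dev beta} the noise term is bounded by $C\sqrt{\log(pn)/|\I|}\,\|\Theta\|_1\le\frac{\lambda}{8\sqrt{|\I|}}\|\Theta\|_1$ once $\lambda=C_\lambda\sqrt{\log(pn)}$ with $C_\lambda$ large; a routine $\ell_1$-decomposition then yields the cone condition $\|\Theta_{S^c}\|_1\le 3\|\Theta_S\|_1$ (the third conclusion). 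Restricted to this cone, \eqref{eq:res eigen beta} gives $\Theta^\top\widehat\Sigma_\I\Theta\ge \Theta^\top\Sigma\Theta - C\sqrt{\s\log(pn)/|\I|}\,\|\Theta\|_2^2\ge \frac{c_x}{2}\|\Theta\|_2^2$, where the final step uses $\Lambda_{\min}(\Sigma)\ge c_x$ together with the effective-sample-size bound in \Cref{assume: model assumption}{\bf e} to absorb the correction (since $\s\log(pn)/|\I|\le \s\log(pn)/(\zeta n)$ is a small constant). Feeding this back into the basic inequality and using $\|\Theta_S\|_1\le\sqrt{\s}\|\Theta\|_2$ produces $\|\Theta\|_2\le C\lambda\sqrt{\s}/\sqrt{|\I|}$ and $\|\Theta\|_1\le C\lambda\s/\sqrt{|\I|}$; since $|\I|\ge\zeta n$ and $\lambda\asymp\sqrt{\log(pn)}$, these are exactly the first two claimed bounds, of orders $\s\log(pn)/n$ and $\s\sqrt{\log(pn)/n}$.

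Finally I would upgrade to uniformity over intervals by a union bound: there are at most $n^2$ intervals $\I\subseteq(0,n]$ with $|\I|\ge\zeta n$, and \Cref{lemma: beta deviation 1} guarantees that each of \eqref{eq:res eigen beta} and \eqref{eq:dev beta} holds for a fixed $\I$ off an event of probability at most $(pn)^{-c}$ with $c\ge 3$; intersecting these events and inflating the failure probability by $n^2$ still leaves total probability at least $1-(pn)^{-3}$, on which all three conclusions hold simultaneously for every admissible $\I$. The main obstacle will be ensuring the per-interval failure probabilities are small enough (polynomially small in $pn$ with a sufficiently large exponent) to survive this union bound despite the $\beta$-mixing dependence and the sub-Weibull tails; this is exactly where \Cref{assume:order_betamixing} is needed, as it forces the mixing- and tail-inflated deviation magnitudes in Propositions~7 and~8 of \cite{Wong2020} to collapse to the clean rates $\sqrt{\log(pn)/|\I|}$ and $\sqrt{\s\log(pn)/|\I|}$, so that the restricted-eigenvalue correction stays $o(1)$ and the noise term remains below $\lambda/8$.
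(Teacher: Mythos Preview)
Your proposal is correct and follows essentially the same route as the paper: establish per-interval Lasso consistency under $\beta$-mixing and then union-bound over the at most $n^2$ admissible intervals. The only difference is cosmetic---the paper defers the per-interval step to Corollary~9 of \cite{Wong2020}, whereas you unpack that corollary by running the basic-inequality argument of \Cref{lemma:interval lasso} directly with the deviation inputs \eqref{eq:res eigen beta}--\eqref{eq:dev beta} from \Cref{lemma: beta deviation 1} (which are Propositions~7--8 of \cite{Wong2020}, the building blocks of that same corollary).
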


\subsection{Additional technical results}

\begin{theorem} \label{theorem:beta deviation}
Let $\{ x_j\}_{j=1}^n  $ be a strictly stationary sequence of zero mean random variables that are subweibull$(\gamma_2)$ with subweibull constant $K$.  Suppose  the $\beta$-mixing coefficients of $\{ x_j\}_{j=1}^n  $ satisfy 
$ \beta(m) \le \exp(-c m^{\gamma_1})$. Let  $\gamma$ be given by 
$$ \frac{1}{\gamma}  =\frac{1}{\gamma_1} + \frac{2}{\gamma_2}$$
and suppose that $ \gamma<1$. Then for $n\ge 4$  and any  $\delta > 1/n $, 
$$ \mathbb P \bigg\{  \bigg| \frac{1}{n} \sum_{i=1}^n x_i  \bigg|  \ge \delta  \bigg\} \le n \exp\bigg\{ 
-\frac{ ( \delta n )^\gamma }{   K^ \gamma C_1 } 
\bigg\}  
+ \exp\bigg\{ -\frac{\delta^2 n}{ K ^2C_2}\bigg\}, $$
where the constants $C_1, C_2 $ depend only on $\gamma_1,\gamma_2$ and $c$. 
\end{theorem}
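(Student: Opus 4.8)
The plan is to prove \Cref{theorem:beta deviation} by the classical truncation--blocking--coupling scheme: split off a bounded part of each $x_i$, replace the dependent bounded blocks by an independent sequence at a cost measured by the $\beta$-mixing coefficients (Berbee's coupling lemma), apply the usual Bernstein inequality to the resulting sum of bounded independent blocks, and finally optimise the truncation level $M$ and the block length $\ell$ as suitable powers of $\delta n$. This is essentially the one-dimensional specialisation of the deviation estimates of \cite{Wong2020} (which build on the Bernstein inequality of Merlevede, Peligrad and Rio for bounded weakly dependent sequences), and I only sketch the argument here.

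\textbf{Step 1 (truncation).} Fix $M>0$ and write $x_i=\bar x_i+\tilde x_i$ with $\bar x_i=x_i\mathbf{1}\{|x_i|\le M\}-\mathbb{E}[x_i\mathbf{1}\{|x_i|\le M\}]$ and $\tilde x_i=x_i\mathbf{1}\{|x_i|>M\}-\mathbb{E}[x_i\mathbf{1}\{|x_i|>M\}]$, so that $|\bar x_i|\le 2M$, $\mathbb{E}\bar x_i=\mathbb{E}\tilde x_i=0$, and both sequences are still strictly stationary and geometrically $\beta$-mixing with the same rate. The sub-Weibull$(\gamma_2)$ bound gives $\mathbb{P}(|x_i|>M)\le 2\exp(-(M/(c_0K))^{\gamma_2})$ and $\mathbb{E}|x_i|\mathbf{1}\{|x_i|>M\}\lesssim K\exp(-\tfrac12(M/(c_0K))^{\gamma_2})$. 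A union bound shows that off an event of probability at most $2n\exp(-(M/(c_0K))^{\gamma_2})$ every $\tilde x_i$ coincides with the deterministic constant $-\mathbb{E}[x_i\mathbf{1}\{|x_i|>M\}]$, hence $\frac1n\sum_i\tilde x_i$ reduces to a bias of size $\le\delta/4$ once $M$ exceeds a fixed multiple of $K$ (harmless in the regime $\delta>1/n$). It therefore suffices to bound $\mathbb{P}(|\frac1n\sum_i\bar x_i|>\delta/2)$ and to add the truncation term $2n\exp(-(M/(c_0K))^{\gamma_2})$.

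\textbf{Step 2 (blocking, coupling, Bernstein).} Partition $\{1,\dots,n\}$ into consecutive blocks of length $\ell$, group them into even- and odd-indexed families whose blocks are separated by gaps of length $\ell$, and treat each family separately. Writing $B_k$ for a block sum of $\bar x_i$'s in a fixed family, Berbee's lemma produces independent $B_k^{\ast}\overset{d}{=}B_k$ with $\mathbb{P}(\exists k:\ B_k\ne B_k^{\ast})\le n\exp(-c\ell^{\gamma_1})$. Each $B_k^{\ast}$ is centred with $|B_k^{\ast}|\le 2\ell M$, and since $\{\bar x_i\}$ is stationary, geometrically $\beta$-mixing and (being bounded) sub-Weibull, \Cref{lemma:bound for long-run variance} together with a covariance (Davydov-type) inequality gives $\mathrm{Var}(B_k^{\ast})\le \ell V$ with $V\lesssim K^2$. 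The classical Bernstein inequality for the independent sum on each family then yields, after combining the two families and Step 1,
\begin{align*}
\mathbb{P}\Big(\Big|\tfrac1n\textstyle\sum_i x_i\Big|>\delta\Big)\ \le\ 4\exp\!\Big(-\frac{c_1(\delta n)^2}{nV+\ell M\,\delta n}\Big)\ +\ n\exp(-c\ell^{\gamma_1})\ +\ 2n\exp\!\big(-(M/(c_0K))^{\gamma_2}\big).
\end{align*}
The variance-dominated regime of the first exponential already gives $\exp(-c_1'\delta^2 n/K^2)$, which is the second term of the theorem; the boundedness-dominated regime leaves $\exp(-c_1'\delta n/(\ell M))$.

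\textbf{Step 3 (optimisation) and the main obstacle.} It remains to choose $M$ and $\ell$ so that the three surviving error exponents --- $(M/(c_0K))^{\gamma_2}$ from truncation, $\ell^{\gamma_1}$ from coupling, and $\delta n/(\ell M)$ from Bernstein --- can each be forced to be at least a constant multiple of $(\delta n/K)^{\gamma}$ with $1/\gamma=1/\gamma_1+2/\gamma_2$; taking $M$ and $\ell$ as appropriate powers of $\delta n$ and carefully tracking constants achieves this, with the sub-geometric mixing rate $\exp(-c\ell^{\gamma_1})$ (rather than a purely geometric one) responsible for the $1/\gamma_1$ contribution to the exponent and the sub-Weibull tails for the $\gamma_2$-dependent part. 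One must in addition check that the induced truncation bias stays below $\delta/4$, that the leftover block of length $<\ell$ contributes $o(\delta n)$ to the sum under the chosen scaling, and that the long-run-variance bound $V\lesssim K^2$ holds uniformly. I expect this bookkeeping --- balancing the four error sources and pinning down the optimal $(M,\ell)$ so that exactly the exponent $1/\gamma_1+2/\gamma_2$ comes out --- to be the main obstacle; the probabilistic ingredients (Berbee coupling, Bernstein for bounded independent summands, and the covariance bound) are standard. An alternative, more computational route is to bound $\|\sum_i x_i\|_p$ by a Rosenthal-type moment inequality for $\beta$-mixing sequences and then optimise over $p$ via Markov, but it reduces to the same blocking--coupling estimates internally and faces the same optimisation.
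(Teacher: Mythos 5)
The paper does not prove this statement at all: its ``proof'' is a one-line citation to Lemma 13 of \cite{Wong2020}, so any self-contained argument is necessarily a different route. Your truncation--blocking--Berbee-coupling--Bernstein scheme is indeed the standard machinery behind such results, and the three error sources you identify (truncation tail, coupling cost, Bernstein for the bounded independent blocks) are the right ones. However, there is a genuine gap, and it sits exactly where you flag ``the main obstacle'': the optimisation in Step 3 cannot, as set up, produce the stated exponent. With a single truncation level $M$ and a single block length $\ell$, forcing the truncation term to be $\exp(-(M/K)^{\gamma_2})\lesssim \exp(-(\delta n)^{\gamma})$ requires $M\gtrsim K(\delta n)^{\gamma/\gamma_2}$, the coupling term requires $\ell\gtrsim(\delta n)^{\gamma/\gamma_1}$, and then the boundedness-dominated Bernstein exponent is $\delta n/(\ell M)\lesssim(\delta n)^{1-\gamma/\gamma_1-\gamma/\gamma_2}$; demanding this be $\gtrsim(\delta n)^{\gamma}$ forces $1/\gamma\ge 1+1/\gamma_1+1/\gamma_2$. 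That matches the claimed $1/\gamma=1/\gamma_1+2/\gamma_2$ only when $1/\gamma_2\ge 1$; for $\gamma_2\in(1,2]$ (which \Cref{assume:beta} explicitly allows) your scheme yields a strictly smaller $\gamma$ and hence a strictly weaker tail bound than the theorem asserts.

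The missing ingredient is that the sharp exponent does not come from a single-scale argument: Wong, Li and Tewari obtain it by combining the truncation step with the Bernstein-type inequality of Merlev\`ede, Peligrad and Rio for \emph{bounded} geometrically mixing sequences, whose proof uses an adaptive, Cantor-set-like multi-scale blocking that removes the additive ``$+1$'' in the exponent budget. So either you should import that inequality as a black box in place of your Step 2 (in which case the remaining bookkeeping does close, and the truncation contributes the $1/\gamma_2$ or $2/\gamma_2$ part while the mixing rate contributes $1/\gamma_1$), or you must replace the single block length $\ell$ by the multi-scale construction. As written, completing your sketch honestly proves a correct but weaker statement, not \Cref{theorem:beta deviation}. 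The remaining minor points you raise (truncation bias below $\delta/4$, the leftover partial block, the long-run variance bound $V\lesssim K^2$ via \Cref{lemma:bound for long-run variance}) are all routine and not where the difficulty lies.
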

\begin{proof}
This is Lemma 13 of \cite{Wong2020}.
\end{proof}

 \begin{lemma} \label{lemma:sparse discretization}
Let $V \in \mathbb R^p  $ be any random vector.  Suppose for all $u \in \mathbb R^p$ such that $\|u\|_2=1$, it holds that
$$ \mathbb P \bigg\{  | u^\top V| \ge  \delta  \bigg\} \le  g(\delta) . $$
 Then 
 $$ \mathbb P \bigg\{  \sup_{ \|u \|_2 =1 , \|u\|_0\le \s } | u^\top V| \ge  \delta  \bigg\} \le  p^\s 5^\s g(\delta/2).   $$
 \end{lemma}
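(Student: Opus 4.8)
The plan is to run a standard discretization ($\epsilon$-net) argument over the set of $\s$-sparse unit vectors, reducing the supremum to a finite maximum to which the hypothesis and the union bound apply. Throughout we may assume $\s\le p$, which is the case of interest (if $\s>p$ the $\s$-sparsity constraint is vacuous and the bound follows a fortiori by taking the full support).

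First I would decompose the index set by support. Since every $u$ with $\|u\|_0\le\s$ and $\|u\|_2=1$ has support contained in some $S\subseteq\{1,\dots,p\}$ with $|S|=\s$, we have
$$\big\{\, u:\ \|u\|_2=1,\ \|u\|_0\le\s\,\big\}\ \subseteq\ \bigcup_{|S|=\s}\ \mathbb S_S,\qquad \mathbb S_S:=\{\,u:\ \|u\|_2=1,\ \mathrm{supp}(u)\subseteq S\,\},$$
and there are exactly $\binom{p}{\s}\le p^{\s}$ such sets $S$. It therefore suffices to control, for each fixed $S$, the quantity $M_S:=\sup_{u\in\mathbb S_S}|u^\top V|$, and then union-bound over the $\le p^{\s}$ supports. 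Next I would discretize each $\mathbb S_S$: it is isometric to the Euclidean unit sphere of $\mathbb R^{\s}$, so by the usual volumetric bound it admits a $(1/2)$-net $N_S\subseteq\mathbb S_S$ with $|N_S|\le(1+2/(1/2))^{\s}=5^{\s}$. The standard net-to-supremum comparison then gives $M_S\le 2\max_{w\in N_S}|w^\top V|$: for any $u\in\mathbb S_S$ choose $w\in N_S$ with $\|u-w\|_2\le1/2$, write $u-w=\|u-w\|_2\,\tilde u$ with $\tilde u\in\mathbb S_S\cup\{0\}$, so that $|u^\top V|\le|w^\top V|+\|u-w\|_2\,|\tilde u^\top V|\le\max_{w\in N_S}|w^\top V|+\tfrac12 M_S$; taking the supremum over $u$ and rearranging yields the claim. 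Hence $\{M_S\ge\delta\}\subseteq\bigcup_{w\in N_S}\{|w^\top V|\ge\delta/2\}$, and applying the hypothesis to each unit vector $w$ together with a union bound over $w\in N_S$,
$$\mathbb P\big(M_S\ge\delta\big)\ \le\ |N_S|\,g(\delta/2)\ \le\ 5^{\s}\,g(\delta/2).$$

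Finally I would assemble the pieces by a union bound over the $\le p^{\s}$ supports:
$$\mathbb P\Big(\sup_{\|u\|_2=1,\ \|u\|_0\le\s}|u^\top V|\ge\delta\Big)\ \le\ \sum_{|S|=\s}\mathbb P\big(M_S\ge\delta\big)\ \le\ p^{\s}\,5^{\s}\,g(\delta/2),$$
which is precisely the asserted inequality. I do not expect any genuine obstacle here — this is a textbook covering argument — and the only points requiring a little care are (i) choosing the net radius equal to $1/2$ so that the net-to-supremum constant is exactly $1/(1-1/2)=2$, matching the $g(\delta/2)$ in the statement, and (ii) the bookkeeping that every $\s$-sparse unit direction is already covered by the union over size-$\s$ supports, so the cardinality factor is exactly $\binom{p}{\s}\le p^{\s}$.
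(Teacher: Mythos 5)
Your proposal is correct and follows essentially the same route as the paper's proof: decompose by size-$\s$ supports, take a $(1/2)$-net of cardinality at most $5^{\s}$ on each supported sphere, use the standard net-to-supremum comparison to get the factor $2$ (hence $g(\delta/2)$), and union-bound over the at most $p^{\s}$ supports and net points. No gaps.
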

\begin{proof}This is a standard covering lemma. 
Choose $ S\subset \{ 1,\ldots, p\}$ with $ |S| =\s $. Define 
$$\Omega_{S} : =\{ u \in \mathbb  R^p: \|u\|_2=1 , \ \text{support}(u) \subset S  \}.$$
Then we have 
$$ \{u \in \mathbb R^p : \|u\|_2=1, \|u\|_0\le \s \} = \bigcup_{|S| = \s } \Omega_S .$$
{\bf Step 1.} Let $S$ be given such that $|S|\le \s$. 
Let $\{ w_i\}_{i=1}^M $ be $1/2$-net of $\Omega_S$. That is for any $u\in \Omega_S $, there exists 
$w_j $ such that $\|w_j-u \|_2 \le 1/2$.  By standard covering result,  $\{ w_i\}_{i=1}^M $ can be chosen so that 
$M\le 5^\s. $
\\
\\
Denote 
$$ \alpha =\sup_{ u \in \Omega_S   } | u^\top V| \ . $$
Since 
$ \text{support}(w_{j}-u ) \subset S$  and $\|w_j-u \|_2 \le 1/2$,  
$$| (w_j -u) ^\top V| \le \alpha /2  .$$
So we have for any $u \in \Omega_S $
$$ |u^\top V| \le \max _{1\le i \le  M } | w_i ^\top V| + | (w_j -u) ^\top V|  \le \max _{1\le i \le  M } | w_i ^\top V|  + \frac{\alpha}{2}.$$
This implies that 
$$\alpha \le \max _{1\le i \le  M } | w_i ^\top V|  + \frac{\alpha}{2}  $$
or simply 
$$ \alpha \le 2  \max _{1\le i \le  M } | w_i ^\top V|  .$$
As a result 
$$ \mathbb P \bigg\{ \sup_{u \in \Omega_S }  | u^\top V| \ge  \delta  \bigg\} \le 
\mathbb P \bigg\{ \max_{1 \le i \le M    }  | w_i ^\top V| \ge   \delta /2   \bigg\} \le 5^\s  g(\delta/2).  $$
\
\\
{\bf Step 2.} Since there are at most ${ p \choose \s} \le p^\s $ number of possible choices of $S$  such that $|S|\le \s$, it holds that 
$$ \mathbb P \bigg\{  \sup_{ \|u \|_2 =1 , \|u\|_0\le \s } | u^\top V| \ge  \delta  \bigg\} \le  p^\s 5^\s g(\delta/2 ) .$$
\end{proof}

\begin{lemma} \label{lemma: approximation of convex hull}Denote 
$$ \mathcal K (\s) = \{ u \in \mathbb R^p:  \|u\|_2= 1, \|u\|_0 \le \s  \}.$$ 
For any vector  $V \in \mathbb R^p$, it holds that 
$$ \sup_{u  \in cl \{ conv (\mathcal K(\s))\}} |u^\top V | \le   \sup_{u \in \mathcal K( \s)} |u^\top V| . $$ 
\end{lemma}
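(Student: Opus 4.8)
The plan is to reduce everything to the linearity and continuity of the functional $u \mapsto u^\top V$. First I would treat the convex hull. Any $u \in conv(\mathcal K(\s))$ admits a representation as a finite convex combination $u = \sum_{i=1}^m \lambda_i u_i$ with $u_i \in \mathcal K(\s)$, $\lambda_i \ge 0$ and $\sum_{i=1}^m \lambda_i = 1$. By the triangle inequality followed by the convexity bound $\sum_i \lambda_i a_i \le \max_i a_i$ for nonnegative weights summing to one,
\[
|u^\top V| = \Big| \sum_{i=1}^m \lambda_i\, u_i^\top V \Big| \le \sum_{i=1}^m \lambda_i\, |u_i^\top V| \le \max_{1\le i \le m} |u_i^\top V| \le \sup_{w \in \mathcal K(\s)} |w^\top V|,
\]
so the desired bound already holds with $cl\{conv(\mathcal K(\s))\}$ replaced by $conv(\mathcal K(\s))$.

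Next I would pass to the closure. Since $V$ is a fixed vector, the map $w \mapsto |w^\top V|$ is continuous on $\mathbb R^p$. Hence, given $u \in cl\{conv(\mathcal K(\s))\}$, I would pick a sequence $\{u_n\}_{n\ge 1} \subset conv(\mathcal K(\s))$ with $u_n \to u$, and conclude $|u^\top V| = \lim_{n\to\infty} |u_n^\top V| \le \sup_{w \in \mathcal K(\s)} |w^\top V|$ from the preceding display. Taking the supremum over all such $u$ yields the claim.

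There is essentially no obstacle here; the only mildly delicate point is bookkeeping — making sure the convex-combination step uses nothing beyond $\lambda_i \ge 0$ with $\sum_i\lambda_i = 1$, and that the closure step invokes only continuity of $w\mapsto |w^\top V|$, not compactness of the left-hand set. (Incidentally $\mathcal K(\s)$ is a finite union of the unit spheres of the $\binom{p}{\s}$ coordinate subspaces of dimension $\s$, hence compact, so the right-hand supremum is actually attained; this is convenient but not needed for the inequality.)
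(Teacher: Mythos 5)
Your argument is correct and matches the paper's proof essentially verbatim: a finite convex combination plus the triangle inequality handles $conv(\mathcal K(\s))$, and continuity of $w \mapsto |w^\top V|$ extends the bound to the closure. No gaps.
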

\begin{proof}
By continuity of the linear form, it suffices to show that 
$$ \sup_{u  \in   conv (\mathcal K(\s)) } |u^\top V | \le  \sup_{u \in \mathcal K( \s)} |u^\top V| . $$ 
Let $v \in conv\{ \mathcal K(\s) \}$. Then $v = \sum_{l=1}^L \alpha_l v_l  $ where  $L\ge 1$, $v_ l \in \mathcal K(\s)$ and 
$0\le \alpha_ l \le 1  $ such that $\sum_{ l =1}^n\alpha_ l  =1 .  $
Then 
\begin{align*}
|v^\top V |  \le \sum_{l=1}^L \alpha_l   | v_l^\top V|  \le \sup_{u \in \mathcal K( \s)} |u^\top V|,
\end{align*}
which directly gives the desired result. 
\end{proof}

\section{Lower bounds for detection boundary  }
\textbf{Proof of \Cref{lemma:lower bound for detection}}: 
\begin{proof} To establish the lower bound, it suffices to consider the special case for $K=1$,   $x_i  \overset{i.i.d}{\sim}  N_p(0, I_p) $ and $\epsilon_i \overset{i.i.d}{\sim}   N(0,1) $. This is because the i.i.d. Gaussian conditions are a special case of the sub-Weibull beta-mixing conditions. The generic settings   will only result in better constants.  
\\
\\
{\bf Step 1.} Denote 
\[
		\mathcal{C } = \left\{ \beta  \in \mathbb{R}^p: \, \beta (i)   \in \{  \kappa/\sqrt{\s},  0  , -\kappa/\sqrt {\s}   \}, \ \| \beta  \|_0 =\s \right\}.
	\] 
Let $\widetilde P_{ \beta } $ denote the joint distribution of  $\{ y_i, x_i\}_{i=1}^n $ generated according to \Cref{assume: model assumption}, where 
$$\beta^*_t = \begin{cases} 
0  & \text{when } 1\le t \le n/2;
\\
\beta & \text{when }  (n/2+ 1)\le t \le n. 
\end{cases} $$
So under $\widetilde P_{ \beta } $, there is only one change point $n/2$ in the time series and 
$$ \|\beta^*_{n/2} -\beta^*_{n/2+1}\|_2^2 = 
 \|  \beta\|_2^2  = \kappa^2. 
$$
where $\Sigma =I_p$ is used.  Let 
$$\kappa ^2= b \frac{s\log(p)}{n} $$ for sufficiently small constant $b$. So by assumption, $\kappa^2<1/8$.
\\
\\Let $ \widetilde P_0 $ denote the distribution of data generated according to \Cref{assume: model assumption} with $ \beta_ i ^* = 0 $ for all $i$. 
 Let    $$\widetilde P_1 = \frac{1}{|\mathcal C |  } \sum_{\beta \in  \mathcal C} \widetilde  P_\beta ,$$
where $|\mathcal C| $ denotes the cardinality of $\mathcal C$. Denote $\E_\beta  $ to be the expectation under the distribution of $\widetilde P_ \beta  $.
Note that  
\begin{align*}
 \inf _{\psi }\sup_{P    \in \mathcal P_1 (b)} \E _ { 0}  (  \psi) + \E _ {P } (1-    \psi)  
 \ge &  \inf _{\psi }\sup_{ \beta    }\E _ { 0}  (  \psi) + \E _{\beta }    (1-    \psi)  
 \\
 \ge &  \inf _{\psi } \E _ 0 (  \psi) +  \frac{1}{|\mathcal C |  } \sum_{\beta \in  \mathcal C}    \E _ \beta   (1-    \psi)   
 \\
 \geq &1 - \frac{1}{2} \| \widetilde P_0 -\widetilde P_1\|_1 ,
\end{align*}
where the last inequality is due to LeCam's Lemma. 
Let $P_0 ^{n/2 }$ denote the joint distribution of $\{ y_i, x_i\}_{i=1}^{n/2 } $ with $\beta^*_i=0$ for all $1\le i \le n/2$.
In addition, let  $P_\beta^{n/2 }  $ denote the joint distribution of $\{ y_i, x_i\}_{i=1}^{n/2 } $ with $\beta^*_i=\beta $ for all $1\le i \le n/2$ and   let 
$$   P_1 ^{n/2 } = \frac{1}{|\mathcal C |  } \sum_{\beta \in  \mathcal C}   P_\beta^{n/2 }   .$$
Since under  $ \widetilde P_0$ or $ \widetilde P_1 $, the distribution of $\{ y_i   ,  X_ i   \}_{i =1}^{n/2} $ are the same,
straightforward calculations show that 
$$  \| \widetilde P_0 -\widetilde P_1\|_1 =  \|   P_0^{n/2 }  - P_1^{n/2 } \|_1.$$
\
\\
In addition, by Le Cam's lemma,   
 $$
  \inf _{\psi }\sup_{P   \in \mathcal P_1 (b)} \E _ { 0 }  (  \psi) +\E _ {P} (1-    \psi)   \ge 1 -  \|  P_0^{n/2 }  -  P_1^{n/2 } \|_{TV} 
 $$   
where  
\begin{align*}
 \|  P_0^{n/2 }  -  P_1^{n/2 } \|_{TV}     \le  \frac{1}{2}\sqrt { \E_{P^{n/2}  _0 }  \bigg (  \frac{d P_{ 1} ^{n/2} }{ d P_0^{n/2}} \bigg) ^2    - 1}.
\end{align*}  
The relationship between different norms of distributions can be found in \cite{tsybakov2009lower} 
\\
\\
So it suffices to show 
\begin{align} \E_{P^{n/2}  _0 }  \bigg (  \frac{d P_{ 1} ^{n/2} }{ d P_0^{n/2}} \bigg) ^2   =1+o(1). \label{eq:lower bound target}
\end{align}
\
\\
{\bf Step 2.}
 Note that 
 $ \frac{dP_\beta }{ dP_0 } =\exp\bigg\{ y x^\top\beta  - \frac{1}{2} (x^\top \beta)^2  \bigg\} ,$ where $x\sim N(0,I_p) $. 
	 So 
	 \begin{align*}
	 \E_{P_0} \left( \frac{dP_\beta }{ dP_0 }\frac{dP_{\beta'}  }{ dP_0 }  \right) = \E _X (\exp\{ \beta^\top xx^\top \beta'  \}  ). 
	 \end{align*}
	 Straightforward calculations show that 
 \begin{align*}
\E_{P^{n/2}  _0 }   \bigg (  \frac{d P_{ 1} ^{n/2} }{ d P_0^{n/2}}  \bigg) ^2  =& \E_{\beta,\beta'\sim \mathcal C}  \Bigg\{   \Bigg[ \E_X\Bigg(
\exp\bigg\{ \beta^\top xx^\top \beta ' \bigg\}  \Bigg)\Bigg] ^{n/2}  \Bigg\}
\\
=& \E_{\beta,\beta'\sim \mathcal C}   \Bigg\{   \Bigg( 1- \bigg [    2  \beta^\top\beta'   + \kappa^4 -   (   \beta^\top\beta'  )^2  \bigg]  \Bigg) ^{-n/4}  \Bigg\} ,
\\
\le& \E_{\beta,\beta'\sim \mathcal C}   \Bigg\{   \Bigg( 1- \bigg [    2  \beta^\top\beta'   + \kappa^4   \bigg]  \Bigg) ^{-n/4}  \Bigg\} ,
	 \end{align*} 
	 where $\beta \sim \mathcal C$ indicates that $\beta$ is selected from $\mathcal C$ uniformly at random. 
Since $\log(1-x)^{-1}\le x+x^2 $, and 
	 $$ 2   \beta^\top\beta'      + \kappa^4 \le 2 \kappa^2 +\kappa^4 \le 2 \frac{1}{8} + \frac{1}{64} < 1/2,$$
	  it holds that 
	 \begin{align*}
	& \E  _{\beta,\beta'\sim \mathcal C}  \Bigg\{   \Bigg( 1- \bigg [    2      \beta^\top\beta'      + \kappa^4   \bigg]  \Bigg) ^{-n/4}  \Bigg\} 
\\	
	= & \E _{\beta,\beta'\sim \mathcal C}  \Bigg\{    \exp \bigg(  \frac{n}{4}  \log \frac{1}{ 1-  2    \beta^\top\beta'        - \kappa^4  } \bigg )    \Bigg\} 
	 \\
	 \le &  \E _{\beta,\beta'\sim \mathcal C}  \Bigg\{    \exp  \frac{n}{4} \bigg(    2    \beta^\top\beta'        +  \kappa^4  + 
\bigg[ 2   \beta^\top\beta'        +  \kappa^4   \bigg  ]^2 	 
	  \bigg )    \Bigg\}  
	  \\
	  \le &  \E _{\beta,\beta'\sim \mathcal C}   \Bigg\{   \exp \frac{n}{4}  \bigg(   4   \beta^\top\beta'        + 2 \kappa^4  
	  \bigg )    \Bigg\}   
	  \\
	    = & \exp \bigg( \frac{n \kappa^4}{2} \bigg) \E _{\beta,\beta'\sim \mathcal C}   \Bigg\{    \exp   \bigg(    n  \beta^\top\beta'        
	  \bigg )    \Bigg\}   ,
	 \end{align*} 
	 where $ 4 \beta^\top\beta'      + 2 \kappa^4   <1 $ is used in the last inequality.
	 Let $J$ denote the support of $\beta$,  $J'$ denote the support of $\beta'$ and 
	 $$U =: |\{j \in \mathcal J \cap \mathcal J' : \beta_j =\beta_j'  \} | . $$
	 As a result 
	 $$\beta^\top \beta'  = 2 \frac{\kappa^2}{s } U -  \frac{\kappa^2}{s } |\mathcal J \cap \mathcal J' | . $$
	 So 
	$$  \E _{\beta,\beta'\sim \mathcal C}   \Bigg\{    \exp   \bigg(     n   \beta^\top\beta'        
	  \bigg )    \Bigg\}  = \E_{\mathcal J \cap \mathcal J' } \Bigg\{   \exp   \bigg[  n  \bigg(   2 \frac{\kappa^2}{s } U -  \frac{\kappa^2}{s } |\mathcal J \cap \mathcal J' |  \bigg) \bigg]      \Bigg \} .$$
	  Conditioning on $|\mathcal J \cap \mathcal J' |  $, $U\sim Bin (|\mathcal J \cap \mathcal J' | , 1/2) $. 
	  So 
	  \begin{align*}
	  & \E_{\mathcal J \cap \mathcal J' } \Bigg\{   \exp \bigg[    n  \bigg(   2 \frac{\kappa^2}{s } U -  \frac{\kappa^2}{s } |\mathcal J \cap \mathcal J' |  \bigg)   \bigg ]   \Bigg \}  
	  \\
	   = & \E _{\mathcal J \cap \mathcal J' }  E_{U |  {\mathcal J \cap \mathcal J' }  } \Bigg\{   \exp \bigg [ n  \bigg(   2 \frac{\kappa^2}{s } U -  \frac{\kappa^2}{s } |\mathcal J \cap \mathcal J' |  \bigg)  \bigg ]    \Bigg \}   
	   \\
	   = &  \E _{\mathcal J \cap \mathcal J' }    \Bigg\{   \bigg( 1- \frac{1}{2} + \frac{1}{2} e^{2n \frac{\kappa^2}{s }  }\bigg) ^{|\mathcal J \cap \mathcal J' |  } \exp   \bigg(    - n \frac{\kappa^2}{s } |\mathcal J \cap \mathcal J' |  \bigg)     \Bigg \}    
	   \\
	   = & \E _{\mathcal J \cap \mathcal J' }    \Bigg\{  \cosh \bigg(  \frac{ n \kappa^2}{s } \bigg )^{|\mathcal J \cap \mathcal J' |  }      \Bigg \}     
	   \\
	   \le &  \E _{\mathcal J \cap \mathcal J' } \Bigg\{  \exp  \bigg(  \frac{n  \kappa^2}{s } |\mathcal J \cap \mathcal J' | \bigg )      \Bigg \}      ,
	  \end{align*}
	  where $\cosh(x) \le \exp(x)$ for all $x$ is used in the last inequality. 
	  Putting this together, it holds that 
	  \begin{align}\label{eq:lower bound target 2}
	  \E_{P^{n/2}  _0 }   \bigg (  \frac{d P_{ 1} ^{n/2} }{ d P_0^{n/2}}  \bigg) ^2 \le \exp \bigg( \frac{n \kappa^4}{2} \bigg) \E _{\mathcal J \cap \mathcal J' } \Bigg\{  \exp  \bigg(  \frac{ n  \kappa^2}{s } |\mathcal J \cap \mathcal J' | \bigg )      \Bigg \}  
	  \end{align}

\	  
\\
{\bf Step 3.}	  Let $H(p,s,s)   $ denote the hypergeometric distribution counting the number of black balls in $s$ draws  from an urn containing $s$ black balls out of $p$ balls. Then by \Cref{eq:lower bound target 2},
\begin{align*} 
	  \E_{P^{n/2}  _0 }   \bigg (  \frac{d P_{ 1} ^{n/2} }{ d P_0^{n/2}}  \bigg) ^2 \le \exp \bigg( \frac{n \kappa^4}{2} \bigg) \E  \Bigg\{  \exp  \bigg(  \frac{ n  \kappa^2}{s }  H(p,s,s)  \bigg )      \Bigg \}  
	  \end{align*}
	 By  Lemma 3 in the supplement of \cite{arias2011global}, $H(p,s,s)$ is stochastically smaller than $Bin(s, s/(p-s) )$. 
	 So if $B\sim  Bin(s, s/(p-s))$, then 
\begin{align*} \E_{P^{n/2}  _0 }   \bigg (  \frac{d P_{ 1} ^{n/2} }{ d P_0^{n/2}}  \bigg) ^2
\le &  \exp \bigg( \frac{n \kappa^4}{2} \bigg) \E  \Bigg\{  \exp  \bigg(  \frac{ n  \kappa^2}{s } B \bigg )      \Bigg \}   
	  \\
	  = & \exp \bigg( \frac{n\kappa^ 4}{2}\bigg ) \Bigg\{ 1- \frac{s }{p-s } +   \frac{s }{p-s }  \exp( \frac {n\kappa^2 }{s})\Bigg\}  ^s
	  \\
	  \le &  \exp \bigg( \frac{n\kappa^ 4}{2}\bigg ) \Bigg\{ 1 +   \frac{s }{p-s }  \exp( \frac {n\kappa^2 }{s})\Bigg\}  ^s
	  \\
	  \le &\exp \bigg( \frac{n\kappa^ 4}{2} +  \frac{s ^2  }{p-s }  \exp( \frac {n\kappa^2 }{s}) \bigg ).
\end{align*} 
Since $s\log(p) =o(\sqrt n)  $, $s\le p ^{\alpha}  $ for some $\alpha <1/2$, and  $\kappa ^2= b \frac{s\log(p)}{n} $  for sufficiently small constant $b$, $\lim_{n,p\to \infty }\frac{n\kappa^ 4}{2}  = 0 $ and $ \lim_{n,p\to \infty } \frac{s ^2  }{p-s }  \exp( \frac {n\kappa^2 }{s}) =0$. This directly implies that  \Cref{eq:lower bound target} holds. 
	 \end{proof}

\section{Proofs in  \Cref{sec:adaptive}}

  \subsection*{Estimating the Variance}
\begin{lemma} \label{lemma:estimation of variance}
Suppose  $\I \subset (0,n]$ is any  generic interval such that  $|\I|\ge N $ and 
	$$	\frac{ \max\{ \log^{1/\gamma}(p),  ( \s\log p )^{ \frac{2}{\gamma}- 1}  \} }{N } \to 0.$$
     Suppose in addition that the interval $\I$ does not contain any change points.    Let $\widehat \beta_\I$ be the Lasso estimator defined in \eqref{eq:interval lasso} with   $\lambda = C_\lambda \sqrt { \log p }$ for  some sufficiently large constant $C_\lambda $. Then 
   $$ \p\bigg(  \bigg|  \frac{1}{|\I| } \sum_{i\in \I } (y_i-x_i^\top \widehat \beta _\I )^2  -  \sigma_\epsilon^  2 \bigg | \ge \delta + \frac{C_1 \s\log(p)  }{ N  }   \bigg)  
   \le 2  N \exp \bigg\{ - c_2  (\delta N ) ^\gamma  \bigg\}+ \exp \bigg\{ - c_3  \delta ^2 N    \bigg\}  +N ^{-5}. $$
 
\end{lemma}
\begin{proof}   Since 
$\I$ contains no change points, 
$\beta_i^* = \beta^*_ \I $ for all $i\in \I$. 
By by 
   \Cref{theorem:beta deviation}, $$ \p\bigg(  \bigg|  \frac{1}{|\I| } \sum_{i\in \I } \epsilon_i^2  -  \sigma_\epsilon^  2 \bigg | \ge \delta  \bigg)  
   \le 2 N \exp \bigg\{ - c_2 (\delta N ) ^\gamma  \bigg\}  + \exp \bigg\{ - c_3  \delta ^2 N    \bigg\}  .  $$ 
  Therefore, it suffices to  control 
$$   \bigg| \frac{1}{|\I| } \sum_{i\in \I } \epsilon_i^2  -  \frac{1}{|\I| } \sum_{i\in \I } (y_i-x_i^\top \widehat \beta _\I )^2      \bigg|  = \bigg| \frac{1}{|\I| } \sum_{i\in \I } (y_i-x_i^\top   \beta  ^*   _\I  )^2   -  \frac{1}{|\I| } \sum_{i\in \I } (y_i-x_i^\top \widehat \beta _\I )^2      \bigg|      . $$ 
To this end, note that 
  Note that 
\begin{align}  \nonumber 
   \bigg|   \frac{1}{ |\I|  }  \sum_{  i      \in \I } (y_ i      - X_i   ^\top \widehat \beta_\I   )^2  -  \frac{1}{ |\I|  }  \sum_{ i    \in \I } (y_ i      - X_i  ^\top \beta^* _\I  )^2  \bigg|   
 = &\bigg|  \frac{1}{ |\I|  }  \sum_{ i \in \I } \big\{  X_i  ^\top  ( \widehat \beta_\I  -   \beta^* _\I  )   \big\} ^2 -  2  \frac{1}{ |\I|  }  \sum_{i\in \I }  \epsilon_i  X_i ^\top ( \widehat \beta_\I   -  \beta^*_\I   ) \bigg| 
 \\ \label{eq:regression change point beta mixing deviation bound term 1}
 \le &    \frac{1}{ |\I|  }  \sum_{ i  \in \I } \big\{  X_ i ^\top  ( \widehat \beta_\I  -    \beta^*_\I  )   \big\}   ^2 
 \\   \label{eq:regression change point beta mixing deviation bound term 3}
 +& 2 \bigg|  \frac{1}{ |\I|  }     \sum_{i  \in \I }  \epsilon_ i X_i^\top ( \widehat \beta_\I   -  \beta^*_\I ) \bigg|  .
\end{align}
\
\\
Suppose all the good events in \Cref{lemma:interval lasso consistency beta mixing} holds. 
\\
\\
{\bf Step 1.} For \Cref{eq:regression change point beta mixing deviation bound term 1}, by \Cref{lemma:interval lasso consistency beta mixing},   $\widehat \beta_\I  -    \beta^*_\I  $ satisfies the cone condition that 
$$  | (\widehat \beta_\I  -    \beta^*_\I  )_{S^c} |_1 \le 3  | (\widehat \beta_\I  -    \beta^*_\I )_S   |_1   .$$
It follows from \Cref{corollary: beta deviation 1 with change points} that with probability at least $ 1-N^{-5}$,
\begin{align*} 
 \bigg| \frac{1}{|\I| } \sum_{ i \in \I } \big\{  x_i^\top  ( \widehat \beta_\I  -    \beta^*_\I  )   \big\} ^2    - ( \widehat \beta_\I  -    \beta^*_\I    )^\top \Sigma  ( \widehat \beta_\I  -    \beta^*_\I  )  \bigg|  \le C_4  \sqrt { \frac{\s \log(p ) }{|\I| }} \| \widehat \beta_\I  -    \beta^*_\I     \|_2 ^2  .
\end{align*}
The above display   gives
\begin{align*}    \bigg| \frac{1}{|\I| } \sum_{i  \in \I } \big\{   x_i^\top  ( \widehat \beta_\I  -    \beta^*_\I  )    \big\}^2 \bigg| \le &  \Lambda_{\max}(\Sigma)   \| \widehat \beta_\I  -    \beta^*_\I     \|_2 ^2   + C_4  \sqrt { \frac{\s \log(p ) }{|\I| }} \| \widehat \beta_\I  -    \beta^*_\I     \|_2 ^2 
\\
\le & \Lambda_{\max}(\Sigma)  \| \widehat \beta_\I  -    \beta^*_\I     \|_2 ^2 + C_ 4  \sqrt { \frac{\s \log(p ) }{C_\zeta\s \log(p )  }} \| \widehat \beta_\I  -    \beta^*_\I     \|_2 ^2   
\\
\le &  \frac{ C_5\s\log(p )}{|\I| }  , \end{align*} 
where the second inequality follows from the assumption that  $|\I| \ge    ( \s \log(p ) )^{2/\gamma -1 } \ge  \s \log(p ) $ when $ \gamma\ge 1$ and the last inequality follows from  \Cref{lemma:interval lasso consistency beta mixing}.
This gives 
$$    \bigg|  \frac{1}{|\I| } \sum_{ i \in \I } \big\{  x_i ^\top  ( \widehat \beta_\I  -    \beta^*_\I  )    \big\}^2 \bigg| \le  \frac{2 C_5\s\log(p) }{|\I| }. $$
\
\\
{\bf Step 2.}
For 
\Cref{eq:regression change point beta mixing deviation bound term 3}, note that
\begin{align*} 
  \bigg|  \frac{1}{ |\I|  }   \sum_{i  \in \I }  \epsilon_ i x _i^\top ( \widehat \beta_\I   -  \beta^*_\I ) \bigg| 
   \le    C_3 \sqrt { \frac{\log(p )}{|\I| }} \| \widehat \beta_\I   -  \beta^*_\I \|_1 
  \le  C_4 \sqrt { \frac{\log(p )}{|\I| }} \s \sqrt { \frac{\log(p )}{|\I| }}  = \frac{C_6 \s \log(p )}{|\I| } 
\end{align*} 
where both the first and the second inequality follow  from \Cref{lemma:interval lasso consistency beta mixing}.

\end{proof}

\begin{proof}[Proof of \Cref{lemma:consistent estimate of variance under dependence}] Let $ m \in \{ 1 ,\ldots,   \lfloor \log(n) \rfloor\} $ such that $\mathcal J_m$ contains no change point. Since 
$|\mathcal J_m| \ge n /  \log(n) $, by \Cref{lemma:estimation of variance}, it follows that 
 $$\p\bigg(  \big |   \sigma_\epsilon^2   -  \widehat  \sigma_ m ^  2 \big  | \ge C_1 \bigg\{  \frac{\log ^{1+ 1/\gamma }(n) }{n  }  + \frac{ \s\log(p)\log(n)   }{ n  }   + \frac{ \log(n) }{\sqrt n } \bigg \}   \bigg)
   \le 3 n  ^{-3}.$$
   Since there at most $\log(n)  $ such  intervals,  by union bound 
  \begin{align*} &\p\bigg(  \big |   \sigma_\epsilon^2   -  \widehat  \sigma_ m ^  2 \big  | \ge C_1 \bigg\{  \frac{\log ^{1+ 1/\gamma }(n) }{\sqrt n  }  + \frac{ \s\log(p)\log(n)   }{ n  }   \bigg \}  \ \text{for all $m$ such that $\mathcal J_m$ contains no change point }   \bigg)
  \\
   &\le 3 n  ^{-3}\log(n) \le 3n^{-2}.
   \end{align*}
   Since there at most $K$ intervals in $\{\mathcal J_m \}_{m=1}^{  \lfloor \log(n) \rfloor}  $ containing  change points,  and $ 2K \le 2 \log(n) < n $ for sufficiently large $n$,  by \Cref{lemma:pigeon-hole} it follows that 
 $$\p\bigg(  \big |   \sigma_\epsilon^2   -  \widehat  \sigma_ \epsilon ^  2 \big  | \ge C_1 \bigg\{  \frac{\log ^{1+ /\gamma }(n) }{ \sqrt n  }  + \frac{ \s\log(p)\log(n)   }{ n  }   \bigg \}    \bigg)
   \le 3 n  ^{-2}.$$  
 
\end{proof}

\begin{lemma}\label{lemma:pigeon-hole} Let $N,  K\in \mathbb Z^+$ and $N  >   2K$. 
Suppose $\{ a_{ m } \}_{m=1}^N \subset \mathbb R $  is such that 
 there exist  at least $N- K$ elements of $\{ a_{ m } \}_{m=1}^N $ such that 
 $$ |a_{ m }  -b^*  | \le \delta. $$ 
 Let $ \widehat  b  $ denote the median of $\{ a_{ m } \}_{m=1}^N$. Then 
 $$ |b-b^*| \le \delta. $$
\end{lemma}
\begin{proof} Let $\{ a_{(m)} \}_{m=1}^N$ be the sorted set of $\{ a_{ m } \}_{m=1}^N$.
There are at least $N- K$ elements of $\{ a_{ m } \}_{m=1}^N $ such that 
 $$  a_{ m }   \ge  b^* -\delta . $$ 
 So there are at most $K$ elements of $\{ a_{ m } \}_{m=1}^N $ such that 
 $$ a_{m} < b^* -\delta. $$
 Since $\{ a_{(m)} \}_{m=1}^N $ is sorted, it follows that 
 $$ a_{( K +1)} \ge  b^* -\delta .$$
Therefore for all $m \ge K +1 $,
 $$ a_{(m)}\ge  b^* -\delta .$$
 Since $ N > 2K  \ge K+1  $, it follows that  the median $\widehat b$ of $\{ a_{ m } \}_{m=1}^N$ satisfies 
 $$  \widehat b  \ge b^*-\delta.$$
 The other side of the inequality follows from a symmetric argument. 
\end{proof}

\subsection*{Estimating the sparsity parameter}

\begin{lemma} \label{lemma:consistent sparsity estimate no change points}
  Suppose  $\I \subset (0,n]$ is any  generic interval such that  $|\I|\ge  n/\log(n)  $ and 
	\begin{align} \label{eq:consistent sparsity estimate no change points} 
	\frac{\s\log (p)  \log(n)  }{\sqrt n } \to 0.
	\end{align}
     Denote  
     $$\widehat S_{\I  }   =  \bigg\{j:   |\widehat  \beta _{\I }  (j) | \ge  n^{-1/4} \bigg \}  . $$
    $\bullet$ Suppose   that the interval $\I$ does not contain any change points.   Then 
     $$\p( |\widehat S_{\I } | = |S_{\I }|  ) \ge 1-n^{-2}. $$
     $\bullet$ Suppose   that the interval $\I$   contain $K$  change points.  
     Then 
     $$\p(  |\widehat S_{\I } |  \le  (K+1)\s ) \ge 1-n^{-2}. $$
\end{lemma}
\begin{proof} 
Suppose all the good events in \Cref{lemma:interval lasso consistency beta mixing} holds.  
\\
\\
{\bf Step 1.} Suppose   that the interval $\I$ does not contain any change points.   Then 
$$\|\widehat \beta_\I - \beta^*_\I \|_\infty \le\|\widehat \beta_\I - \beta^*_\I \| _2 \le C_1\sqrt { \frac{\s \log(p) }{ |\I| }} \le C_1 \sqrt { \frac{\s \log(p)\log(n)  }{ n }}  . $$
Thus by \Cref{assume:recovery signal size}, if  $j \not \in S_\I$, then 
\begin{align*}| \widehat \beta_\I (j)   | =  | \widehat \beta_\I (j)  -\beta^* _\I (j)  | \le 
     C_1 \sqrt { \frac{\s \log(p)\log(n)  }{ n }} = o(n^{-1/4}),
\end{align*}
where the last inequality follows from  the assumption   
$\frac{ \s\log(p) \log(n) }{\sqrt n }\to 0 $.
\
\\
In addition, if  $j  \in S_\I$, then 
\begin{align*}| \widehat \beta_\I (j)   | \ge   | \beta^* _\I (j)  | -  
     C_1 \sqrt { \frac{\s \log(p)\log(n)  }{ n }} \ge  n^{-\theta } -  C_1 \sqrt { \frac{\s \log(p)\log(n)  }{ n }} \ge \frac{ n^{-\theta} }{2} > n^{-1/4} ,
\end{align*}
where the third inequality follows from the assumptions that $ \theta <1/4$ and that $\frac{ \s\log(p) \log(n) }{\sqrt n }\to 0 $. Therefore under the good event in \Cref{lemma:interval lasso consistency beta mixing},
$ \widehat S_\I = S_\I$.
\\
\\
{\bf Step 2.} Suppose   that the interval $\I$   contains $K$ change points.   Let $S_\I =\bigcup_{ i\in \I} S_i  $. Then $|S_\I| \le (K+1)\s.  $ Thus by \Cref{assume:recovery signal size}, if  $j \not \in S_\I$, then 
\begin{align*}| \widehat \beta_\I (j)   | =  | \widehat \beta_\I (j)  -\beta^* _\I (j)  | \le 
     C_1 \sqrt { \frac{\s \log(p)\log(n)  }{ n }} = o(n^{-1/4}),
\end{align*}
where the last inequality follows from  the assumption    that
$\frac{ \s\log(p) \log(n) }{\sqrt n }\to 0 $. 
This shows that $|\widehat S_\I| \le | S_\I | \le (K+1)\s.  $

\end{proof}

\begin{proof}[Proof of \Cref{lemma:consistent estimate of sparisty under dependence}]
Observe that under $H_0$,   \Cref{lemma:consistent sparsity estimate no change points} implies that $| \widehat S_{\mathcal J_m}| =  |S_{\mathcal J_m} | =|S_1| $. Thus $\widehat \s =|S_1|$.   
\\
\\
Under $H_a$, observe that  each interval  in $\{\mathcal J_m\}_{m=1}^{\lfloor \log(n)\rfloor} $ contains at most $K$ change  points, where $K \le \log(n) $.  So $|\s_m| \le \log(n) \s $ for all $1\le m \le \lfloor \log(n)\rfloor$. Therefore $\widehat \s \le  \log(n) \s $. On the other hand,   there are $\lfloor \log(n)\rfloor - K $ many intervals  in $\{\mathcal J_m\}_{m=1}^{\lfloor \log(n)\rfloor} $  containing    change points. So  there are 
$\lfloor \log(n)\rfloor - K > \frac{1}{2}\lfloor \log(n)\rfloor  $ many intervals in  $\{\mathcal J_m\}_{m=1}^{\lfloor \log(n)\rfloor} $ contains no change points,  and thus by the first part   of \Cref{lemma:consistent sparsity estimate no change points},  there are at least 
$  \frac{1}{2}\lfloor \log(n)\rfloor  $ many $ m \in\{1,\ldots,  \lfloor \log(n)\rfloor\}$ satisfying $| \widehat S_{\mathcal J_m}|=|   S_{\mathcal J_m}| \ge \s $. Thus the median $\widehat s  $ of $\{| \widehat |S_{\mathcal J _m} \}_{m=1}^{\lfloor \log(n)\rfloor}  $ satisfies
 $ \widehat \s\ge \s.  $
\end{proof}

\subsection*{The adaptive procedure}

\begin{proof}[Proof of \Cref{theorem:practical main_beta}]
Under $H_0$, \Cref{lemma:consistent estimate of variance under dependence} gives   $ \widehat \sigma_\epsilon \overset{\p}{\to } \sigma_\epsilon$ and  \Cref{lemma:consistent estimate of sparisty under dependence} implies that with high probability $\widehat \s =|S_1| =\s$.  So   the same argument   for  $H_0$ in 
\Cref{theorem:main_beta}  continues to hold.  Under $H_a$, \Cref{lemma:consistent estimate of variance under dependence} implies that $ \widehat \sigma_\epsilon \overset{\p}{\to } \sigma_\epsilon$ and  \Cref{lemma:consistent estimate of sparisty under dependence} implies that with high probability $\widehat \s \le  2\s$. Therefore under $H_a$, with high  probability
 $  \widehat   \sigma_\xi \le 2 A_n\frac{\widehat \s \log(p)}{ \sqrt n }   $  and  so   the same argument   for  $H_a$ in 
\Cref{theorem:main_beta}  continues to hold.
\end{proof}

\section{Proofs in \Cref{sec:localization}}

 \begin{proof}[Proof of \Cref{theorem:localization bound}  ]     Without loss of generality, assume that $\sigma_\epsilon=1  $. 
Let $\widetilde S_n(t)$ be defined as in \Cref{eq:localization qf_random},
\begin{align*} \nonumber   \widetilde { \mathcal M}  (t)      = &\frac{4}{ t  } \sum_{i=1}^ t    x_i  ^\top     (  \beta^*_{ (0,  t ] }  - \beta^*_ { ( t , n] }  )      \epsilon_i  +\frac{4}{n- t } \sum_{i=t  +1}^ n     x_i  ^\top     (  \beta^*_ { (t , n] }    - \beta^*_{ (0, t ] }    )      \epsilon_i  
\\
  + &   ( \beta^* _{ (0, t ] }  -     \beta _ { (t , n] } ^*  )  ^\top  (  \widehat \Sigma  _{ (0, t ] }   -\Sigma  )  (  \beta^*_{ (0, t ] }   -    \beta ^* _ { (t , n] }   )  +  ( \beta^*_ { (t , n] }    -     \beta _{ (0, t ] }  ^*  )  ^\top  (  \widehat \Sigma  _ { (t , n] }     -\Sigma  )  (  \beta^*_ { (t , n] }    -    \beta ^* _{ (0, t ] }    ) , \text{ and}
  \\
	\mu(t) =    &   ({\beta}_{(0,t]}^*-{\beta}_{(t,n]}^* )^\top   \Sigma ({\beta}_{(0,t]}^*-{\beta}_{(t,n]}^* ) .
\end{align*}    
  By the same argument as in  { \bf Step 1 } as in the proof  of \Cref{theorem:alternative 2_beta}
uniformly with 
     $t \in \{ \lfloor \zeta n \rfloor  , \ldots  \lfloor (1- \zeta) n \rfloor \}  $,   
 we have that with probability at least $1-n^{-3}$, 
 \begin{align*}
     \bigg|   \widetilde  {\mathcal S} _n  ( t    ) -2 \mu(t )   -    \widetilde { \mathcal M }   (t )       \bigg| 
  \le C_1\frac{\s \log(p)}{ n } .
 \end{align*}
So with 
     $t \in \{ \lfloor \zeta n \rfloor  , \ldots  \lfloor (1- \zeta) n \rfloor \}  $,   
 \begin{align*}
    \bigg\{  \frac{(n-t  )t  }{n} \bigg\}  \bigg|   \widetilde  {\mathcal S} _n  ( t    ) -2 \mu(t )   -    \widetilde { \mathcal M }   (t )       \bigg| 
  \le C_1  \s \log(p) .
 \end{align*}
   By definition of $\he $,
   $$    \bigg\{  \frac{(n-\eta )\eta }{n} \bigg\} \ws _n(\eta) \le \bigg\{  \frac{(n-\he  )\he }{n} \bigg\} \ws_n(\he)   . $$
   The above inequalities imply that 
   \begin{align}\label{eq:localization basic inequality}   \bigg\{  \frac{(n-\eta )\eta }{n} \bigg\} \bigg( 2 \mu(\eta) + \wm(\eta) \bigg)  \le  \bigg\{  \frac{(n-\he )\he }{n} \bigg\} \bigg( 2 \mu(\he ) + \wm (\he ) \bigg)     
 +    C_1  \s\log(p)   .
   \end{align}
   Without loss of generality, assume that $\he \ge \eta $. For sufficiently large $C_2$, if 
    $ \he \le \eta + C_2 \frac{\s\log(p)}{\kappa^2 } $ 
   then the desired result immediately follows. So it suffices to assume that 
    \begin{align}\label{eq:sample condition localziation}     \he -\eta  \ge  C_2 \frac{\s\log(p)}{\kappa^2 } \ge C_2' \s\log(p), 
    \end{align}
    where the last inequality follows from $\kappa=O(1)$. 
\
\\
{\bf Step 1.}  Note that \eqref{eq:localization basic inequality} implies that 
\begin{align}\label{eq:localization step 1}
& 2    \frac{(n-\eta )\eta }{n}\mu(\eta)         -  2   \frac{(n-\he )\he }{n}       \mu(\he )  \le    \frac{(n-\he )\he }{n}        \wm (\he )    - \frac{(n-\eta )\eta }{n}   \wm (\eta  )
 +    C_1  \s\log(p) .
\end{align}
  Denote 
 $ \theta  =    \beta_{\eta}^*-\beta_{\eta+1}^*   . $ 
Note that $\theta ^\top  \Sigma \theta  \ge \Lambda_{\min}(\Sigma) \kappa^2$.
    By definition of $\mu(t)$, it follows that
    $$ \frac{(n-\eta )\eta }{n} \mu(\eta) - \frac{(n-\he )\he }{n} \mu(\he)  -     = \frac{\eta( \he -\eta )}{t} \theta ^\top  \Sigma \theta  \ge c( \he-\eta )\kappa^2 $$
    for some sufficiently small constant $c$. 
Therefore
\eqref{eq:localization step 1} implies that 
\begin{align}\label{eq:localization step 11}
 c( \he-\eta )\kappa^2   \le    \frac{(n-\he )\he }{n}        \wm (\he )    - \frac{(n-\eta )\eta }{n}   \wm (\eta  )
 +    C_1  \s\log(p) 
\end{align}
\
\\
{\bf Step 2.}
Note that 
\begin{align*} &  \bigg| \frac{(n-\he )\he }{n}        \wm (\he )    - \frac{(n-\eta )\eta }{n}   \wm (\eta  ) \bigg| 
\\
\le &\bigg|     \frac{(n-\he )\he }{n}     \wm _1 (\he ) -\frac{(n-\eta  )\eta  }{n}     \wm _1 (\eta )  \bigg| +    \bigg|  \frac{(n-\he )\he }{n}     \wm _2 (\he ) -\frac{(n-\eta  )\eta  }{n}     \wm _2 (\eta ) \bigg| 
  \end{align*} 
where 
\begin{align*} \wm _1 (t ) & = \frac{4}{ t  } \sum_{i=1}^ t    x_i  ^\top     (  \beta^*_{ (0,  t ] }  - \beta^*_ { ( t , n] }  )      \epsilon_i  +\frac{4}{n- t } \sum_{i=t  +1}^ n     x_i  ^\top     (  \beta^*_ { (t , n] }    - \beta^*_{ (0, t ] }    )      \epsilon_i,    \quad \text{and }
\\
\wm _2(t ) & = ( \beta^* _{ (0, t ] }  -     \beta _ { (t , n] } ^*  )  ^\top  (  \widehat \Sigma  _{ (0, t ] }   -\Sigma  )  (  \beta^*_{ (0, t ] }   -    \beta ^* _ { (t , n] }   )  +  ( \beta^*_ { (t , n] }    -     \beta _{ (0, t ] }  ^*  )  ^\top  (  \widehat \Sigma  _ { (t , n] }     -\Sigma  )  (  \beta^*_ { (t , n] }    -    \beta ^* _{ (0, t ] }    )  
  \end{align*} 
{\bf Step 3.}  We first  show that with high probability,
$$ \bigg|     \frac{(n-\he )\he }{n}     \wm _1 (\he ) -\frac{(n-\eta  )\eta  }{n}     \wm _1 (\eta )    \bigg| \le C_1\kappa \sqrt { \he -\eta } .$$
Observe that 
\begin{align}\nonumber 
&     \frac{(n-\he )\he }{n}     \wm _1 (\he ) -\frac{(n-\eta  )\eta  }{n}     \wm _1 (\eta )     
\\ \label{eq:localization deviation term 1}
= &    \bigg( \frac{(n-\he )\he }{n} \frac{1}{\he } \frac{ \eta }{\he }  - \frac{(n-\eta   )\eta   }{n}\frac{1}{\eta }  \bigg) \sum_{i=1}^\eta x_i^\top   \theta \epsilon_i 
\\ \label{eq:localization deviation term 2}
+ & \bigg( \frac{(n-\he )\he }{n} \frac{1}{t} \frac{\he }{t }  - \frac{(n-\eta   )\eta   }{n}\frac{1}{n-\eta }   \bigg) \sum_{i=\eta+1 }^{\widehat \eta}  x_i^\top   \theta \epsilon_i  
\\\label{eq:localization deviation term 3}
 + &\bigg( \frac{(n-\he )\he }{n} \frac{1}{n - \he } \frac{ \eta }{\he }   - \frac{(n-\eta   )\eta   }{n}\frac{1}{n-\eta }   \bigg) \sum_{i=\he+1 }^{ n }  x_i^\top   \theta \epsilon_i   .
\end{align}
For \eqref{eq:localization deviation term 1}, note that by \Cref{theorem:beta deviation}, 
$$ \bigg| \sum_{i=1}^\eta x_i^\top   \theta \epsilon_i  \bigg|  \le C_3 \kappa \sqrt \eta. $$
So
$$\eqref{eq:localization deviation term 1} \le C_3 \frac{\sqrt \eta }{\he }(\he -\eta )  \kappa \le C_3'  \sqrt{ (\he -\eta)}\kappa, $$
where $ \widehat \eta \ge \zeta n  \ge \zeta (\he -\eta)$ is used in the last inequality. 
\\
\\
To bound \eqref{eq:localization deviation term 1}, note that  for any $t \ge  C_2' \s\log(p)$, by \Cref{theorem:beta deviation}    with probability $1-n^{-3}$, 
$$  \bigg | \sum_{i=\eta+1 }^{  \eta +t }  x_i^\top   \theta \epsilon_i   \bigg|  \le C_4 \kappa \sqrt {t }.  $$
So by union bound, with probability $1-n^{-2}$, 
$$ \bigg | \sum_{i=\eta+1 }^{  \eta +t }  x_i^\top   \theta \epsilon_i   \bigg|  \le C_4 \kappa \sqrt {t } \text{ for all } t \ge  C_2' \s\log(p) . $$
So  with probability $1-n^{-2}$, 
$$\eqref{eq:localization deviation term 2} \le C_4' \kappa \sqrt { \he - \eta   }, $$
By a similar argument, with probability $1-n^{-2}$, 
$$\eqref{eq:localization deviation term 3} \le C_5 ' \kappa \sqrt { \he - \eta   }. $$
Thus
$$ \bigg| \frac{(n-\he )\he }{n}     \wm _1 (\he ) -\frac{(n-\eta  )\eta  }{n}     \wm _1 (\eta )  \bigg| \le C_6 \kappa \sqrt { \he - \eta   }. $$ 
 \
 \\
{ \bf Step 4.}
 By a similar argument as {\bf Step 3}, it follows that 
 $$\bigg|  \frac{(n-\he )\he }{n}     \wm _2 (\he ) -\frac{(n-\eta  )\eta  }{n}     \wm _2 (\eta ) \bigg| \le  C_7 \kappa \sqrt { \he - \eta   }. $$ 
 \
 \\
 { \bf Step 5.} { \bf Step 3} and { \bf Step 4} imply that  with probability at least $1-n^{-2}$,
\begin{align}\label{eq:localization step 5} 
 \bigg|   \frac{(n-\he )\he }{n}        \wm (\he )    - \frac{(n-\eta )\eta }{n}   \wm (\eta  ) \bigg| \le C_8 \kappa \sqrt { \he - \eta   }.
\end{align} 
This inequality together with \Cref{eq:localization step 11} implies that 
$$ c( \he-\eta )\kappa^2   \le   C_8 \kappa \sqrt { \he - \eta   } 
 +    C_1  \s\log(p)  .$$
 This immediate implies that 
 $$|\widehat \eta -\eta| \le C\frac{\s\log(p)}{\kappa^2 } . $$
\end{proof}

  \section{Additional Technical Results}

\begin{lemma} \label{lemma:beta bounded 1}
Suppose   \Cref{assume: model assumption} holds. Let $\I \subset [1,n] $. Denote 
 $ \kappa = \max_{k\in \{1,\ldots, K\} } \kappa_k,$ 
where $ \{ \kappa_k\}_{k=1}^K$ are defined in  \Cref{assume: model assumption}. Then 
   $$\|\beta^*_\I - \beta_ i\|_2  \le C\kappa  \le CC_\kappa,$$
  for some absolute constant $C$ independent of $n$.
\end{lemma}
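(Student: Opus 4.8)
The plan is to exploit the piecewise-constant structure of the sequence $\{\beta_i^*\}_{i=1}^n$ together with the fact that, by \Cref{assume: model assumption}(d), the number of change-points $K$ is a fixed finite constant, so any quantity bounded by a multiple of $K$ is effectively bounded by an absolute constant. The starting point is the elementary observation that for any two indices $a\le b$ in $\{1,\dots,n\}$, the difference $\beta_a^*-\beta_b^*$ telescopes over the change-points lying between them: writing $\beta_a^*-\beta_b^*=\sum_{m=a}^{b-1}(\beta_m^*-\beta_{m+1}^*)$ and noting that the only nonzero summands occur at $m=\eta_k$ with $a\le\eta_k<b$, the triangle inequality gives
\[
\|\beta_a^*-\beta_b^*\|_2\le\sum_{k:\,a\le\eta_k<b}\|\beta_{\eta_k+1}^*-\beta_{\eta_k}^*\|_2=\sum_{k:\,a\le\eta_k<b}\kappa_k\le K\kappa,
\]
with the empty-sum convention handling the case where $a$ and $b$ lie in the same segment. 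This bound is uniform over all pairs, so in particular it does not matter whether or not the index $i$ in the statement belongs to $\I$.

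Next I would write $\beta_\I^*=\frac{1}{|\I|}\sum_{j\in\I}\beta_j^*$, so that $\beta_\I^*$ is a convex combination of the vectors $\{\beta_j^*:j\in\I\}$, and apply the triangle inequality (equivalently, convexity of the norm):
\[
\|\beta_\I^*-\beta_i^*\|_2=\Big\|\frac{1}{|\I|}\sum_{j\in\I}(\beta_j^*-\beta_i^*)\Big\|_2\le\frac{1}{|\I|}\sum_{j\in\I}\|\beta_j^*-\beta_i^*\|_2\le K\kappa,
\]
where the last step applies the telescoping bound term by term. Taking $C=K$ --- an absolute constant in view of \Cref{assume: model assumption}(d) --- establishes the first inequality $\|\beta_\I^*-\beta_i^*\|_2\le C\kappa$. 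The second inequality $C\kappa\le CC_\kappa$ is then just the standing boundedness of the change sizes, i.e.\ $\kappa=\max_k\kappa_k\le C_\kappa$.

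I do not expect any genuine analytic obstacle here; the argument is entirely elementary. The only points requiring care are bookkeeping ones: making the telescoping over change-points precise (including the degenerate empty-sum case and the possibility that $i\notin\I$), and observing that although the constant $C$ produced by the argument depends on $K$, this is harmless because $K$ is a fixed model constant, so $C$ is genuinely absolute and independent of $n$.
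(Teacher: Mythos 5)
Your proof is correct and takes essentially the same route as the paper's: both write $\beta^*_\I$ as a convex combination of the piecewise-constant segment values and bound each pairwise difference $\|\beta_j^*-\beta_i^*\|_2$ by telescoping over the at most $K$ intervening change-points, giving a constant of order $K\kappa$. Your write-up is marginally more careful in handling a general interval $\I$ and an index $i$ possibly outside $\I$, which the paper only reduces to the case $\I=[1,n]$, $i=1$ and calls "similar."
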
 
 \begin{proof}
 It suffices to consider $\I =[1,n]$  and  $\beta_ i= \beta_1 $ as the general case is similar. 
Denote $$ \Delta_k = \eta_{k+1}-\eta_k. $$ 
  Observe that  
 \begin{align*}
  \| \beta^*_{[1,n]}  - \beta_ 1^*  \|_2  =& 
  \left\|  \frac{1}{n} \sum_{i=1}^n \beta_i ^*  - \beta_1^*  
  \right \|_2 
 = \left\|  \frac{1}{n} \sum_{k=0}^{K} \Delta_k \beta_{\eta_k+ 1} ^*   -    \frac{1}{n}\sum_{k=0}^{K} \Delta_k  \beta_1 ^*
  \right \|_2  
  \\
 \le 
 &    \frac{1}{n} \sum_{k=0}^{K} \left\| \Delta_k  (\beta_{\eta_k+ 1} ^* -\beta_1 ^* ) \right\|_2    \le 
 \frac{1}{n}\sum_{k=0}^{K} \Delta_k  (K+1)  \kappa   \le  (K+1)\kappa . 
  \end{align*}
  By \Cref{assume: model assumption}, both $\kappa $ and $K $ bounded above. 
 \end{proof}
 
 \begin{lemma}\label{eq:cumsum upper bound}
 Let $t\in \I =(s,e] \subset [1,n]$. Denote 
 $ \kappa_{\max}= \max_{k\in \{1,\ldots, K\} } \kappa_k,$ 
where $ \{ \kappa_k\}_{k=1}^K$ are defined in  \Cref{assume: model assumption}.  Then   
 $$ \sup_{0< s < t<e\le n  }\| \beta_{ (s, t]}  ^* - \beta_{(t,e] }  ^* \|_2  \le C\kappa  \le CC_\kappa. $$ 
for some absolute constant $C$ independent of $n$. 
 \end{lemma}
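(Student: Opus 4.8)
The plan is to mirror the proof of \Cref{lemma:beta bounded 1}. Write $\kappa := \kappa_{\max} = \max_{1\le k\le K}\kappa_k$. The key structural fact is that $\{\beta_i^*\}_{i=1}^n$ takes only the $K+1$ distinct values $\beta^*_1=\beta^*_{\eta_0+1}, \beta^*_{\eta_1+1},\ldots,\beta^*_{\eta_K+1}$ (with the convention $\eta_0=0$), and that the value on the $k$-th segment, $\beta^*_{\eta_{k-1}+1}$, and the value on the $(k+1)$-th segment, $\beta^*_{\eta_k+1}$, differ by exactly $\kappa_k=\|\beta^*_{\eta_k+1}-\beta^*_{\eta_k}\|_2\le\kappa$ (using $\beta^*_{\eta_k}=\beta^*_{\eta_{k-1}+1}$). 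Telescoping along the at most $K$ change-points therefore gives $\|\beta_i^*-\beta_j^*\|_2\le K\kappa$ for every pair of indices $i,j$; in particular $\max_{1\le i\le n}\|\beta_i^*-\beta_1^*\|_2\le K\kappa$.

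Next I would fix arbitrary $0<s<t<e\le n$, so that both $(s,t]$ and $(t,e]$ are nonempty, and expand the two segmental averages as $\beta^*_{(s,t]}=\frac{1}{t-s}\sum_{i=s+1}^{t}\beta_i^*$ and $\beta^*_{(t,e]}=\frac{1}{e-t}\sum_{i=t+1}^{e}\beta_i^*$. Inserting the common reference vector $\beta_1^*$ and applying the triangle inequality,
\begin{align*}
\|\beta^*_{(s,t]}-\beta^*_{(t,e]}\|_2
&\le \|\beta^*_{(s,t]}-\beta_1^*\|_2 + \|\beta_1^*-\beta^*_{(t,e]}\|_2 \\
&\le \frac{1}{t-s}\sum_{i=s+1}^{t}\|\beta_i^*-\beta_1^*\|_2 + \frac{1}{e-t}\sum_{i=t+1}^{e}\|\beta_i^*-\beta_1^*\|_2 \\
&\le 2\max_{1\le i\le n}\|\beta_i^*-\beta_1^*\|_2 \le 2K\kappa.
\end{align*}

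Since the bound $2K\kappa$ does not depend on $s,t,e$, taking the supremum yields $\sup_{0<s<t<e\le n}\|\beta^*_{(s,t]}-\beta^*_{(t,e]}\|_2\le 2K\kappa =: C\kappa$, and by \Cref{assume: model assumption} both $K$ and $\kappa$ are bounded above by absolute constants, so $C\kappa\le CC_\kappa$. I do not anticipate any genuine obstacle here; the only point to keep track of is that the constant $C=2K$ is independent of $n$ and of the interval endpoints, which is immediate because the number of change-points $K$ is fixed by the infill \Cref{assume: model assumption}(d).
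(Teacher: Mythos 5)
Your proof is correct and follows essentially the same route as the paper's: both arguments reduce the claim to the fact that all the $\beta_i^*$ lie within $K\kappa$ of one another and then control the two segmental averages by the triangle inequality. The only (cosmetic) difference is your choice of reference vector $\beta_1^*$ in place of the paper's $\beta^*_{\eta_q+1}$, which if anything makes the bookkeeping slightly cleaner and lets you treat general $(s,e]$ directly rather than only $(0,n]$.
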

 \begin{proof}
 It suffices to consider $(s,e]=(0,n]$, as the general case is similar.  
 Denote $$ \Delta_k = \eta_{k+1}-\eta_k. $$  Suppose that $\eta_{q} <t \le \eta_{q+1} $.
 Observe that 
 \begin{align*}
&  \| \beta^*_{(1,t]}  - \beta ^* _{(t,n]}  \|_2  
  \\
  =& 
  \left\|  \frac{1}{t } \sum_{i=1}^t \beta_i ^*  - \frac{1}{ n- t } \sum_{i=t+1}^n \beta_i ^*
  \right \|_2 
 \\
 = & \left\|  \frac{1}{t}  \left( \sum_{k=0}^{q-1} \Delta_k \beta_{\eta_k+ 1}  ^*  + (t-\eta_{q} )\beta^*_{\eta_q + 1} \right) -    \frac{1}{n-t } \left( \sum_{k=q+1}^{K}  \Delta_k \beta_{\eta_k+ 1} ^*   + ( \eta_{q+1}-t  )\beta^*_{\eta_q + 1} \right) 
  \right \|_2  
  \\
=
 & \left\|  \frac{1}{t}  \left( \sum_{k=0}^{q-1} \Delta_k (\beta ^* _{\eta_k+ 1}-\beta^*_{\eta_q+1} ) \right)  +  \beta^*_{\eta_q + 1} -    \frac{1}{n-t } \left( \sum_{k=q+1}^{K}  \Delta_k (  \beta ^* _{\eta_k+ 1}  -\beta ^* _{\eta_q+1} ) \right)   -  \beta^*_{\eta_q + 1} 
  \right \|_2 
  \\
  =
  &\left\|  \frac{1}{t}  \left( \sum_{k=0}^{q-1} \Delta_k (\beta_{\eta_k+ 1} ^* -\beta^*_{\eta_q+1} ) \right)    -    \frac{1}{n-t } \left( \sum_{k=q+1}^{K}  \Delta_k (  \beta_{\eta_k+ 1} ^*   -\beta_{\eta_q+1} ^*  ) \right)   
  \right \|_2 
  \\
  \le 
  &\frac{1}{t} \sum_{k=0}^{q-1} \Delta_k    K \kappa      +\frac{1}{n-t} \sum_{k=q+1}^{K} \Delta_k    K  \kappa 
  \le 2K\kappa  . 
  \end{align*}
  
 \end{proof}

\begin{lemma}\label{eq:equivalence Hypothesis} Let $0< \zeta<1/2$ is any constant sufficiently  close to $0$.
Denote 
$$ \kappa   = \max_{k\in \{1,\ldots, K\} } \kappa_k,$$
where $ \{ \kappa_k\}_{k=1}^K$ are defined in  \Cref{assume: model assumption}.
Suppose $\{ \beta^*_i\}_{i=1}^n$ is a collection of vectors satisfying \Cref{assume: model assumption} with $K>0$. Then it holds that 
$$ \max_{\zeta n \le t\le (1-\zeta) n} (\beta^*_\ot- \beta_\tn^*) ^\top  \Sigma  (\beta^*_\ot- \beta_\tn^* ) \ge c\kappa^2  $$
for some sufficiently small constant $c$.
In addition, suppose that $ \zeta $ is sufficiently small such that 
$$\zeta \le \eta_1^*<\ldots<  \eta_K^* \le 1-\zeta. $$
Then there exists $ q \in \{ 1,\ldots,K\}  $  such that
\begin{align}
\label{eq:variance cusum maximized at the change point}   
(\beta^*_{ (0,\eta_q] }- \beta_ { ( \eta_q,n ] }^*) ^\top  \Sigma (\beta^*_{ (0,\eta_q] }- \beta_ { ( \eta_q,n ] }^*)     \ge c'\kappa^2  ,  
\end{align}
where $ \eta_q= n\eta_q^*$ and $c'$ is some sufficiently small constant.
 \end{lemma}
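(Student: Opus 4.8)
The plan is to reduce the quadratic form to a Euclidean norm, and then to a statement about partial sums. First, by \Cref{assume: model assumption}(b) we have $\Delta_t^{*\top}\Sigma\Delta_t^*\ge c_x\|\Delta_t^*\|_2^2$ with $\Delta_t^*=\beta_{(0,t]}^*-\beta_{(t,n]}^*$, so it suffices to lower bound $\max_t\|\Delta_t^*\|_2^2$. Writing $\bar\beta^*=\beta_{(0,n]}^*$ and $g(t):=\sum_{i=1}^t(\beta_i^*-\bar\beta^*)$, an elementary CUSUM identity gives $\Delta_t^*=\tfrac{n}{t(n-t)}\,g(t)$; since $t(n-t)\le n^2/4$ this yields $\|\Delta_t^*\|_2\ge\tfrac4n\|g(t)\|_2$ for every $t$. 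Thus the whole problem reduces to producing a genuine change-point $\eta_{q'}$ with $\|g(\eta_{q'})\|_2\gtrsim n\kappa$. The key structural fact is that $g$ is piecewise linear in $t$ with kinks exactly at $\eta_1,\dots,\eta_K$, satisfies $g(0)=g(n)=0$, and on each segment $(\eta_{k-1},\eta_k]$ (with the conventions $\eta_0=0$, $\eta_{K+1}=n$) has constant per-step increment $\beta_{\eta_k}^*-\bar\beta^*$.

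Next I would pick $q$ with $\kappa_q=\kappa=\max_k\kappa_k$ and compare $g$ at the three points $\eta_{q-1},\eta_q,\eta_{q+1}$. Writing $a=\eta_q-\eta_{q-1}$, $b=\eta_{q+1}-\eta_q$, piecewise linearity gives the discrete second difference
$$\frac{g(\eta_q)-g(\eta_{q-1})}{a}-\frac{g(\eta_{q+1})-g(\eta_q)}{b}=\beta_{\eta_q}^*-\beta_{\eta_q+1}^*,$$
whose norm is exactly $\kappa_q=\kappa$. The triangle inequality then forces $\|g(\eta_q)-g(\eta_{q-1})\|_2+\|g(\eta_{q+1})-g(\eta_q)\|_2\ge\min(a,b)\,\kappa$, and expanding each difference by the triangle inequality once more gives $\max\{\|g(\eta_{q-1})\|_2,\|g(\eta_q)\|_2,\|g(\eta_{q+1})\|_2\}\ge\tfrac16\min(a,b)\,\kappa$. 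Since $g$ vanishes at $0$ and $n$ while the right side is strictly positive, the maximizing point cannot be $\eta_0$ or $\eta_{K+1}$, so it is a genuine change-point $\eta_{q'}$ with $q'\in\{1,\dots,K\}$. Under the hypothesis of the second display (and, for the first display, once $\zeta$ is small enough that $0<\zeta\le\eta_1^*$ and $\eta_K^*\le1-\zeta$), the relative change-points are fixed and separated, so $\min(a,b)\ge c_1 n$ for a fixed $c_1>0$ and all large $n$, and $\eta_{q'}\in[\zeta n,(1-\zeta)n]$. Combining, $\|\Delta_{\eta_{q'}}^*\|_2\ge\tfrac4n\cdot\tfrac{c_1 n}{6}\kappa=\tfrac{2c_1}{3}\kappa$, hence $\Delta_{\eta_{q'}}^{*\top}\Sigma\Delta_{\eta_{q'}}^*\ge c_x(\tfrac{2c_1}{3})^2\kappa^2=:c'\kappa^2$, which is \eqref{eq:variance cusum maximized at the change point}; the first display follows with $c=c'$ since its maximum is taken over a set containing $\eta_{q'}$.

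Most of the work is routine: the identity $\Delta_t^*=\tfrac{n}{t(n-t)}g(t)$, and turning the fixed, strictly positive minimal gap between consecutive $\eta_k^*$ (and between $0,\eta_1^*$ and $\eta_K^*,1$) into $\min(a,b)\ge c_1 n$ through the floor functions. The one point that needs genuine care is the boundary behaviour: the three-point second-difference argument is natural only for an interior change-point, and when $q=1$ or $q=K$ one of $\eta_{q-1},\eta_{q+1}$ is an endpoint of $[0,n]$ where $g=0$. I would handle this uniformly by keeping the conventions $\eta_0=0$, $\eta_{K+1}=n$ throughout and noting that such an endpoint can never realise the strictly positive lower bound on the maximum, so the argmax is always a genuine change-point lying in $[\zeta n,(1-\zeta)n]$.
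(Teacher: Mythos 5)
Your proof is correct, and it takes a genuinely different route from the paper's. The paper whitens the coefficients ($\alpha_i=\Sigma^{1/2}\beta_i^*$), forms the weighted vector CUSUM $\widetilde\alpha(t)$, and then imports two external results from \cite{wang2021optimal}: their Lemma S.12 to lower-bound $\max_t\|\widetilde\alpha(t)\|_2^2$ by $\Delta^2\kappa_k^2/(48n)$, and their Proposition S.1 to conclude that the CUSUM norm is maximized at a change-point, which yields \eqref{eq:variance cusum maximized at the change point}. You instead replace the whitening by the one-sided bound $v^\top\Sigma v\ge c_x\|v\|_2^2$ (which suffices because only a lower bound is needed), pass to the unweighted partial-sum process $g(t)=\sum_{i\le t}(\beta_i^*-\bar\beta^*)$ via the exact identity $\Delta_t^*=\tfrac{n}{t(n-t)}g(t)$, and run a self-contained three-point second-difference argument at the maximal jump: the discrete second difference of $g$ across $\eta_{q-1},\eta_q,\eta_{q+1}$ equals $\beta^*_{\eta_q}-\beta^*_{\eta_q+1}$, so two applications of the triangle inequality force $\max\{\|g(\eta_{q-1})\|_2,\|g(\eta_q)\|_2,\|g(\eta_{q+1})\|_2\}\gtrsim\min(a,b)\,\kappa\gtrsim n\kappa$, and since $g$ vanishes at $0$ and $n$ the maximizer must be a genuine change-point. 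All the individual steps check out (the identity for $\Delta_t^*$, the bound $t(n-t)\le n^2/4$, the constant-increment structure of $g$ on each segment, and the order-$n$ gaps coming from Assumption~\ref{assume: model assumption}(d) including the gaps to the endpoints when $q=1$ or $q=K$); your constant $1/6$ is even slightly conservative, as the triangle-inequality step actually gives $1/4$. What the paper's route buys is brevity and the stronger structural fact (from Proposition S.1) that the global argmax of the CUSUM is a change-point; what yours buys is a fully elementary, self-contained argument with no reliance on external CUSUM lemmas, at the cost of only locating \emph{some} change-point where the quadratic form is large — which is all the lemma asserts.
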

\begin{proof}
Let 
$$ \alpha _ i = \Sigma^{1/2 } \beta_i^*  \quad \text{for all} \quad  1\le i \le n .$$
Then the  vector CUSUM statistics of $\{ \alpha_i\}_{i=1}^n$ is 
$$ \widetilde \alpha (t)  : = \sqrt { \frac{t(n-t)}{ n } } \bigg(  \frac{1}{t} \sum_{i=1}^t  \alpha_i -\frac{1}{n-t} \sum_{i=t+1}^n  \alpha_i   \bigg)  .$$
The time series $\{ \alpha_i\}_{i=1}^n$ is a collection of  piecewise constant vectors such that 
$$ \alpha_i = \alpha_{i'} \quad \text{for all} \quad \eta_{k}+1\le i\le i' \le \eta_{k+1} $$
and that the jump sizes at the change points satisfy 
$$ \| \alpha_{\eta_k+1} - \alpha_{\eta_k}\|_2 ^2 =(\beta ^* _{\eta_k+1} - \beta ^* _{\eta_k}) ^\top  \Sigma (\beta ^* _{\eta_k+1} - \beta ^* _{\eta_k}) \ge c_x \| \beta ^* _{\eta_k+1} - \beta ^* _{\eta_k} \|^2_2  =  c_x\kappa^2_k, $$
where $\kappa_k$ is defined in \Cref{assume: model assumption}. 
By Lemma S.12 of  \cite{wang2021optimal}, for any $k\in\{ 1,\ldots, K\}$, 
$$  \max_{\zeta n \le t\le (1-\zeta) n} \| \widetilde \alpha (t)   \| _2^2 \ge \frac{\Delta^2 }{48 n  }c_x \kappa^2_k   \ge    c_1  \kappa^2_k  n,  $$
where 
$$ \Delta : =\min_{1\le k \le K} (\eta_{k+1}-\eta_k) \ge c_2n $$
is used in the last inequality. 
Therefore 
$$  \max_{\zeta n \le t\le (1-\zeta) n} \| \widetilde \alpha (t)   \| _2^2    \ge    c_1  \kappa^2  n,  $$
 The desired result follows by observing that 
\begin{align*}  \| \widetilde \alpha (t)   \| _2^2  = \frac{t(n-t)}{ n } (\beta^*_\ot- \beta_\tn^*) ^\top  \Sigma  (\beta^*_\ot- \beta_\tn^* ) 
\end{align*}
and so 
\begin{align*}  \max_{\zeta n \le t\le (1-\zeta) n}   (\beta^*_\ot- \beta_\tn^*) ^\top  \Sigma  (\beta^*_\ot- \beta_\tn^* )  
=  & 
\max_{\zeta n \le t\le (1-\zeta) n}   \frac{ n}{t(n-t)  } \| \widetilde \alpha (t)   \| _2^2
\\ 
  \ge 
  & \max_{\zeta n \le t\le (1-\zeta) n}     \frac{ n}{t(n-t)  }     c_1  \kappa  ^2  n \ge c_1 \kappa ^2. 
\end{align*} 
For \Cref{eq:variance cusum maximized at the change point}, note that  by Proposition S.1 of  \cite{wang2021optimal},
$   \| \widetilde \alpha (t)   \| _2^2 $ as a function of $t$ is maximized at the change points. Say $\eta_q$ is the maximizer. Then 
$$  \| \widetilde \alpha (\eta_q )   \| _2^2  =  \max_{\zeta n \le t\le (1-\zeta) n} \| \widetilde \alpha (t)   \| _2^2   \ge    c_1  \kappa^2  n .  $$
Therefore 
\begin{align*} (\beta^*_{ (0,\eta_q] }- \beta_ { ( \eta_q,n ] }^*) ^\top  \Sigma (\beta^*_{ (0,\eta_q] }- \beta_ { ( \eta_q,n ] }^*)    
=  & 
 \frac{ n}{ \eta_q (n-\eta_q )  }   \| \widetilde \alpha (\eta_q )   \| _2^2 
  \ge   \frac{ n}{ \eta_q (n-\eta_q )  }  c_1  \kappa^2  n  \ge c'\kappa ^2,
\end{align*}  
where the last inequality follows that by \Cref{assume: model assumption} {\bf c},
$ \eta_q \asymp n $ and $ n-\eta_q \asymp n $.
\end{proof}  
 
\
\\
  
\section{Additional Numerical Results}\label{subsec:additional_num}

\begin{figure}
	\centering
	\vspace{-0.8cm}
	\begin{subfigure}[b]{0.4\textwidth}
		\centering
		\includegraphics[width=\textwidth, angle=270]{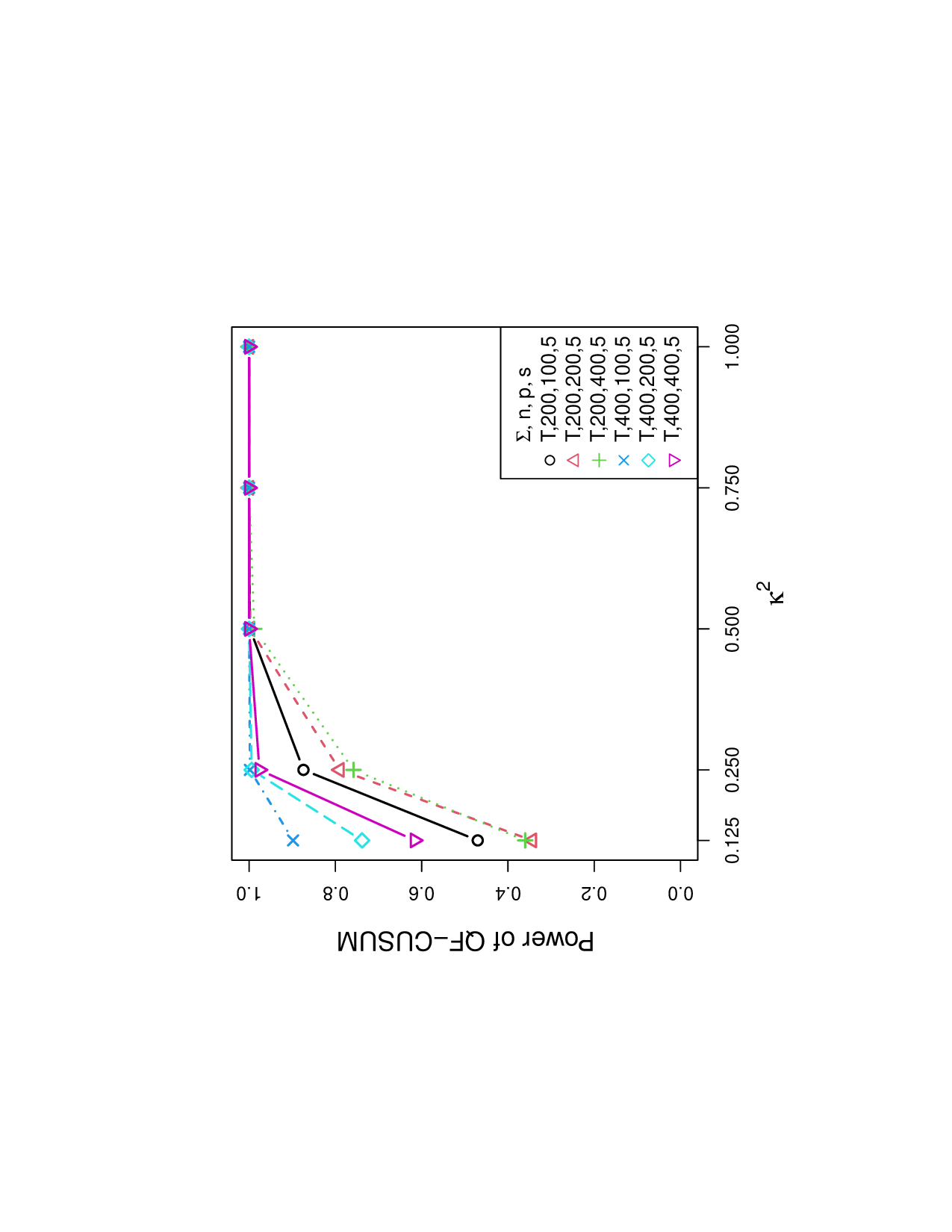}
	\end{subfigure}
    \hspace{-0.1cm} 
    \vspace{-0.8cm}
	\begin{subfigure}[b]{0.4\textwidth}
		\centering
		\includegraphics[width=\textwidth, angle=270]{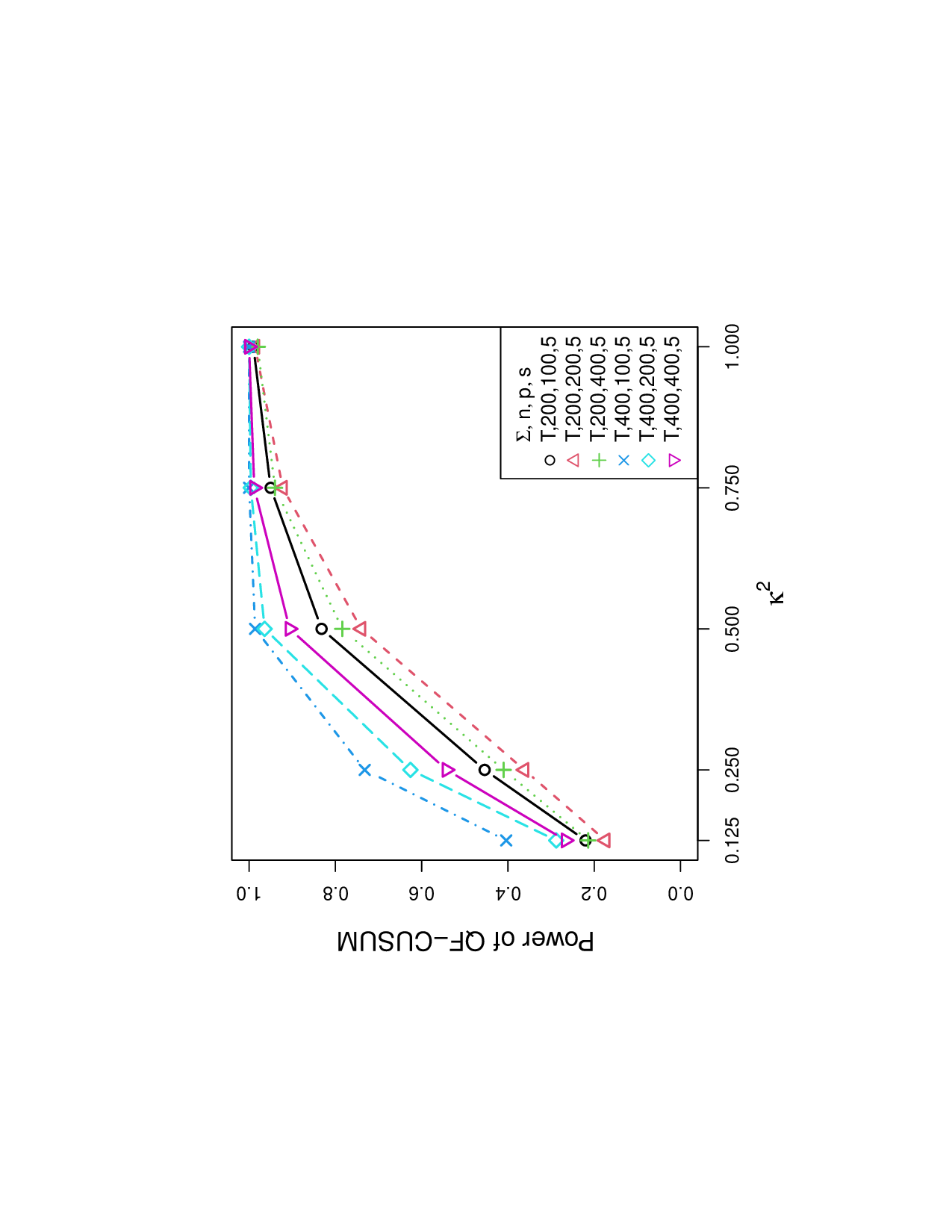}
	\end{subfigure}
 	\vspace{-0.8cm}
	\begin{subfigure}[b]{0.4\textwidth}
		\centering
		\includegraphics[width=\textwidth, angle=270]{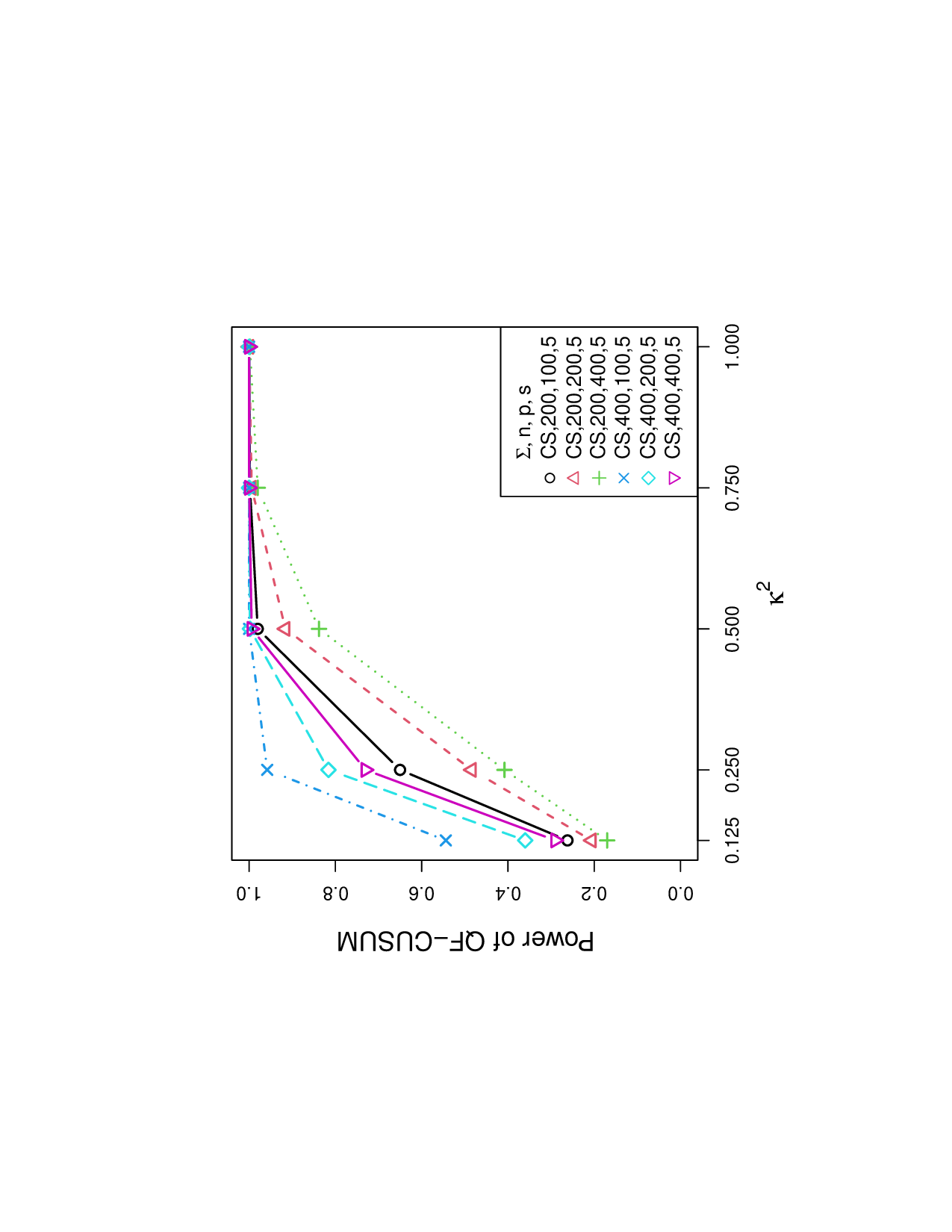}
	\end{subfigure}
    \hspace{-0.1cm}
	\begin{subfigure}[b]{0.4\textwidth}
		\centering
		\includegraphics[width=\textwidth, angle=270]{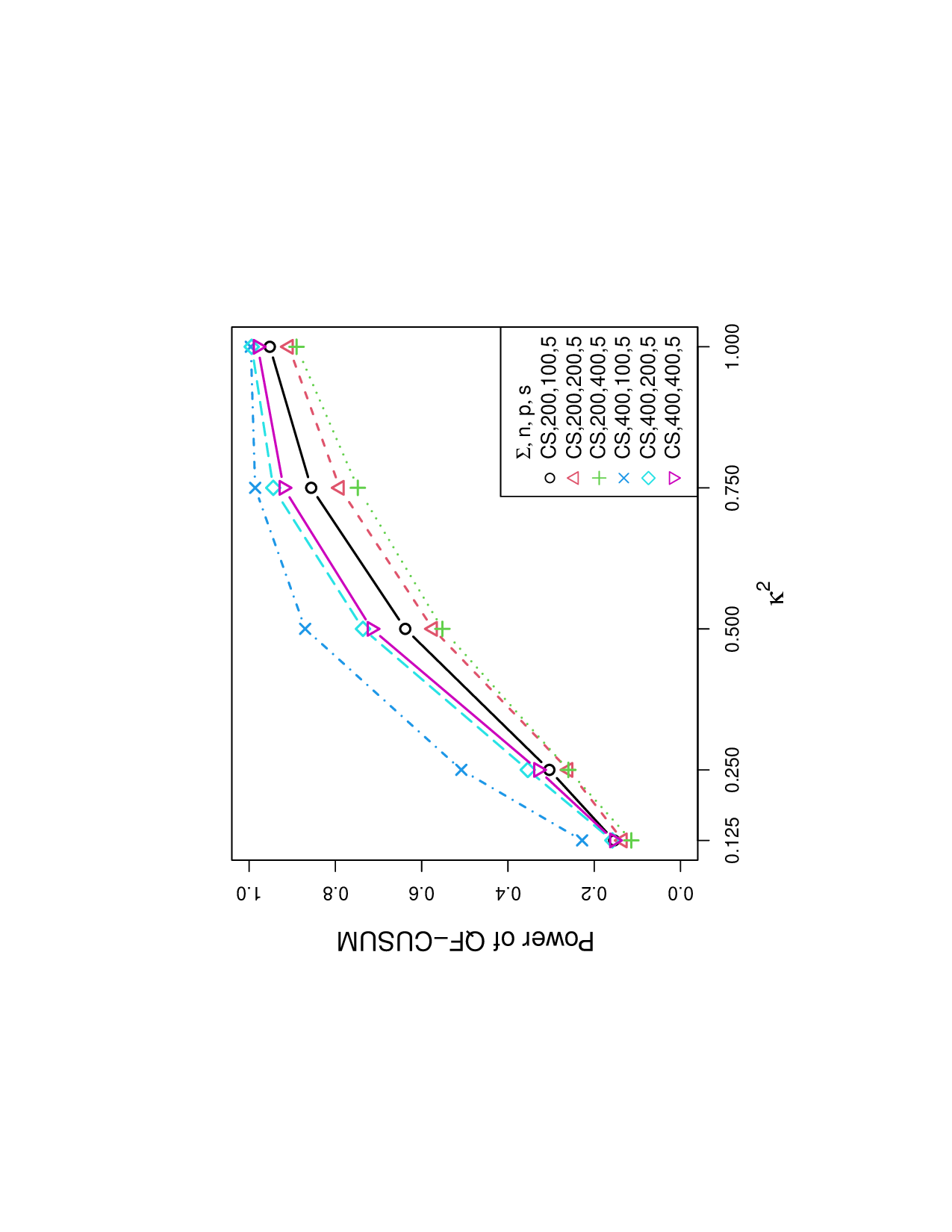}
	\end{subfigure}
    \vspace{-0.8cm}
	\begin{subfigure}[b]{0.4\textwidth}
		\centering
		\includegraphics[width=\textwidth, angle=270]{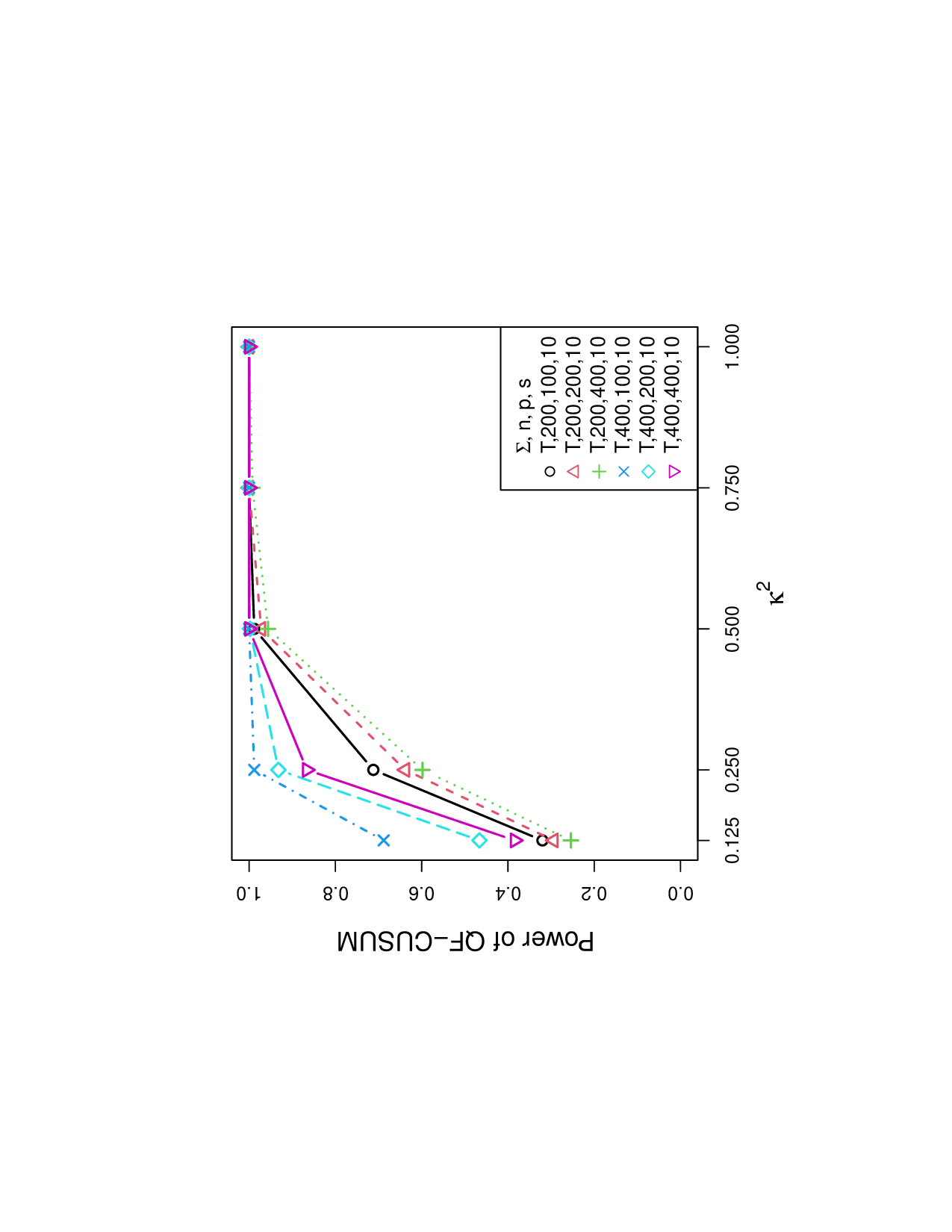}
	\end{subfigure}
    \hspace{-0.1cm}
	\begin{subfigure}[b]{0.4\textwidth}
		\centering
		\includegraphics[width=\textwidth, angle=270]{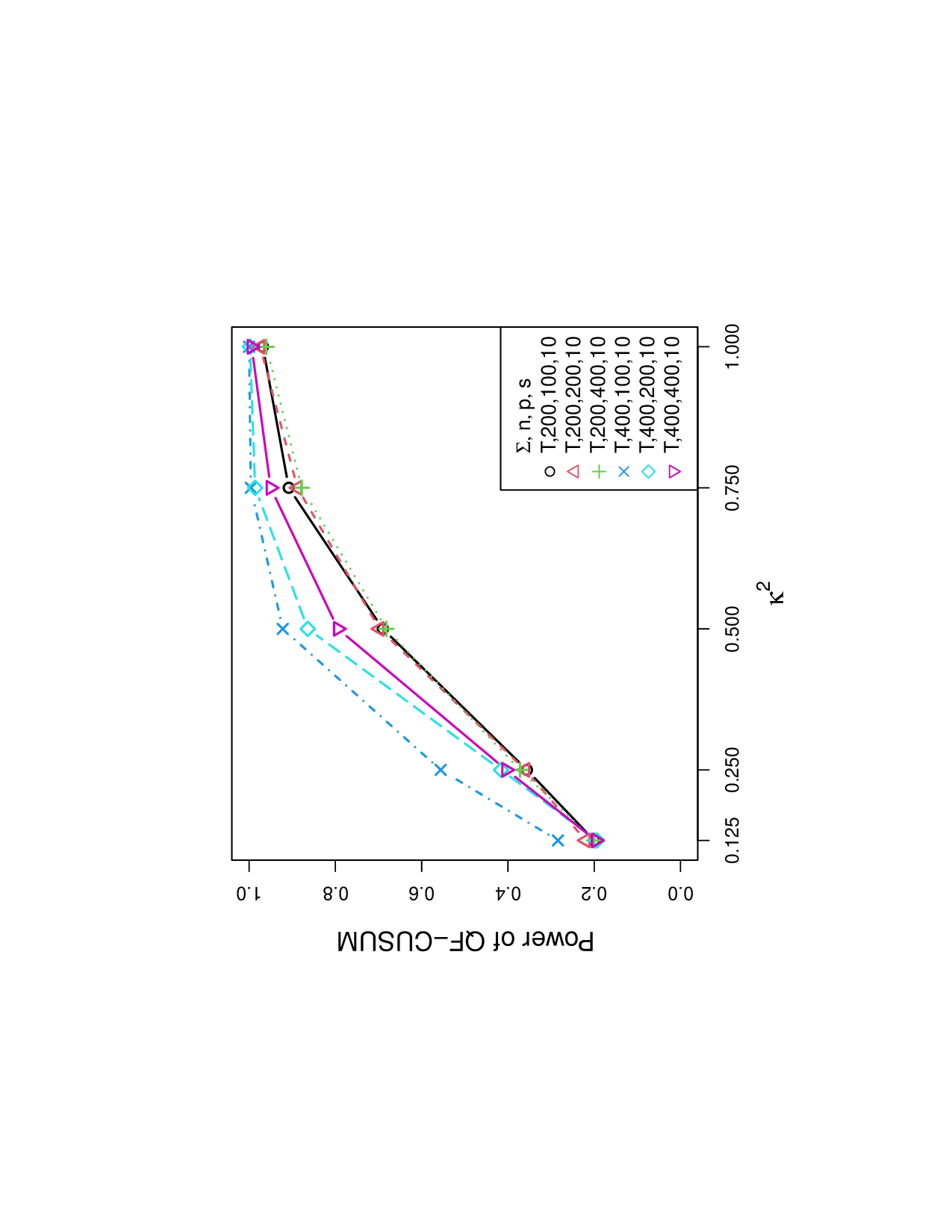}
	\end{subfigure}
    \vspace{-0.2cm}
	\begin{subfigure}[b]{0.4\textwidth}
		\centering
		\includegraphics[width=\textwidth, angle=270]{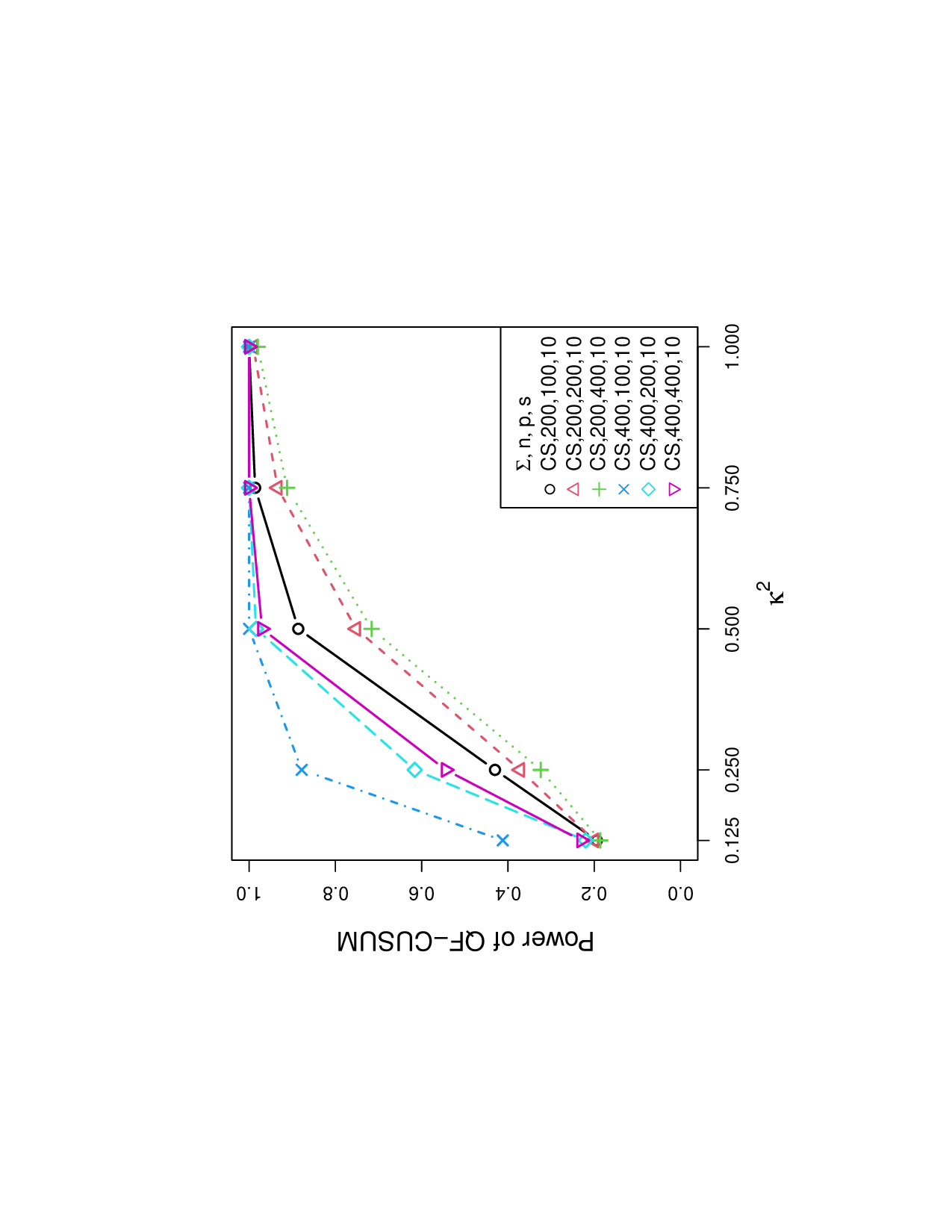}
	\end{subfigure}
    \hspace{-0.1cm}
	\begin{subfigure}[b]{0.4\textwidth}
		\centering
		\includegraphics[width=\textwidth, angle=270]{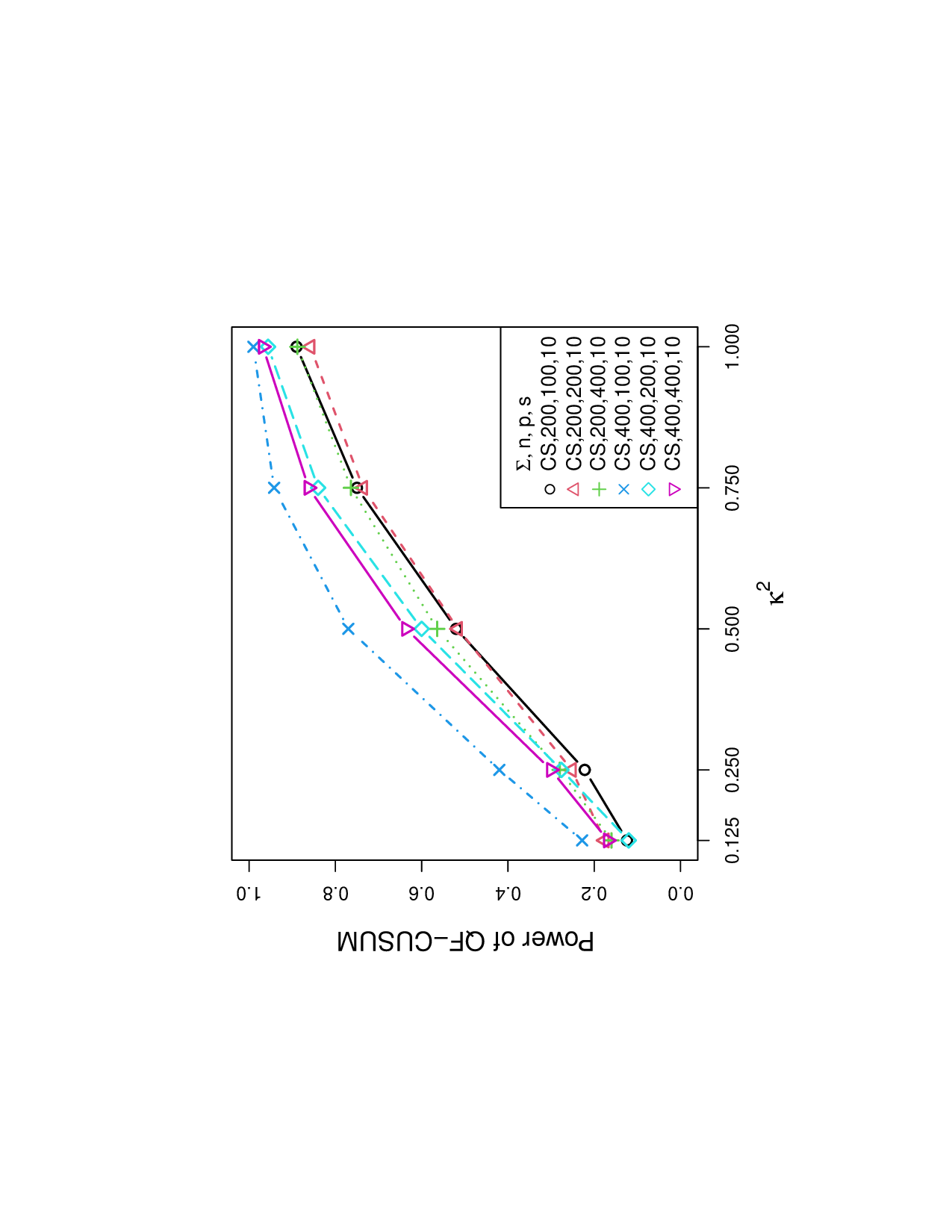}
	\end{subfigure}
    \vspace{-0.2cm}
	\caption{Power performance of QF-CUSUM under the single change-point case (left column) and multiple change-point case (right column) with AR temporal dependence.}
	\label{fig:power_AR}
\end{figure}
    
\begin{figure}
	\centering
	\vspace{-0.8cm}
	\begin{subfigure}[b]{0.4\textwidth}
		\centering
		\includegraphics[width=\textwidth, angle=270]{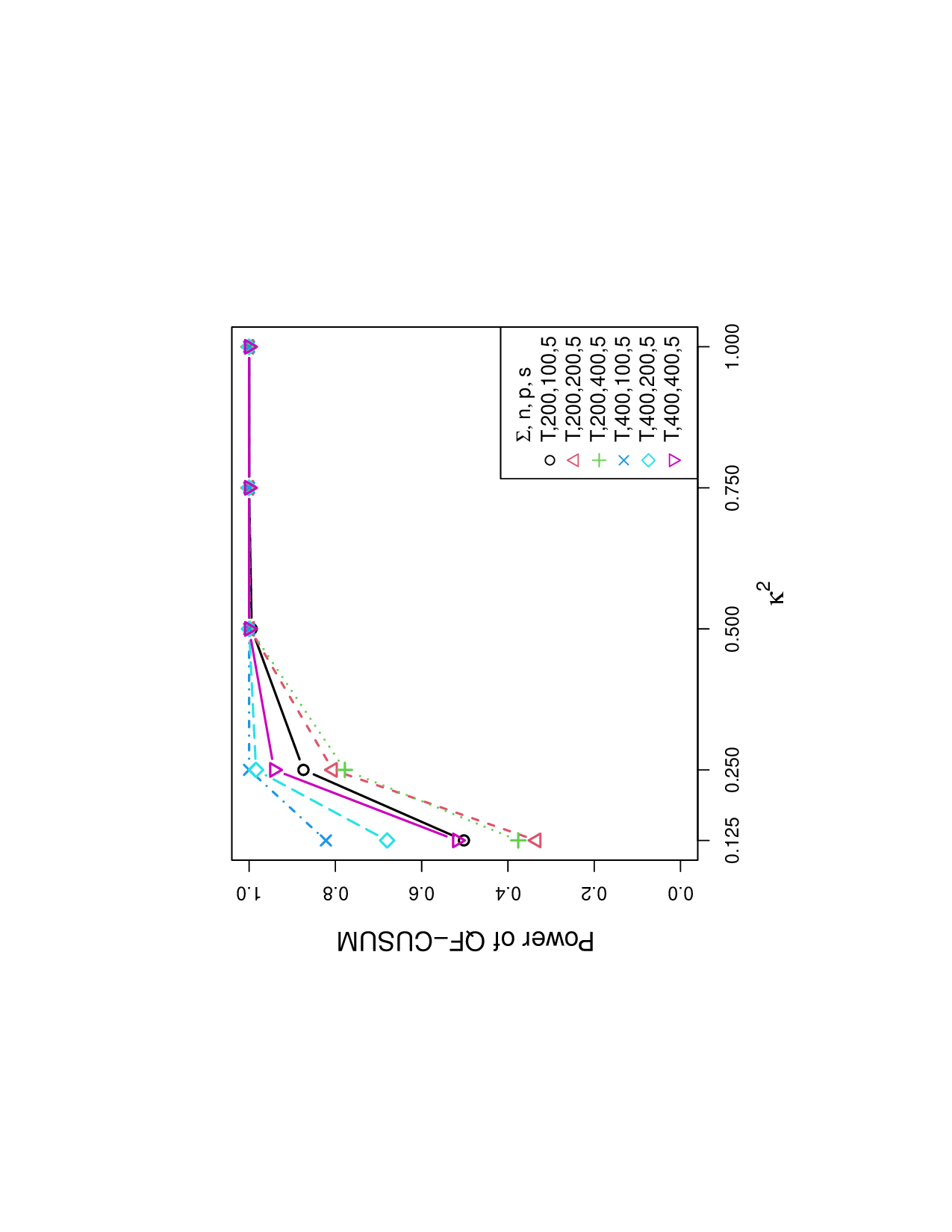}
	\end{subfigure}
    \hspace{-0.1cm} 
    \vspace{-0.8cm}
	\begin{subfigure}[b]{0.4\textwidth}
		\centering
		\includegraphics[width=\textwidth, angle=270]{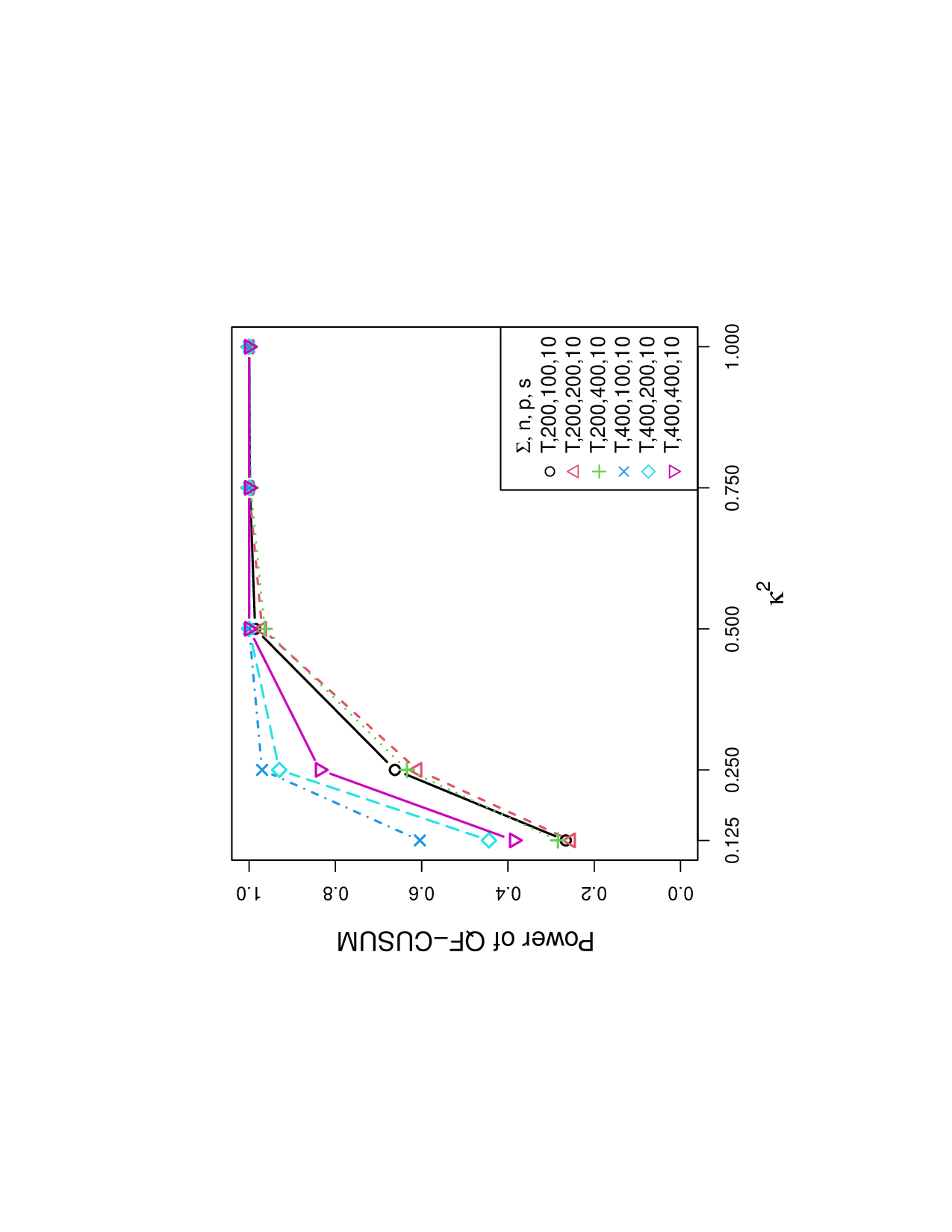}
	\end{subfigure}
    \vspace{-0.8cm}
	\begin{subfigure}[b]{0.4\textwidth}
		\centering
		\includegraphics[width=\textwidth, angle=270]{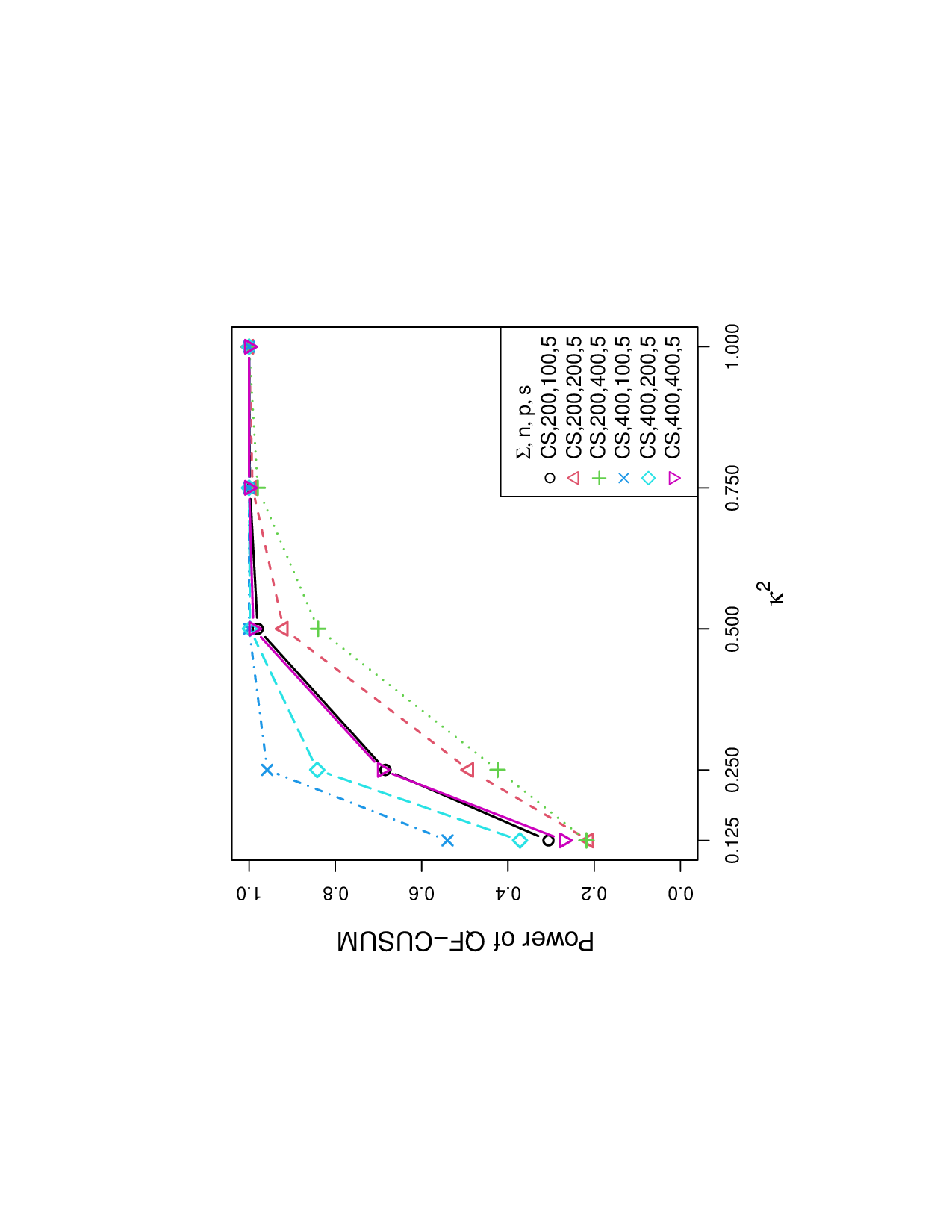}
	\end{subfigure}
    \hspace{-0.1cm}
	\begin{subfigure}[b]{0.4\textwidth}
		\centering
		\includegraphics[width=\textwidth, angle=270]{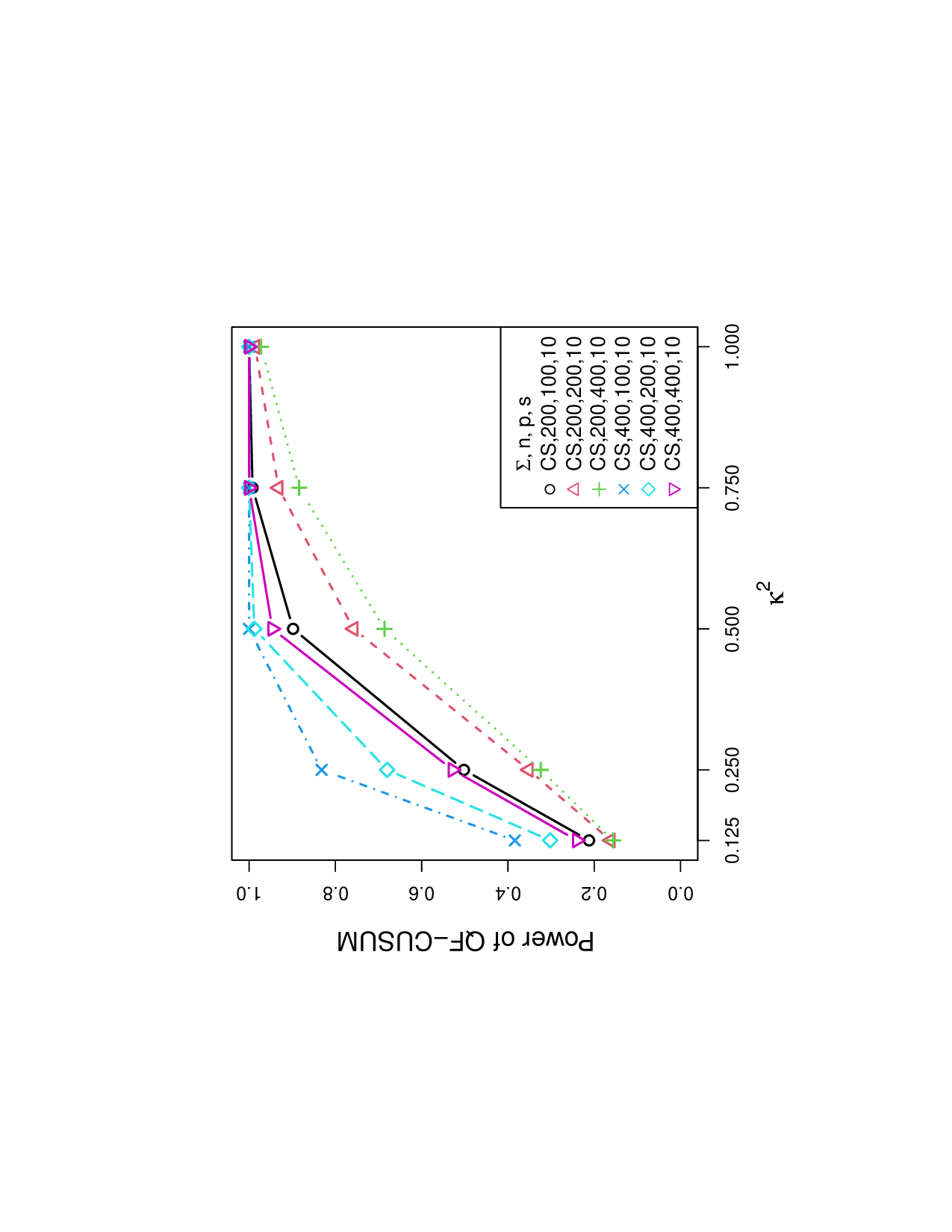}
	\end{subfigure}
	\vspace{-0.8cm}
	\begin{subfigure}[b]{0.4\textwidth}
		\centering
		\includegraphics[width=\textwidth, angle=270]{./EPS/MAonecp_symbetaUnequal_T_sp5}
	\end{subfigure}
	\hspace{-0.1cm}
	\begin{subfigure}[b]{0.4\textwidth}
		\centering
		\includegraphics[width=\textwidth, angle=270]{./EPS/MAonecp_symbetaUnequal_T_sp10}
	\end{subfigure}
	\vspace{-0.2cm}
	\begin{subfigure}[b]{0.4\textwidth}
		\centering
		\includegraphics[width=\textwidth, angle=270]{./EPS/MAonecp_symbetaUnequal_CS_sp5}
	\end{subfigure}
    \hspace{-0.1cm}
	\begin{subfigure}[b]{0.4\textwidth}
		\centering
		\includegraphics[width=\textwidth, angle=270]{./EPS/MAonecp_symbetaUnequal_CS_sp10}
	\end{subfigure}
    \vspace{-0.2cm}
	\caption{Power performance of QF-CUSUM under the single change-point case (left column) and multiple change-point case (right column) with MA temporal dependence.}
	\label{fig:power_MA}
\end{figure}

\end{document}